\numberwithin{equation}{section}
\newtheorem{theorem}[equation]{Theorem}  
\newtheorem{corollary}[equation]{Corollary}
\newtheorem{proposition}[equation]{Proposition}
\newtheorem{lemma}[equation]{Lemma}
\theoremstyle{definition}
\newtheorem{definition}[equation]{Definition}
\newtheorem{example}[equation]{Example}
\newtheorem{remark}[equation]{Remark}
\newcommand{\nc}{\newcommand}
\nc{\rnc}{\renewcommand}
\def\tab(#1){\,\mbox{\tiny$\young(#1)$}\,}
\def\etab(#1){\,\mbox{\tiny$\yng(#1)$}\,}
\newcommand{\dom}{\operatorname{dom}}
\newcommand{\im}{\operatorname{im}}
\newcommand{\mfs}{\mathfrak{s}}
\newcommand{\mft}{\mathfrak{t}}
\newcommand{\mfu}{\mathfrak{u}}
\newcommand{\mfv}{\mathfrak{v}}
\newcommand{\gr}{\mathscr{R}}
\newcommand{\gl}{\mathscr{L}}
\newcommand{\gh}{\mathscr{H}}
\newcommand{\gd}{\mathscr{D}}
\newcommand{\gj}{\!\mathscr{J}\!}
\newcommand{\B}{\mathcal{B}}
\newcommand{\Bell}{B}
\newcommand{\J}{\mathcal{J}}
\newcommand{\M}{\mathcal{M}^0}
\newcommand{\lb}{\langle}
\newcommand{\rb}{\rangle}
\newcommand{\rank}{\mathrm{rank}}
\newcommand{\idrank}{\mathrm{idrank}}
\renewcommand{\P}{\mathcal{P}}
\newcommand{\G}{\mathcal{G}}
\renewcommand{\S}{\mathcal S}
\newcommand{\X}{\mathcal{X}}
\newcommand{\sm}{\setminus}
\tikzset{->-/.style={decoration={
  markings,
  mark=at position #1 with {\arrow{>}}},postaction={decorate}}}
\tikzset{-<-/.style={decoration={
  markings,
  mark=at position #1 with {\arrow{<}}},postaction={decorate}}}
\newcommand{\dde}[2]{
\draw [->-=0.5] (#1) to [bend left=15] (#1#2);
\draw [->-=0.5] (#1#2) to [bend left=15] (#2);
\draw [-<-=0.5] (#1) to [bend right=15] (#1#2);
\draw [-<-=0.5] (#1#2) to [bend right=15] (#2);
}
\newcommand{\dden}[2]{
\draw [->-=0.5] (#1) to [bend left=10] (#1#2);
\draw [->-=0.5] (#1#2) to [bend left=10] (#2);
\draw [-<-=0.5] (#1) to [bend right=10] (#1#2);
\draw [-<-=0.5] (#1#2) to [bend right=10] (#2);
}
\newcommand{\ddeb}[2]{
\draw [->-=0.5] (#1) to [bend left=15] (#2#1);
\draw [->-=0.5] (#2#1) to [bend left=15] (#2);
\draw [-<-=0.5] (#1) to [bend right=15] (#2#1);
\draw [-<-=0.5] (#2#1) to [bend right=15] (#2);
}
\newcommand{\ddenb}[2]{
\draw [->-=0.5] (#1) to [bend left=10] (#2#1);
\draw [->-=0.5] (#2#1) to [bend left=10] (#2);
\draw [-<-=0.5] (#1) to [bend right=10] (#2#1);
\draw [-<-=0.5] (#2#1) to [bend right=10] (#2);
}
\nc{\arcup}[2]{
\draw(#1,2)arc(180:270:.4) (#1+.4,1.6)--(#2-.4,1.6) (#2-.4,1.6) arc(270:360:.4);
}
\nc{\arcdn}[2]{
\draw(#1,0)arc(180:90:.4) (#1+.4,.4)--(#2-.4,.4) (#2-.4,.4) arc(90:0:.4);
}
\nc{\cve}[2]{
\draw(#1,2) to [out=270,in=90] (#2,0);
}
\nc{\barcup}[3]{
\draw(#1,2)arc(180:270:#3) (#1+#3,2-#3)--(#2-#3,2-#3) (#2-#3,2-#3) arc(270:360:#3);
}
\nc{\barcdn}[3]{
\draw(#1,0)arc(180:90:#3) (#1+#3,#3)--(#2-#3,#3) (#2-#3,#3) arc(90:0:#3);
}
\nc{\garcup}[3]{
\draw[lightgray](#1,2)arc(180:270:#3) (#1+#3,2-#3)--(#2-#3,2-#3) (#2-#3,2-#3) arc(270:360:#3);
}
\nc{\garcdn}[3]{
\draw[lightgray](#1,0)arc(180:90:#3) (#1+#3,#3)--(#2-#3,#3) (#2-#3,#3) arc(90:0:#3);
}
\nc{\gcve}[2]{
\draw[lightgray](#1,2) to [out=270,in=90] (#2,0);
}
\nc{\catarc}[3]{
\draw(#1,2)arc(180:270:#3) (#1+#3,2-#3)--(#2-#3,2-#3) (#2-#3,2-#3) arc(270:360:#3);
}
\nc{\longarcup}[2]{
\draw(#1,2)arc(180:270:.6) (#1+.6,1.4)--(#2-.6,1.4) (#2-.6,1.4) arc(270:360:.6);
}
\nc{\longarcdn}[2]{
\draw(#1,0)arc(180:90:.6) (#1+.6,.6)--(#2-.6,.6) (#2-.6,.6) arc(90:0:.6);
}
\nc{\bluebox}[4]{
\draw[color=blue!20, fill=blue!20] (#1,#2)--(#3,#2)--(#3,#4)--(#1,#4)--(#1,#2);
}
\nc{\siijk}[3]{
	\fill (#1,0)circle(.1)
	      (#2,0)circle(.1)
	      (#3,0)circle(.1)
	      (#1,2)circle(.1)
	      (#2,2)circle(.1)
	      (#3,2)circle(.1);
  \draw(#1,2)node[above]{{\tiny $\phantom{jk}i\phantom{jk}$}};
  \draw(#2,2)node[above]{{\tiny $\phantom{jk}j\phantom{jk}$}};
  \draw(#3,2)node[above]{{\tiny $\phantom{jk}k\phantom{jk}$}};
	\cve{#3}{#1}
}
\nc{\uv}[1]{\pscircle*(#1,2){2pt}}
\nc{\lv}[1]{\pscircle*(#1,0){2pt}}
\nc{\stline}[2]{\psline(#1,2)(#2,0)}
\nc{\uline}[2]{\psline(#1,2)(#2,2)}
\nc{\lline}[2]{\psline(#1,0)(#2,0)}
\nc{\uvw}[1]{\pscircle*[linecolor=white](#1,2){2pt}\pscircle(#1,2){2pt}}
\nc{\lvw}[1]{\pscircle*[linecolor=white](#1,0){2pt}\pscircle(#1,0){2pt}}
\nc{\guv}[1]{\pscircle*[linecolor=lightgray](#1,2){2pt}}
\nc{\glv}[1]{\pscircle*[linecolor=lightgray](#1,0){2pt}}
\nc{\gstline}[2]{\psline[linecolor=lightgray](#1,2)(#2,0)}
\nc{\ul}[2]{\psline(#1,2)(#2,2)}
\rnc{\ll}[2]{\psline(#1,0)(#2,0)}
\nc{\uc}{\pscurve(0,2)(1,1.6)(2,2)}
\nc{\lc}{\pscurve(0,0)(1,0.4)(2,0)}
\nc{\gul}[2]{\psline[linecolor=lightgray](#1,2)(#2,2)}
\nc{\gll}[2]{\psline[linecolor=lightgray](#1,0)(#2,0)}
\nc{\guc}{\pscurve[linecolor=lightgray](0,2)(1,1.6)(2,2)}
\nc{\glc}{\pscurve[linecolor=lightgray](0,0)(1,0.4)(2,0)}
\nc{\uvtx}[1]{\pscircle*(#1,4){2pt}}
\nc{\uvertex}[1]{\pscircle*(#1,2){2pt}}
\nc{\lvertex}[1]{\pscircle*(#1,0){2pt}}
\nc{\stlinebd}[2]{\psline[border=0.5 mm](#1,2)(#2,0)}
\nc{\guvertex}{\guv} \nc{\glvertex}{\glv}
\nc{\suv}[1]{\pscircle*(#1,2){1.5pt}}
\nc{\slv}[1]{\pscircle*(#1,0){1.5pt}}
\nc{\wsuv}[1]{\pscircle*[linecolor=white](#1,2){1.5pt}\pscircle(#1,2){1.5pt}}
\nc{\wslv}[1]{\pscircle*[linecolor=white](#1,0){1.5pt}\pscircle(#1,0){1.5pt}}
\nc{\uds}[2]{\psline[linestyle=dotted](#1,2)(#2,2)}
\nc{\lds}[2]{\psline[linestyle=dotted](#1,0)(#2,0)}
\nc{\bn}{[n]}
\nc{\al}{\alpha}
\nc{\be}{\beta}
\nc{\ga}{\gamma}
\nc{\de}{\delta}
\nc{\ve}{\varepsilon}
\nc{\si}{\sigma}
\nc{\lam}{\lambda}
\nc{\Lam}{\Lambda}
\nc{\set}[2]{\{ {#1} : {#2} \}} 
\nc{\bigset}[2]{\big\{ {#1} : {#2} \big\}} 
\nc{\Bigset}[2]{\Big\{ \,{#1}\, \,\Big|\, \,{#2}\, \Big\}}
\nc{\codom}{\operatorname{codom}}
\nc{\coker}{\operatorname{coker}}
\nc{\sub}{\subseteq}
\rnc{\sp}{\supseteq}
\nc{\speq}{\hspace{-.25cm}&=&\hspace{-.25cm}}
\nc{\spra}{\Rightarrow}
\nc{\De}{\Delta}
\nc{\mt}{\mapsto}
\nc{\COMMA}{,\quad}
\nc{\la}{\langle}
\nc{\ra}{\rangle}
\nc{\oijn}{1\leq i<j\leq n}
\nc{\E}{\mathcal E}
\nc{\PP}{\mathcal{PP}}
\nc{\JrPn}{J_r(\P_n)}
\nc{\JrpPn}{J_{r+1}(\P_n)}
\nc{\IrPn}{I_r(\P_n)}
\nc{\JnPn}{J_n(\P_n)}
\nc{\InPn}{I_n(\P_n)}
\nc{\InmPn}{I_{n-1}(\P_n)}
\nc{\JnmPn}{J_{n-1}(\P_n)}
\nc{\JrPPn}{J_r(\PP_n)}
\nc{\JrpPPn}{J_{r+1}(\PP_n)}
\nc{\IrPPn}{I_r(\PP_n)}
\nc{\JnPPn}{J_n(\PP_n)}
\nc{\InPPn}{I_n(\PP_n)}
\nc{\InmPPn}{I_{n-1}(\PP_n)}
\nc{\JnmPPn}{J_{n-1}(\PP_n)}
\nc{\JrBn}{J_r(\B_n)}
\nc{\JrpBn}{J_{r+2}(\B_n)}
\nc{\IrBn}{I_r(\B_n)}
\nc{\JnBn}{J_n(\B_n)}
\nc{\InBn}{I_n(\B_n)}
\nc{\InmBn}{I_{n-2}(\B_n)}
\nc{\JnmBn}{J_{n-2}(\B_n)}
\nc{\JrJn}{J_r(\J_n)}
\nc{\JrpJn}{J_{r+2}(\J_n)}
\nc{\IrJn}{I_r(\J_n)}
\nc{\JnJn}{J_n(\J_n)}
\nc{\InJn}{I_n(\J_n)}
\nc{\InmJn}{I_{n-2}(\J_n)}
\nc{\JnmJn}{J_{n-2}(\J_n)}
\nc{\PnSn}{\P_n\setminus\S_n}
\nc{\BnSn}{\B_n\setminus\S_n}
\nc{\JnSn}{\J_n\setminus\{1\}}
\nc{\T}{\mathcal T}
\nc{\partn}[4]{\left( \begin{array}{c|c} 
#1 \ & \ #3 \ \ \\ \cline{2-2}
#2 \ & \ #4 \ \
\end{array} \!\!\! \right)}
\nc{\partnlong}[6]{\left( \begin{array}{c|c|c} 
#1 \ & \ #3 \ & \ #5 \ \ \\ \cline{3-3}
#2 \ & \ #4 \ & \ #6 \ \
\end{array} \!\!\! \right)}
\nc{\partnlongvar}[6]{\left( \begin{array}{c|c|c} 
#1 \ & \ #3 \ & \ #5 \ \ \\ \cline{2-3}
#2 \ & \ #4 \ & \ #6 \ \
\end{array} \!\!\! \right)}
\nc{\partnlonger}[8]{\left( \begin{array}{c|c|c|c} 
#1 \ & \ #3 \ & \ #5 \ & \ #7 \ \ \\  \cline{4-4}
#2 \ & \ #4 \ & \ #6 \ & \ #8 \ \
\end{array} \!\!\! \right)}
\begin{document}

\title[Diagram monoids and Graham--Houghton graphs] 
{Diagram monoids and Graham--Houghton graphs: idempotents and generating sets of ideals}  

\keywords{Partition monoid, Brauer monoid, Jones monoid, diagram algebra, generating sets, idempotents, rank, idempotent rank, Graham-Houghton graph}
\subjclass[2010]{20M20 (20M10, 20M17, 05E15, 05A18)}
\maketitle

\begin{center}

    James East\footnote{Centre for Research in Mathematics; School of Computing, Engineering and Mathematics; Western Sydney University; Locked Bag 1797, Penrith, NSW, 2751, Australia.  {\tt J.East@WesternSydney.edu.au}} and Robert~D.~Gray\footnote{School of Mathematics; University of East Anglia; Norwich NR4 7TJ, UK.  {\tt Robert.D.Gray@uea.ac.uk} \\ This author was partially supported by the EPSRC grant EP/N033353/1.} 
    
    \medskip
    
    \today

\end{center}

\begin{abstract}
We study the ideals of the partition, Brauer, and Jones monoid, establishing various combinatorial results on generating sets and idempotent generating sets via an analysis of their  Graham--Houghton graphs.  We show that each proper ideal of the partition monoid $\mathcal P_n$ is an idempotent generated semigroup, and obtain a formula for the minimal number of elements (and the minimal number of idempotent elements) needed to generate these semigroups.  In particular, we show that these two numbers, which are called the rank and idempotent rank (respectively) of the semigroup, are equal to each other, and we characterize the generating sets of this minimal cardinality.  We also characterize and enumerate the minimal idempotent generating sets for the largest proper ideal of $\mathcal P_n$, which coincides with the singular part of $\mathcal P_n$. Analogous results are proved for the ideals of the Brauer and Jones monoids; in each case, the rank and idempotent rank turn out to be equal, and all the minimal generating sets are described.  
We also show how the rank and idempotent rank results obtained, when applied to the corresponding twisted semigroup algebras (the partition, Brauer, and Temperley--Lieb algebras), allow one to recover formulae for the dimensions of their cell modules (viewed as cellular algebras) which, in the semisimple case, are formulae for the dimensions of the irreducible representations of the algebras.
As well as being of algebraic interest, our results relate to several well-studied topics in graph theory including the problem of counting perfect matchings (which relates to the problem of computing permanents of $\{0,1\}$-matrices and the theory of Pfaffian orientations), and the problem of finding factorizations of Johnson graphs. Our results also bring together several well-known number sequences such as Stirling, Bell, Catalan and Fibonacci numbers.
\end{abstract}

\section{Introduction}\label{sect_intro}
There has been a lot of interest recently in algebras with a basis consisting of diagrams that are multiplied in some natural diagrammatic way. Examples of such ``diagram algebras'' include the Brauer algebra \cite{Brauer1937}, Temperley--Lieb algebra \cite{Goodman1993}, and the Jones algebra \cite{Jones1994}. All of these examples arise in a natural way as subalgebras of the partition algebra \cite{Martin1994}, whose basis consists of all set-partitions of a $2n$-element set (see below for a formal definition). The partition algebra first appeared independently in the work of Martin \cite{Martin1991, Martin1994} and Jones \cite{Jones1994_2}. In both cases, their motivation for studying this algebra was as a generalization of the Temperley--Lieb algebra and the Potts model in statistical mechanics. Since its introduction, the partition algebra has received a great deal of attention in the literature; see for example \cite{
GW1995,
Haldel,
Enyang2013,
Guo2009,
Halverson2001,
Halverson2004,
Halverson2005,
Halverson2010,
Hartmann2010,
Konig1999, 
Martin1996,
Martin2013,
Martin1999,
Martin2004,
Wilcox2007,
Xi1999}.

All of the diagram algebras mentioned above are examples of cellular algebras, an important class of algebras introduced by Graham and Lehrer in \cite{GrahamLehrer1996}. The fact that these algebras are cellular allows one to obtain information about the semisimplicity of the algebra and about its representation theory, even in the non-semisimple case. 
In the partition, Brauer, and Temperley--Lieb algebras, the product of two diagram basis elements is always a scalar multiple of another basis element. Using this observation as a starting point, Wilcox \cite{Wilcox2007}, showed that these algebras are isomorphic to certain twisted semigroup algebras. By realizing the algebras in this way, many questions concerning the algebras can be related to questions for the corresponding semigroups. For instance, cellularity of the algebra can be deduced from various aspects of the structure of the monoid. The original study of cellular semigroup algebras may be found in \cite{East2006}; see also \cite{Guo2009,Wilcox2014,May2015,May2015_2} for some recent developments.  
Another example of how the study of these semigroups can give information about the associated algebras may be found in work of the first author \cite{East2011_1}, who gives presentations for the partition monoid and shows how these presentations give rise to presentations for the partition algebra; see also \cite{JE2016_2,JE2016_1,East2011_2}. 
A further example is given in the paper \cite{enum} where idempotents in the partition, Brauer and partial Brauer monoids are described and enumerated, and then the results are applied to determine the number of idempotent basis elements in the finite dimensional partition, Brauer and partial Brauer algebras; see also \cite{enum2}.

The corresponding semigroups---the partition, Brauer, and Jones monoids, and other related semigroups---have been studied, for instance, in \cite{DE2015_1,DEG2015,JE2016_1,JE2016_2,DE2016_1,MarMaz,ACHLV, KudMaz2006, Maz2002,Maz1998, enum,Aui2012, Aui2013, ADV, East2013, East2011_1, East2011_2, East2012, FitzGerald2011, FitzGerald2006, Maltcev2007}. Recently, the first author \cite{East2011_2} considered the subsemigroup generated by the set of idempotents in the partition monoid $\P_n$, showing in particular that every non-invertible element is expressible as a product of idempotents (we shall see in Theorem~\ref{thm_main1} below that this result holds more generally for any proper two-sided ideal of the partition monoid); presentations were also obtained in \cite{East2011_2}, and the infinite case was considered in \cite{East2012,East2013_6}. So the singular part of $\P_n$  is an idempotent generated semigroup, and this is a property that $\P_n$ has in common with several other naturally arising monoids. 
For instance, every non-invertible matrix from the full linear monoid $M_n(Q)$ of $n \times n$ matrices over an arbitrary division ring $Q$ is expressible as a product of idempotent matrices \cite{Erdos1967, laffey83}, and the same result is true for the full transformation semigroup of all maps from a finite set to itself \cite{Howie1966}. Presentations for certain idempotent generated semigroups may be found in \cite{JE2016_1,JE2016_2,East2013_3,East2011_2,East2013_2,Maltcev2007}.
More recently, in a significant extension of Erdos's result from \cite{Erdos1967}, Putcha \cite{putcha06} gave necessary and sufficient conditions for a reductive linear algebraic monoid to have the property that every non-unit is a product of idempotents. Another reason idempotent generated semigroups have received considerable attention in the literature is that they possess a universal property: every semigroup embeds into an idempotent generated semigroup \cite{Howie1966} (indeed, in an idempotent generated regular $*$-semigroup \cite{East2012}), and if the semigroup is (finite) countable it can be embedded in a (finite) semigroup generated by three idempotents \cite{Byleen1984}. 

The Graham--Houghton graph of a semigroup is a bipartite graph with one part indexed by the $\gr$-classes of the semigroup, the other part indexed by the $\gl$-classes, and edges corresponding precisely to those $\gh$-classes that contain idempotents (see Section~\ref{sec_small} for the definition of Green's relations). Graham introduced these graphs in \cite{Graham68} to study the idempotent generated subsemigroup of a $0$-simple semigroup.  Graham's results were later rediscovered by Houghton who gave them a topological interpretation \cite{Houghton1977}.  In the case that the semigroup is regular and idempotent generated, the connected components of this graph are in natural bijective correspondence with the $\gd$-classes of the semigroup. 
Graham's results show that these graphs are important tools for studying idempotent generated semigroups. 
More background on these graphs and their applications in semigroup theory may be found in \cite[Section~4.13]{SteinbergBook2009}.

In addition to being a fundamental tool for studying products of idempotents, there are several other reasons that motivate the problem of obtaining a better understanding of Graham--Houghton graphs. 
Firstly, these graphs are important  in the study of \emph{free idempotent generated semigroups}. There has  been a recent resurgence of interest in the study of these semigroups~\cite{GY2014,DDG2015,DG2015,Dolinka2012,Brittenham2009, DolinkaGray, Easdown1985,Easdown2010, Gray2012(2), Gray2012(1),Nambooripad1979}, with a particular focus on describing their maximal subgroups. The theory developed in \cite{Brittenham2009} shows that maximal subgroups of free idempotent generated semigroups are precisely the fundamental groups of Graham--Houghton complexes. The Graham--Houghton complex of an idempotent generated semigroup is a $2$-complex, whose $1$-skeleton is the Graham--Houghton graph of the semigroup, and which has $2$-cells glued in for each \emph{singular square} of idempotents (in the sense of Nambooripad \cite{Nambooripad1979}). Thus a necessary first step in determining these maximal subgroups is to obtain a  description of the underlying Graham--Houghton graph. This has been done with success, and maximal subgroups have been computed, for certain fundamental examples such as the full transformation monoid \cite{Gray2012(2)} and the full linear monoid \cite{DolinkaGray}. In contrast, currently nothing is known about the maximal subgroups of free idempotent generated semigroups arising from the partition, Brauer or Jones monoids. We hope the descriptions we obtain here of the Graham--Houghton graphs of the $\gd$-classes of these semigroups will help with this research program. 

Graham--Houghton graphs also play an important role in the representation theory of finite semigroups. 
Fundamental results of Clifford \cite{Clifford1942,Clifford1960}, Munn \cite{Munn1955,Munn1957} and Ponizovski{\u\i} \cite{Ponizovskii1956}
show that the irreducible representations of any finite semigroup can be parametrised in terms of the 
irreducible representations of its maximal subgroups. 
A full account of this theory may be found in \cite[Chapter~5]{CliffordAndPreston}. 
A short modern proof of the Clifford-Munn-Ponizovski{\u\i} theorem is given in \cite[Theorem~7]{Ganyushkin2009}. 
This result gives a bijection between the irreducible representations of $S$ and irreducible representations of the maximal subgroups $G_{e_J}$ of $S$, where 
$\{ e_J : J \in \mathcal{U}(S) \}$ is an idempotent transversal of the set $\mathcal{U}(S)$ of regular $\mathcal{J}$-classes of $S$. 
The work of Munn and Ponizovski{\u\i} also gives necessary and sufficient conditions for the semigroup algebra $KS$ over a field $K$ to be semisimple. 
These conditions use Rees's theorem and the notion of a Rees matrix semigroup. 
Associated to a regular $\gj$-class $J$ of a finite semigroup $S$ is a (so-called) \emph{Rees (0-)matrix semigroup} whose structure is governed by a group $G_J$ and a ``structure matrix'' $P$ with entries from $G_J\cup\{0\}$ (see Section \ref{sec_small} for more precise definitions). 
The locations of the non-zero entries of $P$ are determined by the edges of the Graham--Houghton graph.  
The semigroup algebra $KS$ is semisimple if and only if $S$ is a regular semigroup and for each $\gj$-class $J$, each $KG_J$ is semisimple and the structure matrix corresponding to $J$ is square and invertible. (Note that $\gj = \gd$ here since $S$ is finite.)  
%
%
%
%
%
%
%

These general results of course apply to the semigroup algebras of the monoids studied in this paper. 
In fact, as mentioned above, the algebras associated with the semigroups studied here actually all belong to a special class called 
cellular algebras. It was shown by Wilcox \cite{Wilcox2007} that the partition, Brauer, and Temperley--Lieb algebras can 
be realised as twisted semigroup algebras of, respectively, the partition, Brauer and Jones monoids. There are twisted 
semigroup algebra analogues of the general results mentioned above: e.g., an analogous result giving necessary and sufficient conditions for semisimplicity; see \cite[Theorem~15]{Wilcox2007}. In the particular case of the diagram algebras we consider here, the results of Wilcox show how a cellular structure for
the twisted semigroup algebra can be built from cellular structures of the group algebras of the maximal subgroups of the semigroup. 
Both the size and the structure of the Graham--Houghton graphs then encodes important information about these cellular structures, and thus about the 
corresponding diagram algebras and their representation theory. 
For example, the sizes of the Graham--Houghton graphs relate to the dimensions of the cell modules of the algebras. 
Indeed, in each case the Graham--Houghton graph is a balanced bipartite graph with each part of the bipartition having size equal to the number of $\gr$-classes in the corresponding $\gj$-class. These numbers can be used to relate the dimensions of cell modules for the twisted semigroup algebra to the dimensions of the cell modules corresponding to the group algebras of the maximal subgroups (see Equation~\ref{eq_connection}).  
%
%
%
%
In this way, our results (specifically, Theorems~\ref{thm_main1},~\ref{thm_main3} and~\ref{thm_main4}, which count the number of $\gr$- and $\gl$-classes in the $\gj$-classes of the partition, Brauer and Jones monoids) allow for alternative derivations of (and in some cases, alternative formulae for) the dimensions of the cell modules of the partition, Brauer and Temperley-Lieb algebras; see \cite{Halverson2005,Pan95,Wenzl88,GrahamLehrer1996, Martin1990_2,Martin1990, GL1998,BR1999,EG1999,Martin1987,Grood1999,KX1998,KX2001} for earlier derivations.  
Full details of this will be given in Section~\ref{sec_irreps}.
%
%
%
%
%
%
%
%
The structure of the Graham--Houghton graph is also relevant when considering products of basis elements of the cellular 
basis of these twisted semigroup algebras. Indeed, the elements $C_{\mfs, \mft}^{\lambda}$ of the cellular bases of these diagram algebras are all sums over elements from certain $\gh$-classes in a corresponding diagram semigroup. A key step in understanding the representation theory of these cellular algebras is to investigate the bilinear form
$
\phi_\lambda: W(\lambda) \times W(\lambda) \rightarrow R$, 
$
\phi_\lambda(C_\mfs, C_\mft) = \phi(\mfs,\mft), 
$
which is defined by considering products of the form 
$C_{\mfs_1, \mft_1}^{\lambda}C_{\mfs_2, \mft_2}^{\lambda}$ (see Section~\ref{sec_irreps} for details).  
In order to describe this bilinear form 
%
%
it is necessary to know when such a product 
``moves down'' in the algebra, and in particular whether this sum moves down in the $\gj$-order of the semigroup. 
This is determined precisely by the location of the idempotents in the $\gj$-class of the elements arising in the sums defining $C_{\mfs_1, \mft_1}^{\lambda}$ and $C_{\mfs_2, \mft_2}^{\lambda}$. The Graham--Houghton graphs of these $\gj$-classes record exactly this information.  

In this paper we shall primarily be concerned with using Graham--Houghton graphs as a tool to study generating sets, idempotents, and subsemigroups generated by certain sets of idempotents, in the partition monoid and some of its key submonoids. Idempotent generated semigroups have been investigated using this approach in, for example \cite{GrayHowieIssuePaper, Howie1978, Howie1990, Levi2002, Levi2003}. There are numerous results in the literature concerning the problem of finding small generating sets (and generating sets of idempotents) for certain naturally arising semigroups, most often semigroups of transformations, matrix semigroups, and more generally semigroups of endomorphisms of various combinatorial or algebraic structures. One of the earliest results of this kind may be found in the work of Howie \cite{Howie1978}, where minimal idempotent generating sets of $\mathrm{Sing}_n$, the singular part of the full transformation semigroup, were classified (by associating a tournament in a natural way with certain sets of idempotents); in subsequent work with McFadden \cite{Howie1990}, the ranks and idempotent ranks of arbitrary ideals of the full transformation monoid were calculated. Since then, many more results of this flavour have appeared in the literature \cite{DEM2015,DE2014,DE2014_2,DE2015,Dawlings1981,Dawlings1982,Fountain1992,Fountain1993,Garba1990,Gomes1987,Gomes1992,Gray2007,Gray2008,Levi2002,Levi2003}.
Motivated in part by this work, for each of the proper ideals of the subsemigroups of the partition monoid we consider, we shall prove that the ideal is idempotent generated, and then investigate its generating sets, and idempotent generating sets, with a particular emphasis on describing small generating sets (and idempotent generating sets). 
We shall establish formulae for the smallest number of elements needed to generate the ideal (called the \emph{rank}) and also the smallest number of idempotents required to generate the ideal (the \emph{idempotent rank}). In all cases we will show that these two numbers coincide. This fact, together with some general results given in Section~\ref{sect_0-simple}, can then be used to completely describe all the minimal generating sets for these ideals. We then go on to use  Graham--Houghton graphs (and certain quotients of them) to study the  idempotent generating sets in more detail. Specifically we investigate the problem of whether we can count all the idempotent generating sets of minimal cardinality. We shall see that this problem is equivalent to counting the number of perfect matchings in the corresponding Graham--Houghton graph. Counting perfect matchings in bipartite graphs is a well-studied problem in combinatorics, and relates to the problem of computing permanents of $\{0,1\}$-matrices and the theory of Pfaffian orientations; see \cite{Robertson1999}.  
We note that throughout the paper by a \emph{minimal (idempotent) generating set} we will always mean a generating set (idempotent generating set) of minimal possible cardinality, as opposed to simply a generating set that is minimal with respect to set-theoretic inclusion (these two notions of minimality do not always coincide; see Example \ref{Example3}). 


Including this introduction, the article comprises ten sections and is structured as follows. 
In Section~2, we present some results of Howie about idempotent generators in the full transformation monoid $\T_n$. We also take the opportunity to correct a mistake in the formula given by Howie \cite{Howie1978} for the number of distinct minimal idempotent generating sets for the singular part of $\T_n$. In Section~3, we present some general theory showing how (idempotent) generating sets for finite semigroups can be related to (idempotent) generating sets of their principal factors. In Section~4, we give some background on the theory of Graham--Houghton graphs, and how they can be used to investigate (idempotent) generating sets. In Section~5, we develop the theory introduced in earlier sections, tailoring it to the study of regular $*$-semigroups. This is a class of regular semigroups that includes the partition monoid and all of the submonoids of the partition monoid considered in this article. In Section~6, we relate the ideas of Sections~4 and 5, showing how certain graphs defined in Section~5 are isomorphic to natural quotient graphs of the Graham--Houghton graphs discussed in Section~4. Specifically, we present a result that gives necessary and sufficient conditions for a set of idempotents to generate a given regular $*$-semigroup, given in terms of an associated two-coloured directed graph. In Section~7, we turn our attention to the partition monoid $\P_n$. We prove that the proper two-sided ideals of $\P_n$ are idempotent generated, and we give a formula for the rank and idempotent rank, showing that these are equal. We completely characterize the minimal generating sets, and minimal idempotent generating sets, and for the singular part of $\P_n$ we also enumerate the minimal idempotent generating sets. We also apply the results of Section~6 to give necessary and sufficient conditions for a set of idempotents to generate the singular part of~$\P_n$. The Brauer monoid is the subject of Section~8, where, as for the partition monoid, we consider its proper two sided ideals, showing they are idempotent generated, computing the idempotent rank, showing that it is equal to the rank, and characterizing the minimal generating sets. We also establish a bijection between the minimal idempotent generating sets of its singular part with certain factorizations of particular Johnson graphs. 
In Section~9, we consider the Jones monoid, where we establish analogous results for (idempotent) generating sets of its two-sided ideals, and also show that the number of distinct minimal idempotent generating sets is given by the $n$th Fibonacci number.   
Finally, in Section~10 we explain the connection between ranks (and idempotent ranks) of ideals of partition, Brauer and Jones monoids, and dimensions of cell modules (and irreducible representations) of the corresponding partition, Brauer, and Temperley--Lieb algebras.

\section{
Idempotent generators in the full transformation monoid
}
\label{sec_Howie}

In this section, we summarize some results of Howie and his collaborators \cite{Gomes1987,Howie1966, Howie1978, Howie1990} on idempotent generators in the full transformation monoid. We do this in part because it will give a flavour of the kind of results we aim to obtain later on for ideals of the partition monoid and related monoids. 
We have also included this material so that we can correct a mistake in a formula given by Howie in \cite{Howie1978}. 
Denote by $\T_n$ and $\S_n$ the full transformation semigroup and symmetric group on the set $\bn=\{1,\ldots,n\}$, respectively.
For $\al\in\T_n$ and $i\in\bn$, we write $i\al$ for the image of $i$ under $\al$; in this way, transformations compose from left to right.

As mentioned in the introduction, the study of idempotent generated semigroups dates back to the paper \cite{Howie1966}, where Howie shows that the semigroup $\mathrm{Sing}_n=\T_n \setminus \S_n$ is idempotent generated. In particular, he shows that  $\mathrm{Sing}_n$ is generated by the set of idempotents with image size $n-1$. In a later paper \cite{Howie1978}, he went on to study generating sets of idempotents in $\mathrm{Sing}_n$ in more detail, giving a combinatorial characterization of the minimal idempotent generating sets of the semigroup, and counting the number of such idempotent generating sets. In a subsequent work with McFadden \cite{Howie1990}, the ranks and idempotent ranks of the proper ideals of $\T_n$ were obtained.  See \cite{GomesRuskuc2014} for a historical overview of Howie's work, including this particular research program.

In more detail, let $S = \mathrm{Sing}_n = \T_n \setminus \S_n$. Set
\[
F = \{ \alpha \in E(S) : |\im \alpha| = n-1 \}, 
\]
where $E(S)$ denotes the set of idempotents in the semigroup $S$.  In \cite{Howie1966}, it is shown that $\mathrm{Sing}_n=\langle F \rangle $. Each $\beta \in F$ has the property that $i \beta = i$ for all $i \in \im \beta$ (since $\beta^2 = \beta$) and since $|\im \beta | = n-1$ there is exactly one $i \in [n]$ such that $i \beta = j$ where $j \neq i$. In this case, we denote $\beta$ by 
$\binom ij$, meaning $\beta$ maps $i$ to $j$ and fixes every other point. 
Defining relations were given for $\text{Sing}_n$ with respect to the generating set $F$ in \cite{East2013_2}; see also \cite{East2013_3,East2013_5,East2011_2,Maltcev2007} for presentations of other singular semigroups.

Let $X \subseteq F$.  Define a digraph $\Gamma(X)$ with vertex set $[n]$ and an arc $j \rightarrow i$ if and only if $\binom ij \in X$.  With this notation, in \cite{Howie1978} the following result is obtained (recall that a digraph is a \emph{tournament} if for each pair of distinct vertices $x,y$, the graph contains exactly one of the edges $x\to y$ or $y\to x$).  

\begin{theorem}\label{thm_How}
Let $X$ be a set of idempotents from $F$ in $\T_n$, where $n\geq3$. Then $X$ is a minimal (idempotent) generating set for $\mathrm{Sing}_n$ if and only if $\Gamma(X)$ is a strongly connected tournament. 
\end{theorem}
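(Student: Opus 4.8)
The plan is to prove the two implications separately, splitting each into a statement about the size of $X$ (forcing a tournament) and a statement about connectivity. Throughout I write $\binom{i}{j}$ for the idempotent sending $i\mapsto j$ and fixing all other points, so that $\binom{i}{j}$ has image $[n]\setminus\{i\}$ and collapses exactly the pair $\{i,j\}$, and so that $\binom{i}{j}\in X$ corresponds to the arc $j\to i$ of $\Gamma(X)$. The whole argument rests on the observation that, for any product $\gamma=x_1\cdots x_m$ with each $x_t\in X$, one has $\ker\gamma\supseteq\ker x_1$; since each $x_t$ lies in $F$ and so has a single nontrivial kernel class of size $2$, if $\gamma$ itself has rank $n-1$ (so a single $2$-class $\{i,j\}$) then the $2$-class of $x_1$ must already be $\{i,j\}$, forcing $x_1\in\{\binom{i}{j},\binom{j}{i}\}$.

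First I would record the two necessary conditions. Applying the observation above to $\gamma=\binom{i}{j}$ for each pair $\{i,j\}$ (which lies in $\mathrm{Sing}_n=\langle X\rangle$) shows that at least one of $\binom{i}{j},\binom{j}{i}$ lies in $X$; hence the underlying graph of $\Gamma(X)$ is complete and $|X|\geq\binom{n}{2}$. For strong connectivity, suppose $\Gamma(X)$ is not strongly connected and let $C\subsetneq[n]$ be a nonempty source strongly connected component of its condensation, so that no arc enters $C$ from outside. Every generator $\binom{a}{b}$ (arc $b\to a$) moves only the point $a$; if $a\in C$ then the arc $b\to a$ enters $C$, so $b\in C$. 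Thus no generator maps a point of $C$ outside $C$, whence every element of $\langle X\rangle$ maps $C$ into $C$. Choosing $p\in C$ and $j\notin C$, the idempotent $\binom{p}{j}$ does not preserve $C$ and so cannot lie in $\langle X\rangle$, contradicting $\langle X\rangle=\mathrm{Sing}_n$. Therefore any generating set $X\subseteq F$ gives a strongly connected $\Gamma(X)$ with complete underlying graph; if in addition $X$ is minimal, then $|X|=\binom{n}{2}$ (using sufficiency below), so each pair carries exactly one arc and $\Gamma(X)$ is a strongly connected tournament.

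For the converse I would show that a strongly connected tournament $\Gamma(X)$ forces $\langle X\rangle\supseteq F$, whence $\langle X\rangle\supseteq\langle F\rangle=\mathrm{Sing}_n$ by Howie's result \cite{Howie1966}. Two ingredients combine here. The first is a purely local (finite, $n=3$) computation: if $p,q,r$ carry a directed $3$-cycle in $\Gamma(X)$, then the three corresponding idempotents fix all points outside $\{p,q,r\}$ and act on $\{p,q,r\}$ as a strongly connected tournament, and one checks directly that they generate the whole of $\mathrm{Sing}_3$ on those three points---in particular all six idempotents $\binom{x}{y}$ with $x,y\in\{p,q,r\}$. (Note this genuinely requires nonobvious longer products: the three ``reversed'' idempotents are recovered only at length six, e.g.\ as $(abc)^2$ for suitable generators $a,b,c$.) The second ingredient is the classical fact that in a strongly connected tournament every arc lies on a directed $3$-cycle (a consequence of Moon's theorem \cite{Moon1966} on vertex-pancyclicity, or provable directly). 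Given any pair $\{i,j\}$, its arc lies on some $3$-cycle through a third vertex $k$, and the local statement then yields $\binom{i}{j},\binom{j}{i}\in\langle X\rangle$; ranging over all pairs gives $F\subseteq\langle X\rangle$. Combined with $|X|=\binom{n}{2}$ equal to the idempotent rank, a strongly connected tournament is exactly a minimal idempotent generating set.

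The main obstacle is the sufficiency direction, and within it the local $n=3$ computation rather than the graph theory: the naive attempt to build $\binom{i}{j}$ by following a directed path fails (arc transitivity does \emph{not} hold, since a path of two idempotents typically drops rank or merges points irreversibly), so one must exploit the full cyclic structure and the longer products it provides. Once the $3$-cycle lemma is in place, the reduction via ``every arc lies on a $3$-cycle'' together with $\langle F\rangle=\mathrm{Sing}_n$ makes the rest routine, and the counting ($|X|\geq\binom{n}{2}$ with equality forcing a tournament) follows cleanly from the first-factor kernel argument.
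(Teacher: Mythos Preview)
The paper does not prove this theorem; it is quoted from Howie \cite{Howie1978} without argument, so there is no in-paper proof to compare against. That said, your necessity direction is fine: the first-factor kernel argument correctly forces the underlying graph to be complete, and the source-component argument correctly forces strong connectivity.

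The sufficiency direction contains a genuine gap. Your reduction rests on the assertion that in a strongly connected tournament \emph{every arc} lies on a directed $3$-cycle, and you attribute this to Moon's theorem. Moon's theorem gives vertex-pancyclicity, not arc-pancyclicity, and the arc statement is false. A concrete counterexample on $\{1,2,3,4\}$: take the $3$-cycle $1\to2\to3\to1$ together with $1\to4$, $2\to4$, $4\to3$. This tournament is strongly connected (e.g.\ $4\to3\to1\to2$), but the arc $2\to4$ lies on no $3$-cycle, since the only out-neighbour of $4$ is $3$ and $3\not\to2$. So your local $3$-cycle lemma, though correctly verified, cannot be applied to every pair $\{i,j\}$ as you claim.

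The repair is not difficult and stays close to your idea. Every arc $u\to v$ in a strongly connected tournament lies on \emph{some} directed cycle (concatenate $u\to v$ with a $v$-to-$u$ path), and your length-$3$ computation extends: for a $k$-cycle $p_1\to p_2\to\cdots\to p_k\to p_1$ with $a_i=\binom{p_{i+1}}{p_i}$, the product $a_1a_2\cdots a_k$ identifies $p_1$ with $p_2$ and cyclically permutes $p_2,\ldots,p_k$, so $(a_1\cdots a_k)^{k-1}=\binom{p_1}{p_2}$; cyclic rotation then yields all the reversed idempotents along the cycle. With this $k$-cycle lemma in place of the $3$-cycle lemma, your argument goes through.
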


In particular, counting the number of arcs in a strongly connected tournament, one sees that 
\[
\rank(\mathrm{Sing}_n) = \idrank(\mathrm{Sing}_n) = \binom n2 = \frac{n(n-1)}{2}. 
\]
It follows from the correspondence given by Theorem~\ref{thm_How} that the number of minimal idempotent generating sets for $\mathrm{Sing}_n$ is precisely the number of strongly connected labeled tournaments on $n$ vertices.  The formula for this number stated in the paper \cite{Howie1978} is actually incorrect. The correct formula may be found in a paper of Wright \cite{Wright1970}; it may also be found as number sequence A054946 on \cite{Sloan:OEIS}. Let $w_n$ denote the number of strongly connected labeled tournaments on $n$ vertices. Then $w_n$ is given by the recurrence
\[
w_1 = 1 \quad\text{and}\quad w_n = F_n - \sum_{s=1}^{n-1} \binom ns w_s F_{n-s} \quad\text{for $n\geq2$,}
\]
where $F_k = 2^{\scriptscriptstyle{\binom k2}}= 2^{k(k-1)/2}$. The first few terms in this sequence are displayed in Table~\ref{tab_wk}.   So, via the correspondence established by Theorem~\ref{thm_How}, for $n \geq 3$, $w_n$ is the number of distinct minimal idempotent generating sets for $\mathrm{Sing}_n$.  Arbitrary minimal generating sets of $\mathrm{Sing}_n$ were characterized in \cite{Ayik2013}.  Arbitrary idempotent generating sets of $\mathrm{Sing}_n$ were classified and enumerated in \cite{DE2014}.

\begin{table}[ht]%
\begin{center}
\begin{tabular}{|c|cccccccccc|}
\hline
$n$ & $1$ & $2$ & $3$ & $4$ & $5$ & $6$ & $7$ & $8$ & $9$ & $10$ \\
\hline
$w_n$  & $1$ &  $0$ &  $2$  & $24$  & $544$  & $22320$  & $1677488$  & $236522496$  & $64026088576$ & $33832910196480$ \\
\hline
\end{tabular}
\end{center}
\caption{The sequence $w_n$.  For $n\geq3$, $w_n$ is equal to the number of minimal idempotent generating sets for $\text{Sing}_n=\T_n\setminus\S_n$.}
\label{tab_wk}
\end{table}

In subsequent work, Howie and McFadden \cite{Howie1990} investigated the ideals of $\T_n$.  These are the sets
\[
I_r(\T_n) = \{ \alpha \in \T_n : |\im\al| \leq r \}, \quad\text{$1\leq r\leq n$.}
\]
Typically, the ideal $I_r(\T_n)$ is denoted $K(n,r)$, but we use the current notation for consistency with later usage.

\begin{theorem}\label{thm_How2}
For $1\leq r\leq n-1$, the ideal $I_r(\T_n)$ is idempotent generated, and 
\[
\rank(I_r(\T_n)) = \idrank(I_r(\T_n)) = \begin{cases}
n &\text{if $r=1$}\\
S(n,r) &\text{if $2\leq r\leq n-1$,}
\end{cases}
\]
where $S(n,r)$ denotes the Stirling number of the second kind.
\end{theorem}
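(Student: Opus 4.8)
The plan is to treat $I_r(\T_n)$ through the structure of its unique maximal $\gj$-class and the associated completely $0$-simple principal factor, following the same philosophy used later in the paper for the diagram monoids. Recall that in $\T_n$ two maps are $\gr$-related exactly when they have the same kernel, $\gl$-related exactly when they have the same image, and $\gd$-related (equivalently $\gj$-related) exactly when they have the same rank. Writing $D_k=\{\al\in\T_n:|\im\al|=k\}$, the ideal decomposes as $I_r(\T_n)=D_1\cup\cdots\cup D_r$, with $D_r$ the unique $\gj$-maximal class. Its $\gr$-classes are indexed by the kernels of rank-$r$ maps, i.e.\ by the partitions of $\bn$ into $r$ blocks, of which there are $S(n,r)$; its $\gl$-classes are indexed by the $r$-element subsets of $\bn$, of which there are ${n\choose r}$; and the idempotents of $D_r$ are exactly those maps whose image is a transversal of their kernel. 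I would record these two counts first, since they are the numbers that the final formula must reconcile.

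The first substantive step is to show that $I_r(\T_n)$ is generated by the idempotents lying in $D_r$. For this I would adapt Howie's argument behind the generation of $\mathrm{Sing}_n$ (the case $r=n-1$ underlying Theorem~\ref{thm_How}): every element of $D_r$ is a product of idempotents of $D_r$, and every element of a lower class $D_k$ (with $k<r$) is obtained by composing rank-$r$ maps, whose products realise every smaller rank. Granting this, the whole ideal is idempotent generated, and---by the standard correspondence between an ideal and the principal factor of its top class---computing $\rank$ and $\idrank$ of $I_r(\T_n)$ reduces to computing them for the completely $0$-simple semigroup $D_r^\ast=D_r\cup\{0\}$, whose Graham--Houghton graph is thereby connected.

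For the principal factor I would prove the two bounds separately. For the lower bound, note that in a Rees matrix semigroup a nonzero product lies in the $\gr$-class of its first factor and the $\gl$-class of its last factor; hence any generating set must meet every $\gr$-class and every $\gl$-class, and so must contain at least $\max\!\big(S(n,r),{n\choose r}\big)$ elements. Since idempotents are in particular generators we have $\rank\le\idrank$, so it remains to exhibit an idempotent generating set of exactly this size. This construction is the heart of the matter and the step I expect to be the main obstacle: using the connectivity of the transversal (Graham--Houghton) graph of $D_r$ one selects a family of rank-$r$ idempotents that simultaneously covers all $\gr$- and all $\gl$-classes (a minimum edge cover, of size $\max(S(n,r),{n\choose r})$) and whose products range over the entire maximal subgroup $\S_r$; the cycles of the connected graph are exactly what supply the non-identity group elements. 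Combining the bounds gives $\rank(I_r(\T_n))=\idrank(I_r(\T_n))=\max\!\big(S(n,r),{n\choose r}\big)$.

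It then remains to evaluate the maximum. For $r=1$ the class $D_1$ consists of the $n$ constant maps, which form a right-zero semigroup of idempotents in which no element is a product of the others, so $\rank=\idrank=n={n\choose 1}$ while $S(n,1)=1$. For $2\le r\le n-1$ I would verify the elementary inequality $S(n,r)\ge{n\choose r}$ (for instance by an explicit injection of the $r$-subsets of $\bn$ into the partitions of $\bn$ into $r$ blocks; at the extreme $r=n-1$ this reads $S(n,n-1)={n\choose2}\ge n$, valid for $n\ge3$), whence the maximum equals $S(n,r)$. This yields the stated two-case formula.
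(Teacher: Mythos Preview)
The paper does not give its own proof of this theorem: it is quoted as a background result of Howie and McFadden \cite{Howie1990}, with only the statement and a table of values. So there is no in-paper argument to compare against; your outline is essentially a sketch of the original Howie--McFadden strategy, and the reductions you make (to the principal factor $D_r^\ast$ of the top $\gj$-class, then to $\rank(D_r^\ast)=\max\big(S(n,r),{n\choose r}\big)$ via the analogue of Lemma~\ref{lem_rank}) are correct.

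The step you yourself flag as ``the main obstacle'' is a genuine gap, not a formality. For an idempotent-generated completely $0$-simple semigroup one always has $\rank=\max(|I|,|\Lambda|)$, but $\idrank$ can be strictly larger (the paper notes this explicitly after Lemma~\ref{lem_rank}), and Theorem~\ref{thm_Gray}---the only tool in the paper that forces $\rank=\idrank$---applies only when $|I|=|\Lambda|$, which fails here since $S(n,r)\neq{n\choose r}$ in general. A minimum edge cover of the Graham--Houghton graph need not generate: one must choose the idempotents so that the cycles they create in the graph actually produce all of the maximal subgroup $\S_r$. Howie and McFadden carry this out by an explicit construction; your sketch locates the difficulty correctly but does not resolve it. A smaller point: the inequality $S(n,r)\ge{n\choose r}$ for $2\le r\le n-1$ is true, but the ``explicit injection'' you allude to is less immediate than it sounds (the obvious maps sending an $r$-subset to a partition by lumping the complement into one block are not injective); an induction using $S(n,r)=S(n-1,r-1)+rS(n-1,r)$ against Pascal's rule is cleaner.
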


These (idempotent) ranks are given in Table \ref{tab_Snk}.

\begin{table}[ht]%
\begin{tabular}{|c|cccccccccc|}
\hline
$n$ $\setminus$ $r$		&1	&2	&3	&4	&5	&6	&7 &8 &9 &10     \\
\hline
1		&1	&	&	&	&	&	&	&&&       \\
2		&1	&1	&	&	&	&	&	&&&       \\
3		&1	&3	&1	&	&	&	&	&&&       \\
4		&1	&7	&6	&1	&	&	&	&&&       \\
5		&1	&15	&25	&10	&1	&	&	&&&       \\
6		&1	&31		&90		&65		&15	&1	&	&&&       \\
7		&1	&63		&301		&350		&140		&21	&1 &&& \\
8 	&1	&127		&966		&1701		&1050		&266		&28	&1		& & \\
9 	&1	&255		&3025		&7770		&6951		&2646		&462		&36	&1	& \\
10 	&1	&511		&9330		&34105		&42525		&22827		&5880		&750		&45		&1 \\
\hline
\end{tabular}
\caption{
The Stirling numbers of the second kind, $S(n,r)$.  For $2\leq r\leq n-1$, $S(n,r)$ is equal to $\idrank(I_r(\T_n)) = \rank(I_r(\T_n))$.
}
\label{tab_Snk}
\end{table}

\section{Small generating sets: ideals and principal factors}
\label{sec_small}

We begin this section with some general observations about generating sets of semigroups that will explain why, in many situations, the problem of finding small generating sets for a given semigroup $S$ reduces to the problem of finding small generating sets for a certain principal factor of $S$.

Let $S$ be a semigroup, let $S^1$ denote $S$ with an identity element adjoined (if necessary), and let $u,v \in S$. Recall that Green's relations $\gr$, $\gl$, $\gj$, $\gh$ and $\gd$ are equivalence relations on $S$ that reflect the ideal structure of the semigroup $S$ and are
defined by 
\[
u \gr v  \ \Leftrightarrow  \ uS^1 = vS^1, 
\quad 
u \gl v \  \Leftrightarrow \  S^1u = S^1v,
\quad
u \gj v \  \Leftrightarrow \  S^1uS^1 = S^1vS^1, 
\]
\[
\gh=\gr\cap\gl,\quad \gd = \gr\vee\gl=\gr \circ \gl = \gl \circ \gr. 
\]
All of the semigroups we consider in this paper will be finite, and in finite semigroups the relations $\gj$ and $\gd$ coincide. 
The $\gj$ relation gives rise to a natural preorder on $S$ given by 
\[
u \leq_{\gj} v \ \Leftrightarrow \ S^1 u S^1 \subseteq S^1 v S^1.
\]
This preorder induces in a natural way a partial order on the set $S / \gj$ of $\gj$-classes of~$S$. By a maximal $\gj$-class of a semigroup $S$ we mean a $\gj$-class that is maximal in this partially ordered set of $\gj$-classes of $S$.  An element $s \in S$ is called (von Neumann) \emph{regular} if $s \in s S s$. If every element of $S$ is regular we say that $S$ is a \emph{regular semigroup}. All the semigroups considered in this paper will be regular.

Recall that the principal factors of a semigroup $S$ are the basic building blocks of the semigroup, and are obtained by taking a $\gj$-class $J$ and forming a semigroup $J^* = J \cup \{ 0 \}$, where $0$ is a symbol that does not belong to $J$, with multiplication given, for $s,t\in J\cup\{0\}$, by
\[
s * t = 
\begin{cases}
st & \mbox{if $s$, $t$ and $st$ belong to $J$} \\
0 & \mbox{otherwise}. 
\end{cases}
\] 
If $S$ is finite, then every principal factor $J^*$ is either a semigroup with zero multiplication or a completely $0$-simple semigroup (see the next paragraph).  If, in addition, $S$ is regular then every principal factor $J^*$ is completely $0$-simple.

We recall that a subset $I$ of a semigroup is an \emph{ideal} if $SI\sub S$ and $IS\sub S$.
A semigroup $S$ with zero is called \emph{$0$-simple} if $\{0\}$ and $S$ are its only ideals (and $S^2 \neq \{ 0 \}$). It is said to be \emph{completely $0$-simple} if it is $0$-simple and has $0$-minimal left and right ideals. In particular every finite $0$-simple semigroup is completely $0$-simple. Howie's monograph \cite{howie} may be consulted for more information on completely $0$-simple semigroups and other background on semigroup theory.  An important result for our purposes is the Rees Theorem \cite[Theorem 3.2.3]{howie} which says that any finite completely $0$-simple semigroup is isomorphic to a (so-called) \emph{regular Rees (0-)matrix semigroup} $\M[G;I,\Lambda;P]$ defined as follows.  First, $G$ is a finite group, $I$ and $\Lambda$ finite sets, and $P=(p_{\lam,i})$ a $\Lambda\times I$ matrix with entries in $G\cup\{0\}$, where $0$ is assumed not to belong to $G$, and such that each row and each column of $P$ contains at least one non-zero entry.  The semigroup $\M[G;I,\Lambda;P]$ is then defined on the set $(I\times G\times \Lambda)\cup\{0\}$, and with product defined by setting
\[
(i,g,\lambda)(j,h,\mu)=(i,gp_{\lambda, j}h,\mu) \qquad\text{if $p_{\lambda, j}\not=0$}
\]
and with all other products defined to be $0$.

Recall that the \emph{Rees quotient} of $S$ by an ideal $I$, denoted $S/I$, is defined on underlying set $(S\setminus I)\cup\{0\}$, where $0$ is a symbol not belonging to $S$, and with product $\circ$ defined, for $s,t\in(S\setminus I)\cup\{0\}$, by
\[
s\circ t =\begin{cases}
st &\text{if $s$, $t$ and $st$ belong to $S\setminus I$}\\
0 &\text{otherwise.}
\end{cases}
\]
A principal factor $J^*$ (as defined above) is of course (isomorphic to) a special case of this construction: we take $S=\la J\ra$ and $I=\la J\ra\setminus J$.

The following observation lies at the heart of what is to follow: it is similar to \cite[Lemma~3.2]{Quick2012} (which is proved for monoids but may easily be adapted for semigroups); its proof is routine, and is omitted.

\begin{lemma}
\label{lem_easy}
Let $S$ be a finite semigroup and let $I$ be an ideal of $S$. Moreover, suppose that $S$ is generated by $U = S \setminus I$. Let $A$ be a generating set of minimal cardinality for $S$. Then $A \subseteq U$ and 
\[
\rank(S) = \rank(S/I).
\]
Furthermore, $S$ is idempotent generated if and only if $S/I$ is idempotent generated, in which case
\[
\idrank(S) = \idrank(S/I).
\]
\end{lemma}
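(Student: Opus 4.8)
The plan rests on one structural observation about ideals: since $I$ is an ideal, any product in $S$ having at least one factor in $I$ lies in $I$, so an element of $U = S \sm I$ can never be expressed as a product that uses a factor from $I$. I would first record the consequence that, for \emph{any} generating set $A$ of $S$, the subset $A \cap U$ already generates $S$: each $u \in U$ is a product of elements of $A$, and by the observation every factor of such a product must itself lie in $U$, so $u \in \langle A \cap U\rangle$; since $U$ generates $S$ by hypothesis, it follows that $\langle A \cap U\rangle = S$. Applying this when $A$ has minimal cardinality forces $A \cap U = A$, i.e.\ $A \sub U$, which is the first assertion.

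For the rank equality I would use the Rees quotient homomorphism $\pi \colon S \to S/I$, which collapses $I$ to the zero of $S/I$ and restricts to the identity on $U$. Pushing forward: starting from a minimal generating set $A \sub U$ of $S$, the set $\pi(A) = A$ generates all of $U$ inside $S/I$ because $\pi$ is a homomorphism, and it also produces the zero—here one uses that $I$ is nonempty (the case $I = \emptyset$ being trivial, and $I = S$ being excluded since $U$ generates $S$), so any $x \in I$ is a product of elements of $A$ in $S$ whose image under $\pi$ equals $0$. Hence $A$ generates $S/I$ and $\rank(S/I) \leq \rank(S)$. Pulling back: given a minimal generating set $B$ of $S/I$, I would pass to $B \cap U$; reading off a factorisation of an arbitrary $u \in U$ over $B$ in $S/I$, no factor can be the zero (otherwise the product would be $0 \neq u$), so the same factorisation is valid in $S$ and shows $B \cap U$ generates $U$, hence $S$. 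This yields $\rank(S) \leq |B \cap U| \leq \rank(S/I)$, and equality follows.

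The idempotent statement is obtained by running these same two arguments with the word ``idempotent'' inserted throughout, using that $\pi$ preserves idempotents and that the nonzero idempotents of $S/I$ are precisely the images of the idempotents of $S$ lying in $U$ (an element $f \in U$ satisfies $f \cdot f = f$ in $S/I$ if and only if $f^2 = f$ in $S$, since then $f^2 = f \notin I$). The forward implication shows that any idempotent generating set of $S$ yields, via intersection with $U$ and transport by $\pi$, an idempotent generating set of $S/I$, and symmetrically for the converse; this gives the ``if and only if'', and the minimal-cardinality versions then give $\idrank(S) = \idrank(S/I)$. The only genuinely delicate points—and the place where I would be most careful—are the bookkeeping around the zero of $S/I$: verifying that the zero does lie in the generated subsemigroup when pushing forward, and checking that discarding the zero from a generating set of $S/I$ (should it appear there) does not destroy the ability to generate $U$ when pulling back. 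Everything else is a direct application of the ideal observation of the first paragraph.
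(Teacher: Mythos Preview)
The paper does not actually supply a proof of this lemma: it is stated with a citation to \cite[Lemma~3.2]{Quick2012} and no argument is given in the text. So there is no ``paper's own proof'' to compare against here.

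Your argument is correct and is the standard one. The key structural fact---that no element of $U = S \setminus I$ can have a factorisation using an element of $I$---immediately gives $A \cap U$ as a generating set whenever $A$ is, and everything else flows from pushing forward and pulling back along the Rees quotient map. Your treatment of the partial products in the pull-back step (checking that a factorisation $b_1 \cdots b_k = u$ in $S/I$ with $u \in U$ forces each partial product to remain in $U$, so that the same equation holds in $S$) is exactly the point that needs care, and you handle it correctly. Your observation that the nonzero idempotents of $S/I$ coincide with the idempotents of $S$ lying in $U$ is also correct and is what makes the idempotent version go through unchanged.

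One small remark: the paper adopts the convention (see the Remark following Lemma~\ref{lem_maxJ}) that in a semigroup with zero, $\langle X \rangle$ is always taken to include the zero element. With that convention the bookkeeping you flag as ``delicate'' around the zero of $S/I$ becomes vacuous; your direct argument shows that the zero is genuinely generated anyway (since $I \neq \varnothing$ and any $x \in I$ is a product over $A \subseteq U$), so your proof does not rely on the convention.
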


A special case of the above situation is when $U$ is a maximal $\gj$-class of $S$.

\begin{lemma}
\label{lem_maxJ}
Let $S$ be a semigroup with a maximal $\gj$-class $J$ such that $\lb J \rb = S\not=J$. Then
\[
\rank(S) = \rank(J^*).
\]
Furthermore, $S$ is idempotent generated if and only if $J^*$ is idempotent generated, in which case
\[
\idrank(S) = \idrank(J^*).
\]
\end{lemma}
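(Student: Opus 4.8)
The plan is to deduce this as an immediate application of Lemma~\ref{lem_easy}. The key observation is that the hypotheses on $J$ allow us to take the ideal $I$ of that lemma to be the complement $I = S \sm J$, and then to identify the Rees quotient $S/I$ with the principal factor $J^*$. Throughout we use the standing convention that $S$ is finite, so that Lemma~\ref{lem_easy} is available.

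First I would verify that $I = S \sm J$ is indeed an ideal of $S$; this is the one place where the \emph{maximality} of the $\gj$-class $J$ (rather than $J$ being merely a single $\gj$-class) is essential. Take $x \in I$ and $s \in S$. Since $sx \in S^1 x S^1$ and $xs \in S^1 x S^1$, we have $sx \le_\gj x$ and $xs \le_\gj x$. If, say, $sx$ lay in $J$, then $J = J_{sx} \le_\gj J_x$, and maximality of $J$ (nothing lies strictly above it) would force $J_x = J$, contradicting $x \in I$. Hence $sx, xs \in I$, so $I$ is a two-sided ideal. Note also that the hypothesis $S \neq J$ guarantees $I \neq \emptyset$, so the Rees quotient $S/I$ genuinely has a zero.

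Next I would match everything up with the set-up of Lemma~\ref{lem_easy}. With $I = S\sm J$ we have $U = S \sm I = J$, so the standing hypothesis $\langle J \rangle = S$ is precisely the requirement that $S$ be generated by $U$. It then remains to identify $S/I$ with $J^*$: both have underlying set $J \cup \{0\}$, and for $s,t \in J$ the product $s\cdot t$ in $S/I$ equals $st$ when $st \notin I$ (that is, when $st \in J$) and equals $0$ otherwise. Since $s$ and $t$ already lie in $J$, this is exactly the rule defining $*$ on $J^*$, so $S/I = J^*$. With these identifications, Lemma~\ref{lem_easy} gives directly $\rank(S) = \rank(S/I) = \rank(J^*)$, together with the equivalence ``$S$ idempotent generated $\iff$ $J^* = S/I$ idempotent generated'' and, in that case, $\idrank(S) = \idrank(S/I) = \idrank(J^*)$.

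I expect the only step requiring genuine care to be the proof that $S \sm J$ is an ideal, which rests on the compatibility of the $\gj$-preorder with multiplication and on maximality of $J$; everything else is bookkeeping, namely recognizing the Rees quotient as the principal factor and quoting Lemma~\ref{lem_easy}.
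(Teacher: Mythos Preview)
Your proposal is correct and follows exactly the same approach as the paper: the paper's proof is a single sentence observing that $S\setminus J$ is an ideal and that $S/(S\setminus J)\cong J^*$, then invoking Lemma~\ref{lem_easy}. You have simply spelled out the details of that one-line argument.
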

\begin{proof}
This is a straightforward consequence of the preceding lemma since $S\sm J$ is an ideal of $S$, and the quotient $S/(S\sm J)$ is isomorphic to the principal factor $J^*$. 
\end{proof}

\begin{remark}
Throughout this article, when we consider semigroups with a zero element (e.g., completely $0$-simple semigroups) we will use $\langle X \rangle$ to denote all the elements that can be written as products of elements of $X$, together with the zero element if it is not already generated. In almost all cases where we apply results for completely $0$-simple semigroups to principal factors of semigroups, including the partition monoid and related semigroups, this convention will not make any difference since all the completely $0$-simple semigroups that arise will have the property that the zero element is generated by the other elements of the semigroup; the only exception is in the bottom ideal (which consists of a single $\gj$-class).  With this convention, we need not require that $S\not=J$ in the statement of Lemma~\ref{lem_maxJ}.
\end{remark}

Principal factors naturally come into play when analyzing idempotent generated semigroups, in part because of the following classical result of FitzGerald, which is distilled from the proof of \cite[Lemma 2]{FitzGerald1972}; see also \cite[Lemma 4.13.1]{SteinbergBook2009}.

\begin{theorem}\label{thm_FitzGerald}
Let $S$ be a semigroup and let $e_1, \ldots, e_m \in E(S)$. If $e_1  \cdots e_m $ is regular, then there is a sequence of (not necessarily distinct) idempotents 
$f_1, \ldots, f_m \in E(S)$ such that $f_i \,\gj e_1 \cdots e_m$ for all $i$ and
\[
f_1  \cdots f_m
=
e_1  \cdots e_m. 
\]
\end{theorem}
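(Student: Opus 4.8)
The plan is to argue by induction on $m$, the number of idempotent factors, writing $a=e_1\cdots e_m$ for the (regular) product throughout. The base case $m=1$ is immediate: then $a=e_1$ is itself an idempotent lying in its own $\gj$-class, so we may take $f_1=e_1$. For the inductive step I would fix a regular product $a=e_1\cdots e_m$ with $m\geq2$, assume the statement for all regular products of $m-1$ idempotents, and peel off one factor. The naive attempt---applying the induction hypothesis directly to the tail $e_1\cdots e_{m-1}$---fails, because a subproduct of idempotents need not be regular; circumventing this is precisely the obstacle the argument must overcome.

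To set up the step I would fix an inverse $a'$ of $a$ (so $aa'a=a$ and $a'aa'=a'$) and record the two identities $e_1a=a$ and $ae_m=a$, which follow at once from $e_1^2=e_1$, $e_m^2=e_m$ and $a=e_1\cdots e_m$. I then introduce
\[
f_m:=e_ma'a,\qquad f_1:=aa'e_1,\qquad \pi:=aa'e_1e_2\cdots e_{m-1}.
\]
A short computation shows $f_m$ and $f_1$ are idempotents; for instance $f_m^2=e_ma'(ae_m)a'a=e_ma'aa'a=e_ma'a=f_m$, and $f_1$ is handled symmetrically. Since $aa'e_1=f_1$, the element $\pi=f_1e_2\cdots e_{m-1}$ is a product of \emph{exactly} $m-1$ idempotents. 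The crucial claim is that $\pi$ lies in the $\gj$-class of $a$ and is regular: from $\pi=a\,(a'e_1\cdots e_{m-1})$ we get $\pi\in aS^1$, while a direct cancellation gives $\pi f_m=aa'(e_1\cdots e_m)a'a=aa'aa'a=a$, so $a\in\pi S^1$; hence $aS^1=\pi S^1$, i.e.\ $\pi\,\gr\,a$. As $aa'$ is an idempotent with $aa'\,\gr\,a\,\gr\,\pi$, and any element $\gr$-related to an idempotent is regular, the element $\pi$ is regular and $\gj$-related to $a$.

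With this in hand the induction closes. Since $\pi$ is a regular product of $m-1$ idempotents, the induction hypothesis yields idempotents $h_1,\ldots,h_{m-1}$, each in the $\gj$-class of $\pi$ (which is the $\gj$-class of $a$), with $\pi=h_1\cdots h_{m-1}$. As $f_m$ is an idempotent satisfying $f_m=e_ma'a\in S^1a$ and $a=\pi f_m\in S^1f_m$ (so $f_m\,\gj\,a$), and $\pi f_m=a$, we conclude $a=h_1\cdots h_{m-1}f_m$, a product of $m$ idempotents all lying in the $\gj$-class of $a$, as required. I expect the main obstacle to be exactly the replacement of the non-regular tail $e_1\cdots e_{m-1}$ by the corrected product $\pi=aa'e_1\cdots e_{m-1}$: left-multiplying by the idempotent $aa'\in J_a$ drags the subproduct into the $\gj$-class of $a$---making it regular, hence amenable to the induction hypothesis---while the absorption $aa'e_1=f_1$ keeps $\pi$ a product of only $m-1$ idempotents, so the factor count comes out exactly right. (This route is self-contained and uses no sandwich-set machinery, only a fixed inverse of $a$ and Green's relation $\gr$.)
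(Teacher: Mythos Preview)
Your proof is correct. The paper does not supply a proof of this statement: it is quoted as a classical result and attributed to FitzGerald's 1972 paper, so there is no ``paper's own proof'' to compare against.

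For the record, your argument is essentially FitzGerald's original one. The key device---premultiplying the subproduct $e_1\cdots e_{m-1}$ by the idempotent $aa'$ to force it into the $\gr$-class of $a$ (hence making it regular), while simultaneously absorbing $aa'$ into the first factor via $aa'e_1=f_1\in E(S)$ so that the idempotent count stays at $m-1$---is exactly the trick in the source. Your verification that $\pi f_m=a$, that $f_m\,\gl\,a$ (hence $f_m\,\gj\,a$), and that $\pi\,\gr\,a$ are all sound; the appeal to ``any element $\gr$-related to an idempotent is regular'' is standard. Nothing is missing.
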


The next result quickly follows.

\begin{corollary}\label{cor_FitzGerald}
Let $S$ be a finite regular idempotent generated semigroup. Then for every $\gj$-class $J$ of $S$, the principal factor $J^*$ is an idempotent generated completely $0$-simple semigroup. 
\end{corollary}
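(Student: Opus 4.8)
The plan is to prove the two assertions separately. The first---that each principal factor $J^*$ is completely $0$-simple---is immediate from the structure theory recalled just above the statement: since $S$ is finite and regular, every principal factor of $S$ is completely $0$-simple. So the substance of the corollary is the claim that $J^*$ is generated by its idempotents, and it is here that Theorem~\ref{thm_FitzGerald} does the work.

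To establish idempotent generation of $J^*$, I would take an arbitrary $s \in J$ and show that it lies in the subsemigroup of $J^*$ generated by $E(J^*) = E(J) \cup \{0\}$. Since $S$ is idempotent generated, we may write $s = e_1 \cdots e_m$ with each $e_i \in E(S)$ and $m \geq 1$. Because $S$ is regular, the product $e_1 \cdots e_m = s$ is regular, so Theorem~\ref{thm_FitzGerald} applies: there exist idempotents $f_1, \ldots, f_m \in E(S)$ with $f_i \, \gj \, s$ for every $i$ and $f_1 \cdots f_m = s$. In particular each $f_i$ lies in $J$, so each $f_i \in E(J)$.

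The one point that needs care---and which I expect to be the main obstacle---is that the equation $f_1 \cdots f_m = s$ holds in $S$, whereas to conclude $s \in \langle E(J^*) \rangle$ I need the same product to evaluate to $s$ when computed in the principal factor $J^*$; that is, the partial products must not ``fall through'' to $0$. To rule this out, consider the partial products $p_k = f_1 \cdots f_k$. Each step multiplies on the right, so $p_{k+1} = p_k f_{k+1}$ gives $p_{k+1} \leq_{\gj} p_k$, whence $s = p_m \leq_{\gj} p_k \leq_{\gj} p_1 = f_1$. But $f_1 \, \gj \, s$, so the inequalities $s \leq_{\gj} p_k \leq_{\gj} f_1 \, \gj \, s$ force $p_k \, \gj \, s$, i.e.\ $p_k \in J$, for every $k$. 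Thus at each stage $p_k$, $f_{k+1}$, and $p_k f_{k+1} = p_{k+1}$ all lie in $J$, so by the definition of the multiplication in $J^*$ the product $f_1 * \cdots * f_m$ computed in $J^*$ agrees with the product computed in $S$, and therefore equals $s$.

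Finally I would assemble the conclusion: every $s \in J$ is a product of idempotents of $J^*$, and the zero of $J^*$ is included by the convention adopted in the Remark above, so $J^* = \langle E(J^*) \rangle$. Combined with the fact that $J^*$ is completely $0$-simple, this proves the corollary. The crux is the $\gj$-order monotonicity argument of the previous paragraph, which is exactly what converts FitzGerald's factorization in $S$---which a priori only guarantees that the idempotent factors share the $\gj$-class of $s$---into a genuine factorization internal to the principal factor.
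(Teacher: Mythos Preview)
Your proof is correct. The paper itself gives no proof for this corollary, simply asserting that it ``quickly follows'' from FitzGerald's theorem; your argument supplies precisely the details one would write, and the partial-products step you flag as the main obstacle is indeed the only point that needs checking beyond a direct appeal to Theorem~\ref{thm_FitzGerald}.
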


\begin{example}[Full transformation monoid]
As before, let $\T_n$ denote the full transformation monoid on $n$ points. The proper two-sided ideals of $\T_n$ are the sets
\[
I_r = I_r(\T_n) = \{ \alpha \in \T_n : \rank(\alpha) \leq r \}, \quad 1 \leq r < n.
\]
(The \emph{rank}, $\rank(\al)$, of a transformation $\al\in\T_n$ is equal to $|\im(\al)|$.)  Let 
\[
J_r = J_r(\T_n) = \{ \alpha \in \T_n : \rank(\alpha) = r \}.
\]
Then it follows from \cite[Lemma~2.2]{Fountain1992} that $I_r$ is an idempotent generated semigroup with a unique maximal $\gj$-class $J_r$, and $I_r = \lb J_r \rb$. It follows that $J_r^*$ is idempotent generated, with 
\[
\rank(I_r) = \rank(J_r^*)
\quad \mbox{and} \quad
\idrank(I_r) = \idrank(J_r^*), 
\]
where, since $\T_n$ is regular, $J_r^*$ is a completely $0$-simple semigroup. 
\end{example}

\begin{example}[Full linear monoid]
Let $M_n(F)$ denote the full linear monoid of all $n \times n$ matrices over a finite field $F$. The proper two-sided ideals of $M_n(F)$ are the sets 
\[
I_r = I_r(M_n(F)) = \{ A \in M_n(F) : \rank(A) \leq r \}, \quad 0 \leq r < n.
\]
(The \emph{rank}, $\rank(A)$, of a matrix $A\in M_n(F)$ is equal to the dimension of its row or column space.)  Let 
\[
J_r = J_r(M_n(F)) = \{ A \in M_n(F) : \rank(A) = r \}.
\]
Then it also follows from \cite[Lemma~2.2]{Fountain1992} that $I_r$ is an idempotent generated semigroup with a unique maximal $\gj$-class $J_r$ and $I_r = \lb J_r \rb$. It follows that $J_r^*$ is idempotent generated, with 
\[
\rank(I_r) = \rank(J_r^*)
\quad \mbox{and} \quad
\idrank(I_r) = \idrank(J_r^*), 
\]
where, since $M_n(F)$ is regular, $J_r^*$ is a completely $0$-simple semigroup. 
\end{example}

\begin{remark}
The two examples above admit a common generalization to endomorphism monoids of finite dimensional \emph{independence algebras} \cite{Fountain1992}; see also \cite{Fountain1993} for a study of the infinite dimensional case.
\end{remark}

\begin{example}[Partition monoid]\label{ex_partition}
Let $\P_n$ denote the partition monoid. There is a natural notion of \emph{rank} for the elements of $\P_n$ (see below for the definition). 
The proper two-sided ideals of $\P_n$ are the sets
\[
I_r = I_r(\P_n) = \{ \alpha \in \P_n : \rank(\alpha) \leq r \}, \quad 0 \leq r < n.
\]
Let
\[
J_r = J_r(\P_n) = \{ \alpha \in \P_n : \rank(\alpha) = r \}.
\]
Then $I_r$ is an idempotent generated semigroup with unique maximal $\gj$-class $J_r$ and $I_r = \lb J_r \rb$ (see Lemma~\ref{lem_dropdownPn} below). Since $\P_n$ is regular, it follows that $J_r^*$ is an idempotent generated completely $0$-simple semigroup satisfying
\[
\rank(I_r) = \rank(J_r^*)
\quad \mbox{and} \quad
\idrank(I_r) = \idrank(J_r^*). 
\]
We will also see that similar statements are true of the Brauer and Jones monoids $\B_n$ and~$\J_n$.
\end{example}

Thus, in all the examples above---$I_r(\T_n)$, $I_r(M_n(F))$ and $I_r(\P_n)$---the problem of determining rank and idempotent rank reduces to answering the same question for some idempotent generated completely $0$-simple semigroup, so let us now turn our attention to this class. 

\section{Idempotent generated completely $0$-simple semigroups}\label{sect_0-simple}

The study of idempotent generated completely $0$-simple semigroups is a classical topic, with early results by Graham, Houghton, Howie, etc. Here we summarize some results from \cite{Gray2008}. 

Let $S = \M[G;I,\Lambda;P]$ be a finite completely $0$-simple semigroup represented as a Rees matrix semigroup over a group $G$ with $I$ indexing the $\gr$-classes of $S$, $\Lambda$ indexing the $\gl$-classes, and structure matrix $P$ with entries from $G \cup \{ 0 \}$. Our interest is in the case that $S$ is idempotent generated. In this situation the problem of determining the rank has a straightforward answer.  The next result has an easy inductive proof; see \cite[Theorem~2.4]{Gray2005} and \cite[Lemma~2.2, Lemma~2.3]{Gray2008}. 

\begin{lemma}
\label{lem_rank}
Let $S = \M[G;I,\Lambda;P]$ be a finite completely $0$-simple semigroup. If $S$ is idempotent generated, then
\[
\rank(S) = \max\{|I|,|\Lambda|\}. 
\]
\end{lemma}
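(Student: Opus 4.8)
The plan is to prove the two inequalities $\rank(S)\ge\max\{|I|,|\Lambda|\}$ and $\rank(S)\le\max\{|I|,|\Lambda|\}$ separately, after fixing Rees coordinates. Write a nonzero element of $S=\M[G;I,\Lambda;P]$ as a triple $(i,g,\lambda)$ with $i\in I$, $g\in G$, $\lambda\in\Lambda$, so that the $\gr$-classes are indexed by $I$, the $\gl$-classes by $\Lambda$, and multiplication is $(i,g,\lambda)(j,h,\mu)=(i,\,g\,p_{\lambda j}\,h,\,\mu)$ when $p_{\lambda j}\ne0$, and $0$ otherwise. I would first establish the \emph{lower bound}. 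The key observation is that in any nonzero product the first coordinate is inherited from the leftmost factor and the last coordinate from the rightmost factor; hence if a nonzero $s$ is written as a product $a_1\cdots a_k$ of generators (possible since the set generates $S$), then $s\,\gr\,a_1$ and $s\,\gl\,a_k$. Consequently any generating set must contain an element from each of the $|I|$ distinct $\gr$-classes and one from each of the $|\Lambda|$ distinct $\gl$-classes, so it has at least $\max\{|I|,|\Lambda|\}$ elements.

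For the \emph{upper bound} I would assume without loss of generality that $|I|\ge|\Lambda|$ (dualising otherwise) and exhibit a generating set of size $|I|$. Since $S$ is idempotent generated and completely $0$-simple---hence has a single nonzero $\gd$-class---its Graham--Houghton graph $\Delta$ (bipartite, with parts $I$ and $\Lambda$, and an edge $\{i,\lambda\}$ whenever $p_{\lambda i}\ne0$) is connected, by Graham's theory of idempotent-generated $0$-simple semigroups \cite{Graham68}. Fixing a spanning tree $T$ of $\Delta$ and normalising the Rees coordinates via an isomorphism $(i,g,\lambda)\mapsto(i,\,u_i\,g\,v_\lambda,\,\lambda)$ so that $p_{\lambda i}=1_G$ on every edge of $T$, Graham's theorem---applied in the form that $S$ is idempotent generated---guarantees that the non-tree entries $\{p_{\lambda i}:\{i,\lambda\}\notin T\}$ generate the whole group $G$. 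I would then pick a surjection $\phi\colon I\to\Lambda$ and group elements $g_i\in G$, and set $A=\{(i,g_i,\phi(i)):i\in I\}$, a transversal meeting every $\gr$- and every $\gl$-class and of the required size $|I|$.

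The substance of the argument, and the step I expect to be the main obstacle, is verifying that this size-$\max$ transversal actually generates all of $S$; note that this is genuinely a statement about \emph{rank} rather than idempotent rank, since the elements of $A$ need not be idempotents. Two things must be checked: that products of elements of $A$ reach every $\gh$-class $H_{i\lambda}$---which follows by chaining generators along a walk in the connected graph $\Delta$ from row $i$ to column $\lambda$---and, more delicately, that within each $H_{i\lambda}$ one recovers the \emph{entire} group $G$, which requires forming closed (cycle) products whose group parts sweep out a generating set of $G$. This is exactly where the hypothesis that $S$ is idempotent generated is indispensable, through the condition that the non-tree structure-matrix entries generate $G$. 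I would organise the verification as an induction on $|I|+|\Lambda|$ following \cite{Gray2005,Gray2008}: when $|I|>|\Lambda|$ one strips a carefully chosen row (an $\gr$-class whose removal keeps $\Delta$ connected and leaves the cycle group equal to $G$), applies the inductive hypothesis to the resulting smaller completely $0$-simple semigroup, and reattaches a single generator; the balanced base case $|I|=|\Lambda|$, where a transversal of group-loaded positions must be shown to fill every group, is the crux. Preserving the ``cycle group equals $G$'' condition when a row is removed is the delicate bookkeeping on which the induction turns.
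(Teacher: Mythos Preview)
Your proposal is correct and aligns with the paper's own approach: the paper does not spell out a proof but simply notes that the result ``has an easy inductive proof'' and cites \cite[Theorem~2.4]{Gray2005} and \cite[Lemma~2.2, Lemma~2.3]{Gray2008}, which is precisely the inductive scheme on $|I|+|\Lambda|$ (using Graham normalisation and connectivity of the Graham--Houghton graph) that you outline. One minor comment: for the upper bound you only need \emph{some} generating set of size $\max\{|I|,|\Lambda|\}$, so you are free to choose the $g_i$ and $\phi$ judiciously rather than proving that an arbitrary transversal works; the latter is the stronger statement of Theorem~\ref{thm_Gray}(iii) and is not required here.
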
 

In particular, in all of the examples discussed in Section \ref{sec_small}, the rank is given simply by counting the number of $\gr$-classes and $\gl$-classes in each $\gd = \gj$-class and choosing the larger of the two numbers.

Now, of course, in general for an idempotent generated semigroup $S$, we have
\begin{equation}
\label{eqn_star}
\idrank(S) \geq \rank(S).
\end{equation}
It is easy to construct examples (even when $S$ is a finite completely $0$-simple semigroup) where $\rank(S)$ and $\idrank(S)$ are not equal; see for example \cite[Example~2.7]{Gray2008}.  A natural question, therefore, is that of when they are equal.  The main result of \cite[Section~2]{Gray2008} gives necessary and sufficient for equality in equation \eqref{eqn_star} where $S$ is an arbitrary idempotent generated completely $0$-simple semigroup; see \cite[Theorem~2.16]{Gray2008}. We shall not need the full generality of those results here but shall concentrate on a particularly nice special situation: namely, the case when $|I| = |\Lambda|$. As we shall see below, this condition is satisfied for the principal factors of the partition, Brauer and Jones monoids.  

Before stating the results, we first need to introduce some definitions. The key idea is that the question of whether $\rank(S) = \idrank(S)$ comes down to a consideration of the way that the idempotents are distributed in $S$; that is, the position of the non-zero elements in the structure matrix $P$.

\begin{definition}[Graham--Houghton graph]\label{GHGraph_defn}
Let $S=\M[G;I,\Lambda;P]$ be a Rees matrix semigroup. 
Let $\Delta(S)$ denote the bipartite graph with vertex set $I \cup \Lambda$ and an edge connecting $i \in I$ to $\lambda \in \Lambda$ if and only if $p_{\lambda i} \neq 0$.  We call $\Delta(S)$ the \emph{Graham--Houghton graph} of $S$.
\end{definition}
Thus, the edges in $\Delta(S)$ record the positions of the idempotents in $S$. 
 Graham--Houghton graphs will be discussed in more detail in Section~\ref{sec_GHGraphs} below. 

Let $\Gamma$ be a graph and let $v \in V(\Gamma)$ be a vertex of $\Gamma$. Define the neighbourhood of $v$ to be the set
\[
\Gamma(v) = \{ w \in V(\Gamma) : w \sim v \}.
\]
Here we use $\sim$ for the adjacency relation in the graph. 
More generally, for a subset $W$ of $V(\Gamma)$, the neighbourhood is $W$ is the set
\[
\Gamma(W)  =  \bigcup_{v \in W} \Gamma(v) \\
                      =  \{ w \in V(\Gamma) : w \sim v \ (\exists v \in W) \}.
\]
In the case that $S=\M[G;I,\Lambda;P]$ is an idempotent generated completely $0$-simple semigroup with $|I| = |\Lambda|$, we have $\rank(S) = |I| = |\Lambda|$. If $\idrank(S) = \rank(S)$, then there is an idempotent generating set with size $|I|=|\Lambda|$ which, since every generating set must clearly intersect every non-zero $\gr$-class and every non-zero $\gl$-class of $S$, must correspond to a perfecting matching in the bipartite graph $\Delta(S)$. 

Thus, a necessary condition for equality in equation \eqref{eqn_star} in the case $|I|=|\Lambda|$ is that $\Delta(S)$ admits a perfect matching. It is a classical result from combinatorics, due to Hall \cite{HallBook}, that a balanced bipartite graph admits a perfect matching if and only if it satisfies the so-called \emph{Hall's condition}.  In what follows, when we refer to a bipartite graph $\Gamma=X\sqcup Y$, we mean that the vertex set of $\Gamma$ is the disjoint union of $X$ and $Y$, and all edges of $\Gamma$ join a vertex from $X$ and a vertex from $Y$.

\begin{theorem}[Hall \cite{HallBook}]
A bipartite graph $\Gamma = X \sqcup Y$ with $|X| = |Y|$ has a perfect matching if and only if the following condition is satisfied\emph{:}
\begin{equation}
|\Gamma(A)| \geq |A| \ \mbox{for all} \ A \subseteq X.  \tag{HC}
\end{equation}
\end{theorem}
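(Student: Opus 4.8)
The plan is to prove the two implications separately, the necessity being immediate and the sufficiency being handled by induction on $n = |X| = |Y|$. For necessity, suppose $\Gamma$ admits a perfect matching $M$. Then each $x \in A$ is matched by $M$ to a distinct vertex $m(x)$, and since $m(x) \sim x$ we have $m(x) \in \Gamma(A)$; the map $x \mapsto m(x)$ is therefore an injection $A \to \Gamma(A)$, giving $|\Gamma(A)| \geq |A|$. So (HC) is necessary.

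For sufficiency I would induct on $n$, the base case $n=1$ being clear since (HC) applied to the single vertex of $X$ forces an edge. For the inductive step I distinguish two cases according to whether (HC) holds with slack everywhere or is tight on some proper subset. In the first case, suppose every nonempty proper $A \subsetneq X$ satisfies $|\Gamma(A)| \geq |A| + 1$. Pick any edge $\{x_0, y_0\}$, which exists by (HC) applied to a singleton, and delete both endpoints to form the balanced bipartite graph $\Gamma'$ on $(X \setminus \{x_0\}) \sqcup (Y \setminus \{y_0\})$. Deleting $y_0$ shrinks each neighbourhood by at most one vertex, so for every $A \subseteq X \setminus \{x_0\}$ we have $|\Gamma'(A)| \geq |\Gamma(A)| - 1 \geq |A|$, and (HC) survives. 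By the inductive hypothesis $\Gamma'$ has a perfect matching, and adjoining $\{x_0, y_0\}$ gives one for $\Gamma$.

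In the remaining case some nonempty proper $A \subsetneq X$ satisfies $|\Gamma(A)| = |A|$ exactly. Here I would split $\Gamma$ into the subgraph induced on $A \sqcup \Gamma(A)$ and the subgraph induced on $(X \setminus A) \sqcup (Y \setminus \Gamma(A))$. The first is balanced, and it inherits (HC) directly from $\Gamma$, so by induction it has a perfect matching. For the second, the crucial point is that the neighbourhood of a set $B \subseteq X \setminus A$ computed inside this subgraph is exactly $\Gamma(B) \setminus \Gamma(A)$; applying (HC) to $A \cup B$ in the original graph and using $\Gamma(A \cup B) = \Gamma(A) \cup \Gamma(B)$ together with $|\Gamma(A)| = |A|$ yields $|A| + |B| \leq |\Gamma(A \cup B)| = |A| + |\Gamma(B) \setminus \Gamma(A)|$, whence $|\Gamma(B) \setminus \Gamma(A)| \geq |B|$. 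This is precisely (HC) for the second subgraph, which is also balanced, so by induction it too has a perfect matching. The union of the two matchings is a perfect matching of $\Gamma$, completing the induction.

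The main obstacle is the bookkeeping in this second case, specifically verifying that passing to $(X \setminus A) \sqcup (Y \setminus \Gamma(A))$ does not destroy Hall's condition. The disjointness underlying the identity $|\Gamma(A \cup B)| = |A| + |\Gamma(B) \setminus \Gamma(A)|$ rests on the tightness $|\Gamma(A)| = |A|$, and once that identity is secured the rest of the argument is routine. An alternative route avoiding this case analysis would be to prove the contrapositive via maximal matchings and alternating trees, extracting a violating set $A$ from the vertices reachable along alternating paths from an unmatched vertex; I expect the inductive argument above to be cleaner and entirely self-contained.
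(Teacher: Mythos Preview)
Your proof is correct and is the standard inductive argument for Hall's marriage theorem. However, the paper does not actually prove this statement: it is quoted as a classical result due to Hall, with a citation to the original source, and no proof is given or needed for the paper's purposes. So there is nothing to compare your approach against; you have simply supplied a (valid) proof where the paper chose to cite one.
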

Although necessary, this property is not sufficient to guarantee $\idrank(S) = \rank(S)$ here, and what we actually require is a slight strengthening of Hall's condition:
\begin{definition}
A bipartite graph $\Gamma = X \sqcup Y$ with $|X|=|Y|$ is said to satisfy the \emph{strong Hall condition} if it satisfies
\begin{equation}
|\Gamma(A)| > |A| \ \mbox{for all} \ \varnothing\not=A \subsetneq X.  \tag{SHC}
\end{equation}
\end{definition}
\begin{definition}[Sparse cover]
We call a subset $A$ of a Rees matrix semigroup 
$S = \M[G;I,\Lambda;P]$ a \emph{sparse cover} of $S$ if $|A| = \max\{ |I|, |\Lambda|\}$ and $A$ intersects every non-zero $\gr$-class and every non-zero $\gl$-class of $S$. 
\end{definition}
If $S$ is a finite idempotent generated completely $0$-simple semigroup, then every generating set of $S$ of minimum cardinality is a sparse cover, but in general the converse does not hold. 
We now state Theorem~2.10 from \cite{Gray2008}, which ties these ideas together. 

\begin{theorem} 
\label{thm_Gray}
Let $S=\M[G;I,\Lambda;P]$ be a finite idempotent generated completely $0$-simple semigroup with $|I|=|\Lambda|$. 
Then the following are equivalent\emph{:}
\begin{enumerate}
\item[(i)] $\rank(S)=\idrank(S)$\emph{;}
\item[(ii)] the bipartite graph $\Delta(S)$ satisfies the strong Hall condition\emph{;}
\item[(iii)] every sparse cover of $S$ generates $S$. 
\end{enumerate}
\end{theorem}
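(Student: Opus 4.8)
The plan is to translate everything into the combinatorics of $\Delta=\Delta(S)$, using that $\rank(S)=|I|=|\Lambda|=:n$ by Lemma~\ref{lem_rank}. Writing the nonzero elements of $S$ as triples $(i,g,\lambda)$, recall that any generating set must meet every nonzero $\gr$-class and every nonzero $\gl$-class; hence a generating set of size $n$ (in particular a sparse cover, or a size-$n$ set of idempotents) selects exactly one element from each $\gr$-class and each $\gl$-class. Such a set is encoded by a bijection $\sigma\colon I\to\Lambda$ together with group coordinates $g_i\in G$, namely $A=\{(i,g_i,\sigma(i)):i\in I\}$; when $A$ consists of idempotents, $\sigma$ is forced to be a perfect matching of $\Delta$ and $g_i=p_{\sigma(i)i}^{-1}$. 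Since $(i,g,\sigma(i))(j,h,\sigma(j))$ is nonzero precisely when $p_{\sigma(i)j}\neq0$, i.e. when $\sigma(i)\sim j$ in $\Delta$, I would introduce the digraph $D_\sigma$ on vertex set $I$ with an arc $i\to j$ iff $\sigma(i)\sim j$. Products of elements of $A$ then trace directed walks in $D_\sigma$, with a walk $i\leadsto j$ producing an element in the $\gh$-class indexed by $(i,\sigma(j))$; so $A$ meets every $\gh$-class iff $D_\sigma$ is strongly connected.

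The heart of the argument is the purely graph-theoretic equivalence: $\Delta$ satisfies (SHC) if and only if $D_\sigma$ is strongly connected for \emph{every} bijection $\sigma$. First I would record that, for a balanced bipartite graph, (SHC) is symmetric in the two sides — equivalently, $\Delta$ satisfies (SHC) iff it is connected and every edge lies in a perfect matching (an ``elementary'' graph). This follows from the standard tight-set argument: a proper tight set $A\subsetneq I$ with $|\Gamma(A)|=|A|$, together with a perfect matching, forces $A\cup\Gamma(A)$ to be a union of connected components, contradicting connectedness, and conversely. For the equivalence itself: if $D_\sigma$ has a proper nonempty out-closed set $C$, then no arc leaves $C$, so the neighbours of $\sigma(C)$ all lie in $C$, giving $|\Gamma(\sigma(C))|\le|C|=|\sigma(C)|$ with $\sigma(C)\subsetneq\Lambda$, violating (SHC) on the $\Lambda$-side; conversely, from any proper nonempty $D\subsetneq\Lambda$ with $|\Gamma(D)|\le|D|$ I can choose a bijection $\sigma$ carrying $\Gamma(D)$ into $D$, making $\sigma^{-1}(D)$ a proper out-closed set of $D_\sigma$.

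With this lemma I would close the implications. For (ii)$\Rightarrow$(iii): (SHC) makes every $D_\sigma$ strongly connected, so every sparse cover $A$ meets every $\gh$-class of $S$; combined with the group condition discussed below this yields $\lb A\rb=S$. For (iii)$\Rightarrow$(ii) I argue contrapositively: if (SHC) fails, the construction above produces a bijection $\sigma$, hence a sparse cover, whose $D_\sigma$ has a proper out-closed set, so that cover omits some $\gh$-class and cannot generate — regardless of its group coordinates. For the link with (i): (SHC) guarantees a perfect matching (Hall), whose idempotents have strongly connected $D_\sigma$ and therefore generate $S$, giving $\idrank(S)\le n=\rank(S)$, hence equality by~\eqref{eqn_star}. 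Conversely, if (SHC) fails then either $\Delta$ has no perfect matching — in which case an edge-cover (Gallai) count shows more than $n$ idempotents are needed merely to meet all $\gr$- and $\gl$-classes, so $\idrank(S)>n$ — or there is a proper tight set $A\subsetneq I$; in the latter case every perfect matching $\sigma$ satisfies $\sigma(A)=\Gamma(A)$ (since $\sigma(i)\in\Gamma(A)$ for $i\in A$ and $|\sigma(A)|=|A|=|\Gamma(A)|$), which forces $I\sm A$ to be out-closed in $D_\sigma$, so no $n$-element set of idempotents is strongly connected and again $\idrank(S)>n=\rank(S)$.

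The step I expect to be genuinely delicate is the group-theoretic half of the generation criterion, needed only for the positive directions (ii)$\Rightarrow$(iii) and (ii)$\Rightarrow$(i): strong connectivity of $D_\sigma$ only guarantees that $\lb A\rb$ meets every $\gh$-class, not that it fills each one, so I must show the structure group $G$ is recovered in each $\gh$-class. Here I would lean essentially on the hypothesis that $S$ is idempotent generated: by Graham's structure theorem one may normalise $P$ along a spanning tree of the connected graph $\Delta$ so that idempotent generation is equivalent to the products of structure constants around cycles generating $G$, and then show that a strongly connected $D_\sigma$, together with the forced idempotent coordinates $p_{\sigma(i)i}^{-1}$, realises these cycle-products as products of the chosen elements. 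Verifying that the combinatorial reachability really propagates the \emph{full} group, rather than a proper subgroup, is the crux that separates this theorem from a direct corollary of Hall's theorem, and is where I would concentrate the technical effort.
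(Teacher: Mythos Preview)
The paper does not actually prove this theorem: it is quoted verbatim as \cite[Theorem~2.10]{Gray2008} and no argument is supplied here. So there is nothing in the present paper to compare your proof against.

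That said, your outline is largely sound, and in one place you are making your own life much harder than necessary. The graph-theoretic core --- encoding a sparse cover by a bijection $\sigma\colon I\to\Lambda$, forming the digraph $D_\sigma$, and showing that (SHC) on $\Delta$ is equivalent to strong connectivity of $D_\sigma$ for \emph{every} $\sigma$ --- is correct (using that regularity of a Rees matrix semigroup forces $\Delta$ to have no isolated vertices, so (SHC) is symmetric in the two parts). Your contrapositive arguments for (iii)$\Rightarrow$(ii) and (i)$\Rightarrow$(ii) are clean and need no group theory at all.

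The place you flag as ``genuinely delicate'' is not: you do not need Graham's normalisation, and you do not need to show that $\lb A\rb$ \emph{fills} each $\gh$-class. The hypothesis that $S$ is idempotent generated does the work for free. If $D_\sigma$ is strongly connected then, as you already observe, $\lb A\rb$ meets every nonzero $\gh$-class. But a nonzero $\gh$-class in a completely $0$-simple semigroup is either a finite group or contains no idempotent; when it is a group and $\lb A\rb$ contains some element $x$ of it, then some power of $x$ is the identity of that group, i.e.\ the idempotent of that $\gh$-class. Hence $\lb A\rb\supseteq E(S)$, and since $S=\lb E(S)\rb$ by hypothesis, $\lb A\rb=S$. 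This single observation closes both (ii)$\Rightarrow$(iii) and (ii)$\Rightarrow$(i) without any structure-constant bookkeeping; your sketch via cycle-products of $P$ would work but is substantial overkill.
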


\section{Generating sets of regular $*$-semigroups}\label{sect_regular-*}
  
The partition, Brauer and Jones monoids (see Section~\ref{sect_Pn} for the definitions) belong to the variety, introduced by Nordahl and Scheiblich \cite{Nordahl1978}, of \emph{regular $*$-semigroups}.  More generally, every proper two-sided ideal of each of the above-mentioned diagram semigroups is a regular $*$-semigroup that happens to be idempotent generated (see below). So, for all these examples, the problems of determining the rank and idempotent rank, and of describing minimal (idempotent) generating sets, may all be considered. 

In this section we present some general results about small generating sets of regular $*$-semigroups, and then see how they can be applied to answer the above questions for the two-sided ideals of all of the examples mentioned above. 
In particular, in each case, by applying Theorem~\ref{thm_Gray}(iii), we will be able to completely describe the generating sets of minimal cardinality. 

\begin{definition}
A semigroup $S$ is called a \emph{regular $*$-semigroup} if there is a unary operation $*:S \rightarrow S:s\mapsto s^*$ satisfying the following conditions, for all $a,b \in S$:
\[
(a^*)^* = a \COMMA (ab)^* = b^* a^* \COMMA a a^* a = a.
\]
(We note that regular $*$-semigroups are occasionally called \emph{regular involution semigroups}~\cite{NP1985} or  \emph{$*$-regular semigroups} \cite{Polak2001}, even though the latter also refers to a completely different class of (involution) semigroups \cite{Drazin1979,CD2002}.  We will use the original \emph{regular $*$-semigroup} terminology \cite{Nordahl1978}.)
\end{definition}

(Note that $a^*aa^*=a^*$ follows from these axioms.)  
Clearly every regular $*$-semigroup is regular, and $a a^*$ and $a^* a$ are idempotents $\gr$- and $\gl$-related, respectively, to $a$. But not all regular semigroups are regular $*$-semigroups; for instance, regular $*$-semigroups have square $\gd$-classes (the number of $\gr$- and $\gl$-classes contained in any given $\gd$-class must be equal), and not every regular semigroup has this property (consider the full transformation monoid, for example). On the other hand, every inverse semigroup (and hence every group) is easily seen to be a regular $*$-semigroup, with $*$ taken as the inverse operation.  

Many naturally arising semigroups are regular $*$-semigroups, including many semigroups whose elements admit a diagrammatic  representation, where the $*$ operation corresponds to taking the vertical mirror image of the diagram representing the element; this is the case for the Brauer, Jones and partition monoids (see below).

There is a special type of idempotent in regular $*$-semigroups, the so-called \emph{projections}; these play an important role in understanding the structure and generating sets of the semigroup. 

\begin{definition}
An idempotent $p$ in a regular $*$-semigroup $S$ is called a \emph{projection} if $p^* = p$.  If $A\sub S$, we write $E(A)$ (respectively, $P(A)$) for the set of all idempotents (respectively, projections) of $S$ contained in $A$.
\end{definition} 

\subsection{Idempotent generated regular $*$-semigroups} Of course, not every regular $*$-semigroup is idempotent generated; consider groups or inverse semigroups, for instance. However, some important, naturally arising semigroups of diagrams turn out to be idempotent generated; we shall see several examples below. 

Our next aim is to show how generating sets of idempotents may be replaced by small generating sets of idempotents consisting only of projections. 
We begin with some basic observations about the behaviour of projections. These results are well known and we include proofs only for the sake of completeness. 
\begin{lemma}
\label{lem_projections}
Let $S$ be a regular $*$-semigroup. Then\emph{:} 
\begin{enumerate}
\item[(i)] $P(S)=\{aa^*:a\in S\}=\{a^*a:a\in S\}$\emph{;}
\item[(ii)] $E(S)=P(S)^2$.  In particular, the subsemigroup generated by the idempotents coincides with the subsemigroup generated by the projections, and $S$ is idempotent generated if and only if it is generated by its projections\emph{;}
\item[(iii)]
Every $\gr$-class of $S$ contains precisely one projection, as does every $\gl$-class.  
\end{enumerate}
\end{lemma}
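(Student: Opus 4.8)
The plan is to prove the three parts in order, using the defining axioms of a regular $*$-semigroup together with the standard characterisation of Green's relations.

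For part (i), I would first observe that every $aa^*$ is a projection: idempotency follows since $(aa^*)(aa^*) = a(a^*aa^*) = aa^*$ using the identity $a^*aa^*=a^*$ noted after the definition, and self-adjointness follows since $(aa^*)^* = (a^*)^*a^* = aa^*$. Thus $\{aa^*:a\in S\}\sub P(S)$, and symmetrically $\{a^*a:a\in S\}\sub P(S)$. For the reverse inclusion, if $p\in P(S)$ then $p = pp = pp^*$ (since $p^*=p$ and $p^2=p$), so $p\in\{aa^*:a\in S\}$ by taking $a=p$; likewise $p=p^*p$ shows $p\in\{a^*a:a\in S\}$. This gives the two set equalities at once.

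For part (ii), the inclusion $P(S)^2\sub E(S)$ requires checking that a product of two projections, while not obviously idempotent, still lies in the idempotent-generated part — but what is actually claimed is the set equality $E(S)=P(S)^2$, so I would instead show each idempotent is a product of \emph{two} projections. Given $e\in E(S)$, consider the two projections $ee^*$ and $e^*e$; I would verify directly that $e = (ee^*)(e^*e)$, or find the correct pairing, by expanding using $e^2=e$ and the $*$-axioms. (The natural candidate is to write $e$ via its two associated projections $ee^*$ and $e^*e$, using $e=ee^*\cdot e$ and regularity to collapse the middle.) The pieces about generated subsemigroups then follow formally: since projections are idempotents, $\lb P(S)\rb\sub\lb E(S)\rb$, and since $E(S)=P(S)^2\sub\lb P(S)\rb$ we get the reverse containment, whence $\lb E(S)\rb=\lb P(S)\rb$, and $S$ is idempotent generated iff projection generated.

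For part (iii), I would use that $aa^*$ is an idempotent $\gr$-related to $a$ (stated in the excerpt) and is a projection by part (i), giving existence of a projection in each $\gr$-class; dually $a^*a$ handles $\gl$-classes. For uniqueness, suppose $p,q$ are projections in the same $\gr$-class. Then $p\,\gr\,q$ means $pS^1=qS^1$, so $pq=q$ and $qp=p$; taking $*$ and using $p^*=p$, $q^*=q$ gives $qp=q$ and $pq=p$, and comparing yields $p=q$. The dual argument with $\gl$ and left multiplication handles the $\gl$-class case. I expect the main obstacle to be pinning down the exact two-projection factorisation of a general idempotent in part (ii); the $\gr$/$\gl$ and part (i) arguments are routine once the axioms are in hand, but verifying $E(S)=P(S)^2$ as an equality (rather than just $E(S)\sub\lb P(S)\rb$) needs the precise algebraic identity, which I would establish by a short computation with $e$, $e^*$, $ee^*$ and $e^*e$.
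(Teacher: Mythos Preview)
Your parts (i) and (iii) are essentially the paper's proof. In (iii) the paper is marginally slicker --- from $p=qp$ it computes $p=p^*=(qp)^*=p^*q^*=pq=q$ in one line --- but your version via applying $*$ to both equations $pq=q$ and $qp=p$ is correct and amounts to the same thing.

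There is a genuine gap in part (ii). The statement asserts the \emph{set equality} $E(S)=P(S)^2$, which needs both inclusions. You correctly target $E(S)\subseteq P(S)^2$ via the factorisation $e=(ee^*)(e^*e)$ (the paper gets this from $e=ee^*e=e(ee)^*e=(ee^*)(e^*e)$, using $e=e^2$ to expand $e^*=(ee)^*=e^*e^*$). But you dismiss the reverse inclusion $P(S)^2\subseteq E(S)$, seemingly conflating $E(S)$ with the idempotent-generated subsemigroup $\langle E(S)\rangle$. That $P(S)^2\subseteq\langle E(S)\rangle$ is trivial since projections are idempotents; what is \emph{not} automatic is that a product $pq$ of two projections is itself an idempotent. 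The paper checks this directly: $pq=pq(pq)^*pq=pqq^*p^*pq=pqqppq=(pq)^2$. Without this, you have only $E(S)\subseteq P(S)^2$, which is enough for the ``In particular'' consequences about generated subsemigroups, but not for the set equality claimed in the first sentence of (ii).
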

\begin{proof}
(i) \ If $p\in P(S)$, then $p=pp^*=p^*p$.  Conversely, it is easy to check that $aa^*,a^*a\in P(S)$ for any $a\in S$.

(ii) \ If $e\in E(S)$, then $e=ee^*e=e(ee)^*e=(ee^*)(e^*e)$.  Conversely, if $p,q\in P(S)$, then $pq=pq(pq)^*pq=pqq^*p^*pq=pqqppq=(pq)^2$.

(iii) \ We prove the result for $\gr$-classes; the result for $\gl$-classes is dual. Let $R$ be an $\gr$-class. For any $a \in R$, $aa^* \in R$ is a projection. So each $\gr$-class contains at least one projection. Now let $p, q \in R$ be projections. Then $p = qp$ and $q = pq$ (as $p\gr q$), so
$
p = p^* = (qp)^* = p^* q^* = pq = q,
$  
as required. 
\end{proof}

\begin{theorem}\label{thm_czsproj}
Let $S$ be a finite idempotent generated completely $0$-simple regular $*$-semigroup with set of non-zero $\gr$-classes and $\gl$-classes indexed by $I$ and $\Lambda$ respectively. 
Then\emph{:}
\begin{enumerate}
\item[(i)] $\rank(S) = \idrank(S) = |I| = |\Lambda|$\emph{;}
\item[(ii)] a subset $X$ of $S$ is a minimal generating set for $S$ if and only if $X$ intersects each non-zero $\gr$- and $\gl$-class of $S$ precisely once. 
\end{enumerate}
\end{theorem}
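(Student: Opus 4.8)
The plan is to prove part~(i) first, exploiting the abundance of projections to pin down the idempotent rank, and then deduce part~(ii) by combining part~(i) with the machinery of Theorem~\ref{thm_Gray}. For part~(i), I would begin by noting that since $S$ is a regular $*$-semigroup its $\gd$-classes are square, so $|I|=|\Lambda|$ (the non-zero part of a completely $0$-simple semigroup being a single $\gd$-class); as $S$ is idempotent generated and completely $0$-simple, Lemma~\ref{lem_rank} then gives $\rank(S)=\max\{|I|,|\Lambda|\}=|I|=|\Lambda|$. It remains to show $\idrank(S)=\rank(S)$, where the lower bound $\idrank(S)\geq\rank(S)$ is \eqref{eqn_star}. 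For the upper bound I would use the set $P(S)$ of all non-zero projections of $S$: by Lemma~\ref{lem_projections}(ii) the subsemigroup generated by the projections coincides with the one generated by the idempotents, so $P(S)$ generates $S$ (up to the freely adjoined zero, per the convention of the Remark), while by Lemma~\ref{lem_projections}(iii) each of the $|I|$ non-zero $\gr$-classes contains exactly one projection, whence $|P(S)|=|I|$. Thus $\idrank(S)\leq|P(S)|=|I|=\rank(S)$, forcing equality.

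For the forward implication of part~(ii), suppose $X$ is a minimal generating set, so that $|X|=\rank(S)=|I|=|\Lambda|$ by part~(i). A standard Rees-coordinate argument shows that any generating set of a completely $0$-simple semigroup must meet every non-zero $\gr$-class and every non-zero $\gl$-class: if a non-zero element factors as $a=x_1\cdots x_k$ over the generators, then $a\,\gr\,x_1$ and $a\,\gl\,x_k$. Since there are $|I|$ non-zero $\gr$-classes and $|X|=|I|$, the pigeonhole principle forces $X$ to meet each exactly once, and likewise for the $|\Lambda|$ non-zero $\gl$-classes.

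For the reverse implication, suppose $X$ meets each non-zero $\gr$- and $\gl$-class exactly once; then $|X|=|I|=|\Lambda|=\max\{|I|,|\Lambda|\}$ and $X$ covers every non-zero $\gr$- and $\gl$-class, so $X$ is a sparse cover. Part~(i) gives $\rank(S)=\idrank(S)$, which is condition~(i) of Theorem~\ref{thm_Gray}; hence condition~(iii) of that theorem holds, namely every sparse cover generates $S$. Therefore $X$ generates $S$, and as $|X|=\rank(S)$ it is of minimal cardinality, i.e.\ a minimal generating set.

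I expect the only real obstacle to be the upper bound in part~(i): the key realization is that the full set of projections is simultaneously a generating set (Lemma~\ref{lem_projections}(ii)) and of the exact size $|I|=|\Lambda|$ (Lemma~\ref{lem_projections}(iii)). Once this is in hand, no new combinatorics is required—part~(ii) reduces to bookkeeping together with the previously established equivalence in Theorem~\ref{thm_Gray}—so the substance of the argument lies in correctly invoking the structural lemmas about projections rather than in any delicate estimate.
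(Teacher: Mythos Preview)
Your proof is correct and follows essentially the same line as the paper's: both use Lemma~\ref{lem_projections} to see that the non-zero projections form a generating set of size $|I|=|\Lambda|$, sandwich $\rank(S)\leq\idrank(S)$ between copies of $|I|$, and then invoke Theorem~\ref{thm_Gray} for part~(ii). The only cosmetic difference is that the paper gets the lower bound $|I|\leq\rank(S)$ directly from the observation that any generating set must meet every non-zero $\gr$- and $\gl$-class (rather than citing Lemma~\ref{lem_rank}), and it leaves the two directions of~(ii) implicit where you spell them out.
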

\begin{proof}
By Lemma~\ref{lem_projections}, since $S$ is idempotent generated it is generated by its set $P$ of non-zero projections and, since this set intersects each $\gr$-class and $\gl$-class exactly once, we conclude that $|P| = |I| = |\Lambda|$. Clearly (thinking of $S$ as a Rees matrix semigroup), every generating set must intersect every non-zero $\gr$- and $\gl$-class, giving:
\[
|I| = |\Lambda| \leq \rank(S) \leq \idrank(S) \leq |P| = |I| = |\Lambda|. 
\]
Part (ii) is then an immediate consequence of Theorem~\ref{thm_Gray}. 
\end{proof}

\begin{definition}[Projection graph]\label{def_GammaS}
Let $S$ be a finite idempotent generated completely $0$-simple regular $*$-semigroup.  Let $\Gamma(S)$ be the digraph with vertex set $P(S)\setminus\{0\}$ and edges $p\to q$ if and only if $pq\not=0$.  We call $\Gamma(S)$ the \emph{projection graph} of $S$. If $T$ is a finite regular $*$-semigroup generated by the idempotents in a maximal $\gj$-class $J$, we will often write $\Gamma(T)=\Gamma(J^*)$.
\end{definition}

For some examples, see Figures \ref{fig_Gamma_5}, \ref{fig_Lambda_5} and \ref{fig_Xi_5}, below.  Note that the edge relation on $\Gamma(S)$ is symmetric and reflexive (symmetry holds because $pq=0$  $\Rightarrow$ $qp=q^*p^*=(pq)^*=0^*=0$, for all $p,q\in P(S)\setminus\{0\}$).  The relationship between the projection graph $\Gamma(S)$ and the Graham--Houghton graph $\Delta(S)$ will be explained in Section~\ref{sec_GHGraphs}. 
By Lemma~\ref{lem_projections}, the edges of $\Gamma(S)$ are in one-one correspondence with the nonzero idempotents of $S$.  

\begin{definition}[Balanced subgraph]\label{def_balanced_subgraph}
We say a subgraph $H$ of a digraph $G$ is \emph{balanced} if $V(H)=V(G)$ and the in- and out-degree of each vertex of $H$ is equal to $1$.
\end{definition}

So a balanced subgraph partitions the digraph into disjoint directed cycles, which may contain one, two, or more vertices. Note that this generalises the notion of disjoint cycle decompositions of permutations. For example, if we begin with the complete directed graph ${KD}_n$ (the digraph with $n$ vertices, a loop at each vertex, and one arc in each direction between every pair of distinct vertices), then there is a natural correspondence between balanced subgraphs of ${KD}_n$ and elements of the symmetric group $\S_n$, given by the obvious translation between balanced subgraphs and the elements of $\S_n$ written as products of disjoint cycles. Thus, balanced subgraphs may be thought of as \emph{permutation subgraphs}: subgraphs that involve all the vertices of the graph, and whose edges induce a permutation of the vertex set of the digraph. The set of all permutations arising from the balanced subgraphs of a directed graph is in general not a group.

\begin{theorem}\label{thm_balanced}
Let $S$ be a finite idempotent generated completely $0$-simple regular $*$-semigroup.  Let $G$ be a subgraph of $\Gamma(S)$, and let $X_G=\{ pq : \text{$p\to q$ is an edge of $G$}\}$ be the corresponding set of idempotents of $S$.  Then $X_G$ is a minimal generating set for $S$ if and only if $G$ is balanced.
\end{theorem}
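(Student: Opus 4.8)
The plan is to deduce the theorem from the characterization of minimal generating sets in Theorem~\ref{thm_czsproj}(ii), which says that a subset of $S$ is a minimal generating set precisely when it meets every non-zero $\gr$-class and every non-zero $\gl$-class exactly once. The starting point is the dictionary supplied by Lemma~\ref{lem_projections}(iii): the non-zero projections---that is, the vertices of $\Gamma(S)$---are in bijection with the non-zero $\gr$-classes via $p\mapsto R_p$ (the $\gr$-class of $p$) and simultaneously with the non-zero $\gl$-classes via $p\mapsto L_p$. So I will regard each vertex $p$ of $\Gamma(S)$ as labelling both an $\gr$-class $R_p$ and an $\gl$-class $L_p$, and I note that, since every edge $p\to q$ of $\Gamma(S)$ satisfies $pq\neq0$, each element of $X_G$ is a genuine non-zero idempotent.

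The crucial structural step is to locate each idempotent $pq\in X_G$ in Green's structure of $S$. Here I use the standard fact about completely $0$-simple semigroups that the product of two non-zero elements, when it is itself non-zero, is $\gr$-related to the left factor and $\gl$-related to the right factor (this is immediate from the Rees coordinate description of the product). Applied to an edge $p\to q$, this gives that $pq$ lies in $R_p$ and in $L_q$. Two distinct edges then yield distinct idempotents: if their out-vertices differ the idempotents lie in different $\gr$-classes, and if their in-vertices differ they lie in different $\gl$-classes. Hence the assignment $(p\to q)\mapsto pq$ is a bijection from the edge set of $G$ onto $X_G$, and moreover the number of elements of $X_G$ lying in $R_p$ equals the out-degree of $p$ in $G$, while the number lying in $L_q$ equals the in-degree of $q$ in $G$.

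With this dictionary the theorem follows from Theorem~\ref{thm_czsproj}(ii). Indeed, $X_G$ meets every non-zero $\gr$-class of $S$ exactly once if and only if every vertex of $\Gamma(S)$ has out-degree $1$ in $G$, and $X_G$ meets every non-zero $\gl$-class exactly once if and only if every vertex has in-degree $1$; each of these conditions already forces every vertex of $\Gamma(S)$ to be incident to an edge of $G$, so that $V(G)=V(\Gamma(S))$ and $G$ is spanning. Combining the two conditions, $X_G$ is a minimal generating set if and only if every vertex of $G$ has both in-degree and out-degree equal to $1$ and $G$ is spanning, which is exactly the requirement that $G$ be balanced in the sense of Definition~\ref{def_balanced_subgraph}. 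I expect the only real content to lie in the structural step of the second paragraph---correctly identifying $R_p$ and $L_q$ as the $\gr$- and $\gl$-class of $pq$, and thereby establishing that the edge-to-idempotent map is a class-respecting bijection; once that is in hand, the conclusion is a direct translation between the two conditions via Theorem~\ref{thm_czsproj}(ii).
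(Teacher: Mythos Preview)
Your proof is correct and follows essentially the same approach as the paper: both arguments reduce to the observation that $|X_G\cap R_p|$ equals the out-degree of $p$ in $G$ (and dually for $\gl$-classes and in-degree), and then invoke Theorem~\ref{thm_czsproj}(ii). The paper's proof states this in one line and leaves the verification to the reader, whereas you have carefully spelled out why $pq\in R_p\cap L_q$ and why distinct edges yield distinct idempotents; but the underlying strategy is identical.
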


\begin{proof}
Let $p\in P(S)\setminus\{0\}$.  It is easy to check that $|X_G\cap R_p|$ is equal to the out-degree of $p$ in $G$ (here, $R_p$ denotes the $\gr$-class containing $p$).  Together with the dual statement, and Theorem~\ref{thm_czsproj}~(ii), this proves the result.
\end{proof}

\begin{theorem}
\label{thm_maxJclasses}
Let $S$ be a finite regular semigroup, let $J$ be a $\gj$-class of $S$, $X$ a subset of $J$, and $T = \lb X \rb$, the subsemigroup of $S$ generated by $X$. If $T$ is regular, then
\[
J \cap T = J_1 \cup \cdots \cup J_l,
\]
where $J_1, \ldots, J_l$ are the maximal $\gj$-classes of $T$. 
\end{theorem}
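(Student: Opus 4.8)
The plan is to prove the two inclusions $J\cap T\sub J_1\cup\cdots\cup J_l$ and $J_1\cup\cdots\cup J_l\sub J\cap T$ separately. Two features of the set-up will be used repeatedly. First, since every element of $T$ is a product of elements of $X\sub J$, each $t\in T$ satisfies $t\leq_{\gj}J$ in $S$ (its $S$-$\gj$-class lies below $J$). Second, because $T^1\sub S^1$, each of Green's relations $\gr,\gl,\gj$ computed in $T$ is contained in the corresponding relation computed in $S$. I will also use that $S$, being finite, is \emph{stable}: $x\gj xy$ implies $x\gr xy$, and $x\gj yx$ implies $x\gl yx$, for $x,y\in S$.

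The technical heart of the argument is a transfer lemma that turns regularity of $T$ into a strengthening of the second observation: for $a,b\in T$, if $a\gr b$ in $S$ then $a\gr b$ in $T$, and dually for $\gl$. To see this, choose an inverse $a'\in T$ of $a$ and set $e=aa'\in E(T)$; then $e\gr a\gr b$ in $S$, so $eb=b$, whence $b=a(a'b)$ with $a'b\in T$ and therefore $b\leq_{\gr}a$ in $T$. The symmetric computation with an inverse of $b$ gives $a\leq_{\gr}b$ in $T$, so $a\gr b$ in $T$. Together with the reverse inclusion this shows that $\gr$ and $\gl$ on $T$ coincide exactly with the restrictions of $\gr$ and $\gl$ from $S$.

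For the inclusion $J\cap T\sub J_1\cup\cdots\cup J_l$ I would show that every $a\in J\cap T$ lies in a maximal $\gj$-class of $T$, the key sublemma being: \emph{if $a,b\in J\cap T$ and $a\leq_{\gj}b$ in $T$, then $a\gj b$ in $T$.} Indeed, writing $a=ubv$ with $u,v\in T^1$ and putting $c=ub\in T$, the relation $a\gj b$ in $S$ together with $a\leq_{\gj}c\leq_{\gj}b$ in $S$ forces $a,b,c$ into the single $\gj$-class $J$; stability of $S$ then yields $c\gl b$ and $a\gr c$ in $S$, and the transfer lemma upgrades these to $c\gl b$ and $a\gr c$ in $T$, giving $a\gd b$ and hence $a\gj b$ in $T$. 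Granting the sublemma, suppose $a\in J\cap T$ and $a\leq_{\gj}b$ in $T$ for some $b\in T$; the first observation gives $b\in J$, so $b\in J\cap T$ and the sublemma yields $a\gj b$ in $T$. Thus no element of $T$ lies strictly above $a$, i.e. the $\gj$-class of $a$ is maximal, and so $a\in J_1\cup\cdots\cup J_l$.

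For the reverse inclusion, let $J_i$ be a maximal $\gj$-class of $T$ and take $a\in J_i$. Writing $a=x_1\cdots x_k$ with each $x_j\in X\sub J\cap T$, we have $a\leq_{\gj}x_1$ in $T$, and maximality of $J_i$ forces $a\gj x_1$ in $T$, hence $a\gj x_1$ in $S$, so $a\in J$ and $a\in J\cap T$. Therefore each $J_i\sub J\cap T$, which completes the proof. I expect the transfer lemma to be the main obstacle: it is the one place where regularity of $T$ is essential, and the bookkeeping with the inverses and the idempotents $aa'$, $bb'$ — together with the appeal to stability needed to place the intermediate element $c=ub$ correctly — is the crux of the whole argument.
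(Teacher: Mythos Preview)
Your proof is correct and follows essentially the same route as the paper's. Both arguments pivot on the same core computation: given $a=ubv$ with $a,b\in J\cap T$, set $c=ub$, use stability of the finite semigroup $S$ to obtain $b\gl^S c$ and $c\gr^S a$, and then invoke regularity of $T$ to upgrade these to $\gl^T$ and $\gr^T$, yielding $a\gd^T b$. The paper leaves your ``transfer lemma'' implicit (it simply writes ``Since $T$ is regular, it follows that $b\gl^T ub\gr^T a$''), and it organizes the two inclusions in the opposite order, using the first (each maximal $\gj^T$-class meets $X$, hence lies in $J$) inside the second; you instead obtain $b\in J$ directly by the squeeze $J=J^S(a)\leq_{\gj^S}J^S(b)\leq_{\gj^S}J$. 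These are cosmetic differences only.
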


\nc{\gjS}{\gj^S}
\nc{\gjT}{\gj^T}

\begin{proof}
Throughout the proof, we write $\gjS$ and $\gjT$ for Green's $\gj$-relations on $S$ and $T$, and similarly for the other Green's relations.  We also write $J^T(x)$ for the $\gjT$-class in $T$ of $x\in T$.
Now let $K$ be a maximal $\gjT$-class of $T$. Since $\lb X \rb = T$ it follows that $K \leq_{\gjT} J^T(x)$ for some $x \in X$. Since $K$ is maximal, we conclude that $K = J^T(x) \subseteq J$. Thus, every maximal $\gjT$-class of $T$ is a subset of $J$. 

Conversely, suppose $K$ is a $\gjT$-class of $T$ with $K\sub J$.  The proof will be complete if we can show that $K$ is maximal (in the ordering on $\gjT$-classes).  Now, $K\leq_{\gjT} M$ for some maximal $\gjT$-class $M$ of $T$.  Let $a\in K$ and $b\in M$ be arbitrary.  Since $K\leq_{\gjT} M$ (and since $T$ is regular), there exist $u,v\in T$ such that $a=ubv$.  Since $K,M\sub J$, we have $a\gj^Sb$, so 
$b \leq_{\gjS} ub \leq_{\gjS} ubv =a \leq_{\gjS} b$.
Thus, all these elements are $\gjS$-related.  In particular, $b\gjS ub$ and $ub\gjS ubv$.  Since $S$ is finite, stability (see \cite[Definition A.2.1 and Theorem A.2.4]{SteinbergBook2009}) gives $b\gl^S ub$ and $ub\gr^S ubv=a$.  Since $T$ is regular, it follows that $b\gl^T ub\gr^T a$, whence $b\gd^T a$ and $b\gj^T a$.  Thus, $K=J^T(a)=J^T(b)=M$.
\end{proof}

The above result fails if one lifts the assumption that the subsemigroup $T$ is regular. One can easily construct a counterexample example where $T$ is a $4$-element subsemigroup of the $5$-element Brandt semigroup (see \cite[Section~5.1]{howie} for more on Brandt semigroups).  

\begin{lemma}\label{lem_subranks}
Let $S$ be a finite semigroup generated by the elements in its maximal $\gj$-classes $J_1, \ldots, J_m$. Then
\[
\rank(S) = \sum_{i=1}^m \rank(J_i^*).
\]
If, in addition, $S$ is idempotent generated, then so are all of $J_1^*, \ldots, J_m^*$ and
\[
\idrank(S) = \sum_{i=1}^m \idrank(J_i^*).
\]
\end{lemma}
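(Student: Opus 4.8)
The plan is to reduce the multi-class statement to the single-class situation of Lemma~\ref{lem_maxJ} by exploiting a \emph{confinement} property of maximal $\gj$-classes, namely: if $J$ is a maximal $\gj$-class of $S$ and $x\in J$ is written as a product $x=a_1\cdots a_k$ in $S$, then every factor $a_\ell$, and indeed every prefix $a_1\cdots a_j$, again lies in $J$. This follows directly from the definition of the preorder $\leq_\gj$: since $x=a_1\cdots a_k\leq_{\gj}a_1\cdots a_j\leq_{\gj}a_\ell$ for $\ell\leq j$, maximality of $J=J(x)$ collapses each of these relations to equality, forcing $a_1\cdots a_j\in J$ and $a_\ell\in J$. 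The first thing I would record as a consequence is that, for $B\subseteq J$, products of elements of $B$ that land in $J$ are computed identically in $S$ and in the principal factor $J^*$ (no intermediate product can escape $J$ and send the $*$-product to $0$); hence $B$ generates $J^*$ \emph{if and only if} every element of $J$ is a product in $S$ of elements of $B$.

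Next I would turn this into a correspondence between generating sets of $S$ and families of generating sets of the $J_i^*$. If $A$ generates $S$, then for each $i$ the confinement property shows that every $x\in J_i$ is a product of elements of $A$ all lying in $J_i$; thus $A\cap J_i$ generates $J_i^*$. Conversely, if each $B_i$ generates $J_i^*$, then $\bigcup_i B_i$ generates $S$, because $J_i\subseteq\langle B_i\rangle$ for all $i$ and, by hypothesis, $S=\langle J_1\cup\cdots\cup J_m\rangle$. Since the $J_i$ are distinct $\gj$-classes, they are pairwise disjoint, so for a minimal generating set $A$ of $S$ we obtain $|A|=\sum_i|A\cap J_i|\geq\sum_i\rank(J_i^*)$, while choosing minimal generating sets $B_i\subseteq J_i$ of the $J_i^*$ and forming $\bigcup_i B_i$ gives the reverse inequality. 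This yields $\rank(S)=\sum_i\rank(J_i^*)$, and is really just Lemma~\ref{lem_maxJ} (equivalently Lemma~\ref{lem_easy}, applied to the ideal $I=S\setminus(J_1\cup\cdots\cup J_m)$, which one checks is an ideal precisely because each $J_i$ is maximal) carried out simultaneously for all the maximal classes.

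For the idempotent statement I would run the same two bounds with idempotent generating sets. Assuming $S$ is idempotent generated, writing any $x\in J_i$ as a product of idempotents and applying confinement shows those idempotents all lie in $J_i$; hence $E(S)\cap J_i$ generates $J_i^*$, so each $J_i^*$ is idempotent generated. Then a minimal idempotent generating set $A$ of $S$ restricts on each $J_i$ to an idempotent generating set of $J_i^*$, giving $\idrank(S)\geq\sum_i\idrank(J_i^*)$ by disjointness, and gluing together minimal idempotent generating sets of the $J_i^*$ yields the reverse inequality, whence $\idrank(S)=\sum_i\idrank(J_i^*)$.

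The only real content is the confinement lemma of the first paragraph; everything afterwards is bookkeeping with the disjointness of the $J_i$ and the two matching bounds. The one point to treat with a little care is the comparison between products in $S$ and in $J_i^*$, i.e.\ the claim that no intermediate product escapes $J_i$, together with the convention on the adjoined zero recorded in the remark following Lemma~\ref{lem_maxJ}, which guarantees that the minimal (idempotent) generating sets $B_i$ of each $J_i^*$ may be taken inside $J_i$ rather than requiring the symbol $0$ as a generator.
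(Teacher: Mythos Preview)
Your argument is correct and is precisely the intended ``easy generalization of Lemma~\ref{lem_maxJ}'' that the paper alludes to without spelling out; the confinement property you isolate is exactly the mechanism that makes Lemma~\ref{lem_easy} work, and your decomposition of a minimal generating set across the disjoint $J_i$ is the natural way to pass from one maximal class to several. One cosmetic remark: the equality $|A|=\sum_i|A\cap J_i|$ for a minimal generating set $A$ does require $A\subseteq\bigcup_i J_i$, which you quite rightly extract from Lemma~\ref{lem_easy}; if you wanted to avoid that reference entirely, the weaker inequality $|A|\geq\sum_i|A\cap J_i|$ already suffices for the lower bound.
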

\begin{proof}
This is an easy generalization of Lemma~\ref{lem_maxJ} above. 
\end{proof}

The following theorem, which is the main result of this section, says that in general, generating sets consisting of projections all taken from the same $\gj$-class of a finite regular $*$-semigroup always constitute a minimal generating set for the semigroup they generate. As we shall see, all of the examples of subsemigroups of $\P_n$ discussed above, and their ideals, may be obtained as subsemigroups of the partition monoid generated by projections of fixed rank, and this allows us to apply the following result to determine the ranks and idempotent ranks in all cases, and ultimately to describe all the minimal generating sets. 

\begin{theorem}\label{thm_projgenerated}
Let $S$ be a finite regular $*$-semigroup, let $J$ be a $\gj$-class of $S$ and let $X \subseteq J$ be a set of projections. Then the subsemigroup $T=\lb X\rb$ of $S$ generated by $X$ satisfies
\[
\rank(T) = \idrank(T) = |X|. 
\]
Furthermore, a subset $Y$ of $T$ is a minimal generating set for $T$ if and only if $Y$ forms a transversal of the set of $\gr$-classes, and set of $\gl$-classes, contained in the maximal $\gj$-classes of $T$.  
\end{theorem}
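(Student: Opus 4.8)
The plan is to reduce everything to the machinery already developed for idempotent generated completely $0$-simple regular $*$-semigroups, combined with the structural result of Theorem~\ref{thm_maxJclasses}. First I would observe that, since $X$ consists of projections lying in a single $\gj$-class $J$, each element of $X$ is an idempotent, and so $T=\lb X\rb$ is generated by idempotents; hence $T$ is a regular semigroup (being generated by regular elements of a finite semigroup, via Theorem~\ref{thm_FitzGerald} and the fact that any product of these idempotents is either regular or lies strictly lower). Moreover $T$ inherits the $*$-operation from $S$, because the set $X$ of projections is closed under $*$ (each $p\in X$ satisfies $p^*=p$), so $X^*=X$ and thus $T^*=T$, making $T$ a finite idempotent generated regular $*$-semigroup. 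The key point is that $T$ itself may not be completely $0$-simple, but its maximal $\gj$-classes will be, which is where Corollary~\ref{cor_FitzGerald} applies.

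Next I would invoke Theorem~\ref{thm_maxJclasses}: since $T$ is regular and $X\sub J$, the intersection $J\cap T$ decomposes as a disjoint union $J_1\cup\cdots\cup J_l$ of the maximal $\gj$-classes of $T$. I would then argue that $T$ is generated by the elements lying in these maximal $\gj$-classes. Indeed $X\sub J\cap T=J_1\cup\cdots\cup J_l$, so the generators themselves all lie in the maximal $\gj$-classes, and hence $T=\lb J_1\cup\cdots\cup J_l\rb$. This puts us in position to apply Lemma~\ref{lem_subranks}, which gives
\[
\rank(T)=\sum_{i=1}^l\rank(J_i^*)
\qquad\text{and}\qquad
\idrank(T)=\sum_{i=1}^l\idrank(J_i^*),
\]
the latter being valid because $T$ is idempotent generated, so by Corollary~\ref{cor_FitzGerald} each principal factor $J_i^*$ is an idempotent generated completely $0$-simple semigroup.

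Now each $J_i^*$ is a finite idempotent generated completely $0$-simple \emph{regular $*$-semigroup} (the $*$ on $T$ restricts appropriately, since each $J_i$ is closed under $*$ as $*$ preserves $\gj$-classes). Therefore Theorem~\ref{thm_czsproj}~(i) applies to each $J_i^*$, yielding $\rank(J_i^*)=\idrank(J_i^*)=|I_i|=|\Lambda_i|$, where $I_i,\Lambda_i$ index the non-zero $\gr$- and $\gl$-classes of $J_i^*$. Summing, I get $\rank(T)=\idrank(T)$. To pin these down to $|X|$, I would use Lemma~\ref{lem_projections}~(iii): each of the $\gr$-classes and each of the $\gl$-classes in the maximal $\gj$-classes contains exactly one projection, and since $X$ is a set of projections that generates $T$, every such $\gr$-class and $\gl$-class must be met by $X$ (as any generating set meets every maximal $\gr$- and $\gl$-class). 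Because a projection is determined by its $\gr$-class (equivalently its $\gl$-class) within a regular $*$-semigroup, $X$ is in bijection with the set of projections in the maximal $\gj$-classes, giving $|X|=\sum_{i=1}^l|I_i|=\rank(T)$.

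Finally, for the characterization of minimal generating sets, I would apply Theorem~\ref{thm_czsproj}~(ii) to each principal factor $J_i^*$, which states that a subset is a minimal generating set of $J_i^*$ precisely when it meets each non-zero $\gr$- and $\gl$-class exactly once. Combining across all $i$ via the reduction of Lemma~\ref{lem_subranks} (and Lemma~\ref{lem_easy}, which guarantees that minimal generating sets of $T$ live in the union of maximal $\gj$-classes and project bijectively onto minimal generating sets of the principal factors), I conclude that $Y\sub T$ is a minimal generating set for $T$ if and only if $Y$ forms a transversal of the $\gr$-classes and of the $\gl$-classes contained in the maximal $\gj$-classes. I expect the main obstacle to be the careful verification that $T$ is regular and that its maximal $\gj$-classes are exactly the pieces $J_i$ supplied by Theorem~\ref{thm_maxJclasses} — in particular, checking that the generating set $X$, lying in the single $\gj$-class $J$ of $S$, really does generate a semigroup whose \emph{own} top $\gj$-classes coincide with $J\cap T$ rather than sliding strictly below $J$; the regularity of $T$ is what makes Theorem~\ref{thm_maxJclasses} applicable and hence closes this gap.
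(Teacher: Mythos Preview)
Your proposal is correct and follows essentially the same route as the paper: show $T$ is a regular $*$-semigroup (via closure under $*$), apply Theorem~\ref{thm_maxJclasses} to identify the maximal $\gj$-classes of $T$ with the pieces of $J\cap T$, then use Lemma~\ref{lem_subranks} and Theorem~\ref{thm_czsproj} on each principal factor. One small remark: your parenthetical appeal to Theorem~\ref{thm_FitzGerald} for the regularity of $T$ is unnecessary and not quite the right reason---the clean argument (which you also give) is simply that $X^*=X$ forces $T^*=T$, so $T$ is a regular $*$-semigroup outright; drop the FitzGerald detour.
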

\begin{proof}
First observe that $T$ is an idempotent generated regular $*$-semigroup. 
Indeed, to see that $T$ is closed under the $*$ operation let $t \in T$ be arbitrary. Then 
$
t = p_1 \cdots p_k 
$
for some $p_1,\ldots,p_k\in X$, and so  
$
t^* = p_k^* \cdots p_1^* = p_k \cdots p_1 \in T.
$

Next, we claim that $T$ is generated by the elements in its maximal $\gj$-classes. 
Let $J_1, \ldots, J_l$ be the maximal $\gj$-classes of $T$. It follows from Theorem~\ref{thm_maxJclasses} that
$
J_1 \cup \cdots \cup J_l = 
J \cap T \supseteq X.
$
Therefore, $T$ is an idempotent generated regular $*$-semigroup generated by the elements in its maximal $\gj$-classes $J_1 \cup \cdots \cup J_l$. By Lemma~\ref{lem_subranks},
\[
\rank(T) = \sum_{i=1}^l \rank(J_i^*) \quad\text{and}\quad
\idrank(T) = \sum_{i=1}^l \idrank(J_i^*).
\]
For each $i$, Theorem~\ref{thm_czsproj} gives
\[
\rank(J_i^*) = \idrank(J_i^*) = |X \cap J_i|, 
\]
since $X$ intersects each $\gr$- and $\gl$-class of $J_i$ exactly once.  It follows that $\rank(T)=\idrank(T)=|X|$.

The last clause follows from Theorem~\ref{thm_czsproj}.
\end{proof}

Note that in general, in the above theorem, the semigroup $T$ will have more than one maximal $\gj$-class.

In each of the examples we will study, the $\gj$-classes will form a chain, so the next general result will be of use.

\begin{proposition}\label{prop_chain}
Suppose $S$ is a finite idempotent generated regular $*$-semigroup and that the $\gj$-classes of $S$ form a chain $J_0<J_1<\cdots<J_k$.  For $0\leq r\leq k$, let $I_r=J_0\cup J_1\cup\cdots\cup J_r$.  Then the ideals of $S$ are precisely the sets $I_0,I_1,\ldots,I_k$.

Suppose further that $P(J_s)\sub \langle J_{s+1}\rangle$ for all $0\leq s\leq k-1$. Then for each $0\leq r\leq k$\emph{:}
\begin{itemize}
	\item[(i)] $I_r=\langle J_r\rangle$\emph{;}
	\item[(ii)] $I_r$ is idempotent generated\emph{;}
	\item[(iii)] $\rank(I_r)=\idrank(I_r)=\rho_r$, where $\rho_r$ is the number of $\gr$-classes (which equals the number of $\gl$-classes) in $J_r$.
\end{itemize}
\end{proposition}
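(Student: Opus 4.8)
The first assertion is independent of the extra hypothesis and is purely order-theoretic. In any semigroup a nonempty set is a two-sided ideal precisely when it is a union of $\gj$-classes that is downward closed for $\leq_{\gj}$: if $x$ lies in an ideal $I$ then $S^1xS^1\subseteq I$ forces every $\gj$-class below that of $x$ into $I$, while conversely any downward-closed union of $\gj$-classes absorbs products on both sides. Since by hypothesis the $\gj$-classes form the chain $J_0<\cdots<J_k$, its nonempty down-sets are exactly the initial segments $\{J_0,\dots,J_r\}$, whose unions are the sets $I_r$. This shows the ideals of $S$ are precisely $I_0,\dots,I_k$.

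For the remaining parts the key lemma I would isolate is that for each $s$ one has $J_s\subseteq\langle P(J_s)\rangle$, whence $\langle J_s\rangle=\langle P(J_s)\rangle$. Because $S$ is finite, regular and idempotent generated, Corollary~\ref{cor_FitzGerald} makes the principal factor $J_s^*$ an idempotent generated completely $0$-simple semigroup; moreover $J_s$ is closed under the star operation of $S$ (since $x\gj x^*$ for all $x\in S$), so this operation descends to an involution on $J_s^*$ making it a regular $*$-semigroup. Lemma~\ref{lem_projections}(ii) then shows that $J_s^*$ is generated by its projections $P(J_s)$. Finally I would lift factorizations: if $x\in J_s$ equals a nonzero product of projections computed in $J_s^*$, then, $0$ being absorbing, every initial sub-product is nonzero, hence lies in $J_s$ and coincides with the same product formed in $S$; thus $x\in\langle P(J_s)\rangle$.

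Part (i) then follows by downward induction on $s=r,r-1,\dots,0$ of the statement $J_s\subseteq\langle J_r\rangle$. The case $s=r$ is clear. For the step, the inductive hypothesis gives $J_{s+1}\subseteq\langle J_r\rangle$, hence $\langle J_{s+1}\rangle\subseteq\langle J_r\rangle$ since $\langle J_r\rangle$ is a subsemigroup; the standing hypothesis $P(J_s)\subseteq\langle J_{s+1}\rangle$ now yields $P(J_s)\subseteq\langle J_r\rangle$, and the reduction of the previous paragraph gives $J_s\subseteq\langle P(J_s)\rangle\subseteq\langle J_r\rangle$. Therefore $I_r=J_0\cup\cdots\cup J_r\subseteq\langle J_r\rangle$, and the reverse inclusion is immediate because $I_r$ is an ideal, hence a subsemigroup containing $J_r$; so $I_r=\langle J_r\rangle$. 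Part (ii) is then automatic, since $I_r=\langle P(J_r)\rangle$ is generated by projections, which are idempotents of $I_r$. For (iii) I would apply Theorem~\ref{thm_projgenerated} with the set of projections $X=P(J_r)\subseteq J_r$: since $\langle X\rangle=\langle J_r\rangle=I_r$, it gives $\rank(I_r)=\idrank(I_r)=|P(J_r)|$, and Lemma~\ref{lem_projections}(iii) identifies $P(J_r)$ bijectively with the set of $\gr$-classes (equivalently $\gl$-classes) of $J_r$, so $|P(J_r)|=\rho_r$.

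The step needing the most care is the reduction $\langle J_s\rangle=\langle P(J_s)\rangle$: one must check carefully that the involution of $S$ really induces a regular $*$-structure on the principal factor $J_s^*$ (in particular that $(ab)^*=b^*a^*$ survives the vanishing convention of the principal factor) so that Lemma~\ref{lem_projections} applies, and then verify the lifting of a factorization from $J_s^*$ back to $S$. The remaining work is routine bookkeeping along the chain, with the hypothesis $P(J_s)\subseteq\langle J_{s+1}\rangle$ serving as exactly the ingredient that drives the downward induction.
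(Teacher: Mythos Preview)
Your proof is correct and follows essentially the same strategy as the paper. The only notable difference is in how you establish the reduction $J_s\subseteq\langle P(J_s)\rangle$: you descend the involution to the principal factor $J_s^*$, invoke Lemma~\ref{lem_projections}(ii) there, and then lift the factorization back to $S$, whereas the paper works directly in $S$ via the identity $e=(ee^*)(e^*e)$ for $e\in E(J_s)$ (placing $ee^*,e^*e\in P(J_s)$) together with Corollary~\ref{cor_FitzGerald}. The paper's route is shorter and sidesteps the checks you flag as ``needing the most care''; conversely, your use of Theorem~\ref{thm_projgenerated} for part~(iii) is a clean one-shot alternative to the paper's combination of Lemma~\ref{lem_maxJ} and Theorem~\ref{thm_czsproj}.
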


\begin{proof}
The statement concerning the ideals of $S$ is easily checked (and is true for any semigroup $S$ whose $\gj$-classes form a finite chain).

	(i) \ Let $0\leq s\leq k-1$.  We first show that
	\begin{equation}
	\label{dagger}
	J_s\sub\la J_{s+1}\ra.
	\end{equation}
First note that if $e\in E(J_s)$, then, as in the proof of Lemma \ref{lem_projections}, $e=(ee^*)(e^*e)$ where $ee^*,e^*e\in P(J_s)$.  So $E(J_s)\sub\la P(J_s)\ra\sub\langle J_{s+1}\rangle$.  By Corollary \ref{cor_FitzGerald}, every element of $J_s$ is a product of idempotents from $J_s$, and \eqref{dagger} follows.  Next note that $I_0=J_0=\langle J_0\rangle$.  If $1\leq r\leq k$, then by \eqref{dagger} and an induction hypothesis, $I_r=I_{r-1}\cup J_r=\la J_{r-1}\ra\cup J_r\sub \la J_r\ra$.

	(ii) \ Since $S$ is idempotent generated, it follows from Corollary \ref{cor_FitzGerald} that $J_r^*$ is an idempotent generated completely $0$-simple semigroup.  Lemma~\ref{lem_maxJ} then implies that $I_r=\langle J_r\rangle$ is idempotent generated.

	(iii) \ Since $I_r$ is a regular $*$-semigroup, Theorem~\ref{thm_czsproj} gives $\rank(J_r^*)=\idrank(J_r^*)=\rho_r$.  By Lemma \ref{lem_maxJ}, $\rank(I_r)=\rank(J_r^*)$ and $\idrank(I_r)=\idrank(J_r^*)$. 
\end{proof}

\begin{remark}
In the notation of the previous proposition, it is clear that the number of minimal generating sets of the ideal $I_r$ is equal to $\rho_r!\times h_r^{\rho_r}$, where $h_r$ is the (common) size of the $\gh$-classes of $S$ contained in $J_r$.  In general, a formula for the number of minimal \emph{idempotent} generating sets is harder to come by.  
For example, to the authors' knowledge, such a general formula is unknown even in the case of the proper ideals of the singular part $S=\text{Sing}_n$ of the full transformation monoid $\T_n$.
\end{remark}

\section{Non-minimal generating sets of idempotents}
\label{sec_GHGraphs}

Theorem~\ref{thm_balanced} gives a necessary and sufficient condition, in terms of the projection graph $\Gamma(S)$, for a set of idempotents to be a minimal idempotent generating set for a finite idempotent generated completely $0$-simple regular $*$-semigroup $S$. Here we consider idempotent generating sets in general, not just those of minimal size. 
We begin by giving some background on Graham--Houghton graphs taken from  \cite[Section~4.13]{SteinbergBook2009}.  

\begin{definition}
Let $S=\M[G;I,\Lambda;P]$ be a completely $0$-simple semigroup represented as a Rees matrix semigroup over a group $G$. Recall from Definition \ref{GHGraph_defn} that the \emph{Graham--Houghton graph}, denoted $\Delta(S)$, is the directed bipartite graph with vertex set $I \cup \Lambda$ and an arc $\lambda\to i$ if and only if $p_{\lambda i} \neq 0$.  Now let $F$ be a set of non-zero idempotents of $S$. We denote by $\Delta(S,F)$ the directed bipartite graph obtained from $\Delta(S)$ by adding arcs $i \rightarrow \lambda$ for every pair $(i,\lambda)$ such that $H_{i,\lambda} \cap F \neq \varnothing$. (Here, $H_{i,\lam}=R_i\cap L_\lam$ denotes the $\gh$-class that is the intersection of the $\gr$- and $\gl$-classes indexed by $i\in I$ and $\lam\in\Lam$, respectively, and we note that such an $\gh$-class contains an idempotent if and only if $p_{\lam i}\not=0$, in which case this idempotent is $(i,p_{\lam i}^{-1},\lam)$.)
\end{definition}

The various parts of the next result follow from \cite[Theorem 2 and subsequent discussion]{Graham68}; we include a proof for convenience.

\begin{proposition}\label{prop_Fgens}
Let $S=\M[G;I,\Lambda;P]$ be a finite idempotent generated completely $0$-simple semigroup, and let $F$ be a set of non-zero idempotents of $S$.
Then the following are equivalent\emph{:}
\begin{enumerate}
\item[(i)] $S = \langle F \rangle$\emph{;}
\item[(ii)] For all $(i,\lambda) \in I \times \Lambda$ there is a directed path in $\Delta(S,F)$ from $i$ to $\lambda$\emph{;}
\item[(iii)] The digraph $\Delta(S,F)$ is strongly connected. 
\end{enumerate}
\end{proposition}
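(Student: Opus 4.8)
The plan is to prove the cycle of implications $(iii) \Rightarrow (ii) \Rightarrow (i) \Rightarrow (iii)$, exploiting the correspondence between the combinatorics of $\Delta(S,F)$ and multiplication in the Rees matrix semigroup $S = \M[G;I,\Lambda;P]$. The implication $(iii) \Rightarrow (ii)$ is immediate from the definition of strong connectivity, since a strongly connected digraph has a directed path between any ordered pair of vertices, in particular from $i \in I$ to $\lambda \in \Lambda$. So the substantive work lies in $(ii) \Rightarrow (i)$ and $(i) \Rightarrow (iii)$.

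For $(ii) \Rightarrow (i)$, the strategy is to show that every nonzero element of $S$, and the zero, lies in $\langle F\rangle$. Since $S$ is idempotent generated, it suffices (using Lemma~\ref{lem_projections}-style reasoning and the structure of completely $0$-simple semigroups) to generate enough idempotents to reach every $\gh$-class; more precisely, I would first recall that in the Rees matrix representation, an element $(i,g,\lambda)$ with $p_{\lambda i} \neq 0$ is an idempotent exactly when $g = p_{\lambda i}^{-1}$, and that products of elements correspond to traversals recorded by the two kinds of arcs in $\Delta(S,F)$: a structure-matrix arc $\lambda \to i$ (present iff $p_{\lambda i}\neq 0$) and an $F$-arc $i \to \lambda$ (present iff $H_{i,\lambda}\cap F \neq \varnothing$). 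A directed path in $\Delta(S,F)$ from $i$ to $\lambda$ then translates, by multiplying together the corresponding idempotents from $F$ and the matrix entries encountered, into a concrete nonzero product in $\langle F\rangle$ landing in the $\gh$-class $H_{i,\lambda}$. Because $\langle F\rangle$ is closed under the group action within each $\gh$-class (one can rescale by multiplying on either side by suitable idempotents, a standard feature of idempotent generated completely $0$-simple semigroups), reaching one element of $H_{i,\lambda}$ suffices to reach all of it. Since $(ii)$ supplies such a path for every $(i,\lambda)$, every nonzero $\gh$-class meets $\langle F\rangle$, and hence $\langle F\rangle = S$.

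For $(i) \Rightarrow (iii)$, I would argue contrapositively: if $\Delta(S,F)$ is not strongly connected, then there is a proper nonempty subset of vertices with no arcs leaving it, and I would show that the corresponding elements of $S$ form a proper subset closed under the relevant products, so $\langle F\rangle \neq S$. Concretely, the bipartite two-colouring means one should track how the $\gr$- and $\gl$-class indices move under multiplication: a product of elements of $F$ starts in some $\gr$-class $R_i$ and, at each step, the transitions available are precisely those recorded by arcs in $\Delta(S,F)$. If from a given starting vertex one cannot reach vertex $\lambda$ by a directed path, then no product of elements of $F$ can have a nonzero component in $L_\lambda$, so the $\gl$-class indexed by $\lambda$ is never hit and $\langle F\rangle$ is proper.

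The main obstacle I anticipate is the careful bookkeeping in $(ii) \Rightarrow (i)$ — verifying that a directed path in $\Delta(S,F)$ really does yield a \emph{nonzero} product in the correct $\gh$-class, and that, having reached a single element of each nonzero $\gh$-class, one can recover the entire $\gh$-class (the full group coset) using only products of elements of $F$ rather than arbitrary elements of $S$. This is where the hypothesis that $S$ is idempotent generated is essential, and where one must invoke the standard Rees-matrix computation $p\cdot(i,g,\lambda)\cdot q$ together with the alternation of the two arc colours along the path to conclude the generation claim cleanly.
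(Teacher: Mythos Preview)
The paper does not prove this proposition; it is quoted as background from \cite[Section~4.13]{SteinbergBook2009}. So there is no ``paper's own proof'' to compare against, and I will simply assess your argument.

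Your cycle $(iii)\Rightarrow(ii)\Rightarrow(i)\Rightarrow(iii)$ is a sensible plan, and your treatment of $(iii)\Rightarrow(ii)$ and of $(i)\Rightarrow(iii)$ (via the observation that any factorisation $x=f_1\cdots f_k\in H_{i,\lambda}$ with $f_j\in F$ records a directed $i\to\lambda$ path in $\Delta(S,F)$) is fine, if a little loosely stated. The real gap is in $(ii)\Rightarrow(i)$.

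You correctly argue that a directed path $i=i_0\to\lambda_1\to i_1\to\cdots\to\lambda_k=\lambda$ in $\Delta(S,F)$ yields a nonzero product $f_1\cdots f_k\in\langle F\rangle\cap H_{i,\lambda}$. The problem is the next sentence: ``Because $\langle F\rangle$ is closed under the group action within each $\gh$-class (one can rescale by multiplying on either side by suitable idempotents \ldots), reaching one element of $H_{i,\lambda}$ suffices to reach all of it.'' This is circular. Left- or right-multiplying $(i,g,\lambda)$ by an idempotent $(i,p_{\mu i}^{-1},\mu)$ gives back $(i,g,\lambda)$; to genuinely change the middle coordinate while staying in $H_{i,\lambda}$ you would need to multiply by non-idempotent elements of $\langle F\rangle$ that you have not yet shown exist. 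More generally, ``$\langle F\rangle$ meets every $\gh$-class'' does \emph{not} by itself imply $\langle F\rangle=S$ (think of a completely simple semigroup with a single $\gh$-class, i.e.\ a group).

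The clean fix uses finiteness (which the paper assumes throughout) together with the idempotent-generation hypothesis in a different way. For each pair $(i,\lambda)$ with $p_{\lambda i}\neq0$, the $\gh$-class $H_{i,\lambda}$ is a finite group, and you have shown $\langle F\rangle\cap H_{i,\lambda}\neq\varnothing$. But this intersection is a subsemigroup of a finite group, hence a subgroup, hence contains the identity of $H_{i,\lambda}$, which is precisely the idempotent $(i,p_{\lambda i}^{-1},\lambda)$. Thus $E(S)\setminus\{0\}\subseteq\langle F\rangle$, and since $S$ is idempotent generated, $S=\langle E(S)\rangle\subseteq\langle F\rangle$. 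This is where idempotent generation is actually used, not in any ``rescaling'' step.
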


\begin{proof}
First assume (i).
Let $i\in I$ and $\lam\in\Lam$ be arbitrary, and consider a product $(i,1,\lam)=(i_1,p_{\lam_1, i_1}^{-1},\lam_1)\cdots(i_k,p_{\lam_k, i_k}^{-1},\lam_k)$, where each $(i_m,p_{\lam_m, i_m}^{-1},\lam_m)\in F$ (and where~$1$ denotes the identity of $G$).  First, we must have $i_1=i$ and $\lam_k=\lam$.  For the term $(i_m,p_{\lam_m, i_m}^{-1},\lam_m)$ to belong to $F$, the graph $\De(S,F)$ must contain the edge $i_m\to\lam_m$.  And for the product to be non-zero, each $p_{\lam_m, i_{m+1}}$ must be non-zero, so $\De(S,F)$ contains the edge $\lam_m\to i_{m+1}$.  So $\De(S,F)$ contains the path $i=i_1\to\lam_1\to i_2\to\cdots\to\lam_k=\lam$, giving~(ii).

Next assume (ii).  To prove (iii), it remains to show that for any $i,j\in I$ and $\lam,\mu\in\Lam$, $\De(S,F)$ contains a path from $i$ to $j$, and from $\lam$ to $\mu$.  We just do the first of these, as the other is similar.  By definition, there exists some $\kappa\in\Lam$ such that $p_{\kappa, j}\not=0$, so $\De(S,F)$ has the edge $\kappa\to j$.  But, by assumption, $\De(S,F)$ also contains a path from $i$ to $\kappa$.  Appending the edge $\kappa\to j$ to this path yields a path from $i$ to $j$.

Finally, assume (iii).  Let $e\in E(S)\setminus\{0\}$.  So $e=(i,p_{\lam, i}^{-1},\lam)$ for some $i\in I$ and $\lam\in\Lam$.  By assumption, there is a path $i=i_1\to\lam_1\to i_2\to\cdots\to\lam_k=\lam$ in $\De(S,F)$.  It follows that the product $(i_1,p_{\lam_1, i_1}^{-1},\lam_1)\cdots(i_k,p_{\lam_k, i_k}^{-1},\lam_k)$ is non-zero and is $\gh$-related to $e=(i,p_{\lam, i}^{-1},\lam)$; note that each term in this product belongs to $F$.  Since $S$ is finite, and since $H_{i,\lam}$ is a group (as it contains an idempotent), some positive power of this product is equal to $e$.  In particular, $e\in\la F\ra$.  Since this is true of an arbitrary non-zero idempotent $e$, and since $S$ is idempotent generated, it follows that $S=\la F\ra$, giving (i).
\end{proof}

In the special case that $S$ is also a regular $*$-semigroup, the digraph $\Delta(S)$ has an especially nice form, and this allows us to re-express Proposition~\ref{prop_Fgens} in terms of the digraph $\Gamma(S)$, defined in Definition~\ref{def_GammaS}, as follows. 

Let $S=\M[G;I,\Lambda;P]$ be an idempotent generated completely $0$-simple regular $*$-semigroup. Then we may set $\Lambda = I'=\{i':i\in I\}$, with the $\gr$- and $\gl$-classes indexed in such a way that the projections lie on the main diagonal $\gh$-classes, $H_{i,i'}$ $(i \in I)$. From this it then follows that the idempotents are distributed in the non-zero $\gd$-class of $S$ with diagonal symmetry, that is $H_{i,j'}$ contains an idempotent if and only if $H_{j,i'}$ does. 

Using these observations it is easy to see that the graph $\Gamma(S)$ is isomorphic to the quotient digraph of $\Delta(S)$ obtained by identifying the pairs of vertices $\{i,i'\}$ for all $i \in I$. More explicitly, $\Gamma(S)$ is isomorphic to the digraph with vertex set $\big\{ \{i,i'\} : i \in I \big\}$ where there is an arc $\{ i, i' \} \rightarrow \{ j, j' \}$ if and only if $H_{i,j'}$ is a group (equivalently, $H_{j, i'}$ is a group). 

The digraph $\Gamma(S)$ is clearly symmetric, and has loops at every vertex. No arcs are identified when passing from $\Delta(S)$ to $\Gamma(S)$, and the arcs of $\Delta(S)$ (and thus also in $\Gamma(S)$) are in natural bijective correspondence with the non-zero idempotents of $S$.  

Now we would like to re-express Proposition~\ref{prop_Fgens} in terms of the digraph $\Gamma(S)$. 

\begin{definition}
Let $F$ be a set of non-zero idempotents from a finite idempotent generated completely $0$-simple regular $*$-semigroup $S=\M[G;I,\Lambda;P]$. We use $\Gamma(S,F)$ to denote the $2$-coloured digraph obtained by first colouring all the edges blue in the projection graph $\Gamma(S)$, and then adding the additional red arcs $i {\boldsymbol{\red\to}} j$ for each idempotent $f \in F \cap H_{i,j}$.  
\end{definition}

\begin{definition}
Let $G$ be a digraph with directed edges coloured red or blue.  Call a directed path $p$ in $G$ an \emph{RBR-alternating path} if the first and last arcs are red, and, as we traverse the path, the arcs alternate in colour between red and blue. An RBR-alternating circuit is an RBR-alternating path whose initial and terminal vertices coincide. In particular, a red loop at a vertex is an example of an RBR-alternating circuit. 
\end{definition}

Since $S$ is generated by the projections, $F$ will generate $S$ if and only if every projection may be expressed as a product of elements from $F$. Combining this with Proposition~\ref{prop_Fgens} we obtain the following result. 

\begin{theorem}\label{thm_generalRBR}
Let $S=\M[G;I,\Lambda;P]$ be a finite idempotent generated completely $0$-simple regular $*$-semigroup, and let $F$ be a set of non-zero idempotents from $S$. Then $S = \langle F \rangle$ if and only if every vertex in $\Gamma(S,F)$ is the base point of some RBR-alternating circuit.  
\end{theorem}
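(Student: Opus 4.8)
The plan is to connect the generation condition $S=\langle F\rangle$, which by Proposition~\ref{prop_Fgens} is equivalent to strong connectivity of the bipartite digraph $\Delta(S,F)$, with the existence of RBR-alternating circuits in the quotient graph $\Gamma(S,F)$. The key translation is the quotient map identifying the vertex pair $\{i,i'\}$ of $\Delta(S)$ to a single vertex of $\Gamma(S)$. Since the projections lie on the diagonal $\gh$-classes $H_{i,i'}$, each blue arc $i\to j$ in $\Gamma(S,F)$ records the fact that $p_{i'j}\neq 0$, i.e.\ that there is a \emph{structure-matrix} arc $i'\to j$ in $\Delta(S)$ (these are the arcs $\lambda\to i$ of Definition, after identifying $\Lambda=I'$). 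Each red arc $i\to j$ records an idempotent $f\in F\cap H_{i,j'}$, which in $\Delta(S,F)$ is the added arc $i\to j'$. So the first step is to set up this dictionary precisely: a blue arc of $\Gamma(S,F)$ corresponds to a $\Delta$-edge pointing \emph{from} a $\Lambda$-vertex \emph{to} an $I$-vertex, and a red arc corresponds to an $F$-edge pointing \emph{from} an $I$-vertex \emph{to} a $\Lambda$-vertex.

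Next I would make the colour-alternation forced by the bipartite structure explicit. In $\Delta(S,F)$ every edge joins an $I$-vertex to a $\Lambda$-vertex; the structure-matrix arcs go $\Lambda\to I$ and the $F$-arcs go $I\to\Lambda$, so any directed path in $\Delta(S,F)$ alternates between $I$-vertices and $\Lambda$-vertices, and consequently alternates between $F$-arcs (red) and structure arcs (blue). Under the quotient map this alternating path in $\Delta(S,F)$ projects to an alternating path in $\Gamma(S,F)$, and conversely any lift of an alternating walk in $\Gamma(S,F)$ to $\Delta(S,F)$ respects the bipartition. The crucial bookkeeping is that a red arc $i\to j$ in $\Gamma(S,F)$ lifts to $i\to j'$ in $\Delta(S,F)$ (starting at an $I$-vertex, ending at a $\Lambda$-vertex), while a blue arc $j\to k$ lifts to $j'\to k$ (starting at the $\Lambda$-vertex matching the previous endpoint). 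Thus an RBR-alternating circuit based at $i$ in $\Gamma(S,F)$ lifts to a closed walk in $\Delta(S,F)$ through $i$, and in particular furnishes a directed path from $i$ to $i'$ in $\Delta(S,F)$.

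For the forward implication, assuming $S=\langle F\rangle$, Proposition~\ref{prop_Fgens}(ii) gives, for each $i\in I$, a directed path in $\Delta(S,F)$ from $i$ to $i'$. This path begins at an $I$-vertex and ends at a $\Lambda$-vertex, so it starts and ends with $F$-arcs; projecting it through the quotient yields an RBR-alternating circuit based at the vertex $i$ of $\Gamma(S,F)$, as required. For the converse, assuming every vertex of $\Gamma(S,F)$ is the base of an RBR-alternating circuit, I would argue that $\Delta(S,F)$ is strongly connected and invoke Proposition~\ref{prop_Fgens}(iii). The remark immediately preceding the theorem is the clean lever here: since $S$ is generated by its projections, it suffices to express every projection as a product of elements of $F$, and a based RBR-alternating circuit at $i$ decodes exactly into such an expression for the projection in $H_{i,i'}$ (the red-blue alternation reflects multiplying an $F$-idempotent by a projection and landing back among projections via the diagonal symmetry $H_{i,j'}$ is a group iff $H_{j,i'}$ is).

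I expect the main obstacle to be the converse direction, specifically making rigorous the passage from ``a based RBR-alternating circuit exists'' to ``$\langle F\rangle$ contains the projection at $i$.'' One must check that each red-then-blue segment of the circuit corresponds to a genuine nonzero product in $S$ landing in the intended $\gh$-class (the blue arc encodes $p_{\lambda i}\neq 0$, which is precisely what guarantees the relevant product is nonzero and $\gr$/$\gl$-related as needed), and that composing these segments around the circuit accumulates to the projection rather than to $0$. The diagonal symmetry of idempotents in the regular $*$-semigroup ($H_{i,j'}$ contains an idempotent iff $H_{j,i'}$ does) is what keeps the alternation coherent and prevents the product from collapsing to zero, so I would isolate that symmetry as the technical heart of the argument.
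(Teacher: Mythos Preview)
Your approach is correct and matches the paper's: use the quotient $\Delta(S,F)\to\Gamma(S,F)$ together with Proposition~\ref{prop_Fgens} for the forward direction, and for the converse use the fact that an RBR-circuit based at $i$ yields (after taking a suitable power, by finiteness) the projection $p_i\in\langle F\rangle$, whence $S=\langle P(S)\rangle\subseteq\langle F\rangle$. One small organisational point: for the converse you need not argue strong connectivity of $\Delta(S,F)$ directly---the projection route already gives $S=\langle F\rangle$, and the step you flag as the main obstacle (the circuit product landing on $p_i$ rather than merely in $H_{i,i'}$) is handled precisely by taking a power, as the paper's Remark following the theorem makes explicit.
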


\begin{remark}
To directly see the significance of RBR-alternating paths and circuits, consider a finite idempotent generated completely $0$-simple regular $*$-semigroup $S$ with non-zero projections $p_1,\ldots,p_k$.  Let $F$ be a set of non-zero idempotents from $S$, and let $$p_{i_1}{\boldsymbol{\red\to}}p_{i_2}{\blue\to}p_{i_3}{\boldsymbol{\red\to}}\cdots{\boldsymbol{\red\to}}p_{i_r}$$ be an RBR-alternating path in the graph $\Gamma(S,F)$.  The red edges mean that the (non-zero) idempotents $p_{i_1}p_{i_2},p_{i_3}p_{i_4},\ldots,p_{i_{r-1}}p_{i_r}$ all belong to $F$, so the product $s=p_{i_1}p_{i_2}\cdots p_{i_r}$ belongs to $\la F\ra$.  The blue edges mean that the products $p_{i_2}p_{i_3},\ldots p_{i_{r-2}}p_{i_{r-1}}$ are all non-zero, and it follows (thinking of $S$ as a Rees matrix semigroup) that $s$ is non-zero too.  In the case that $i_r=i_1$ (so the path is a circuit), $s\gh p_{i_1}$ and so, since $S$ is finite, some power $s^t\in\la F\ra$ is equal to $p_{i_1}$.  See Example \ref{Example6}.
\end{remark}

\begin{remark}\label{rem_generalRBR0}
The condition that every vertex is the base of an RBR-circuit is not equivalent to saying that each vertex is simply \emph{contained} in some RBR-alternating circuit.  For example, consider the graph in Figure~\ref{Fig_TriangleRBGraph}. In this example $1{\boldsymbol{\red\to}}2{\blue\to}3{\boldsymbol{\red\to}}1$ is an RBR-circuit containing each vertex, but vertices $2$ and $3$ are clearly not the base of any RBR-circuit.
\end{remark}

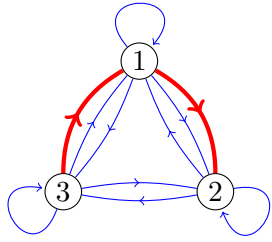
\begin{figure}
\begin{center}
\begin{tikzpicture}[scale=1]
\tikzstyle{vertex}=[circle,draw=black, fill=white, inner sep = 0.06cm]
\node[vertex] (A) at (0,0) {$3$};
\node[vertex] (B) at (2,0) {$2$};
\node[vertex] (C) at (1,1.732) {$1$};
\draw [blue,->-=0.5] (A)        to [bend left=10] (B);
\draw [blue,->-=0.5] (B)        to [bend left=10] (C);
\draw [blue,->-=0.5] (C)        to [bend left=10] (A);
\draw [blue,->-=0.5] (B)        to [bend left=10] (A);
\draw [blue,->-=0.5] (A)        to [bend left=10] (C);
\draw [blue,->-=0.5] (C)        to [bend left=10] (B);
\draw [ultra thick, red,->-=0.5] (A)        to [bend left=30] (C);
\draw [ultra thick, red,->-=0.5] (C)        to [bend left=30] (B);
\draw [blue,->] (C) edge [out=130,in=50,loop] ();
\draw [blue,->] (B) edge [out=130+240,in=50+240,loop] ();
\draw [blue,->] (A) edge [out=130+120,in=50+120,loop] ();
\end{tikzpicture}
\end{center}
\vspace{-6mm}
\caption{
A graph showing that the condition that every vertex is the base of an RBR-circuit is not equivalent to saying that each vertex is contained in some RBR-alternating circuit.  
In all diagrams of graphs with red and blue edges, the red edges are drawn to be thicker than blue edges (for the convenience of readers with black and white copies of the article).
}
\label{Fig_TriangleRBGraph}
\end{figure}

\begin{remark}\label{rem_generalRBR}
We have obtained a necessary and sufficient condition for $S=\M[G;I,\Lambda;P]$ to be generated by a given set $F$ of non-zero idempotents from $S$ in terms of the graph $\Gamma(S,F)$.  It would be nice to give a necessary and sufficient condition just in terms of the subgraph induced by the edges corresponding to the elements from $F$; that is, the red edges.  One might be tempted to conjecture that $S=\lb F\rb$ if and only if, in $\Gamma(S,F)$, either:
\begin{itemize}
	\item[(i)] each vertex has at least one red edge coming in to it and at least one going out of it, or
	\item[(ii)] each vertex is contained in a red circuit.
\end{itemize}
By Theorem~\ref{thm_generalRBR}, we see immediately that condition (i) is necessary, while condition (ii) is sufficient.  However, we will see that (i) need not be sufficient, and (ii) need not be necessary.  See Examples~\ref{Example3} and~\ref{Example6}, Remarks \ref{rem810} and \ref{rem911}, and also Theorem~\ref{arb_gen_set_Jn}.
\end{remark}

\section{The partition monoid}\label{sect_Pn}

In the following sections, we will apply the general results of the previous sections to calculate the ranks and idempotent ranks for the proper two-sided ideals of the partition monoid, and several of its submonoids.  In each case, we will also describe the (minimal) idempotent generating sets for the largest proper two-sided ideal, and also enumerate the minimal such generating sets for the partition and Jones monoids; a solution is not currently available for the latter problem with respect to the Brauer monoid (see Remark~\ref{rem:dn}).  

Let $n$ be a positive integer, which we fix throughout this section, and write
$\bn$ for the finite set~$\{1,\ldots,n\}$. If $1\leq r\leq s\leq n$, we write $[r,s]=\{r,r+1,\ldots,s\}$.  We also write $\bn'=\{1',\ldots,n'\}$
for a set in one-one correspondence with $\bn$. An \emph{$n$-partition} (or simply a \emph{partition} if $n$ is understood from context) is an equivalence relation on
$\bn\cup\bn'$. 
We regard the elements of $\P_n$ formally as subsets of $(\bn\cup\bn')\times(\bn\cup\bn')$ satisfying reflexivity, symmetry and transitivity; so we will write $(x,y)\in\al$ to mean that $x$ and $y$ are $\al$-related elements of $\bn\cup\bn'$.
The set~$\P_n$ of all $n$-partitions forms a monoid of size $|\P_n|=B_{2n}$ (the $2n$th Bell number), known as the
\emph{partition monoid}, under an associative operation we will describe shortly.  

A partition $\al\in\P_n$ may be represented by a graph on the vertex set
$\bn\cup\bn'$ as follows. We arrange vertices $1,\ldots,n$ in a row (increasing
from left to right) and vertices $1',\ldots,n'$ in a parallel row directly below.
We then add edges in such a way that two vertices $x,y$ are connected by a path if and
only if $(x,y)\in\al$. For example, the partition from $\P_6$ with equivalence classes
$
\{1\} , \{2,3',4'\} , \{3,4\} , \{5,6,1',5',6'\} , \{2'\}
$
is represented by the graph 
${\psset{xunit=.3cm, yunit=.2cm}
 \begin{pspicture}(0,-.2)(5.5,0.3) \psset{origin={0.2,-0.6}}
 \suv0
 \suv1
 \suv2
 \suv3
 \suv4
 \suv5
 \slv0
 \slv1
 \slv2
 \slv3
 \slv4
 \slv5
 \psline(1,2)(2,0)(3,0)
 \psline(2,2)(3,2)
 \psline(4,2)(5,2)(5,0)(4,0)
 \pscurve(4,0)(3,0.6)(1,0.6)(0,0)
\end{pspicture}}$.
Of course, such a graphical representation is not unique, but we will identify two
graphs on the vertex set $\bn\cup\bn'$ if they have the same connected components.
In the same way, we will not distinguish between a partition and a graph that
represents it. 

In order to describe the product alluded to above, let
$\al,\be\in\P_n$. We first stack (graphs representing)~$\al$ and~$\be$ so that
vertices $1',\ldots,n'$ of $\al$ are identified with vertices $1,\ldots,n$ of~$\be$. The connected components of this graph are then constructed, and we finally
delete the middle row of vertices as well as any connected components that are contained within the middle row; the resulting graph is the product $\al\be$. As an example, consider the two partitions
$\al=
{\psset{xunit=.3cm, yunit=.2cm}
 \begin{pspicture}(0,-.2)(5.5,0.3) \psset{origin={0.2,-0.6}}
 \suv0
 \suv1
 \suv2
 \suv3
 \suv4
 \suv5
 \slv0
 \slv1
 \slv2
 \slv3
 \slv4
 \slv5
 \stline23
 \uline12
 \uline45
 \lline34
 \pscurve(0,2)(1,1.4)(2,1.4)(3,2)
 \pscurve(0,0)(0.66,0.6)(1.33,0.6)(2,0)
 \pscurve(2,0)(3,0.6)(4,0.6)(5,0)
\end{pspicture}}
$
and
$\be=
{\psset{xunit=.3cm, yunit=.2cm}
 \begin{pspicture}(0,-.2)(5.5,0.3) \psset{origin={0.2,-0.6}}
 \suv0
 \suv1
 \suv2
 \suv3
 \suv4
 \suv5
 \slv0
 \slv1
 \slv2
 \slv3
 \slv4
 \slv5
 \stline32
 \stline44
 \lline35
 \pscurve(0,2)(0.66,1.4)(1.33,1.4)(2,2)
 \pscurve(1,2)(1.66,1.4)(2.33,1.4)(3,2)
\end{pspicture}}
$
from $\P_6$.  The product $\al\be={\psset{xunit=.3cm, yunit=.2cm}
 \begin{pspicture}(0,-.2)(5.5,0.3) \psset{origin={0.2,-0.6}}
 \suv0
 \suv1
 \suv2
 \suv3
 \suv4
 \suv5
 \slv0
 \slv1
 \slv2
 \slv3
 \slv4
 \slv5
 \stline22
 \uline12
 \uline45
 \lline25
 \pscurve(0,2)(1,1.4)(2,1.4)(3,2)
\end{pspicture}}$ is found by moving through the following intermediate stages:
\[
{\psset{xunit=.3cm, yunit=.2cm}
 \begin{pspicture}(0,-.2)(5.5,3.8) \psset{origin={0.2,2.4}}
 \suv0
 \suv1
 \suv2
 \suv3
 \suv4
 \suv5
 \slv0
 \slv1
 \slv2
 \slv3
 \slv4
 \slv5
 \stline23
 \uline12
 \uline45
 \lline34
 \pscurve(0,2)(1,1.4)(2,1.4)(3,2)
 \pscurve(0,0)(0.66,0.6)(1.33,0.6)(2,0)
 \pscurve(2,0)(3,0.6)(4,0.6)(5,0)
 \rput[r](-0.4,3.4){{\footnotesize $\al=$}}
\psset{origin={0.2,-1.6}}
 \suv0
 \suv1
 \suv2
 \suv3
 \suv4
 \suv5
 \slv0
 \slv1
 \slv2
 \slv3
 \slv4
 \slv5
 \stline32
 \stline44
 \lline35
 \pscurve(0,2)(0.66,1.4)(1.33,1.4)(2,2)
 \pscurve(1,2)(1.66,1.4)(2.33,1.4)(3,2)
 \rput[r](-0.4,-.6){{\footnotesize $\be=$}}
\end{pspicture}}
\quad
{\psset{xunit=.3cm, yunit=.2cm}
 \begin{pspicture}(0,-.2)(2.5,3.8) \psset{origin={0.2,1.4}}
 \psline{->}(0,1.9)(2,1.1)
 \psline{->}(0,-1.9)(2,-1.1)
\end{pspicture}}
\quad
{\psset{xunit=.3cm, yunit=.2cm}
 \begin{pspicture}(0,-.2)(5.5,3.8) \psset{origin={0.2,1.4}}
 \suv0
 \suv1
 \suv2
 \suv3
 \suv4
 \suv5
 \slv0
 \slv1
 \slv2
 \slv3
 \slv4
 \slv5
 \stline23
 \uline12
 \uline45
 \lline34
 \pscurve(0,2)(1,1.4)(2,1.4)(3,2)
 \pscurve(0,0)(0.66,0.6)(1.33,0.6)(2,0)
 \pscurve(2,0)(3,0.6)(4,0.6)(5,0)
\psset{origin={0.2,-0.6}}
 \suv0
 \suv1
 \suv2
 \suv3
 \suv4
 \suv5
 \slv0
 \slv1
 \slv2
 \slv3
 \slv4
 \slv5
 \stline32
 \stline44
 \lline35
 \pscurve(0,2)(0.66,1.4)(1.33,1.4)(2,2)
 \pscurve(1,2)(1.66,1.4)(2.33,1.4)(3,2)
\end{pspicture}}
\quad
{\psset{xunit=.3cm, yunit=.2cm}
 \begin{pspicture}(0,-.2)(2.5,3.8) \psset{origin={0.2,1.4}}
 \psline{->}(0,0)(2,0)
\end{pspicture}}
\quad
{\psset{xunit=.3cm, yunit=.2cm}
 \begin{pspicture}(0,-.2)(5.5,3.3) \psset{origin={0.2,1.4}}
 \suv0
 \suv1
 \suv2
 \suv3
 \suv4
 \suv5
 \stline23
 \uline12
 \uline45
 \lline34
 \pscurve(0,2)(1,1.4)(2,1.4)(3,2)
\psset{origin={0.2,-0.6}}
 \slv0
 \slv1
 \slv2
 \slv3
 \slv4
 \slv5
 \stline32
 \stline44
 \lline35
 \pscurve(1,2)(1.66,1.4)(2.33,1.4)(3,2)
\end{pspicture}}
\quad
{\psset{xunit=.3cm, yunit=.2cm}
 \begin{pspicture}(0,-.2)(2.5,3.3) \psset{origin={0.2,1.4}}
 \psline{->}(0,0)(2,0)
\end{pspicture}}
\quad
{\psset{xunit=.3cm, yunit=.2cm}
 \begin{pspicture}(0,-.2)(5.5,0.3) \psset{origin={0.2,0.4}}
 \suv0
 \suv1
 \suv2
 \suv3
 \suv4
 \suv5
 \slv0
 \slv1
 \slv2
 \slv3
 \slv4
 \slv5
 \stline22
 \uline12
 \uline45
 \lline25
 \pscurve(0,2)(1,1.4)(2,1.4)(3,2)
 \rput[l](5.8,1.4){{\footnotesize $=\al\be.$}}
\end{pspicture}}
\]
We now introduce some notation and terminology that we will use throughout our study.  Let $\al\in\P_n$. The equivalence classes of $\al$ are called its \emph{blocks}.  A block of $\al$ is called a \emph{transversal block} if it has nonempty intersection with both $\bn$ and $\bn'$, and a \emph{nontransversal block} otherwise.  The \emph{rank} of $\al$, denoted $\rank(\al)$, is equal to the number of transversal blocks of $\al$.  For $x\in\bn\cup\bn'$, let $[x]_\al$ denote the block of $\al$ containing $x$.  We define the \emph{domain} and \emph{codomain} of $\al$ to be the sets
\begin{align*}
\dom(\al) &= \bigset{ x\in\bn } { [x]_\al\cap\bn'\not=\varnothing}, \\
\codom(\al) &= \bigset{ x\in\bn } { [x']_\al\cap\bn\not=\varnothing}.
\intertext{We also define the \emph{kernel} and \emph{cokernel} of $\al$ to be the equivalences}
\ker(\al) &= \bigset{(x,y)\in\bn\times\bn}{[x]_\al=[y]_\al}, \\
\coker(\al) &= \bigset{(x,y)\in\bn\times\bn}{[x']_\al=[y']_\al}.
\end{align*}
To illustrate these ideas, let $\al={\psset{xunit=.3cm, yunit=.2cm}
 \begin{pspicture}(0,-.2)(5.5,0.3) \psset{origin={0.2,-0.6}}
 \suv0
 \suv1
 \suv2
 \suv3
 \suv4
 \suv5
 \slv0
 \slv1
 \slv2
 \slv3
 \slv4
 \slv5
 \psline(1,2)(2,0)(3,0)
 \psline(2,2)(3,2)
 \psline(4,2)(5,2)(5,0)(4,0)
 \pscurve(4,0)(3,0.6)(1,0.6)(0,0)
\end{pspicture}}\in\P_6$. Then $\rank(\al)=2$, $\dom(\al)=\{2,5,6\}$,
$\codom(\al)=\{1,3,4,5,6\}$, and $\al$ has
kernel-classes~$\{1\}$, $\{2\}$, $\{3,4\}$, $\{5,6\}$ and cokernel-classes
$\{1,5,6\}$, $\{2\}$, $\{3,4\}$.

It is immediate from the definitions that
\[
\begin{array}{rclcrcl}
\dom(\al\be) \hspace{-.25cm}&\sub&\hspace{-.25cm} \dom(\al), & &
\ker(\al\be)\hspace{-.25cm} &\sp&\hspace{-.25cm} \ker(\al),\\
\codom(\al\be) \hspace{-.25cm}&\sub&\hspace{-.25cm} \codom(\be), & &
\coker(\al\be)\hspace{-.25cm} &\sp&\hspace{-.25cm} \coker(\be)
\end{array}
\]
for all $\al,\be\in\P_n$.  Let $\De$ denote the trivial equivalence relation (that is, the equality relation) on $\bn$.  It is also clear that
\[
\begin{array}{rclcrcl}
\dom(\be) \speq \bn & \spra & \dom(\al\be) \speq \dom(\al), \\
\codom(\al) \speq \bn & \spra & \codom(\al\be) \speq \codom(\be), \\
\ker(\be) \speq \De & \spra & \ker(\al\be) \speq \ker(\al), \\
\coker(\al) \speq \De & \spra & \coker(\al\be) \speq \coker(\be)
\end{array}
\]
for all $\al,\be\in\P_n$.  In particular, the sets
\[
\begin{array}{cc}
\set{\al\in\P_n}{\dom(\al)=[n]}, \qquad&
\set{\al\in\P_n}{\ker(\al)=\De}, \\
\set{\al\in\P_n}{\codom(\al)=[n]}, \qquad&
\set{\al\in\P_n}{\coker(\al)=\De}
\end{array}
\]
are all submonoids of $\P_n$.  The intersection of these four submonoids is (isomorphic to) the symmetric group $\S_n$, which is easily seen to be the group of units of $\P_n$.

If $x\in\bn$, we write $x''=x$.  For $\al\in\P_n$, we define $\al^*={\bigset{(x',y')}{(x,y)\in\al}}$.  Diagrammatically, $\al^*$ is obtained by reflecting (a graph representing)~$\al$ in a horizontal axis.  For example, if $\al={\psset{xunit=.3cm, yunit=.2cm}
 \begin{pspicture}(0,-.2)(5.5,0.3) \psset{origin={0.2,-0.6}}
 \suv0
 \suv1
 \suv2
 \suv3
 \suv4
 \suv5
 \slv0
 \slv1
 \slv2
 \slv3
 \slv4
 \slv5
 \psline(1,2)(2,0)(3,0)
 \psline(2,2)(3,2)
 \psline(4,2)(5,2)(5,0)(4,0)
 \pscurve(4,0)(3,0.6)(1,0.6)(0,0)
\end{pspicture}}\in\P_6$, then $\al^*={\psset{xunit=.3cm, yunit=.2cm}
 \begin{pspicture}(0,-.2)(5.5,0.3) \psset{origin={0.2,-0.6}}
 \suv0
 \suv1
 \suv2
 \suv3
 \suv4
 \suv5
 \slv0
 \slv1
 \slv2
 \slv3
 \slv4
 \slv5
 \psline(1,0)(2,2)(3,2)
 \psline(2,0)(3,0)
 \psline(4,0)(5,0)(5,2)(4,2)
 \pscurve(4,2)(3,1.4)(1,1.4)(0,2)
\end{pspicture}}$.
The map $\P_n\to\P_n:\al\mt\al^*$ illustrates the regular $*$-semigroup structure of $\P_n$; for all $\al,\be\in\P_n$, we have
\[
(\al^*)^*=\al \COMMA 
(\al\be)^*=\be^*\al^* \COMMA
\al\al^*\al=\al.
\]
We also have $\codom(\al)=\dom(\al^*)$ and $\coker(\al)=\ker(\al^*)$ and other such identities.

We say a partition $\al\in\P_n$ is \emph{planar} if it has a graphical representation without any crossings.  The set of all planar partitions forms a submonoid of $\P_n$, and we denote this submonoid by $\PP_n$.  The Brauer monoid $\B_n$ is the submonoid of $\P_n$ consisting of all partitions whose blocks all have size~$2$.  The Jones monoid $\J_n$ is the intersection of $\PP_n$ with $\B_n$.  We will concentrate on the partition monoid itself in this section, and will return to the three submonoids in subsequent sections.  The next result was first proved (using different language) in \cite{Wilcox2007}; see also \cite{FitzGerald2011}.  It also follows from some of the above identities.

\begin{theorem}[Wilcox {\cite[Theorem~17]{Wilcox2007}}]
\label{thm_wilcox}
For each $\alpha, \beta \in \P_n$, we have\emph{:}
\begin{enumerate}
\item[(i)] $\alpha \gr \beta$ if and only if $\dom(\alpha)=\dom(\beta)$ and $\ker(\alpha)=\ker(\beta)$\emph{;}
\item[(ii)] $\alpha \gl \beta$ if and only if $\codom(\alpha)=\codom(\beta)$ and $\coker(\alpha)=\coker(\beta)$\emph{;} 
\item[(iii)] $\alpha \gj \beta$ if and only if $\rank(\alpha) = \rank(\beta)$.
\end{enumerate}
\end{theorem}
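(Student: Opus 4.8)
The plan is to prove (i) and (ii) in tandem through the involution $*$, and then to read off (iii) using the facts recorded above that $\gj=\gd=\gr\circ\gl$ in the finite semigroup $\P_n$. Since $\P_n$ is a monoid, $\al\gr\be$ is equivalent to $\al\P_n=\be\P_n$, that is, to the existence of $\ga,\de\in\P_n$ with $\al=\be\ga$ and $\be=\al\de$; dually, $\al\gl\be$ corresponds to left multiplication.

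For the forward direction of (i), I would simply feed $\al=\be\ga$ and $\be=\al\de$ into the containments $\dom(\al\be)\sub\dom(\al)$ and $\ker(\al\be)\sp\ker(\al)$ recorded above: the first pair gives $\dom(\al)\sub\dom(\be)$ and $\ker(\al)\sp\ker(\be)$, the second the reverse inclusions, hence equality on both counts. The forward direction of (ii) is identical after applying $*$, using $\codom(\al)=\dom(\al^*)$, $\coker(\al)=\ker(\al^*)$, and $\al\gl\be\iff\al^*\gr\be^*$ (which follows from $(\al\be)^*=\be^*\al^*$). In this way (ii) will follow from (i) throughout.

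The substance is the converse of (i): given $\dom(\al)=\dom(\be)=:D$ and $\ker(\al)=\ker(\be)=:\kappa$, I must \emph{build} a partition $\ga$ with $\be\ga=\al$ (and symmetrically a $\de$ with $\al\de=\be$). Here $\al$ and $\be$ share their entire upper data: the $\kappa$-classes inside $D$, say $K_1,\dots,K_r$, are the transversal kernel classes common to both, while $K_i$ is joined to a cokernel class $M_i^\be$ of $\be$ and to a cokernel class $M_i^\al$ of $\al$. The idea is to let $\ga$ re-route the bottom of $\be$ onto the bottom of $\al$: define $\ga$ to have transversal blocks whose top part is $M_i^\be$ (read in $\bn$ along the top of $\ga$) and whose bottom part is $(M_i^\al)'$, to place the remaining top points $\bn\sm\codom(\be)$ in singletons, and to install the nontransversal cokernel classes of $\al$ as nontransversal blocks along the bottom. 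Stacking $\be$ over $\ga$, each $K_i$ then threads through $M_i^\be$ down to $(M_i^\al)'$; no two top classes merge and none leaves the domain, so $\ker(\be\ga)=\kappa$ and $\dom(\be\ga)=D$; the nontransversal cokernel classes of $\be$ meet only the singletons of $\ga$ and so collapse into deleted middle-row components; and the bottom reproduces $\coker(\al)$, $\codom(\al)$ with the correct bijection. Hence $\be\ga=\al$. Verifying these four bookkeeping claims is the main obstacle, and the delicate point is confirming that the middle-row components arising from nontransversal blocks are exactly those deleted in forming the product.

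Finally, for (iii): the forward direction is immediate once I note that $\rank$ equals the number of $\kappa$-classes contained in $D$ (equivalently, the number of cokernel classes contained in the codomain), so by (i) and (ii) both $\gr$- and $\gl$-related elements have equal rank, and $\gj=\gd=\gr\circ\gl$ then transfers this to any $\gj$-related pair. For the converse, given $\rank(\al)=\rank(\be)=r$ I would construct an intermediate $\xi$ with $\dom(\xi)=\dom(\al)$, $\ker(\xi)=\ker(\al)$, $\codom(\xi)=\codom(\be)$, $\coker(\xi)=\coker(\be)$, together with any chosen bijection between its $r$ transversal kernel classes and its $r$ transversal cokernel classes; such a $\xi$ exists precisely because both sides carry exactly $r$ transversal classes. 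By the already-proved parts (i) and (ii) we then have $\al\gr\xi\gl\be$, so $\al\gd\be$, i.e.\ $\al\gj\be$, completing the proof.
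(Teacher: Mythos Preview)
Your argument is correct. The paper does not actually supply a proof of this theorem: it is quoted from Wilcox, with the remark that ``It also follows from some of the above identities.'' Your forward implications in (i) and (ii) are exactly what that remark points to, using the containments $\dom(\al\be)\sub\dom(\al)$, $\ker(\al\be)\sp\ker(\al)$ and their $*$-duals. For the converses, your explicit construction of $\ga$ (with transversal blocks $M_i^\be\cup(M_i^\al)'$, singletons on $\bn\sm\codom(\be)$, and the nontransversal cokernel classes of $\al$ below) is correct and is the standard way to realise $\be\ga=\al$; the bookkeeping claims you flag all check out, and the intermediate element $\xi$ in (iii) is built in the same spirit. So your proof fills in precisely what the paper leaves to the reference.
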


We will also require the following result from \cite{East2011_2}; see also \cite{JE2016_2,East2012}.  For $1\leq i\leq n$, let $\pi_i\in\P_n$ be the projection with domain $[n]\setminus\{i\}$ and kernel $\De$.  For $1\leq i<j\leq n$, let $\mathcal E_{ij}$ be the equivalence on $[n]$ whose only non-trivial equivalence class is $\{i,j\}$, and let $\pi_{ij}\in\P_n$ be the projection with domain $[n]$ and kernel $\mathcal E_{ij}$.  See Figure \ref{fig_rank_n-1_projections} for an illustration.

\begin{figure}[ht]
\begin{center}
\begin{tikzpicture}[xscale=.35,yscale=0.35]
	\fill (0,0)circle(.15)
	      (4,0)circle(.15)
	      (5,0)circle(.15)
	      (6,0)circle(.15)
	      (10,0)circle(.15)
	      (0,2)circle(.15)
	      (4,2)circle(.15)
	      (5,2)circle(.15)
	      (6,2)circle(.15)
	      (10,2)circle(.15);
	\draw (0,2)--(0,0)
	      (4,2)--(4,0)
	      (6,2)--(6,0)
	      (10,2)--(10,0);
  \draw [dotted] (0,2)--(4,2)
                 (6,2)--(10,2)
                 (0,0)--(4,0)
                 (6,0)--(10,0);
  \draw(5,2)node[above]{{\tiny $\phantom{j}i\phantom{j}$}};
  \draw(0,2)node[above]{{\tiny $\phantom{j}1\phantom{j}$}};
  \draw(10,2)node[above]{{\tiny $\phantom{j}n\phantom{j}$}};
  \draw(-.1,.9)node[left]{${}_{\phantom{j}}\pi_i=$};
	\end{tikzpicture}
\quad
\begin{tikzpicture}[xscale=.35,yscale=0.35]
\draw(3,2)arc(180:270:.4) (3.4,1.6)--(6.6,1.6) (6.6,1.6) arc(270:360:.4);
\draw(3,0)arc(180:90:.4) (3.4,.4)--(6.6,.4) (6.6,.4) arc(90:0:.4);
	\fill (0,0)circle(.15)
	      (2,0)circle(.15)
	      (3,0)circle(.15)
	      (4,0)circle(.15)
	      (6,0)circle(.15)
	      (7,0)circle(.15)
	      (8,0)circle(.15)
	      (10,0)circle(.15)
	      (0,2)circle(.15)
	      (2,2)circle(.15)
	      (3,2)circle(.15)
	      (4,2)circle(.15)
	      (6,2)circle(.15)
	      (7,2)circle(.15)
	      (8,2)circle(.15)
	      (10,2)circle(.15);
	\draw (0,2)--(0,0)
	      (2,2)--(2,0)
	      (3,2)--(3,0)
	      (4,2)--(4,0)
	      (6,2)--(6,0)
	      (7,2)--(7,0)
	      (8,2)--(8,0)
	      (10,2)--(10,0);
  \draw [dotted] (0,2)--(2,2)
                 (4,2)--(6,2)
                 (8,2)--(10,2)
                 (0,0)--(2,0)
                 (4,0)--(6,0)
                 (8,0)--(10,0);
  \draw(3,2)node[above]{{\tiny $\phantom{j}i\phantom{j}$}};
  \draw(7,2)node[above]{{\tiny $\phantom{j}j\phantom{j}$}};
  \draw(0,2)node[above]{{\tiny $\phantom{j}1\phantom{j}$}};
  \draw(10,2)node[above]{{\tiny $\phantom{j}n\phantom{j}$}};
  \draw(-.1,.9)node[left]{$\pi_{ij}=$};
	\end{tikzpicture}
\end{center}
\caption{The projections $\pi_i,\pi_{ij}\in\P_n$.}
\label{fig_rank_n-1_projections}
\end{figure}
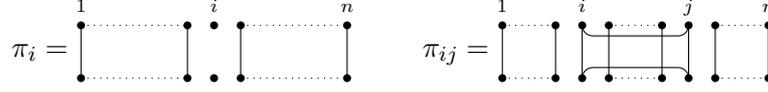

\begin{theorem}\label{thm_PnSn}
The singular part $\P_n\setminus\S_n$ of the partition monoid $\P_n$ is idempotent generated.  The set $\set{\pi_i}{1\leq i\leq n}\cup\set{\pi_{ij}}{1\leq i<j\leq n}$ is a minimal idempotent generating set.
\end{theorem}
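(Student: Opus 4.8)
The plan is to recognise $X=\set{\pi_i}{1\le i\le n}\cup\set{\pi_{ij}}{1\le i<j\le n}$ as \emph{exactly} the set of projections of rank $n-1$, and then apply Theorem~\ref{thm_projgenerated}. First I would check, using the diagrams in Figure~\ref{fig_rank_n-1_projections}, that each $\pi_i$ and each $\pi_{ij}$ is a projection of rank $n-1$: indeed $\pi_i$ has the $n-1$ transversal blocks $\set{k,k'}{k\ne i}$, while $\pi_{ij}$ has the $n-1$ transversal blocks $\{i,j,i',j'\}$ and $\set{k,k'}{k\ne i,j}$. Conversely, a short counting argument shows these are the only rank-$(n-1)$ projections: a projection $p$ satisfies $\dom(p)=\codom(p)$ and $\ker(p)=\coker(p)$, and its rank equals the number of $\ker(p)$-classes contained in $\dom(p)$; if this number is $n-1$ then either $\dom(p)=\bn$, forcing $\ker(p)$ to have a single non-trivial class, of size $2$ (yielding some $\pi_{ij}$), or $|\dom(p)|<n$, which forces $|\dom(p)|=n-1$ with $\ker(p)$ trivial (yielding some $\pi_i$). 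Hence $X$ is precisely the set $P(J)$ of projections of the $\gj$-class $J=\set{\al\in\P_n}{\rank(\al)=n-1}$, and $|X|=n+\binom{n}{2}=\binom{n+1}{2}$.

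Applying Theorem~\ref{thm_projgenerated} with this $J$ and $X=P(J)$ gives immediately that $T:=\langle X\rangle$ is an idempotent generated regular $*$-semigroup with $\rank(T)=\idrank(T)=|X|=\binom{n+1}{2}$, and that a subset of $T$ is a minimal generating set precisely when it is a transversal of the $\gr$- and $\gl$-classes in the maximal $\gj$-classes of $T$. Since $X$ consists of idempotents, it is in particular a minimal \emph{idempotent} generating set for $T$. Everything therefore reduces to identifying $T$ with the singular part $\P_n\sm\S_n$. One inclusion is routine: the elements of $X$ have rank $n-1<n$, and since $\rank(\al\be)\le\min\{\rank(\al),\rank(\be)\}$ the set $\P_n\sm\S_n=\set{\al\in\P_n}{\rank(\al)\le n-1}$ (the rank-$n$ partitions being exactly the units $\S_n$) is an ideal of $\P_n$; hence $T\sub\P_n\sm\S_n$.

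The substance is the reverse inclusion $\P_n\sm\S_n\sub T$, which I would prove in two stages. Stage one is to show $J\sub T$. By Lemma~\ref{lem_projections}(ii) every idempotent of $J$ is a product $(ee^*)(e^*e)$ of two projections which, being $\gr$- and $\gl$-related to $e$, again lie in $J$, hence in $X$; so it suffices to prove that the completely $0$-simple principal factor $J^*$ is idempotent generated. By the classical criterion of Graham \cite{Graham68} this is equivalent to connectivity of the Graham--Houghton graph of $J^*$, which I would verify by exhibiting, for any two rank-$(n-1)$ projections $p,q$, a chain of such projections whose consecutive products are all again of rank $n-1$; once $J^*$ is idempotent generated, every element of $J$ is a non-zero product of idempotents from $J$, and hence of elements of $X$. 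Stage two is a rank-lowering step: showing that each projection of rank $r<n-1$ is a product of rank-$(n-1)$ elements of $T$ (for instance via products such as $\pi_{ij}\pi_{jk}$ and $\pi_i\pi_j$, which drop the rank by one), so that the generation propagates down the chain of $\gj$-classes and yields every element of $\set{\al\in\P_n}{\rank(\al)\le n-1}=\P_n\sm\S_n$.

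The main obstacle is exactly this combinatorial generation in the reverse inclusion --- verifying the connectivity of the Graham--Houghton graph of $J^*$ together with the rank-lowering factorisations --- since these demand explicit diagram calculations rather than the formal machinery developed earlier; this is precisely the combinatorial core established in \cite{East2011_2}. Granting it, $T=\P_n\sm\S_n$ is idempotent generated, and $X$, being a generating set of idempotents of cardinality $\idrank(T)=\binom{n+1}{2}$, is a minimal idempotent generating set, as required.
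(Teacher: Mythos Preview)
The paper does not prove this statement; it is quoted from \cite{East2011_2} (the sentence immediately preceding the theorem says so) and is then used as a hypothesis elsewhere---for instance, Proposition~\ref{prop_chain} is applied to $S=\P_n\sm\S_n$ only after Theorem~\ref{thm_PnSn} certifies that $S$ is idempotent generated. So there is no in-paper proof to compare against, and your proposal is really an attempted reconstruction. The architecture is right: identifying $X$ as exactly $P(J_{n-1}(\P_n))$ is correct, deducing minimality from Theorem~\ref{thm_projgenerated} is clean and non-circular, and your rank-lowering stage two is essentially Lemma~\ref{lem_dropdownPn}.

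Your stage-one sketch, however, contains a real error. You assert that by Graham's criterion, idempotent generation of the principal factor $J^*$ is \emph{equivalent} to connectivity of its Graham--Houghton graph. Connectivity is only necessary. Graham's description of $\langle E(J^*)\rangle$ (for $J^*\cong\M[G;I,\Lambda;P]$) is as a Rees matrix semigroup over the subgroup of $G$ generated by products of non-zero $P$-entries taken around cycles of the graph; for $\langle E(J^*)\rangle=J^*$ one also needs this subgroup to be all of $G$. Here the maximal subgroups of $J_{n-1}(\P_n)$ are isomorphic to $\S_{n-1}$, non-trivial for $n\ge3$, so exhibiting chains of adjacent projections establishes connectivity but not idempotent generation of $J^*$. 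That missing group-theoretic step is precisely the substantive combinatorics carried out in \cite{East2011_2}; you are right to defer to it, but you should not misdescribe what it amounts to.
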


Note that this is precisely the set of all projections of rank $n-1$. Defining relations for this generating set were given in \cite{East2011_2}, but we will not need them here.

If $A\subseteq[n]$, we write $A'=\{a':a\in A\}\subseteq[n]'$.
As in \cite{East2012}, if $\alpha \in \P_n$, we will write
\[
\alpha=\partn{A_i}{B_i}{C_j}{D_k}_{i \in I, \ j \in J, \ k \in K}
\]
to indicate that $\alpha$ has transversal blocks $A_i \cup B_i'$ $(i \in I)$, and nontransversal blocks $C_j$ $(j \in J)$ and $D_k'$ $(k \in K)$.  Note that one or more (but not all) of $I,J,K$ may be empty, and that $|I|=\rank(\al)$.  We will often use variations of this notation but it should always be clear what is meant.
The proof of the following lemma from \cite{East2012} is straightforward. 

\begin{lemma}\label{lem_projections_Pn}
A partition is a projection if and only if it is of the form
\[
\partn{A_i}{A_i}{C_j}{C_j}_{i \in I, \ j \in J}.
\]
\end{lemma}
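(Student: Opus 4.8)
The plan is to reduce everything to a single computation, namely the evaluation of the product $\al\al^*$ for an arbitrary partition $\al$, and then to read off both implications from it. Throughout I would use the fact that, by definition, a projection is exactly a partition $p$ satisfying $p=p^*$ and $p=p^2$.

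First I would record the general form of the adjoint. Writing a general element as
\[
\al=\partn{A_i}{B_i}{C_j}{D_k}_{i\in I,\ j\in J,\ k\in K},
\]
the reflection $\al\mapsto\al^*$ in the horizontal axis swaps the two rows of each transversal block and turns each top (resp.\ bottom) nontransversal block into a bottom (resp.\ top) one, so that
\[
\al^*=\partn{B_i}{A_i}{D_k}{C_j}_{i\in I,\ k\in K,\ j\in J}.
\]

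The heart of the argument, and the step I expect to require the most care, is the diagrammatic computation of $\al\al^*$. Stacking $\al$ over $\al^*$ and identifying the bottom row of $\al$ with the top row of $\al^*$, one checks that the middle row is partitioned in the same way by both factors: into the sets $B_i$ $(i\in I)$ coming from the transversal blocks and the sets $D_k$ $(k\in K)$ coming from the nontransversal blocks (these are pairwise disjoint and cover the middle row, since the primed vertices of $\al$ are partitioned into the parts $B_i'$ and $D_k'$). Consequently each middle block $B_i$ links the top block $A_i$ of $\al$ to the bottom block $A_i$ of $\al^*$, producing a transversal block $A_i\cup A_i'$; each middle block $D_k$ forms a connected component lying entirely in the middle row and is therefore deleted; and the top nontransversal blocks $C_j$ of $\al$ together with the bottom nontransversal blocks $C_j'$ of $\al^*$ survive untouched. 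Hence
\[
\al\al^*=\partn{A_i}{A_i}{C_j}{C_j}_{i\in I,\ j\in J}
\]
for \emph{every} $\al$. The only genuine work is this bookkeeping on the middle row; everything else is formal.

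With this formula in hand both directions are immediate. For the ``only if'' direction, if $\al$ is a projection then $\al=\al^*$ and $\al=\al^2$, so $\al=\al^2=\al\al^*$, and the displayed formula shows $\al$ has the stated shape. For the ``if'' direction, suppose $\al=\partn{A_i}{A_i}{C_j}{C_j}_{i\in I,\ j\in J}$. Reflecting fixes each transversal block $A_i\cup A_i'$ and interchanges $C_j\leftrightarrow C_j'$, giving $\al^*=\al$; and specializing the computation above to the case $B_i=A_i$ and $D_k=C_j$ (with $K=J$) yields $\al^2=\al\al^*=\al$. Thus $\al$ is both self-adjoint and idempotent, hence a projection, completing the proof.
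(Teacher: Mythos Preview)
Your proof is correct. The paper does not actually give its own proof of this lemma; it simply states that the result is from \cite{East2012} and that the proof is straightforward. Your argument, which computes $\al\al^*$ once for a general $\al$ and then reads off both implications, supplies exactly the kind of direct verification the paper omits, and the middle-row bookkeeping you describe is accurate.
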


For $0\leq r\leq n$, let
\[
\JrPn= \set{\al\in\P_n}{\rank(\al)=r}.
\]
By Theorem \ref{thm_wilcox}, these sets are precisely the $\gj$-classes of $\P_n$, and they form a chain:
\[
J_0(\P_n)< J_1(\P_n)<\cdots< J_{n-1}(\P_n)< \JnPn.
\]
It follows from Proposition \ref{prop_chain} that the ideals of $\P_n$ are precisely the sets
\[
\IrPn=J_0(\P_n)\cup J_1(\P_n)\cup\cdots\cup J_r(\P_n)=\set{\al\in\P_n}{\rank(\al)\leq r}.
\]
Note that $\InPn=\P_n$, $\JnPn=\S_n$, $\InmPn=\P_n\setminus\S_n$, and that the maximal subgroups contained in $J_r(\P_n)$ are all isomorphic to $\S_r$.  In what follows, we will apply the general results of Sections \ref{sect_0-simple} and \ref{sect_regular-*} to the (finite idempotent generated regular $*$-) semigroup $S=\InmPn=\P_n\setminus\S_n$.

\subsection{Rank and idempotent rank of ideals in $\P_n$}

The key lemma that allows us to reduce the problem to the consideration of principal factors is the following, which shows how elements of large rank may be used to generate elements of smaller rank. 

\begin{lemma}\label{lem_dropdownPn}
If $0 \leq r \leq n-2$, then $\JrPn \subseteq \langle \JrpPn \rangle$. 
\end{lemma}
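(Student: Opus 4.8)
The plan is to show that every partition $\al$ of rank exactly $r$ lies in $\langle \JrpPn\rangle$; since $\JrpPn\sub\IrpPn$ and we have already established (via Proposition~\ref{prop_chain}) that $\JrpPn$ generates the ideal $\IrpPn$, it will suffice to write an arbitrary rank-$r$ partition as a product of partitions of rank $r+1$. The natural strategy is to express $\al$ as a product of two (or a few) partitions, each of rank $r+1$, whose product collapses exactly one transversal. Concretely, writing $\al=\partn{A_i}{B_i}{C_j}{D_k}_{i\in I,\,j\in J,\,k\in K}$ with $|I|=r$, I would construct partitions $\be,\ga\in\JrpPn$ with $\al=\be\ga$ by ``splitting off'' one extra transversal on the top and bottom respectively.

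First I would handle a clean special case to isolate the idea: suppose $\al$ has a nontransversal block $C_{j_0}\sub\bn$ on the top (equivalently $\dom(\al)\ne\bn$ or $\ker(\al)\ne\De$ in the right way) and a nontransversal block $D_{k_0}\sub\bn'$ on the bottom. Then one can promote these to transversals in the two factors. The idea is to take $\be$ to agree with $\al$ on everything except that $C_{j_0}$ becomes joined through to a new middle point, and symmetrically $\ga$ to reconnect that middle point down to $D_{k_0}$, so that when the product is formed and the middle row deleted, the temporary extra transversal is destroyed and we recover exactly $\al$. Each of $\be,\ga$ has exactly $r+1$ transversals by construction, so both lie in $\JrpPn$. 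The cleanest way to make this rigorous is to use the projection generators: since $r\le n-2$, there are at least two points available outside the transversals, and I would realize the rank drop as a composition involving the rank-$(n-1)$ projections $\pi_i,\pi_{ij}$ of Theorem~\ref{thm_PnSn}, or more directly as a product of two explicit rank-$(r+1)$ diagrams whose composite picture is drawn out.

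The technical heart, and the step I expect to be the main obstacle, is the \emph{bookkeeping of the various kernel/cokernel and domain/codomain cases}: an arbitrary $\al\in\JrPn$ need not have both a top nontransversal and a bottom nontransversal block in the convenient form above. Because $\rank(\al)=r\le n-2$, a counting argument guarantees that $\al$ has at least two ``spare'' points beyond its $r$ transversal blocks, but these could manifest as a nontransversal block, a kernel class of size $\ge2$, or a point outside the domain, and these distribute between top and bottom in several combinations. I would organize the proof so that in every case I can name two rank-$(r+1)$ factors, checking in each case that the middle-row identification and deletion returns precisely $\al$ (using the monotonicity identities $\dom(\al\be)\sub\dom(\al)$, $\ker(\al\be)\sp\ker(\al)$ and their duals to confirm ranks and blocks behave as claimed).

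To keep the case analysis short, I would reduce to a single generic normal form. Observe that $\al\,\gr\,\al\al^*$ and $\al\,\gl\,\al^*\al$, so by Theorem~\ref{thm_czsproj}-style reasoning it is enough to factor one convenient representative in each $\gd$-class and then translate across the $\gr$- and $\gl$-classes by multiplying on the appropriate side by elements of $\JrpPn$ that act as ``partial identities''; this lets me assume without loss of generality that $\al$ is already a projection of rank $r$, of the form $\partn{A_i}{A_i}{C_j}{C_j}_{i\in I,\,j\in J}$ by Lemma~\ref{lem_projections_Pn}. For such a projection the factorization is symmetric and transparent: I pick one point $p$ lying in some nontransversal block or free kernel class (available since $r\le n-2$), let $\be$ be the rank-$(r+1)$ projection obtained from $\al$ by making $\{p\}$-related points into an extra transversal, and set $\al=\be\,\al=\be\,\be\cdots$ collapsing back down. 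Verifying $\be\in\JrpPn$ and $\be\cdot(\text{dual})=\al$ is then a direct diagram computation, completing the proof.
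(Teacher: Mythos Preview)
Your proposal has a genuine gap at the crucial final step. After reducing to a projection $\al=\partn{A_i}{A_i}{C_j}{C_j}$, you promote one nontransversal block $C_{j_0}$ to a transversal to obtain a rank-$(r+1)$ projection $\be$, and then write ``$\al=\be\,\al=\be\,\be\cdots$'' and ``$\be\cdot(\text{dual})=\al$''. But $\al=\be\al$ is a factorization with one factor of rank $r+1$ and one of rank $r$, so it does \emph{not} exhibit $\al$ as a product of elements of $\JrpPn$; and since $\be$ is a projection, $\be\be=\be\not=\al$ and $\be\be^*=\be\not=\al$. Simply promoting a single block and ``collapsing back down'' cannot work: you need \emph{two} genuinely different rank-$(r+1)$ factors whose extra transversals fail to line up in the middle row. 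The paper achieves this by routing through a canonical middle form: $\be$ sends the $r$ transversals $A_i$ to $1,\ldots,r$ and the extra block $C_1$ to $r+1$, while $\ga$ sends $1,\ldots,r$ back to the $A_i$ and has its extra transversal at $n$; since $r\le n-2$ forces $r+1\ne n$, the two surplus transversals miss each other and the product has rank exactly $r$.

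Two further issues. First, your reduction to projections via ``translating across $\gr$- and $\gl$-classes by multiplying by partial identities in $\JrpPn$'' does not work as stated: if $\al\in J_r$ and $p=\al\al^*$ is its projection, then $\al=p\al$ again has a rank-$r$ factor. The correct reduction (used implicitly by the paper via the proof of Proposition~\ref{prop_chain}) is FitzGerald's theorem: every element of $J_r$ is a product of idempotents, hence projections, from $J_r$. Second, you do not handle the case where the projection $\al$ has \emph{no} nontransversal blocks (the paper's Case~2): then one must split a transversal block $A_r$ with $|A_r|\ge2$ into two pieces, and the factorization is different. Your phrase ``free kernel class'' hints at this but the construction you give does not cover it.
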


\begin{proof}
Let $\alpha \in \JrPn$ be a projection where $0 \leq r \leq n-2$.  By (the proof of) Proposition~\ref{prop_chain}, it is enough to show that $\al\in\la\JrpPn\ra$.
Write
\[
\alpha =    \partn{A_i}{A_i}{C_j}{C_j}_{i \in I, \ j \in J},
\]
where $|I| = r$ and $|J| = k$. Without loss of generality, we may suppose that $I=[r]$ and $J=[k]$.  There are two cases to consider. 

\

\noindent \textbf{Case 1:} First suppose $|J| = k \geq 1$. In this case $\alpha = \beta \gamma$ where
\begin{align*}
\beta &= 
\partnlong{A_i}{i}{C_1}{r+1}{C_j}{x}_{i\in I,\ j\in [2,k],\ x\in[r+2,n]},
\\
\gamma  &= 
\partnlong{i}{A_i}{n}{C_1}{x}{C_j}_{i\in I,\ j\in [2,k],\ x\in[r+1,n-1]}
\end{align*}
both belong to $J_{r+1}(\P_n)$. 

\

\noindent \textbf{Case 2:} Next suppose $|J| = k =0$.  Without loss of generality, we may suppose that $|A_r|\geq2$.  Consider a non-trivial decomposition $A_r=A_r'\cup A_r''$ where $A_r'\cap A_r''=\varnothing$.  Then $\alpha=\beta\gamma$ where
\begin{align*}
\beta &= \partnlonger{A_i}{i}{A_r'}{r}{A_r''}{r+1,r+2}{\varnothing}{x}_{i\in [r-1],\ x\in[r+3,n]}, \\
\gamma &= \partnlonger{i}{A_i}{r,r+1}{A_r'}{r+2}{A_r''}{x}{\varnothing}_{i\in [r-1],\ x\in[r+3,n]}
\end{align*}
both belong to $J_{r+1}(\P_n)$. 
\end{proof}

\begin{theorem}
\label{thm_main1}
For $0\leq r\leq n-1$, the ideal $\IrPn$ is idempotent generated, and
\[
\rank(\IrPn) = \idrank(\IrPn) = \sum_{j=r}^n S(n,j)\binom jr = \sum_{j=r}^n \binom nj S(j,r) \Bell_{n-j}
\] 
where $S(j,r)$ denotes the Stirling number of the second kind, and $\Bell_k$ denotes the $k$th Bell number. 
Moreover, a subset $A \subseteq \IrPn$ of this cardinality is a generating set for $\IrPn$ if and only if the following three conditions hold\emph{:}
\begin{enumerate}
\item $\rank(\alpha) = r$ for all $\alpha \in A$\emph{;}
\item for all $\alpha, \beta \in A$ with $\alpha \neq \beta$, either $\ker(\al)\not=\ker(\be)$ or $\dom(\al)\not=\dom(\be)$\emph{;}
\item for all $\alpha, \beta \in A$ with $\alpha \neq \beta$, either $\coker(\al)\not=\coker(\be)$ or $\codom(\al)\not=\codom(\be)$.
\end{enumerate}
\end{theorem}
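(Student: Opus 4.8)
The plan is to feed the semigroup $S=\PnSn=\InmPn$ into Proposition~\ref{prop_chain}, then identify the resulting invariant by counting $\gr$-classes in two different ways, and finally read off the description of minimal generating sets from Theorem~\ref{thm_czsproj}. To set this up I would first check that $S=\InmPn$ meets the hypotheses of Proposition~\ref{prop_chain}. It is a finite regular $*$-semigroup, and it is idempotent generated by Theorem~\ref{thm_PnSn}. By Theorem~\ref{thm_wilcox}(iii) its $\gj$-classes are exactly the sets $\JrPn$ $(0\le r\le n-1)$, and these form a chain. The remaining hypothesis, $P(\JrPn)\sub\la\JrpPn\ra$ for $0\le r\le n-2$, is immediate from Lemma~\ref{lem_dropdownPn}, since any projection of rank $r$ lies in $\JrPn\sub\la\JrpPn\ra$. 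Proposition~\ref{prop_chain} then gives simultaneously that each $\IrPn$ is idempotent generated and that $\rank(\IrPn)=\idrank(\IrPn)=\rho_r$, where $\rho_r$ is the number of $\gr$-classes (equivalently $\gl$-classes) in $\JrPn$.

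The numerical content thus reduces to computing $\rho_r$, which I would do by a double count. By Theorem~\ref{thm_wilcox}(i) an $\gr$-class of $\JrPn$ is determined exactly by a pair $(\ker\al,\dom\al)$ of rank $r$; here $\ker\al$ is a set-partition of $\bn$ and $\dom\al$ is the union of precisely $r$ of its blocks (the transversal blocks, cf.\ Lemma~\ref{lem_projections_Pn}). Grouping these pairs by the number $j$ of blocks of $\ker\al$ yields $S(n,j)$ choices of kernel and $\binom{j}{r}$ choices of which blocks form the domain, so $\rho_r=\sum_{j=r}^n S(n,j)\binom{j}{r}$. Grouping the same pairs instead by the size $j=|\dom\al|$ yields $\binom{n}{j}$ choices of domain, $S(j,r)$ ways to split it into the $r$ transversal blocks, and $\Bell_{n-j}$ ways to partition the complementary $n-j$ points into nontransversal blocks, so $\rho_r=\sum_{j=r}^n\binom{n}{j}S(j,r)\Bell_{n-j}$. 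Since both expressions count the same set, the two closed forms in the statement are equal, with no separate algebraic identity to establish.

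For the characterization of minimal generating sets I would invoke Theorem~\ref{thm_czsproj} through the principal factor. Since $\IrPn=\la\JrPn\ra$ with $\JrPn$ a maximal $\gj$-class, Lemma~\ref{lem_easy} (via Lemma~\ref{lem_maxJ}) shows that a generating set of minimum cardinality must be contained in $\JrPn$ and generate $\JrPn^*$; being contained in $\JrPn$ is exactly condition~(1). Theorem~\ref{thm_czsproj}(ii) then says such a set generates precisely when it meets each non-zero $\gr$-class and each non-zero $\gl$-class once. Because $|A|=\rho_r$ equals the number of $\gr$-classes, requiring that no two elements of $A$ be $\gr$-related forces a transversal by pigeonhole; translating ``$\gr$-related'' and ``$\gl$-related'' via Theorem~\ref{thm_wilcox}(i),(ii) turns these transversal conditions into conditions~(2) and~(3) respectively. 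Conversely, if $A$ satisfies (1)--(3) with $|A|=\rho_r$, the same translation shows $A$ is a transversal of both the $\gr$- and $\gl$-classes, so Theorem~\ref{thm_czsproj}(ii) gives $\la A\ra\sp\JrPn$ and hence $\la A\ra=\la\JrPn\ra=\IrPn$, completing the equivalence.

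I expect the main difficulty to be bookkeeping rather than any single deep step: getting the two counts of $\gr$-classes exactly right (keeping straight the roles of domain, kernel, and transversal versus nontransversal blocks), and observing carefully that it is precisely the cardinality hypothesis $|A|=\rho_r$ that promotes conditions~(2) and~(3) from ``injective on $\gr$- and $\gl$-classes'' to ``a full transversal'', which is the form in which Theorem~\ref{thm_czsproj}(ii) applies.
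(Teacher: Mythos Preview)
Your proposal is correct and follows essentially the same route as the paper: invoke Proposition~\ref{prop_chain} via Lemma~\ref{lem_dropdownPn} to reduce to counting $\gr$-classes, compute $\rho_r$ combinatorially (you supply the second count explicitly where the paper only remarks on it parenthetically), and then translate Theorem~\ref{thm_czsproj}(ii) through Theorem~\ref{thm_wilcox} to obtain conditions~(1)--(3). The only cosmetic difference is that the paper passes directly to the principal factor for the last clause rather than citing Lemma~\ref{lem_easy}, but the content is the same.
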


\begin{proof}
It follows from Lemma~\ref{lem_dropdownPn} and Proposition \ref{prop_chain} that $\IrPn$ is idempotent generated and $\rank(\IrPn) = \idrank(\IrPn) = \rho_{nr}$, where $\rho_{nr}$ is the number of $\gr$-classes in $\JrPn$.  To specify an $\gr$-class in $\JrPn$, we first choose $j$ kernel classes for some $j\in[r,n]$.  From these, we then choose $r$ classes to be part of the transversal blocks.  These choices may be made in $S(n,j)$ and $\binom jr$ ways, respectively.
Multiplying these and summing over all $j\in[r,n]$ gives 
\[
\rho_{nr} = \sum_{j=r}^n S(n,j)\binom jr.
\]
(Counting the $\gr$-classes in another way shows that $\rho_{nr} = \sum_{j=r}^n \binom nj S(j,r) \Bell_{n-j}$: here we first choose the domain, say of size $j$, in $\binom nj$ ways; we then choose the $r$ kernel-classes contained in the domain in $S(j,r)$ ways; we choose the remaining kernel-classes in $\Bell_{n-j}$ ways; finally, we multiply these values, then sum over all $j\in[r,n]$.)
Finally, a subset $A\sub\IrPn$ with $|A|=\rho_{nr}$ generates $\IrPn=\la\JrPn\ra$ if and only if it generates the principal factor $\JrPn^*$ which, by Theorem~\ref{thm_czsproj}, occurs if and only if $A$ is a transversal of the $\gr$- and $\gl$-classes of $\JrPn$.  By Theorem \ref{thm_wilcox}, this is equivalent to saying that conditions (1), (2) and (3) hold.
\end{proof}

\begin{remark}
As expected, this theorem agrees with Theorem~\ref{thm_PnSn} in the particular case $r=n-1$.  Note also that $\rank(I_0(\P_n))=\idrank(I_0(\P_n))=\Bell_n$, and that the identity $\rho_{n0}+\rho_{n1}=\rho_{n+1,0}$ is a consequence of the well-known recurrence $\Bell_{n+1}=\sum_{i=0}^n\binom ni\Bell_i$.  See Table~\ref{tab_rankIrPn} for some computed values of $\rank(\IrPn)=\idrank(\IrPn)$.  These numbers do not appear on \cite{Sloan:OEIS}.
\end{remark}

\begin{remark}\label{rem:|IrPn|}
We note that the formula given in the proof of Theorem \ref{thm_main1} for $\rho_{nr}$, the number of $\gr$-classes (and $\gl$-classes) contained in $\JrPn$, is valid for $r=n$, giving a value of $\rho_{nn}=1$ (but not agreeing with the value $\rank(\InPn)=\rank(\P_n)=4$, as calculated in \cite{East2011_2}).
Since also any $\gh$-class contained in $\JrPn$ has size $r!$, it follows that $|\JrPn|=\rho_{nr}^2r!$.  This yields a formula for the cardinality of the ideals of $\P_n$:
\[
|\IrPn| = \sum_{i=0}^r \rho_{ni}^2i! \qquad\text{for any $0\leq r\leq n$.}
\]
\end{remark}

\begin{table}[ht]%
\begin{tabular}{|c|cccccccccc|}
\hline
$n$ $\setminus$ $r$	&0	&1	&2	&3	&4	&5	&6	&7 &8 &9      \\
\hline
1&            1&            &            &            &            &            &            &            &            &            \\
2&            2&           3&            &            &            &            &            &            &            &            \\
3&            5&          10&           6&            &            &            &            &            &            &            \\
4&           15&          37&          31&          10&            &            &            &            &            &            \\
5&           52&         151&         160&          75&          15&            &            &            &            &            \\
6&          203&         674&         856&         520&         155&          21&            &            &            &            \\
7&          877&        3263&        4802&        3556&        1400&         287&          28&            &            &            \\
8&         4140&       17007&       28337&       24626&       11991&        3290&         490&          36&            &            \\
9&        21147&       94828&      175896&      174805&      101031&       34671&        6972&         786&          45&            \\
10&      115975&      562595&     1146931&     1279240&      853315&      350889&       88977&       13620&        1200&          55\\
\hline
\end{tabular}
\caption{Values of $\rank(I_r(\P_n)) = \idrank(I_r(\P_n))$.}
\label{tab_rankIrPn}
\end{table}

\subsection{Minimal idempotent generating sets of $\boldmath \P_n\setminus\S_n$}
\label{sect_PnSn}

Theorem \ref{thm_balanced} above gives a correspondence between minimal idempotent generating sets of $$\PnSn=\InmPn=\la\JnmPn\ra$$ and balanced subgraphs of the projection graph $\Gamma(\PnSn)=\Gamma(\JnmPn^*)$, in the sense of Definitions~\ref{def_GammaS} and \ref{def_balanced_subgraph}, which, for simplicity, we will denote by $\Gamma_n$.  We will also write~$\G_n$ for the set of all balanced subgraphs of $\Gamma_n$.
Parts of the next lemma were also used in~\cite{East2013}.

\begin{lemma}\label{lem_idempotents_in_J_{n-1}}
The set of idempotents of $\JnmPn$ is
\[
\{\pi_i:1\leq i\leq n\}\cup\{\pi_{ij},\lambda_{ij},\lambda_{ji},\rho_{ij},\rho_{ji}:1\leq i<j\leq n\},
\]
where these partitions are illustrated in Figures \ref{fig_rank_n-1_projections} and \ref{fig_rank_n-1_idempotents}.  The set of projections of $\JnmPn$ is
\[
\{\pi_i:1\leq i\leq n\}\cup\{\pi_{ij}:1\leq i<j\leq n\}.
\]
In the principal factor $\JnmPn^*$, the only nonzero products of pairs of projections are
\[
\pi_{ij}^2=\pi_{ij}, \ \  \pi_i^2=\pi_i, \ \ 
\pi_{ij}\pi_j=\lambda_{ij}, \ \  \pi_{ij}\pi_i=\lambda_{ji}, \ \ 
\pi_i\pi_{ij}=\rho_{ij}, \ \  \pi_j\pi_{ij}=\rho_{ji}.
\]
\end{lemma}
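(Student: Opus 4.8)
The plan is to prove the three assertions in a linked order: the projections first, then the multiplication rules for products of projections, and finally the full set of idempotents, which will drop out of the first two. I would begin with the projections. By Lemma~\ref{lem_projections_Pn} every projection of $\P_n$ has the form described there, with rank equal to the number $|I|$ of transversal blocks $A_i\cup A_i'$. Hence a projection in $\JnmPn$ has exactly $n-1$ transversal blocks. Counting how the $n$ points of $\bn$ are distributed among the $A_i$ $(i\in I)$ and the purely-top nontransversal blocks, subject to $|I|=n-1$, forces exactly one of two possibilities: either every $A_i$ is a singleton and there is a single leftover nontransversal block, which is precisely $\pi_i$ (domain $\bn\sm\{i\}$, trivial kernel); or one $A_i$ has two elements and there are no nontransversal blocks, which is precisely $\pi_{ij}$ (full domain, with kernel-class $\{i,j\}$). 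This establishes the stated description of $P(\JnmPn)$.

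Next I would compute all products of pairs of these projections. The structural fact that makes everything cohere, already established in the proof of Lemma~\ref{lem_projections}(ii), is that a product $pq$ of two projections in a regular $*$-semigroup is automatically idempotent. Each product has rank at most $n-1$, so the real content is to decide, for each pair, whether the rank stays $n-1$ (that is, whether the product is nonzero in the principal factor) and, if so, to identify the resulting diagram. I would do this diagrammatically, stacking the two near-identity partitions and tracking connected components through the middle row, organising the cases by the overlap of the ``defect indices''. For $\pi_i\pi_j$ with $i\neq j$ one obtains isolated top points $i,j$ and isolated bottom points $i',j'$, so the rank drops to $n-2$. For the mixed products $\pi_i\pi_{jk}$ and $\pi_{ij}\pi_k$ the rank remains $n-1$ exactly when the singleton index lies in the pair: the former yields $\rho_{ij}=\pi_i\pi_{ij}$ and $\rho_{ji}=\pi_j\pi_{ij}$, the latter $\lambda_{ij}=\pi_{ij}\pi_j$ and $\lambda_{ji}=\pi_{ij}\pi_i$, and otherwise the rank drops. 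For $\pi_{ij}\pi_{kl}$ the rank stays $n-1$ only when $\{i,j\}=\{k,l\}$ (giving $\pi_{ij}^2=\pi_{ij}$), and drops to $n-2$ when the pairs are disjoint or meet in a single index. The labour can be halved using the $*$-symmetry: projections are self-dual, so $(\pi_k\pi_{ij})^*=\pi_{ij}\pi_k$, and since $*$ preserves rank the $\pi_{ij}\pi_k$ cases follow from the $\pi_i\pi_{jk}$ cases with the roles of $\rho$ and $\lambda$ interchanged. This verification of the product table is the bulk of the argument and the main obstacle: it is a finite but somewhat delicate case analysis of diagram multiplication, and it is where the use of the figure to pin down the labels $\lambda_{ij},\rho_{ij}$ is needed.

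Finally I would deduce the description of $E(\JnmPn)$ from the two preceding steps. If $e\in E(\JnmPn)$, then $ee^*$ and $e^*e$ are projections that are respectively $\gr$- and $\gl$-related to $e$, hence lie in the same $\gj$-class and so have rank $n-1$ by Theorem~\ref{thm_wilcox}; thus both belong to $\{\pi_i\}\cup\{\pi_{ij}\}$. Moreover $e=ee^*e=(ee^*)(e^*e)$, as in the proof of Lemma~\ref{lem_projections}, so $e$ is a nonzero product of a pair of rank-$(n-1)$ projections. Conversely, every element in the asserted list is either one of these projections or one of the nonzero products $\lambda_{ij},\lambda_{ji},\rho_{ij},\rho_{ji}$ found above, and each such product is idempotent by the general fact noted in the second paragraph. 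Hence the idempotents of $\JnmPn$ are exactly the nonzero products of pairs of rank-$(n-1)$ projections, which by the product table is precisely the stated set. This simultaneously confirms the list of idempotents and the multiplication rules, completing the proof.
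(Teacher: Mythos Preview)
Your proposal is correct and follows essentially the same approach as the paper's proof: both first identify the projections via Lemma~\ref{lem_projections_Pn}, then use the fact (from Lemma~\ref{lem_projections}) that every idempotent factors as a product of two projections in its $\gj$-class, and finally verify the product table by direct diagram computation. Your write-up is simply a more detailed version of the paper's terse two-sentence argument, with the case analysis for the products and the $e=(ee^*)(e^*e)$ decomposition made explicit.
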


\begin{proof}
The statement about projections follows quickly from Lemma \ref{lem_projections_Pn}.  By Lemma \ref{lem_projections}, any idempotent is the product of two projections, and it is easy to check that the products of the stated projections give only the stated idempotents as well as lower rank ones.
\end{proof}

\begin{figure}[ht]
\begin{center}
\begin{tikzpicture}[xscale=.35,yscale=0.35]
\draw(3,2)arc(180:270:.4) (3.4,1.6)--(6,1.6) (6,1.6) to[out=0,in=30] (7,2);
	\fill (0,0)circle(.15)
	      (2,0)circle(.15)
	      (3,0)circle(.15)
	      (4,0)circle(.15)
	      (6,0)circle(.15)
	      (7,0)circle(.15)
	      (8,0)circle(.15)
	      (10,0)circle(.15)
	      (0,2)circle(.15)
	      (2,2)circle(.15)
	      (3,2)circle(.15)
	      (4,2)circle(.15)
	      (6,2)circle(.15)
	      (7,2)circle(.15)
	      (8,2)circle(.15)
	      (10,2)circle(.15);
	\draw (0,2)--(0,0)
	      (2,2)--(2,0)
	      (3,2)--(3,0)
	      (4,2)--(4,0)
	      (6,2)--(6,0)
	      (7,2)--(3,0)
	      (8,2)--(8,0)
	      (10,2)--(10,0);
  \draw [dotted] (0,2)--(2,2)
                 (4,2)--(6,2)
                 (8,2)--(10,2)
                 (0,0)--(2,0)
                 (4,0)--(6,0)
                 (8,0)--(10,0);
  \draw(3,2)node[above]{{\tiny $\phantom{j}i\phantom{j}$}};
  \draw(7,2)node[above]{{\tiny $\phantom{j}j\phantom{j}$}};
  \draw(0,2)node[above]{{\tiny $\phantom{j}1\phantom{j}$}};
  \draw(10,2)node[above]{{\tiny $\phantom{j}n\phantom{j}$}};
  \draw(-.1,.9)node[left]{$\lambda_{ij}=$};
	\end{tikzpicture}
\quad
\begin{tikzpicture}[xscale=.35,yscale=0.35]
\draw(3,2) to[out=330,in=180] (4,1.6) (4,1.6)--(6.6,1.6) (6.6,1.6)arc(270:360:.4);
	\fill (0,0)circle(.15)
	      (2,0)circle(.15)
	      (3,0)circle(.15)
	      (4,0)circle(.15)
	      (6,0)circle(.15)
	      (7,0)circle(.15)
	      (8,0)circle(.15)
	      (10,0)circle(.15)
	      (0,2)circle(.15)
	      (2,2)circle(.15)
	      (3,2)circle(.15)
	      (4,2)circle(.15)
	      (6,2)circle(.15)
	      (7,2)circle(.15)
	      (8,2)circle(.15)
	      (10,2)circle(.15);
	\draw (0,2)--(0,0)
	      (2,2)--(2,0)
	      (3,2)--(7,0)
	      (4,2)--(4,0)
	      (6,2)--(6,0)
	      (7,2)--(7,0)
	      (8,2)--(8,0)
	      (10,2)--(10,0);
  \draw [dotted] (0,2)--(2,2)
                 (4,2)--(6,2)
                 (8,2)--(10,2)
                 (0,0)--(2,0)
                 (4,0)--(6,0)
                 (8,0)--(10,0);
  \draw(3,2)node[above]{{\tiny $\phantom{j}i\phantom{j}$}};
  \draw(7,2)node[above]{{\tiny $\phantom{j}j\phantom{j}$}};
  \draw(0,2)node[above]{{\tiny $\phantom{j}1\phantom{j}$}};
  \draw(10,2)node[above]{{\tiny $\phantom{j}n\phantom{j}$}};
  \draw(-.1,.9)node[left]{$\lambda_{ji}=$};
	\end{tikzpicture}
\end{center}
\begin{center}
\begin{tikzpicture}[xscale=.35,yscale=0.35]
\draw(3,0) to[out=30,in=180] (4,.4) (4,.4)--(6.6,.4) (6.6,.4)arc(90:0:.4);
	\fill (0,0)circle(.15)
	      (2,0)circle(.15)
	      (3,0)circle(.15)
	      (4,0)circle(.15)
	      (6,0)circle(.15)
	      (7,0)circle(.15)
	      (8,0)circle(.15)
	      (10,0)circle(.15)
	      (0,2)circle(.15)
	      (2,2)circle(.15)
	      (3,2)circle(.15)
	      (4,2)circle(.15)
	      (6,2)circle(.15)
	      (7,2)circle(.15)
	      (8,2)circle(.15)
	      (10,2)circle(.15);
	\draw (0,2)--(0,0)
	      (2,2)--(2,0)
	      (7,2)--(7,0)
	      (4,2)--(4,0)
	      (6,2)--(6,0)
	      (7,2)--(3,0)
	      (8,2)--(8,0)
	      (10,2)--(10,0);
  \draw [dotted] (0,2)--(2,2)
                 (4,2)--(6,2)
                 (8,2)--(10,2)
                 (0,0)--(2,0)
                 (4,0)--(6,0)
                 (8,0)--(10,0);
  \draw(3,2)node[above]{{\tiny $\phantom{j}i\phantom{j}$}};
  \draw(7,2)node[above]{{\tiny $\phantom{j}j\phantom{j}$}};
  \draw(0,2)node[above]{{\tiny $\phantom{j}1\phantom{j}$}};
  \draw(10,2)node[above]{{\tiny $\phantom{j}n\phantom{j}$}};
  \draw(-.1,.9)node[left]{$\rho_{ij}=$};
	\end{tikzpicture}
\quad
\begin{tikzpicture}[xscale=.35,yscale=0.35]
\draw(3,0)arc(180:90:.4) (3.4,.4)--(6,.4) (6,.4) to[out=0,in=150] (7,0);
	\fill (0,0)circle(.15)
	      (2,0)circle(.15)
	      (3,0)circle(.15)
	      (4,0)circle(.15)
	      (6,0)circle(.15)
	      (7,0)circle(.15)
	      (8,0)circle(.15)
	      (10,0)circle(.15)
	      (0,2)circle(.15)
	      (2,2)circle(.15)
	      (3,2)circle(.15)
	      (4,2)circle(.15)
	      (6,2)circle(.15)
	      (7,2)circle(.15)
	      (8,2)circle(.15)
	      (10,2)circle(.15);
	\draw (0,2)--(0,0)
	      (2,2)--(2,0)
	      (3,2)--(7,0)
	      (4,2)--(4,0)
	      (6,2)--(6,0)
	      (3,2)--(3,0)
	      (8,2)--(8,0)
	      (10,2)--(10,0);
  \draw [dotted] (0,2)--(2,2)
                 (4,2)--(6,2)
                 (8,2)--(10,2)
                 (0,0)--(2,0)
                 (4,0)--(6,0)
                 (8,0)--(10,0);
  \draw(3,2)node[above]{{\tiny $\phantom{j}i\phantom{j}$}};
  \draw(7,2)node[above]{{\tiny $\phantom{j}j\phantom{j}$}};
  \draw(0,2)node[above]{{\tiny $\phantom{j}1\phantom{j}$}};
  \draw(10,2)node[above]{{\tiny $\phantom{j}n\phantom{j}$}};
  \draw(-.1,.9)node[left]{$\rho_{ji}=$};
	\end{tikzpicture}
\end{center}
\caption{The non-projection idempotents of $\JnmPn$; see also Figure \ref{fig_rank_n-1_projections}.}
\label{fig_rank_n-1_idempotents}
\end{figure}

\begin{remark}
So there are $n+5\binom n2=(5n^2-3n)/2$ idempotents in $\JnmPn$.  The idempotents in an arbitrary $\gj$-class $J_r(\P_n)$ are enumerated in \cite{enum}.
\end{remark}

In light of Lemma \ref{lem_idempotents_in_J_{n-1}}, we see that the projection graph $\Gamma_n$ is obtained from the complete graph on vertex set $[n]$ by replacing each edge
\begin{center}
\begin{tikzpicture}[scale=1]
\tikzstyle{vertex}=[circle,draw=black, fill=white,minimum size=17pt, inner sep = 0.07cm]
\tikzstyle{vertexdash}=[circle,draw=white, fill=white, inner sep = 0.07cm]
\node[vertex] (a) at (-5,0) {  $i$ };
\node[vertex] (b) at (-3,0) {  $j$ };
\draw [-] (a) to (b);
\draw(-1.5,0)node{by};
\node[vertex] (1) at (0,0) {  $i$ };
\node[vertex] (2) at (2,0) {  $ij$ };
\node[vertex] (3) at (4,0) {  $j$ };
\draw [->-=0.5] (1)        to [bend right=20] (2);
\draw [-<-=0.5] (1)        to [bend left=20] (2);
\draw [->-=0.5] (2)        to [bend right=20] (3);
\draw [-<-=0.5] (2)        to [bend left=20] (3);
\draw [->] (1) edge [out=130,in=50,loop] ();
\draw [->] (2) edge [out=130,in=50,loop] ();
\draw [->] (3) edge [out=130,in=50,loop] ();
\draw(4.3,0)node{\phantom{hj}.};
\end{tikzpicture}
\end{center}
For convenience, we have labeled the vertices $i$ and $ij$ instead of $\pi_i$ and $\pi_{ij}$.  The graph $\Gamma_5$ is pictured in Figure \ref{fig_Gamma_5}, with the same labelling convention, and with loops omitted.

\begin{figure}[ht]
\begin{center}
\scalebox{0.8}
{
\begin{tikzpicture}[scale=3]
\tikzstyle{vertex}=[circle,draw=black, fill=white, inner sep = 0.07cm]
\node[vertex] (1) at (0	,1) {\Large  $1$ };
\node[vertex] (2) at (0.951058238	,0.309011695) {\Large  $2$ };
\node[vertex] (3) at (0.587781603	,-0.809019646) {\Large  $3$ };
\node[vertex] (4) at (-0.587788043	,-0.809014967) {\Large  $4$ };
\node[vertex] (5) at (-0.951055778	,0.309019266) {\Large  $5$ };
\node[vertex] (12) at (0.475529119	,0.654505847) {  $12$ };
\node[vertex] (13) at (0.293890801	,0.095490177) {  $13$ };
\node[vertex] (14) at (-0.293894022	,0.095492517) {  $14$ };
\node[vertex] (15) at (-0.475527889	,0.654509633) {  $15$ };
\node[vertex] (23) at (0.769419921	,-0.250003976) {  $23$ };
\node[vertex] (24) at (0.181635098	,-0.250001636) {  $24$ };
\node[vertex] (25) at (0	,0.30901548) {  $25$ };
\node[vertex] (34) at (0	,-0.809017306) {  $34$ };
\node[vertex] (35) at (-0.181637088	,-0.25000019) {  $35$ };
\node[vertex] (45) at (-0.769421911	,-0.24999785) {  $45$ };
\dde12
\dde23
\dde34
\dde45
\ddeb51
\dden13
\dden35
\ddenb52
\dden24
\ddenb41
\end{tikzpicture}
}
\end{center}
\caption{The graph $\Gamma_5=\Gamma(\P_5\setminus\S_5)$ with loops omitted.}
\label{fig_Gamma_5}
\end{figure}
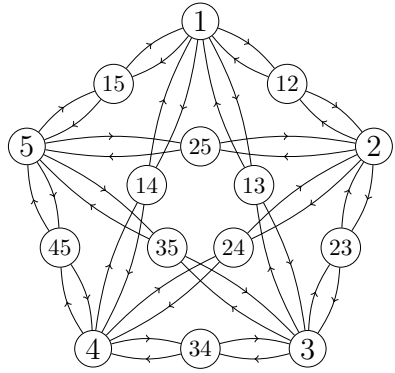

The edges of $\Gamma_n$ correspond to the idempotents of $\JnmPn$ via:
\[
\begin{array}{rclrcl}
i\to i \,\,\,&\equiv&\,\,\, \pi_i ,\quad 
&ij\to ij \,\,\,&\equiv&\,\,\, \pi_{ij} ,\\
ij\to j \,\,\,&\equiv&\,\,\, \lambda_{ij} ,\quad 
&ij\to i \,\,\,&\equiv&\,\,\, \lambda_{ji} ,\\
i\to ij \,\,\,&\equiv&\,\,\, \rho_{ij} ,\quad 
&j\to ij \,\,\,&\equiv&\,\,\, \rho_{ji}.
\end{array}
\]
As mentioned above, Theorem \ref{thm_balanced} shows that the minimal idempotent generating sets of $\PnSn=\InmPn$ correspond to the balanced subgraphs of $\Gamma_n$.  The generating set from Theorem \ref{thm_PnSn}, which consists of all projections from $\JnmPn$, corresponds to the balanced subgraph whose edges are the $n+\binom n2=\binom {n+1}2$ loops of $\Gamma_n$.

We now turn to the task of enumerating $\G_n$, the set of all balanced subgraphs of $\Gamma_n$.  In what follows, for convenience, we will use symmetric notation for the vertices $ij$, and use $ij$ and $ji$ interchangeably without intending to imply $i<j$ or $i>j$.

Let $G\in\G_n$.  The in-degree/out-degree condition is equivalent to saying that $G$ is a disjoint union of circuits.  By inspecting $\Gamma_n$, we see that the circuits of $G$ must be of one of the following four types:
\begin{enumerate}
	\item $i_1 \to i_1i_2 \to i_2 \to i_2i_3 \to i_3 \to \cdots \to i_k \to i_ki_1 \to i_1$ where $k\geq3$ and $i_1,i_2,\ldots,i_k$ are distinct,
	\item $ij\to ij$,
	\item $i\to i$,
	\item $i\to ij\to i$.
\end{enumerate}
Note that if $n$ is large, then most connected components of $G$ will be loops of type (2).  Also note that $G$ is completely determined by its circuits of type (1), (3) and (4).  Our goal is to show that $G$ determines (and is determined by) a pair $(\pi_G,\tau_G)$ where $\pi_G$ is a permutation of a subset $A_G\subseteq[n]$ that has no fixed points and no $2$-cycles, and $\tau_G$ is a function $[n]\setminus A_G\to[n]$ that has no $2$-cycles.  Here we say that a function $\phi:Y\to X$ with $Y\subseteq X$ has a $2$-cycle if there exists $x,y\in Y$ with $x\not=y$, $x\phi=y$ and $y\phi=x$.  With this goal in mind, we define
\[
A_G = \{ i\in[n] : \text{vertex $i$ is contained in a circuit of type (1)} \}.
\]
Note that $|A_G|\in\{0\}\cup[3,n]$.  We define $\pi_G:A_G\to A_G$ to be the permutation whose cycle decomposition includes a cycle $(i_1,i_2,\ldots,i_k)$ corresponding to each circuit of $G$ of type (1).  Note that $\pi_G$ has no $2$-cycles or fixed points.  Note also that if $A_G=\varnothing$, then $\pi_G$ is the unique function $\varnothing\to\varnothing$.  We also define $\tau_G:[n]\setminus A_G\to[n]$ by
\[
i\tau_G = \begin{cases}
i &\text{if $G$ contains the loop $i\to i$}\\
j &\text{if $G$ contains the circuit $i\to ij\to i$.}
\end{cases}
\]
Note that $\tau_G$ contains no $2$-cycles, but might have fixed points.  So $G$ uniquely determines the pair $(\pi_G,\tau_G)$.  Conversely, a pair $(\pi,\tau)$ for which
\begin{itemize}
	\item[(B1)] $\pi$ is a permutation, of some subset $A\subseteq[n]$, with no fixed points and $2$-cycles, and
	\item[(B2)] $\tau:[n]\setminus A\to[n]$ has no $2$-cycles
\end{itemize}
determines a balanced subgraph of $\Gamma_n$ 
in such a way that gives a bijective correspondence between $G \in \G_n$ and  
pairs $(\pi, \tau)$ satisfying (B1) and (B2). 
So it suffices to enumerate such pairs.
\begin{remark}
The functions $\pi_G$ and $\tau_G$ could be combined to give a transformation ${\mu_G:[n]\to[n]}$ defined by
\[
i\mu_G=\begin{cases}
i\pi_G &\text{if $i\in A_G$}\\
i\tau_G &\text{if $i\in [n]\setminus A_G$.}
\end{cases}
\]
However, $G$ is not uniquely determined by $\mu_G$.  For example, in $\Gamma_3$, the two balanced subgraphs shown in Figure~\ref{fig_2balanced} both have $\mu_G=(1,2,3)$.
\end{remark}

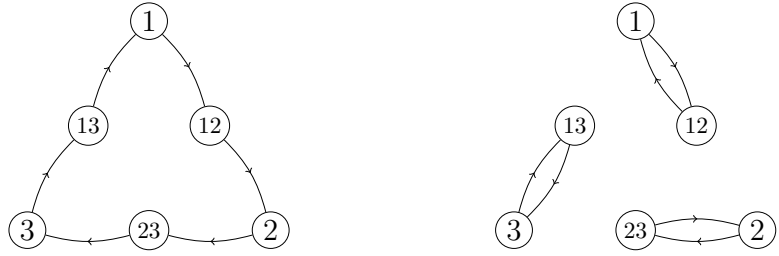
\begin{figure}
\begin{center}
\scalebox{0.8}
{
\begin{tikzpicture}[scale=2]
\tikzstyle{vertex}=[circle,draw=black, fill=white, inner sep = 0.07cm]
\node[vertex] (1) at (1,1.732) {\Large  $1$ };
\node[vertex] (2) at (2,0) {\Large  $2$ };
\node[vertex] (3) at (0,0) {\Large  $3$ };
\node[vertex] (12) at (1.5,0.866) {  $12$ };
\node[vertex] (13) at (0.5,0.866) {  $13$ };
\node[vertex] (23) at (1,0) {  $23$ };
\draw [->-=0.5] (1) to  [bend left=15] (12);
\draw [->-=0.5] (12) to [bend left=15]  (2);
\draw [->-=0.5] (2) to [bend left=15]  (23);
\draw [->-=0.5] (23) to [bend left=15]  (3);
\draw [->-=0.5] (3) to [bend left=15]  (13);
\draw [->-=0.5] (13) to [bend left=15]  (1);
\node[vertex] (1') at (1+4,1.732) {\Large  $1$ };
\node[vertex] (2') at (2+4,0) {\Large  $2$ };
\node[vertex] (3') at (0+4,0) {\Large  $3$ };
\node[vertex] (12') at (1.5+4,0.866) {  $12$ };
\node[vertex] (13') at (0.5+4,0.866) {  $13$ };
\node[vertex] (23') at (1+4,0) {  $23$ };
\draw [->-=0.5] (1') to [bend left=15] (12');
\draw [->-=0.5] (12') to [bend left=15] (1');
\draw [->-=0.5] (2') to [bend left=15] (23');
\draw [->-=0.5] (23') to [bend left=15] (2');
\draw [->-=0.5] (3') to [bend left=15] (13');
\draw [->-=0.5] (13') to [bend left=15] (3');
\end{tikzpicture}
}
\end{center}
\caption{Two balanced subgraphs in $\Gamma_3$, both with $\mu_G=(1,2,3)$.}
\label{fig_2balanced}
\end{figure}

For $0\leq k\leq n$, let
\begin{align*}
A_k &= \{ \pi\in\S_k : \text{$\pi$ has no fixed points or $2$-cycles} \} \\
B_{nk} &= \{ \tau:[k]\to[n] : \text{$\tau$ has no $2$-cycles} \},
\end{align*}
and put
\[
a_k=|A_k| \quad\text{and}\quad b_{nk}=|B_{nk}|.
\]
Note that for any subset $X\subseteq[n]$ with $|X|=k$, the set
\[
\{ \tau:X\to[n] : \text{$\tau$ has no $2$-cycles} \}
\]
has cardinality $b_{nk}$.  If $A\subseteq[n]$ with $|A|=k$, the number of pairs $(\pi,\tau)$ satisfying conditions (B1) and (B2) above is equal to $a_kb_{n,n-k}$.  It follows that
\[
|\G_n| = \sum_{k=0}^n \binom nk a_k b_{n,n-k},
\]
so it remains to evaluate the sequences $a_k$ and $b_{nk}$.  The sequence $a_k$ is well-known; it is A038205 on \cite{Sloan:OEIS}, but we prove the next result for completeness.

\begin{lemma}\label{lem_ak}
The sequence $a_k$ satisfies the recurrence
\[
a_0=1,\ \ a_1=a_2=0, \ \ a_{k+1}=ka_k+k(k-1)a_{k-2} \ \ \text{for $k\geq2$.}
\]
\end{lemma}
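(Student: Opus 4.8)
The plan is to interpret $A_k$ concretely as the set of permutations of $[k]$ all of whose cycles have length at least $3$, since a permutation avoids both fixed points and $2$-cycles precisely when every cycle in its disjoint cycle decomposition has length $\geq 3$. The base cases are then immediate: $a_0=1$ counts the empty permutation, while $a_1=a_2=0$ because no permutation of a $1$- or $2$-element set can avoid both a fixed point and a $2$-cycle (the only options being the identity, or a single transposition).

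For the recurrence with $k\geq 2$, I would fix a permutation $\pi\in A_{k+1}$ and condition on the length of the cycle of $\pi$ containing the largest point $k+1$; this length is necessarily at least $3$. This splits $A_{k+1}$ according to whether that cycle has length exactly $3$ or length at least $4$, and I expect the two cases to contribute the terms $k(k-1)a_{k-2}$ and $ka_k$ respectively.

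In the first case the cycle containing $k+1$ is a $3$-cycle $(k+1,a,b)$; here I would count by selecting the unordered pair $\{a,b\}\subseteq[k]$ in ${k\choose2}$ ways and choosing one of the $2$ cyclic arrangements relative to $k+1$, giving $2{k\choose2}=k(k-1)$ such cycles, after which the remaining $k-2$ points carry an arbitrary element of $A_{k-2}$; this yields $k(k-1)a_{k-2}$. In the second case I would set up an insertion/deletion bijection: deleting $k+1$ from its cycle, that is, replacing the sub-path $x\to k+1\to y$ by $x\to y$, sends a permutation in $A_{k+1}$ whose $(k+1)$-cycle has length $\geq 4$ to a permutation in $A_k$, since the cycle length drops by exactly one and so remains $\geq 3$; conversely $k+1$ can be reinserted immediately after any of the $k$ points $x\in[k]$, and these $k$ insertions are pairwise distinct and always produce a cycle of length $\geq 4$. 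Thus the second case contributes exactly $ka_k$, and summing the two cases gives the claimed recurrence.

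The routine parts, namely the base cases and the ${k\choose2}$-count, are straightforward; the only point requiring genuine care is verifying that the deletion and insertion maps in the second case are mutually inverse bijections onto the correct sets. In particular one must check that deletion never lowers a cycle below length $3$ (guaranteed by restricting to cycles of length $\geq 4$) and that insertion never creates a fixed point or a $2$-cycle (guaranteed because we insert into an existing cycle of length $\geq 3$). This is the main obstacle, though a mild one.
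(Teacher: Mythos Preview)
Your proof is correct and follows exactly the same approach as the paper's: both condition on whether the cycle of $\pi\in A_{k+1}$ containing $k+1$ has length $\geq 4$ (contributing $ka_k$) or length exactly $3$ (contributing $k(k-1)a_{k-2}$). The paper simply asserts these counts as ``easy to see,'' whereas you spell out the insertion/deletion bijection and the ordered-pair count explicitly.
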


\begin{proof}
The values for $a_0,a_1,a_2$ are clear.  Now consider an element $\pi\in A_{k+1}$ where $k\geq2$.  There are two possibilities: either (i) $k+1$ is in an $l$-cycle of $\pi$ where $l\geq4$, or (ii) $k+1$ is in a $3$-cycle of $\pi$.  It is easy to see that there are $ka_k$ elements of type (i), and $k(k-1)a_{k-2}$ of type (ii).
\end{proof}

The first few values of $a_k$ are given in Table \ref{tab_ak}.

\begin{table}[ht]%
\begin{center}
\begin{tabular}{|c|ccccccccccc|}
\hline
$k$ & $0$ & $1$ & $2$ & $3$ & $4$ & $5$ & $6$ & $7$ & $8$ & $9$ & $10$ \\
\hline
$a_k$  & $1$ &  $0$ &  $0$ &  $2$  & $6$  & $24$  & $160$  & $1140$  & $8988$  & $80864$ & $809856$ \\
\hline
\end{tabular}
\end{center}
\caption{The sequence $a_k$.}
\label{tab_ak}
\end{table}

\begin{lemma}\label{lem_bnk}
For any $0\leq k\leq n$, we have
\[
b_{nk} = \sum_{i=0}^{\lfloor \frac{k}{2} \rfloor} (-1)^i\binom k{2i}(2i-1)!!n^{k-2i},
\]
where $(2i-1)!!=(2i-1)(2i-3)\cdots3\cdot1$ and we interpret $(-1)!!=1$.
\end{lemma}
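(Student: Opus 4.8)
The plan is to count $b_{nk}=|B_{nk}|$ by inclusion--exclusion over the possible $2$-cycles. Recall that $\tau:[k]\to[n]$ has a $2$-cycle precisely when there is a pair $\{x,y\}$ with $x,y\in[k]$, $x\neq y$, $x\tau=y$ and $y\tau=x$. For each $2$-element subset $\sigma=\{x,y\}\subseteq[k]$, let $B_\sigma$ denote the set of functions $\tau:[k]\to[n]$ for which $\{x,y\}$ is a $2$-cycle. Then $B_{nk}$ is exactly the set of functions avoiding every $B_\sigma$, and since the total number of functions $[k]\to[n]$ is $n^k$, the inclusion--exclusion principle gives
\[
b_{nk}=\sum_{\mathcal S}(-1)^{|\mathcal S|}\,\Big|\bigcap_{\sigma\in\mathcal S}B_\sigma\Big|,
\]
where $\mathcal S$ ranges over all collections of $2$-element subsets of $[k]$.

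The key observation is that $\bigcap_{\sigma\in\mathcal S}B_\sigma$ is empty unless the members of $\mathcal S$ are pairwise disjoint. Indeed, if two distinct pairs in $\mathcal S$ share a common element $x$, say $\sigma=\{x,y\}$ and $\sigma'=\{x,z\}$ with $y\neq z$, then membership in $B_\sigma\cap B_{\sigma'}$ would force $x\tau=y$ and $x\tau=z$ simultaneously, which is impossible. Conversely, if $\mathcal S=\{\sigma_1,\dots,\sigma_i\}$ consists of $i$ pairwise disjoint pairs (a partial matching on $[k]$), then requiring each $\sigma_t$ to be a $2$-cycle prescribes the images of the $2i$ points lying in $\sigma_1\cup\cdots\cup\sigma_i$, while the remaining $k-2i$ points of $[k]$ may be sent anywhere in $[n]$; hence $\big|\bigcap_t B_{\sigma_t}\big|=n^{k-2i}$.

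It therefore remains to count, for each $i$, the partial matchings of size $i$ on $[k]$. Such a matching is obtained by choosing the $2i$ matched points, in $\binom{k}{2i}$ ways, and then pairing them up, in $(2i-1)!!$ ways (the number of perfect matchings on a $2i$-element set). Grouping the terms of the inclusion--exclusion sum according to $i=|\mathcal S|$ then yields
\[
b_{nk}=\sum_{i=0}^{\lfloor k/2\rfloor}(-1)^i\binom{k}{2i}(2i-1)!!\,n^{k-2i},
\]
with the $i=0$ term giving $n^k$ under the convention $(-1)!!=1$, as required.

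The computation is entirely routine once the inclusion--exclusion is set up; the only point requiring genuine care is the verification that overlapping pairs contribute empty intersections, so that the sum collapses onto partial matchings and the double-factorial count $(2i-1)!!$ emerges. No step here is a serious obstacle, though one must be attentive to the fact that in each intersection $\bigcap_t B_{\sigma_t}$ the unmatched points are allowed to map arbitrarily (possibly creating further $2$-cycles), which is exactly what makes the inclusion--exclusion bookkeeping correct.
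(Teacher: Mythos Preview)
Your proof is correct and follows essentially the same inclusion--exclusion argument as the paper's own proof. If anything, you are slightly more explicit than the paper in verifying that intersections over overlapping pairs are empty (the paper simply restricts attention to disjoint tuples without comment), but the structure and computations are identical.
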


\begin{proof}
For $1\leq r<s\leq k$, let
\[
C_{nk}(r,s) = \{ \tau:[k]\to[n]: \text{$(r,s)$ is a $2$-cycle of $\tau$}\}.
\]
Then, since there are $n^k$ functions $[k]\to[n]$,
\begin{equation}\label{eq_bnk}
b_{nk}=n^k-\left| \bigcup_{1\leq r<s\leq k} C_{nk}(r,s) \right|.
\end{equation}
Now, if $1\leq i\leq\lfloor \frac{k}{2}\rfloor$ and $(r_1,s_1),\ldots,(r_i,s_i)$ are disjoint cycles, then
\[
\big| C_{nk}(r_1,s_1)\cap\cdots\cap C_{nk}(r_i,s_i) \big| = n^{k-2i},
\]
and there are $\binom k{2i}(2i-1)!!$ ways to choose $i$ disjoint cycles from $[k]$.  So the inclusion-exclusion formula gives
\[
\left| \bigcup_{1\leq r<s\leq k} C_{nk}(r,s) \right| = \sum_{i=1}^{\lfloor \frac{k}{2}\rfloor} (-1)^{i+1}\binom k{2i}(2i-1)!!n^{k-2i}.
\]
The result now follows from \eqref{eq_bnk}, since $n^k$ is the $i=0$ term of the sum in the statement of the lemma.
\end{proof}

The numbers $b_{nk}$ do not appear in \cite{Sloan:OEIS}.  The first few values are included in Table \ref{tab_bnk}.
We have proved the following.

\begin{table}[ht]%
\begin{tabular}{|c|ccccccccccc|}
\hline
$n$ $\setminus$ $k$	&0	&1	&2	&3	&4	&5	&6	&7 &8 &9 &10     \\
\hline
0	&1	&	&	&	&	&	&	&	&&&      \\
1	&1	&1	&	&	&	&	&	&	&&&       \\
2	&1	&2	&3	&	&	&	&	&	&&&       \\
3	&1	&3	&8	&18	&	&	&	&	&&&       \\
4	&1	&4	&15	&52	&163	&	&	&	&&&       \\
5	&1	&5	&24	&110	&478	&1950	&	&	&&&       \\
6	&1	&6	&35	&198	&1083	&5706	&28821	&	&&&       \\
7	&1	&7	&48	&322	&2110	&13482	&83824	&505876 &&& \\
8 &1	&8	&63	&488	&3715	&27768	&203569	&1461944	&10270569		& & \\
9 &1	&9	&80	&702	&6078	&51894	&436656	&3618540	&29510268	&236644092	& \\
10 &1	&10	&99	&970	&9403	&90150	&854485	&8003950	&74058105	&676549450	&6098971555 \\
\hline
\end{tabular}
\caption{The numbers $b_{nk}$.}
\label{tab_bnk}
\end{table}

\begin{theorem}
The number of minimal idempotent generating sets for the singular part $\PnSn$ of the partition monoid $\P_n$ is equal to
\[
|\G_n| = \sum_{k=0}^n\binom nka_kb_{n,n-k},
\]
where formulae for the numbers $a_k$ and $b_{nk}$ are given in Lemmas \ref{lem_ak} and \ref{lem_bnk}.
\end{theorem}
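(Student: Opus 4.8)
The plan is to recognize that this count is assembled from the correspondence and enumeration developed above, so the proof consists of chaining three facts together. First I would invoke Theorem~\ref{thm_balanced}: since $\PnSn=\InmPn=\langle\JnmPn\rangle$ is generated by the idempotents of the maximal $\gj$-class $\JnmPn$, whose principal factor $\JnmPn^*$ is a finite idempotent generated completely $0$-simple regular $*$-semigroup, its minimal idempotent generating sets are in bijection with the balanced subgraphs of the projection graph $\Gamma_n=\Gamma(\JnmPn^*)$, that is, with the elements of $\G_n$. Hence the number we seek is exactly $|\G_n|$, and the whole problem reduces to enumerating balanced subgraphs.

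Second, I would set up the bijection $G\mapsto(\pi_G,\tau_G)$ between $\G_n$ and the pairs satisfying conditions (B1) and (B2). The structural input is that a balanced subgraph decomposes as a disjoint union of directed circuits; examining the local picture of $\Gamma_n$ at a middle vertex $ij$ (adjacent only to $i$, $j$ and itself) shows that every such circuit has one of the four listed shapes. The long circuits of type~(1) assemble into a fixed-point-free, $2$-cycle-free permutation $\pi_G$ of the set $A_G$ of vertices they meet, while the loops $i\to i$ and the short circuits $i\to ij\to i$ encode a function $\tau_G$ on $[n]\setminus A_G$ via $i\tau_G=i$ and $i\tau_G=j$ respectively.

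The main obstacle is checking that this correspondence is a genuine bijection, and in particular that the $2$-cycle prohibitions in (B1) and (B2) are exactly the right constraints. These conditions are forced by the requirement that each middle vertex $ij$ have in-degree and out-degree one, so it sits in a unique circuit: one cannot route both $i\to ij\to i$ and $j\to ij\to j$, which is precisely the data a $2$-cycle of $\tau_G$ would record, and similarly no $2$-cycle can arise among the type-(1) circuits. Conversely, I would verify that any pair $(\pi,\tau)$ obeying (B1) and (B2) rebuilds a unique balanced subgraph; this reversibility, rather than the forward direction, is where care is most needed.

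Finally, the enumeration is a partition of $\G_n$ according to $k=|A_G|$. Choosing the underlying set $A\subseteq[n]$ of size $k$ contributes a factor ${n\choose k}$; the admissible permutations $\pi$ of $A$ number $a_k=|A_k|$, and (using that the count of $2$-cycle-free functions from a $k$-element set into $[n]$ depends only on $k$) the admissible $\tau$ on the remaining $n-k$ points number $b_{n,n-k}=|B_{n,n-k}|$. Summing the products over $k$ yields $|\G_n|=\sum_{k=0}^n{n\choose k}a_kb_{n,n-k}$, and the explicit formulae for $a_k$ and $b_{nk}$ are supplied by Lemmas~\ref{lem_ak} and~\ref{lem_bnk}.
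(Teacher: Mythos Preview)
Your proposal is correct and follows essentially the same approach as the paper: the paper develops the bijection $G\mapsto(\pi_G,\tau_G)$ with pairs satisfying (B1)--(B2) in the discussion preceding the theorem, derives the summation $\sum_k{n\choose k}a_kb_{n,n-k}$ by partitioning on $|A_G|$, and then states the theorem with the words ``We have proved the following.'' Your write-up recapitulates exactly this chain of reasoning, with the same structural analysis of the four circuit types and the same justification for the $2$-cycle prohibitions via the degree constraint at the middle vertices $ij$.
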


The first few values of $|\G_n|$ are given in Table \ref{tab_gn}; this sequence also does not appear in~\cite{Sloan:OEIS}.

\begin{table}[ht]%
\begin{center}
\begin{tabular}{|c|ccccccccccc|}
\hline
$n$ & $0$ & $1$ & $2$ & $3$ & $4$ & $5$ & $6$ & $7$ & $8$ & $9$ & $10$ \\
\hline
$|\mathcal{G}_n|$  & $1$ &  $1$ &  $3$ &  $20$  & $201$  & $2604$  & $40915$  & $754368$  & $15960945$  & $381141008$ & $10139372451$ \\
\hline
\end{tabular}
\end{center}
\caption{The numbers $|\G_n|$, which give the number of minimal idempotent generating sets for $\PnSn$.}
\label{tab_gn}
\end{table}

\subsection{Arbitrary idempotent generating sets for $\boldmath \PnSn$}

Given a set $F$ consisting of idempotents from $J_{n-1}(\P_n)$, we would like to know whether $F$ is a generating set of $\P_n \setminus \S_n$.
For a subset $F$ of
\[
\{\al\in E(\P_n):\rank(\al)=n-1\} = \{\pi_i:1\leq i\leq n\}\cup\{\pi_{ij},\lambda_{ij},\lambda_{ji},\rho_{ij},\rho_{ji}:1\leq i<j\leq n\},
\]
let $\Gamma_n(F)$ be the two-coloured digraph obtained by colouring each edge of $\Gamma_n$ blue, and then adding red edges corresponding to the idempotents from $F$: 
\[
\]
\vspace{-1.4cm}
\begin{itemize}
\begin{multicols}{3}
	\item $i{\boldsymbol{\red\to}}i$ if $\pi_i\in F$,
	\item $ij{\boldsymbol{\red\to}}ij$ if $\pi_{ij}\in F$,
	\item $ij{\boldsymbol{\red\to}}j$ if $\lam_{ij}\in F$,
	\item $ij{\boldsymbol{\red\to}}i$ if $\lam_{ji}\in F$,
	\item $i{\boldsymbol{\red\to}}ij$ if $\rho_{ij}\in F$,
	\item $j{\boldsymbol{\red\to}}ij$ if $\rho_{ji}\in F$.
\end{multicols}
\end{itemize}
\vspace{-0.2cm}
(As above, we will denote the vertices of $\Gamma_n$ by $i$ and $ij$ rather than $\pi_i$ and $\pi_{ij}$.) Applying the general result Theorem~\ref{thm_generalRBR} we obtain the following.

\begin{theorem}\label{thm_PnMinusSnGeneral}
For $F\subseteq \{\al\in E(\P_n):\rank(\al)=n-1\}$, the following are equivalent\emph{:}
\begin{enumerate}
	\item[(i)] $\PnSn=\la F\ra$\emph{;}
	\item[(ii)] each vertex of $\Gamma_n(F)$ is the base point of an RBR-alternating circuit. 
\end{enumerate}
\end{theorem}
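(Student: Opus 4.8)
The plan is to deduce the result directly from the general criterion Theorem~\ref{thm_generalRBR}, applied to the principal factor $S=\JnmPn^*$, once we have checked that $S$ satisfies all the hypotheses of that theorem and that its associated two-coloured graph is exactly $\Gamma_n(F)$.

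First I would verify that $S=\JnmPn^*$ is a finite idempotent generated completely $0$-simple regular $*$-semigroup. By Lemma~\ref{lem_dropdownPn} and Proposition~\ref{prop_chain}, $\PnSn=\InmPn=\la\JnmPn\ra$, with $\JnmPn$ the unique maximal $\gj$-class of $\PnSn$, and $\InmPn$ is idempotent generated; Corollary~\ref{cor_FitzGerald} then shows that the principal factor $S$ is an idempotent generated completely $0$-simple semigroup. Since $\al\mapsto\al^*$ preserves $\rank$, the $\gj$-class $\JnmPn$ is closed under $*$, so $S$ inherits a regular $*$-structure (setting $0^*=0$). Thus every hypothesis of Theorem~\ref{thm_generalRBR} holds for $S$.

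Next I would identify the two graphs. By Lemma~\ref{lem_idempotents_in_J_{n-1}} the nonzero projections of $S$ are precisely the $\pi_i$ and $\pi_{ij}$, and the list of nonzero products of projections given there matches exactly the edges in the construction of $\Gamma_n$; hence the projection graph $\Gamma(S)$ of Definition~\ref{def_GammaS} coincides with $\Gamma_n$. Reading off the displayed correspondence between arcs and idempotents ($i\to i\equiv\pi_i$, $ij\to j\equiv\lambda_{ij}$, and so on), the red arcs added for the idempotents of $F$ in forming $\Gamma(S,F)$ are precisely the red arcs of $\Gamma_n(F)$, so $\Gamma(S,F)=\Gamma_n(F)$. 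It then remains to translate the statement $S=\la F\ra$ into $\PnSn=\la F\ra$: because $F\subseteq\JnmPn$ and $\la\JnmPn\ra=\InmPn$, we have $\la F\ra=\PnSn$ if and only if $\JnmPn\subseteq\la F\ra$ in $\P_n$; and since $\rank$ is non-increasing under multiplication, a product of elements of $F$ equals a nonzero $x\in\JnmPn$ in $S$ exactly when that same product equals $x$ in $\P_n$, so $\JnmPn\subseteq\la F\ra$ holds if and only if $F$ generates every nonzero element of $S$, i.e.\ $S=\la F\ra$. Applying Theorem~\ref{thm_generalRBR} to $S$ and $F$ now yields the equivalence of (i) and (ii).

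The main obstacle I anticipate is not any single computation but the care needed in this final translation: one must distinguish generation inside the ideal $\InmPn$, where products may drop to lower rank, from generation inside the principal factor $S$, where such products collapse to $0$. The point that makes it go through is that once $\JnmPn\subseteq\la F\ra$ holds, the lower ideal $I_{n-2}(\P_n)$ is automatically contained in $\la F\ra$ as well, since $\la\JnmPn\ra=\InmPn$; consequently the zero of $S$ being generated corresponds exactly to $I_{n-2}(\P_n)\subseteq\la F\ra$, and no separate argument about the zero is required.
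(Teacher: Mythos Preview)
Your proposal is correct and follows exactly the paper's approach: the paper simply states that the result is obtained by ``applying the general result Theorem~\ref{thm_generalRBR}'' without further detail. You have essentially filled in the verification that the paper leaves implicit---that $\JnmPn^*$ satisfies the hypotheses of Theorem~\ref{thm_generalRBR}, that $\Gamma(S,F)=\Gamma_n(F)$, and that generation of $S$ by $F$ is equivalent to generation of $\PnSn$ by $F$---so there is nothing to add.
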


We currently do not know of any simpler necessary and sufficient condition for such a subset $F$ of idempotents 
to be a generating set.  It would be desirable to give such a condition in terms of only the red edges of the graph $\Gamma_n(F)$.  As mentioned in Remark~\ref{rem_generalRBR}, one might be tempted to conjecture that $\lb F\rb = \PnSn$ if and only if, in $\Gamma_n(F)$, either:
\begin{itemize}
	\item[(i)] each vertex has at least one red edge coming in to it and at least one going out of it, or
	\item[(ii)] each vertex is contained in a red circuit.
\end{itemize}
As we also mentioned in Remark~\ref{rem_generalRBR}, Theorem~\ref{thm_generalRBR} tells us that condition (i) is necessary, while condition (ii) is sufficient.  But the following two examples show that neither condition is necessary \emph{and} sufficient.

\begin{example}
\label{Example3}
Consider the set of idempotents $F=\{\pi_1,\pi_2,\lambda_{12},\rho_{12}\}$ in the partition monoid $\P_2$. The digraph $\Gamma_2(F)$ is illustrated in Figure~\ref{fig_G2F}. Clearly, there is no $RBR$-alternating circuit based at the vertex $\{12\}$. It follows that $\pi_{12}\not\in\la F\ra$, and so $F$ does not generate $\P_2 \setminus \S_2$.  This example shows that (i) is not a sufficient condition. 
\end{example}
\begin{figure}
\begin{center}
\begin{tikzpicture}[scale=1]
\tikzstyle{vertex}=[circle,draw=black, fill=white, inner sep = 0.07cm]
\node[vertex] (1) at (0,0) {  $1$ };
\node[vertex] (12) at (2,0) { $\scriptstyle{12}$};
\node[vertex] (2) at (4,0) {  $2$ };
\draw [ultra thick, ->-=0.5, color=red] (1)        to [bend right=40] (12);
\draw [ultra thick, ->-=0.5, color=red] (12)        to [bend right=40] (2);
\draw [ultra thick, ->, color=red] (1) edge [out=130+180,in=50+180,loop] ();
\draw [ultra thick, ->, color=red] (2) edge [out=130+180,in=50+180,loop] ();
\draw [->, color=blue] (1) edge [out=130,in=50,loop] ();
\draw [->, color=blue] (2) edge [out=130,in=50,loop] ();
\draw [->, color=blue] (12) edge [out=130,in=50,loop] ();
\draw [->-=0.5, color=blue] (1)        to [bend right=15] (12);
\draw [->-=0.5, color=blue] (12)        to [bend right=15] (2);
\draw [->-=0.5, color=blue] (12)        to [bend right=15] (1);
\draw [->-=0.5, color=blue] (2)        to [bend right=15] (12);
\end{tikzpicture}
\end{center}
\caption{The digraph $\Gamma_2(F)$ where $F=\{\pi_1,\pi_2,\lambda_{12},\rho_{12}\}$ in the partition monoid $\P_2$.}
\label{fig_G2F}
\end{figure}
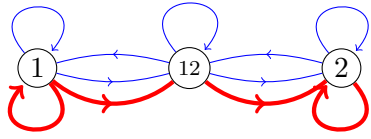
\begin{example}
\label{Example6}
Consider the set of idempotents 
$
F=\{\pi_2,\pi_3,\pi_{12},\pi_{23},\pi_{13},\lambda_{31},\rho_{12}\}
$
in the partition monoid $\P_3$. The digraph $\Gamma_3(F)$ is illustrated in Figure~\ref{Fig_G3F}. 
Every vertex has a red loop (the simplest kind of RBR-circuit) with the exception of vertex~$1$. Vertex $1$ is the basepoint of the RBR-alternating circuit:
\[
1{\boldsymbol{\red\to}} 12{\blue\to} 2{\boldsymbol{\red\to}} 2{\blue\to} 23{\boldsymbol{\red\to}} 23{\blue\to} 3{\boldsymbol{\red\to}} 3{\blue\to} 13{\boldsymbol{\red\to}} 1.
\]
This path corresponds to the product of idempotents
$
\rho_{12}\pi_2\pi_{23}\pi_3\lambda_{31},
$
which is $\gh$-related to the projection $\pi_1$; in fact, $\pi_1=(\rho_{12}\pi_2\pi_{23}\pi_3\lambda_{31})^2$.
It follows from Theorem~\ref{thm_PnMinusSnGeneral} that $\P_3\setminus\S_3=\la F\ra$.
This example shows that (ii) is not a necessary condition.
It is also easy to check that for any $f\in F$, the graph $\Gamma_n(F\setminus\{f\})$ does not satisfy condition (i).  It follows that $F$ is an irreducible generating set, even though it is not of the minimal size $\binom 42=6$.  This contrasts to the situation for the singular part $\T_n\setminus \S_n$ of the full transformation semigroup $\T_n$, where every idempotent generating set contains an idempotent generating set of minimal size \cite{DE2014}.  (The previous statement does not hold if ``idempotent generating set'' is replaced by ``generating set''.)
\end{example}

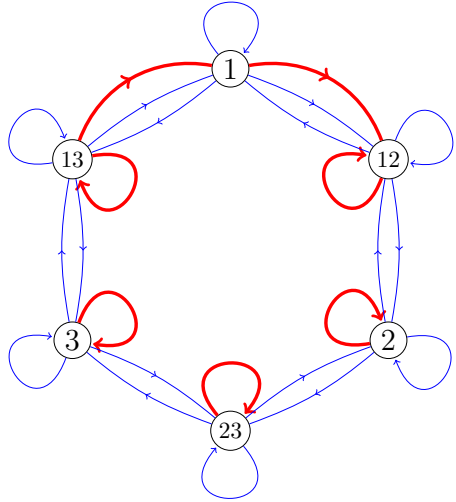
\begin{figure}
\begin{center}
\scalebox{0.8}
{
\begin{tikzpicture}[scale=3]
\tikzstyle{vertex}=[circle,draw=black, fill=white, inner sep = 0.07cm]
\node[vertex] (12) at (0.866025625	,0.499999617) {  $12$ };
\node[vertex] (1) at (0	        ,1) {\Large  $1$ };
\node[vertex] (13) at (-0.866024298	,0.5) {  $13$ };
\node[vertex] (3) at (-0.866026952	,-0.499997319) {\Large  $3$ };
\node[vertex] (23) at (0	        ,-1) {  $23$ };
\node[vertex] (2) at (0.866022971	,-0.500004213) {\Large  $2$ };
\draw [ultra thick, ->-=0.5, color=red] (1) to [bend left=40] (12);
\draw [ultra thick, ->-=0.5, color=red] (13) to  [bend left=40] (1);
\draw [ultra thick, ->, color=red] (12) edge [out=30+40+180,in=30-40+180,loop] ();
\draw [ultra thick, ->, color=red] (2) edge [out=30+40-60+180,in=30-40-60+180,loop] ();
\draw [ultra thick, ->, color=red] (23) edge [out=30+40-120+180,in=30-40-120+180,loop] ();
\draw [ultra thick, ->, color=red] (3) edge [out=30+40-180+180,in=30-40-180+180,loop] ();
\draw [ultra thick, ->, color=red] (13) edge [out=30+40-240+180,in=30-40-240+180,loop] ();
\draw [->, color=blue] (12) edge [out=30+40,in=30-40,loop] ();
\draw [->, color=blue] (2) edge [out=30+40-60,in=30-40-60,loop] ();
\draw [->, color=blue] (23) edge [out=30+40-120,in=30-40-120,loop] ();
\draw [->, color=blue] (3) edge [out=30+40-180,in=30-40-180,loop] ();
\draw [->, color=blue] (13) edge [out=30+40-240,in=30-40-240,loop] ();
\draw [->, color=blue] (1) edge [out=30+40-120+180,in=30-40-120+180,loop] ();
\draw [->-=0.5, color=blue] (1) to [bend left=10] (12);
\draw [->-=0.5, color=blue] (12) to  [bend left=10] (1);
\draw [->-=0.5, color=blue] (1) to [bend left=10] (13);
\draw [->-=0.5, color=blue] (13) to  [bend left=10] (1);
\draw [->-=0.5, color=blue] (3) to [bend left=10] (13);
\draw [->-=0.5, color=blue] (13) to  [bend left=10] (3);
\draw [->-=0.5, color=blue] (2) to [bend left=10] (12);
\draw [->-=0.5, color=blue] (12) to  [bend left=10] (2);
\draw [->-=0.5, color=blue] (3) to [bend left=10] (23);
\draw [->-=0.5, color=blue] (23) to  [bend left=10] (3);
\draw [->-=0.5, color=blue] (2) to [bend left=10] (23);
\draw [->-=0.5, color=blue] (23) to  [bend left=10] (2);
\end{tikzpicture}
}
\end{center}
\caption{
The digraph $\Gamma_3(F)$ where 
$
F=\{\pi_2,\pi_3,\pi_{12},\pi_{23},\pi_{13},\lambda_{31},\rho_{12}\}
$
in the partition monoid $\P_3$.
}
\label{Fig_G3F}
\end{figure}

We leave it as an open problem to determine a necessary and sufficient condition for $F$ to be a generating set, stated in terms of the structure of the subgraph of $\Gamma_n(F)$ determined by $F$ (that is, the subgraph determined by the red edges).

\section{The Brauer monoid}
\label{sec_Brauer}

Recall that the Brauer monoid $\B_n$ is the subsemigroup of $\P_n$ consisting of all partitions whose blocks have cardinality $2$.  See Figure \ref{fig_Bn} for an example.  Note that an element
\[
   \partn{A_i}{B_i}{C_j}{D_k}_{i \in I, \ j \in J, \ k \in K}
\]
of $\B_n$ satisfies $|A_i| = |B_i| = 1$ and $|C_j| = |D_k| = 2$ for all $i,j,k$.  In addition we must have $|J| = |K|$, and $|I|=n-2|J|$, which means that the ranks of elements of $\B_n$ are restricted to natural numbers of the form $n-2k$ where $k$ is a natural number.  Note also that $|\B_n|=(2n-1)!!=(2n-1)(2n-3)\cdots3\cdot1$.

\begin{figure}[ht]
\begin{center}
\begin{tikzpicture}[xscale=.7,yscale=0.7]
	\fill (0,0)circle(.1)
	      (1,0)circle(.1)
	      (2,0)circle(.1)
	      (3,0)circle(.1)
	      (4,0)circle(.1)
	      (5,0)circle(.1)
	      (6,0)circle(.1)
	      (0,2)circle(.1)
	      (1,2)circle(.1)
	      (2,2)circle(.1)
	      (3,2)circle(.1)
	      (4,2)circle(.1)
	      (5,2)circle(.1)
	      (6,2)circle(.1);
	\arcup24
	\catarc16{.7}
	\arcdn23
	\arcdn56
	\cve00
	\cve34
	\cve51
  \draw(0,2)node[above]{{\tiny $1$}};
  \draw(1,2)node[above]{{\tiny $2$}};
  \draw(2,2)node[above]{{\tiny $3$}};
  \draw(3,2)node[above]{{\tiny $4$}};
  \draw(4,2)node[above]{{\tiny $5$}};
  \draw(5,2)node[above]{{\tiny $6$}};
  \draw(6,2)node[above]{{\tiny $7$}};
	\end{tikzpicture}
\end{center}
\caption{An element of the Brauer monoid $\B_7$.}
\label{fig_Bn}
\end{figure}
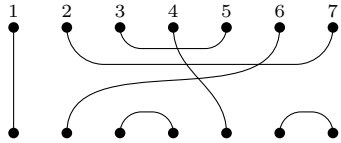

Since the symmetric group $\S_n$ is clearly contained in $\B_n$, it follows that $\S_n$ is the group of units of $\B_n$.  And, since $\B_n$ is closed under the $*$ operation, it is regular, so Green's relations on $\B_n$ are still given by the formulae in Theorem \ref{thm_wilcox}.  In fact, since the domain (respectively, codomain) of an element of $\B_n$ is determined by its kernel (respectively, cokernel), we have the following.  See also \cite[Theorem 7]{Maz1998} where an equivalent characterization is given.

\begin{theorem}
\label{thm_wilcox_new}
For each $\alpha, \beta \in \B_n$, we have\emph{:}
\begin{enumerate}
\item[(i)] $\alpha \gr \beta$ if and only if $\ker(\alpha)=\ker(\beta)$\emph{;}
\item[(ii)] $\alpha \gl \beta$ if and only if $\coker(\alpha)=\coker(\beta)$\emph{;} 
\item[(iii)] $\alpha \gj \beta$ if and only if $\rank(\alpha) = \rank(\beta)$.
\end{enumerate}
\end{theorem}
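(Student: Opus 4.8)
The plan is to deduce Theorem~\ref{thm_wilcox_new} from Wilcox's description of Green's relations on $\P_n$ (Theorem~\ref{thm_wilcox}), using that $\B_n$ is a regular subsemigroup of $\P_n$ together with the special combinatorics of Brauer diagrams. The key structural observation, which I would establish first, is that in $\B_n$ the domain of an element is \emph{determined by} its kernel, and dually the codomain is determined by the cokernel. Indeed, if $\al=\partn{A_i}{B_i}{C_j}{D_k}$ lies in $\B_n$, then $|A_i|=1$ and $|C_j|=2$ for all $i,j$, so the singleton classes of $\ker(\al)$ are exactly the sets $A_i$ and the two-element classes are exactly the $C_j$. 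Hence $\dom(\al)=\bigcup_i A_i$ is precisely the union of the singleton $\ker(\al)$-classes, and is therefore recoverable from $\ker(\al)$ alone; applying $*$ gives the dual statement for $\codom$ and $\coker$. In particular $\ker(\al)=\ker(\be)$ forces $\dom(\al)=\dom(\be)$, and $\coker(\al)=\coker(\be)$ forces $\codom(\al)=\codom(\be)$.

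Next I would record that $\B_n$ is a regular subsemigroup of $\P_n$: it is closed under the $*$ operation inherited from $\P_n$, so it is itself a regular $*$-semigroup and hence regular. I would then invoke the standard fact that for a regular subsemigroup the relations $\gr$ and $\gl$ are simply the restrictions of the corresponding relations of the overmonoid (the inclusion $\gr^{\B_n}\sub\gr^{\P_n}$ is trivial, and the reverse inclusion for elements of a regular subsemigroup is the content of the classical result). Granting this, Theorem~\ref{thm_wilcox}(i),(ii) applies verbatim to $\al,\be\in\B_n$, and the domain/codomain conditions appearing there are redundant by the previous paragraph. This yields parts (i) and (ii): $\al\gr\be\iff\ker(\al)=\ker(\be)$ and $\al\gl\be\iff\coker(\al)=\coker(\be)$.

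For part (iii), the forward direction is immediate, since any $\gj$-relation in $\B_n$ is contained in the $\gj$-relation of $\P_n$, and rank is a $\gj$-invariant of $\P_n$ by Theorem~\ref{thm_wilcox}(iii). For the converse I would argue directly inside $\B_n$ rather than through $\P_n$, using $\gd=\gr\circ\gl$ and the coincidence of $\gd$ and $\gj$ in the finite semigroup $\B_n$: given $\al,\be\in\B_n$ with $\rank(\al)=\rank(\be)=r$, both $\ker(\al)$ and $\coker(\be)$ consist of $r$ singleton classes and $\tfrac{1}{2}(n-r)$ two-element classes, so one can build a genuine Brauer diagram $\ga\in\B_n$ whose top is grouped according to $\ker(\al)$, whose bottom is grouped according to $\coker(\be)$, and whose $r$ remaining top and bottom points are joined bijectively as transversal blocks. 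By parts (i) and (ii) already proved, $\ga\gr\al$ and $\ga\gl\be$, whence $\al\gd\be$, i.e. $\al$ and $\be$ are $\gj$-related.

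The step I expect to be the main obstacle is the justification that $\gr$ and $\gl$ on $\B_n$ genuinely coincide with the restrictions of the corresponding relations on $\P_n$---equivalently, the ``$\Leftarrow$'' directions of (i) and (ii), which require producing the multiplier witnessing $\gr$- or $\gl$-equivalence \emph{within} $\B_n$ rather than merely in $\P_n$. This is precisely where regularity of $\B_n$ is used, and it is the same phenomenon that makes the diagram construction in part (iii) legitimate; once it is in place, the remainder is the bookkeeping of the preceding paragraphs.
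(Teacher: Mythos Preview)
Your proposal is correct and follows essentially the same approach as the paper: the paper does not give a formal proof but simply notes in the paragraph preceding the theorem that $\B_n$ is closed under $*$ (hence regular, so Green's relations are inherited from $\P_n$ via Theorem~\ref{thm_wilcox}) and that in $\B_n$ the domain (respectively, codomain) is determined by the kernel (respectively, cokernel). Your write-up fleshes out exactly these two points and, for part~(iii), makes explicit the construction of the intermediate element $\ga\in\B_n$ witnessing $\al\gd\be$, which the paper leaves implicit; this is a welcome clarification since $\gj$-classes in a subsemigroup need not coincide with restrictions of the ambient $\gj$-classes.
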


For $1\leq i<j\leq n$, we define the partition
\[
\tau_{ij} = \partn{x}{x}{i,j}{i,j}_{x\in[n]\setminus\{i,j\}}.
\]
See Figure \ref{fig_tau_ij} for an illustration.
\begin{figure}[ht]
\begin{center}
\begin{tikzpicture}[xscale=.7,yscale=0.7]
\draw(3,2)arc(180:270:.4) (3.4,1.6)--(6.6,1.6) (6.6,1.6) arc(270:360:.4);
\draw(3,0)arc(180:90:.4) (3.4,.4)--(6.6,.4) (6.6,.4) arc(90:0:.4);
	\fill (0,0)circle(.1)
	      (2,0)circle(.1)
	      (3,0)circle(.1)
	      (4,0)circle(.1)
	      (6,0)circle(.1)
	      (7,0)circle(.1)
	      (8,0)circle(.1)
	      (10,0)circle(.1)
	      (0,2)circle(.1)
	      (2,2)circle(.1)
	      (3,2)circle(.1)
	      (4,2)circle(.1)
	      (6,2)circle(.1)
	      (7,2)circle(.1)
	      (8,2)circle(.1)
	      (10,2)circle(.1);
	\draw (0,2)--(0,0)
	      (2,2)--(2,0)
	      (4,2)--(4,0)
	      (6,2)--(6,0)
	      (8,2)--(8,0)
	      (10,2)--(10,0);
  \draw [dotted] (0,2)--(2,2)
                 (4,2)--(6,2)
                 (8,2)--(10,2)
                 (0,0)--(2,0)
                 (4,0)--(6,0)
                 (8,0)--(10,0);
  \draw(3,2)node[above]{{\tiny $\phantom{j}i\phantom{j}$}};
  \draw(7,2)node[above]{{\tiny $\phantom{j}j\phantom{j}$}};
  \draw(0,2)node[above]{{\tiny $\phantom{j}1\phantom{j}$}};
  \draw(10,2)node[above]{{\tiny $\phantom{j}n\phantom{j}$}};
	\end{tikzpicture}
\end{center}
\caption{The projection $\tau_{ij}\in\B_n$.}
\label{fig_tau_ij}
\end{figure}
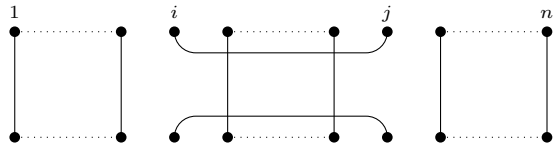
The following result was proved in \cite{Maltcev2007}. 

\begin{theorem} 
\label{lem_Brauer}
The singular part $\BnSn$ of the Brauer monoid $\B_n$ is idempotent generated.  The set $\set{\tau_{ij}}{1\leq i<j\leq n}$ is a minimal idempotent generating set.
\end{theorem}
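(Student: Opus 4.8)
The plan is to follow the strategy used for the partition monoid in Lemma~\ref{lem_dropdownPn} and Theorem~\ref{thm_main1}, exploiting that $\B_n$ is a finite regular $*$-semigroup and that the partitions $\tau_{ij}$ are exactly the rank-$(n-2)$ projections. First I would observe that $\set{\tau_{ij}}{1\le i<j\le n}$ coincides with $P(\JnmBn)$, the set of projections of $\JnmBn$. Indeed, by Lemma~\ref{lem_projections_Pn} every projection of $\B_n$ has the form $\partn{A_i}{A_i}{C_j}{C_j}$, and the requirement that all blocks of an element of $\B_n$ have size $2$ forces each $A_i$ to be a singleton and each $C_j$ a doubleton; having rank exactly $n-2$ then leaves a single upstairs non-transversal block $C_1=\{i,j\}$ (together with its mirror), which is precisely $\tau_{ij}$. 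As $\{i,j\}$ ranges over the ${n\choose2}$ two-element subsets of $\bn$, and since by Theorem~\ref{thm_wilcox_new} the $\gr$-classes of $\JnmBn$ are indexed by these same kernels, we get $|\set{\tau_{ij}}{1\le i<j\le n}|={n\choose2}$, the common number of $\gr$- and $\gl$-classes of $\JnmBn$.

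Setting $X=\set{\tau_{ij}}{1\le i<j\le n}=P(\JnmBn)$, I would then apply Theorem~\ref{thm_projgenerated} with $S=\B_n$ and $J=\JnmBn$. This immediately gives that $T:=\langle X\rangle$ is an idempotent-generated regular $*$-semigroup with $\rank(T)=\idrank(T)=|X|={n\choose2}$, and that its minimal generating sets are exactly the transversals of the $\gr$- and $\gl$-classes lying in its maximal $\gj$-classes. The remaining---and main---task is to identify $T$ with $\BnSn$. The inclusion $T\subseteq\BnSn$ is clear, since $X\subseteq\JnmBn\subseteq\InmBn=\BnSn$ and $\BnSn$ is an ideal.

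For the reverse inclusion I would show, by downward induction on the admissible ranks $r\le n-2$, that $\JrBn\subseteq T$. The base case $r=n-2$ requires showing that the whole $\gj$-class $\JnmBn$---including its non-idempotent group $\gh$-classes, whose maximal subgroup is $\S_{n-2}$---lies in $T$; equivalently, that the principal factor $\JnmBn^{\,*}$ is idempotent generated. This can be deduced from Proposition~\ref{prop_Fgens} applied with $F=E(\JnmBn)$ (it amounts to connectivity of the Graham--Houghton graph of $\JnmBn^{\,*}$), after which every element of $\JnmBn$ is a product of idempotents of $\JnmBn$, hence of projections, and so lies in $\langle P(\JnmBn)\rangle=T$ by Lemma~\ref{lem_projections}(ii). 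The inductive step is the Brauer analogue of Lemma~\ref{lem_dropdownPn}, namely a drop-down lemma $\JrBn\subseteq\langle\JrpBn\rangle$ for each admissible $r\le n-4$; this I would prove by writing a rank-$r$ projection $\alpha$ explicitly as a product $\alpha=\beta\gamma$ of two elements of $\JrpBn$, choosing $\beta,\gamma$ so that all blocks again have size $2$ and the rank drops by exactly two.

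Granting $T=\BnSn$, the theorem is complete: $\BnSn$ is idempotent generated with $\rank(\BnSn)=\idrank(\BnSn)={n\choose2}$, and $X=\set{\tau_{ij}}{1\le i<j\le n}$, being a set of ${n\choose2}$ projections that meets each $\gr$- and each $\gl$-class of the unique maximal $\gj$-class $\JnmBn$ exactly once, is a minimal idempotent generating set by Theorem~\ref{thm_projgenerated}. The principal obstacle is the pair of combinatorial facts specific to $\B_n$: the idempotent-generation of the principal factor $\JnmBn^{\,*}$ (the base case) and the explicit diagrammatic factorizations underlying the drop-down lemma. These are precisely the points at which one must manipulate actual Brauer diagrams rather than invoke the general structure theory; the step in steps of two (rather than one, as for $\P_n$) is what makes the factorizations require slightly more care.
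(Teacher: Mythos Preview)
The paper does not give its own proof of this theorem; it is quoted from \cite{Maltcev2007} and then used as a standing hypothesis later (for instance, Proposition~\ref{prop_chain} is applied to $S=\BnSn$, which already presupposes that $\BnSn$ is idempotent generated). So there is no in-paper argument to compare against directly.

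Your outline is sound in its broad shape, and the drop-down step you sketch is exactly Lemma~\ref{lem_dropdownBn}. The genuine gap is in your base case. You write that idempotent generation of $\JnmBn^{\,*}$ ``can be deduced from Proposition~\ref{prop_Fgens} applied with $F=E(\JnmBn)$ (it amounts to connectivity of the Graham--Houghton graph)''. This is both circular and insufficient. Circular, because Proposition~\ref{prop_Fgens} has ``$S$ idempotent generated'' as a standing hypothesis, so you cannot invoke it to establish that very conclusion. Insufficient, because connectivity of the Graham--Houghton graph of a completely $0$-simple semigroup is necessary but not sufficient for idempotent generation: by Graham's theorem one must also verify that the nonzero sandwich-matrix entries (normalised along a spanning tree) generate the maximal subgroup, here $\S_{n-2}$. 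A one-$\gr$-class, one-$\gl$-class Rees matrix semigroup over any nontrivial group already shows that connectivity alone fails. Establishing that the idempotents of $\JnmBn$ really do generate each $\gh$-class group $\S_{n-2}$ is precisely the nontrivial combinatorial content of the result in \cite{Maltcev2007}; it does not drop out of the structural machinery assembled in this paper, and your proposal does not supply it.
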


In \cite{Maltcev2007}, the authors go on to give a presentation for $\BnSn$ with respect to the above generating set of projections, but we will not require this presentation here.  

As before, for $r=n-2k$ with $k\geq0$ and $0\leq r\leq n$, we write
\[
\JrBn=\set{\al\in\B_n}{\rank(\al)=r}=\JrPn\cap\B_n.
\]
By Theorem \ref{thm_wilcox}, these are precisely the $\gj$-classes of $\B_n$, and they form a chain:
\[
J_m(\B_n)<J_{m+2}(\B_n)<\cdots<\JnmBn<\JnBn,
\]
where $m$ denotes $0$ if $n$ is even, or $1$ otherwise.  By Proposition \ref{prop_chain}, it follows that the ideals of $\B_n$ are precisely the sets
\[
\IrBn = J_m(\B_n)\cup J_{m+2}(\B_n)\cup\cdots\cup \JrBn = \set{\al\in\B_n}{\rank(\al)\leq r} = \IrPn\cap\B_n.
\]
Note that $\InBn=\B_n$, $\JnBn=\S_n$, $\InmBn=\BnSn$, and that the maximal subgroups contained in $J_r(\B_n)$ are all isomorphic to $\S_r$.  In what follows, we will apply the general results of Sections \ref{sect_0-simple} and \ref{sect_regular-*} to the (finite idempotent generated regular~$*$-) semigroup $S=\InmBn=\B_n\setminus\S_n$.

\subsection{Rank and idempotent rank of ideals of $\boldmath \B_n$}

Again, the key step is to show that elements of small rank in $\B_n$ may be expressed as a product of higher rank elements.

\begin{lemma}\label{lem_dropdownBn}
If $0\leq r\leq n-4$, then $\JrBn \subseteq \lb \JrpBn \rb$. 
\end{lemma}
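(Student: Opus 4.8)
The plan is to imitate the proof of Lemma~\ref{lem_dropdownPn}, but with one difference forced by parity. Since every element of $\B_n$ has rank congruent to $n$ modulo $2$, the set $J_{r+1}(\B_n)$ is empty, so we cannot simply restrict the partition-monoid factorisation (whose middle terms have rank $r+1$) to $\B_n$; instead we must express an element of rank $r$ \emph{directly} as a product of two rank-$(r+2)$ Brauer diagrams. Exactly as in Lemma~\ref{lem_dropdownPn}, it suffices by (the proof of) Proposition~\ref{prop_chain} --- which combines Corollary~\ref{cor_FitzGerald} with Lemma~\ref{lem_projections} --- to treat a single projection $\alpha\in\JrBn$ and to show that $\alpha\in\langle\JrpBn\rangle$.

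By Lemma~\ref{lem_projections_Pn}, such an $\alpha$ has the form $\partn{A_i}{A_i}{C_j}{C_j}_{i\in I,\ j\in J}$ with every $A_i$ a singleton and every $C_j$ a pair; here $|I|=r$ and $|J|=k=(n-r)/2$. The hypothesis $r\le n-4$ guarantees $k\ge 2$, so $\alpha$ has at least two nontransversal top blocks $C_1=\{p_1,q_1\}$ and $C_2=\{p_2,q_2\}$, together with their mirror images $C_1',C_2'$ below. I would define $\beta$ to be the projection obtained from $\alpha$ by \emph{opening} the block $C_2$, that is, by replacing the arcs $C_2$ and $C_2'$ with the two through-strands $\{p_2,p_2'\}$ and $\{q_2,q_2'\}$; and $\gamma$ to be the projection obtained from $\alpha$ by instead opening $C_1$. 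Each of $\beta$ and $\gamma$ then has exactly $r+2$ transversal blocks and all blocks of size $2$, so $\beta,\gamma\in\JrpBn$.

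The substance of the argument is the verification that $\beta\gamma=\alpha$; this is a diagram chase in the stacked product and is the two-arc analogue of the Temperley--Lieb computation in which $e_ie_j$ drops rank when $|i-j|\ge 2$. Tracing connected components through the middle row: the strands $i\to i'$ ($i\in I$) pass straight through; the top arc $C_1$ comes directly from $\beta$, while the bottom arc $C_1'$ is created when $\gamma$'s two new through-strands at $p_1,q_1$ close off the middle arc $C_1'$ of $\beta$; dually, the top arc $C_2$ is created when $\beta$'s two new through-strands at $p_2,q_2$ are joined by the top arc $C_2$ of $\gamma$, while the bottom arc $C_2'$ comes directly from $\gamma$; and for each $j\ge 3$ the top arc $C_j$ (from $\beta$) and bottom arc $C_j'$ (from $\gamma$) survive, while the duplicated copies of $C_j$ meeting in the middle row form closed loops that are deleted. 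The outcome is precisely $\alpha$, with the rank having dropped by exactly $2$. The main obstacle is simply to organise this case analysis of the middle row cleanly --- in particular to confirm that opening one arc from the top and one routed through the bottom produces no spurious identifications and lowers the rank by exactly two rather than zero or four --- and, if one wishes, to record $\beta$ and $\gamma$ explicitly in the block notation of Lemma~\ref{lem_projections_Pn}.
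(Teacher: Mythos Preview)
Your proposal is correct and follows essentially the same approach as the paper: reduce to a projection $\alpha$, then write $\alpha=\beta\gamma$ where $\beta,\gamma\in\JrpBn$ are obtained from $\alpha$ by ``opening'' one nontransversal arc each. The only difference is in bookkeeping: the paper observes that any such projection factors as $\alpha=\tau_{a_1b_1}\cdots\tau_{a_kb_k}$ (with the $C_j=\{a_j,b_j\}$ pairwise disjoint, so these $\tau$'s commute and are idempotent), and then takes $\beta=\tau_{a_1b_1}\cdots\tau_{a_{k-1}b_{k-1}}$ and $\gamma=\tau_{a_2b_2}\cdots\tau_{a_kb_k}$, whereupon $\beta\gamma=\alpha$ follows immediately from commutativity and idempotency without any diagram chase. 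Your $\beta$ and $\gamma$ are the same objects (opening $C_2$ and $C_1$ rather than $C_k$ and $C_1$, which is immaterial), but you verify the product by tracing connected components; the paper's $\tau$-factorisation replaces that trace with a one-line algebraic argument.
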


\begin{proof}
Let $\alpha \in \JrBn$ be a projection.  By Proposition \ref{prop_chain}, it suffices to show that $\al\in\la\JrpBn\ra$.  Write
\[
\alpha=\partn{A_i}{A_i}{C_j}{C_j}_{i\in I,\ j\in J}.
\]
Without loss of generality, we may assume that $J=[k]$, where $r=n-2k$.  For $j\in J$, let $C_j=\{a_j,b_j\}$, where $a_j<b_j$.  So $\al=\tau_{a_1b_1}\cdots\tau_{a_kb_k}$.  Then $\al=\be\ga$, where $\be=\tau_{a_1b_1}\cdots\tau_{a_{k-1}b_{k-1}}$ and $\ga=\tau_{a_2b_2}\cdots\tau_{a_kb_k}$
both belong to $\JrpBn$.
\end{proof}

\begin{theorem}
\label{thm_main3}
For $0\leq r=n-2k\leq n-2$, the ideal $\IrBn$ is idempotent generated, and
\[
\rank(\IrBn) = \idrank(\IrBn) = \binom n{2k}(2k-1)!!=\frac{n!}{2^k k! r!}.
\] 
Moreover, a subset $A \subseteq \IrBn$ of this cardinality is a generating set for $\IrBn$ if and only if the following three conditions hold\emph{:}
\begin{enumerate}
\item $\rank(\alpha) = r$ for all $\alpha \in A$\emph{;}
\item for all $\alpha, \beta \in A$ with $\alpha \neq \beta$, $\ker(\al)\not=\ker(\be)$\emph{;}
\item for all $\alpha, \beta \in A$ with $\alpha \neq \beta$, $\coker(\al)\not=\coker(\be)$.
\end{enumerate}
\end{theorem}

\begin{proof}
It follows from Lemma~\ref{lem_dropdownBn} and Proposition \ref{prop_chain} that $\IrBn$ is idempotent generated, and that $\rank(\IrBn) = \idrank(\IrBn) = \rho_{nr}$, where $\rho_{nr}$ is the number of $\gr$-classes in $\JrBn$.  It follows from Theorem~\ref{thm_wilcox_new} that $\rho_{nr}$ is equal to the number of equivalence relations on $[n]$ that have exactly $k$ non-trivial blocks, each of size $2$.  There are $\binom n{2k}$ ways to choose the elements belonging to the non-trivial blocks, and then $(2k-1)!!$ ways to choose the blocks of size $2$; multiplying these together gives the required formula.
The final clause follows by applying Theorem~\ref{thm_czsproj} in a virtually identical way to the argument in the final paragraph of the proof of Theorem \ref{thm_main1}.
\end{proof}

\begin{remark}
Note that the $r=n-2$ case agrees with Theorem \ref{lem_Brauer}.  Note also that $\rank(I_0(\B_n))=(n-1)!!$ if $n$ is even, while $\rank(I_1(\B_n))=n!!$ if $n$ is odd.
Some calculated values of $\rank(\IrBn)=\idrank(\IrBn)$ are given in Table~\ref{tab_rankIrBn}.
\end{remark}

\begin{remark}\label{rem:|IrBn|}
As in Remark \ref{rem:|IrPn|},  we may derive formulae for the sizes of the $\gj$-classes and ideals of $\B_n$:
\[
|\JrBn| = \rho_{nr}^2r! \qquad\text{and}\qquad  |\IrBn| = \sum_{i=0}^r \rho_{ni}^2i! \qquad\text{for any $0\leq r\leq n$},
\]
where the numbers $\rho_{nr}=\rank(\IrBn)$ are as in Theorem \ref{thm_main3}, and for convenience we define $\rho_{nn}=1$ and $\rho_{ni}=0$ if $i\not\equiv n \pmod 2$.
\end{remark}

\begin{table}[ht]%
\begin{tabular}{|c|ccccccccc|}
\hline
$n$ $\setminus$ $r$	&0	&1	&2	&3	&4	&5	&6	&7 &8      \\
\hline
 2&           1&            &            &            &            &            &            &            &                        \\
 3&            &           3&            &            &            &            &            &            &                        \\
 4&           3&            &           6&            &            &            &            &            &                        \\
 5&            &          15&            &          10&            &            &            &            &                        \\
 6&          15&            &          45&            &          15&            &            &            &                        \\
 7&            &         105&            &         105&            &          21&            &            &                        \\
 8&         105&            &         420&            &         210&            &          28&            &                        \\
 9&            &         945&            &        1260&            &         378&            &          36&                        \\
10&         945&            &        4725&            &        3150&            &         630&            &          45            \\
\hline
\end{tabular}
\caption{Values of $\rank(I_r(\B_n)) = \idrank(I_r(\B_n))$.}
\label{tab_rankIrBn}
\end{table}

\subsection{Minimal idempotent generating sets of $\boldmath \BnSn$}

As in Section \ref{sect_PnSn}, an enumeration of the minimal idempotent generating sets of $\BnSn$ amounts to an enumeration of the balanced subgraphs of the projection graph $\Gamma(\BnSn)=\Gamma(\JnmBn^*)$, which we will denote by $\Lambda_n$.

For distinct $i,j,k\in[n]$, we define
\[
\si_{ijk} = \partnlong{x}{x}{k}{i}{i,j}{j,k}_{x\in[n]\setminus\{i,j,k\}}.
\]
See Figure \ref{fig_si_ijk} for an illustration where, for convenience, we have only pictured the restriction of $\si_{ijk}$ to $\{i,j,k\}$.  It is easy to check that these partitions are idempotents.
\begin{figure}[ht]
\begin{center}
\begin{tikzpicture}[xscale=.7,yscale=0.7]
\siijk012
\arcup01
\arcdn12
\end{tikzpicture}
\begin{tikzpicture}[xscale=.7,yscale=0.7]
\siijk102
\arcup01
\arcdn02
\end{tikzpicture}
\begin{tikzpicture}[xscale=.7,yscale=0.7]
\siijk120
\arcup12
\arcdn02
\end{tikzpicture}
\\
\
\\
\begin{tikzpicture}[xscale=.7,yscale=0.7]
\siijk021
\arcup02
\arcdn12
\end{tikzpicture}
\begin{tikzpicture}[xscale=.7,yscale=0.7]
\siijk201
\arcup02
\arcdn01
\end{tikzpicture}
\begin{tikzpicture}[xscale=.7,yscale=0.7]
\siijk210
\arcup12
\arcdn01
\end{tikzpicture}
\end{center}
\caption{Simplified illustrations of the idempotents $\si_{ijk}\in\B_n$ for all possible orderings of $i,j,k$.  See text for more details.}
\label{fig_si_ijk}
\end{figure}
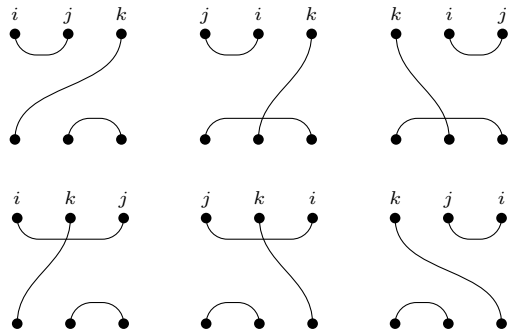

The next lemma is verified in similar fashion to Lemma~\ref{lem_idempotents_in_J_{n-1}}, and its proof is omitted.  For simplicity, we will use symmetric notation and allow ourselves to write $\tau_{ji}=\tau_{ij}$ for all $\oijn$.

\begin{lemma}\label{lem_idempotents_in_J_{n-2}_Bn}
The set of idempotents of $\JnmBn$ is
\[
\set{\tau_{ij}}{\oijn}\cup\set{\si_{ijk}}{\text{$i,j,k\in[n]$ \emph{distinct}}}.
\]
The set of projections of $\JnmBn$ is
\[
\set{\tau_{ij}}{\oijn}.
\]
In the principal factor $\JnmBn^*$, the only nonzero products of pairs of projections are, using symmetric notation for the projections,
\[
\tau_{ij}^2=\tau_{ij}, \ \ \tau_{ij}\tau_{jk}=\si_{ijk}.
\]
\end{lemma}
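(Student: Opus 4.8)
The plan is to follow the same strategy as in the proof of Lemma~\ref{lem_idempotents_in_J_{n-1}}, working diagrammatically. First I would describe the elements of $\JnmBn$ explicitly: since every block of an element of $\B_n$ has size~$2$, and since $\rank(\al)=n-2$ forces exactly one nontransversal block on the top and one on the bottom (in the notation $\al=\partn{A_i}{B_i}{C_j}{D_k}$ this is the $|J|=|K|=1$ case), each such $\al$ consists of $n-2$ transversal strands together with a single upper nontransversal block $\{i,j\}\sub\bn$ and a single lower nontransversal block $\{k,l\}'\sub\bn'$. To pin down the projections I would apply Lemma~\ref{lem_projections_Pn}: a projection has the form $\partn{A_i}{A_i}{C_j}{C_j}$, so in $\B_n$ at rank $n-2$ the sets $A_i$ are forced to be singletons $\{x\}$ and the unique nontransversal block is a pair occurring identically on top and bottom. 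This is precisely $\tau_{ij}$, giving the claimed set of projections $\set{\tau_{ij}}{\oijn}$.

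For the idempotents I would use Lemma~\ref{lem_projections}(ii), which gives $e=(ee^*)(e^*e)$ with $ee^*,e^*e\in P(\B_n)$. Since $ee^*\gr e$ and $e^*e\gl e$, both lie in the same $\gj$-class as $e$, so if $e\in\JnmBn$ then $ee^*$ and $e^*e$ are rank-$(n-2)$ projections, i.e.\ each is some $\tau$. Thus every idempotent of $\JnmBn$ is a product $\tau_{ab}\tau_{cd}$ of two of the projections just found, and it remains to evaluate these products diagrammatically according to the size of $\{a,b\}\cap\{c,d\}$. When $\{a,b\}=\{c,d\}$ the product is $\tau_{ab}$ again; when the pairs are disjoint, stacking the diagrams destroys the through-strands at all four of $a,b,c,d$, so the rank drops to $n-4$ and the product is zero in $\JnmBn^*$; and when the pairs share exactly one point I would show, by tracing the connected components of the stacked diagram, that $\tau_{ij}\tau_{jk}=\si_{ijk}$ (the upper arc $\{i,j\}$ comes from the top factor, the lower arc $\{j,k\}$ from the bottom factor, and the merged middle cluster produces the single transversal strand $k\mapsto i'$, with all remaining points fixed). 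Running $(i,j,k)$ over all ordered triples of distinct elements then recovers exactly the family $\set{\si_{ijk}}{i,j,k\ \text{distinct}}$, each of which is an idempotent (either because it is one of these $(ee^*)(e^*e)$-products, or by checking $\si_{ijk}^2=\si_{ijk}$ directly from the diagram). Together with the projections $\tau_{ij}$, this exhausts the idempotents of $\JnmBn$.

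The same computation of products immediately yields the product table for the principal factor $\JnmBn^*$, namely $\tau_{ij}^2=\tau_{ij}$ and $\tau_{ij}\tau_{jk}=\si_{ijk}$ as the only nonzero products of pairs of projections. I expect the one genuinely fiddly point to be the bookkeeping in the shared-index case: one must verify that the orientation of $\si_{ijk}$ (the asymmetric transversal $k\mapsto i'$ together with the upper arc $\{i,j\}$ and lower arc $\{j,k\}$) matches its definition, and that all six orderings of a triple arise. A useful sanity check here is the count: there are $n(n-1)(n-2)$ ordered triples of distinct elements, which coincides with the number of products $\tau_{ij}\tau_{jk}$ having a single common (middle) index, so no $\si_{ijk}$ is produced twice and none is missed.
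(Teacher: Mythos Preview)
Your proposal is correct and follows exactly the approach the paper intends: the paper omits the proof, saying only that it ``is verified in similar fashion to Lemma~\ref{lem_idempotents_in_J_{n-1}}'', and that lemma's proof is precisely what you outline (identify the projections via Lemma~\ref{lem_projections_Pn}, then use $E=P^2$ from Lemma~\ref{lem_projections} and compute all pairwise products of the projections). Your case analysis on $|\{a,b\}\cap\{c,d\}|$ and the injectivity/count check for the $\sigma_{ijk}$ are the natural way to fill in the details the paper leaves to the reader.
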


It follows that the graph $\Lambda_n$ has vertex set $\set{\tau_{ij}}{\oijn}$ with edges $\tau_{ij}\to\tau_{kl}$ if and only if $\{i,j\}\cap\{k,l\}\not=\varnothing$.  The graphs $\Lambda_4$ and $\Lambda_5$ are pictured in Figure \ref{fig_Lambda_5}, where we have simplified matters by labelling the vertices $ij$ instead of $\tau_{ij}$, omitting loops, and displaying pairs of directed edges $ij\leftrightarrows kl$ as single undirected edges $ij-kl$.
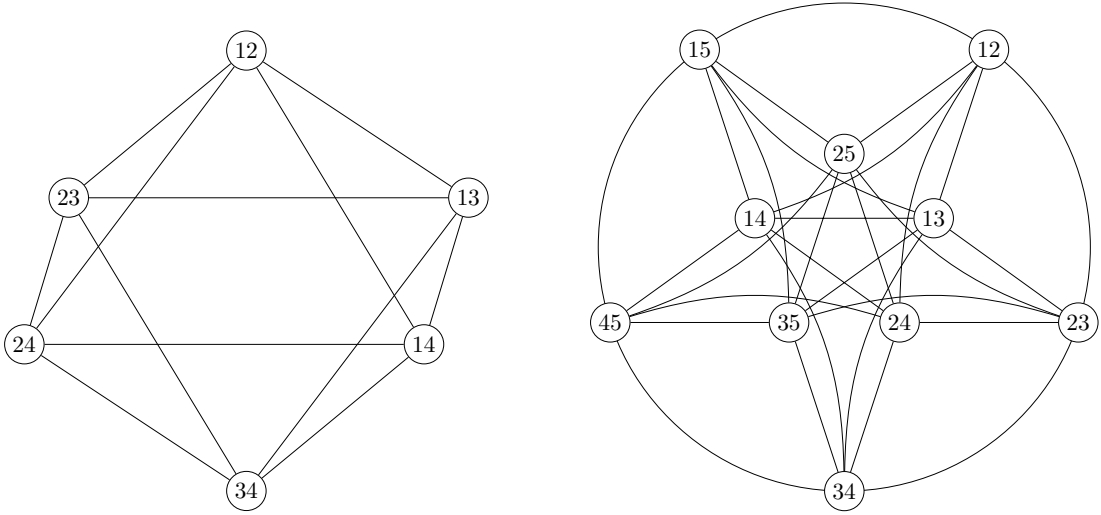
\begin{figure}[ht]
\begin{center}
\scalebox{0.8}
{
\begin{tikzpicture}[scale=7.3]
\tikzstyle{vertex}=[circle,draw=black, fill=white, inner sep = 0.07cm]
\node[vertex] (12) at (0	,1) {$12$};
\node[vertex] (13) at (.5	,.666) {$13$};
\node[vertex] (14) at (0.4	,.333) {$14$};
\node[vertex] (23) at (-0.4	,.666) {$23$};
\node[vertex] (24) at (-.5	,.333) {$24$};
\node[vertex] (34) at (0	,0) {$34$};
\draw (12)--(13);
\draw (12)--(23);
\draw (12)--(24);
\draw (12)--(14);
\draw (34)--(13);
\draw (34)--(14);
\draw (34)--(23);
\draw (34)--(24);
\draw (23)--(13);
\draw (23)--(24);
\draw (24)--(14);
\draw (14)--(13);
\end{tikzpicture}
\qquad\qquad
\begin{tikzpicture}[scale=5]
\tikzstyle{vertex}=[circle,draw=black, fill=white, inner sep = 0.07cm]
\draw (0,0) circle (0.809);
\node[vertex] (12) at (0.475529119	,0.654505847) {  $12$ };
\node[vertex] (13) at (0.293890801	,0.095490177) {  $13$ };
\node[vertex] (14) at (-0.293894022	,0.095492517) {  $14$ };
\node[vertex] (15) at (-0.475527889	,0.654509633) {  $15$ };
\node[vertex] (23) at (0.769419921	,-0.250003976) {  $23$ };
\node[vertex] (24) at (0.181635098	,-0.250001636) {  $24$ };
\node[vertex] (25) at (0	,0.30901548) {  $25$ };
\node[vertex] (34) at (0	,-0.809017306) {  $34$ };
\node[vertex] (35) at (-0.181637088	,-0.25000019) {  $35$ };
\node[vertex] (45) at (-0.769421911	,-0.24999785) {  $45$ };
\draw (12)--(13)--(23)--(24)--(34)--(35)--(45)--(14)--(15)--(25)--(12);
\draw (25)--(24)--(14)--(13)--(35)--(25);
\draw (12) to [bend left=17] (14);
\draw (12) to [bend right=17] (24);
\draw (23) to [bend left=17] (25);
\draw (23) to [bend right=17] (35);
\draw (34) to [bend left=17] (13);
\draw (34) to [bend right=17] (14);
\draw (45) to [bend left=17] (24);
\draw (45) to [bend right=17] (25);
\draw (15) to [bend left=17] (35);
\draw (15) to [bend right=17] (13);
\end{tikzpicture}
}
\end{center}
\caption{Simplified illustrations of the graphs $\Lambda_4=\Gamma(\B_4\setminus\S_4)$ and $\Lambda_5=\Gamma(\B_5\setminus\S_5)$; see text for further details.}
\label{fig_Lambda_5}
\end{figure}

Recall that the \emph{Johnson graph} $J(n,k)$ is the graph with vertex set $\{A\subseteq[n]:|A|=k\}$, and with edges $A - B$ if and only if $|A \cap B| = k-1$. In particular $J(n,2)$ has vertex set $\{A\subseteq[n]:|A|=2\}$ and edges $A - B$ if and only if $A$ and $B$ overlap in precisely one element. More background on Johnson graphs in the context of algebraic graph theory may be found in \cite[Chapter~1.6]{GodsilRoyle}. Note that the underlying undirected loop-free graph of $\Lambda_n=\Gamma(\B_n\setminus\S_n)$ is isomorphic to $J(n,2)$. So, in fact, Figure~\ref{fig_Lambda_5} pictures the Johnson graphs $J(4,2)$ and $J(5,2)$.

Factorizations of Johnson graphs have been considered, for instance, in \cite{Praeger2008}. Recall that a $1$-factor of a graph is a collection of edges that spans the graph, while a $2$-factor is a collection of cycles that spans all vertices of the graph. We define a $(0,1,2)$-factor of a graph as a decomposition of the graph into vertices, edges, and cycles, such that each vertex is contained in precisely one of these pieces. An oriented $(0,1,2)$-factor is a $(0,1,2)$-factor such that all the cycles are assigned an orientation (clockwise or anticlockwise).

Note that there is a one-one correspondence between balanced subgraphs of $\Lam_n$ and oriented $(0,1,2)$-factors of $J(n,2)$.  Indeed, consider a balanced subgraph $H$ of $\Lam_n$.  The connected components of $H$ come in three forms: 
\begin{itemize}
\item[(i)] a component of size $1$: $\{i,j\}$\!\!\!\!\begin{tikzcd}\arrow[loop right]{l}{}\end{tikzcd}\!; 
\item[(ii)] a component of size $2$: $\{i,j\}\leftrightarrows\{j,k\}$; or 
\item[(iii)] a component of size $k\geq3$: $\{i_1,i_2\}\to\{i_2,i_3\}\to\cdots\to\{i_{k-1},i_k\}\to\{i_1,i_2\}$.  
\end{itemize}
These three types of components give rise (respectively) to the vertices, edges and oriented cycles in a corresponding $(0,1,2)$-factor of $J(n,2)$.  Combined with Theorem~\ref{thm_balanced}, which tells us that the minimal idempotent generating sets of $\B_n\setminus\S_n$ are in one-one correspondence with the balanced subgraphs of $\Lambda_n=\Gamma(\B_n\setminus\S_n)$, this proves the following.

\begin{proposition}\label{012factorsJn2}
There is a one-one correspondence between the minimal idempotent generating sets of $\B_n\setminus\S_n$ and the oriented $(0,1,2)$-factors of the Johnson graph $J(n,2)$. 
\end{proposition}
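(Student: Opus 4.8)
The plan is to invoke Theorem~\ref{thm_balanced}, which gives a bijection between minimal idempotent generating sets of $\B_n\setminus\S_n$ and balanced subgraphs of the projection graph $\Lambda_n=\Gamma(\B_n\setminus\S_n)$, and then to translate the notion of balanced subgraph of $\Lambda_n$ directly into the language of oriented $(0,1,2)$-factors of $J(n,2)$. First I would recall, from Lemma~\ref{lem_idempotents_in_J_{n-2}_Bn} and the discussion following it, that $\Lambda_n$ is the symmetric, reflexive digraph with vertex set $\set{\tau_{ij}}{\oijn}$ and an arc $\tau_{ij}\to\tau_{kl}$ precisely when $\{i,j\}\cap\{k,l\}\neq\varnothing$; identifying the vertex $\tau_{ij}$ with the $2$-subset $\{i,j\}$ shows that the underlying undirected loop-free graph of $\Lambda_n$ is exactly $J(n,2)$, as already noted in the text.

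Next I would analyse an arbitrary balanced subgraph $G$ of $\Lambda_n$. Since every vertex of $G$ has in-degree and out-degree equal to $1$, $G$ is a disjoint union of directed cycles covering all of $V(\Lambda_n)$, and I would sort these cycles by length. A loop $\tau_{ij}\to\tau_{ij}$ (length $1$) corresponds, under the identification above, to the $2$-subset $\{i,j\}$ appearing as an isolated vertex; a $2$-cycle $\tau_{ij}\to\tau_{kl}\to\tau_{ij}$ corresponds to the single edge $\{i,j\}-\{k,l\}$ of $J(n,2)$ (recall that adjacency in $J(n,2)$ means overlap in one point, which is exactly $\{i,j\}\cap\{k,l\}\neq\varnothing$ for distinct $2$-subsets); and a directed cycle of length $m\geq3$ traces out a cycle of $J(n,2)$ together with a choice of one of its two directions of traversal. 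Assigning to $G$ the collection of isolated vertices, edges, and directed cycles so obtained therefore produces a $(0,1,2)$-factor of $J(n,2)$ in which every cycle carries an orientation, that is, an oriented $(0,1,2)$-factor.

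For the converse I would build the inverse map explicitly: given an oriented $(0,1,2)$-factor of $J(n,2)$, send each isolated vertex $\{i,j\}$ to the loop $\tau_{ij}\to\tau_{ij}$, each edge $\{i,j\}-\{k,l\}$ to the $2$-cycle $\tau_{ij}\to\tau_{kl}\to\tau_{ij}$, and each oriented cycle to the directed cycle of $\Lambda_n$ obtained by following the prescribed orientation. The resulting subgraph covers all vertices and has every in- and out-degree equal to $1$, so it is a balanced subgraph of $\Lambda_n$; a routine check confirms that these two constructions are mutually inverse, and composing with Theorem~\ref{thm_balanced} yields the claimed bijection.

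The one point requiring care, and the only real subtlety, is the role of the orientation. A directed cycle of length $m\geq3$ and its reverse are genuinely distinct balanced subgraphs of $\Lambda_n$, which is why the cycles of the $(0,1,2)$-factor must be oriented; by contrast a $2$-cycle $\tau_{ij}\to\tau_{kl}\to\tau_{ij}$ equals its own reverse, and a loop admits no orientation to choose, so the edges and isolated vertices of the factor are correctly left unoriented. This asymmetry is exactly what is built into the definition of an \emph{oriented} $(0,1,2)$-factor, and verifying that it matches the cycle structure of balanced subgraphs is what makes the correspondence a bijection rather than merely a surjection.
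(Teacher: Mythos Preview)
Your proposal is correct and follows exactly the same approach as the paper's proof: invoke Theorem~\ref{thm_balanced} to reduce to balanced subgraphs of $\Lambda_n$, use the identification of the underlying loop-free undirected graph of $\Lambda_n$ with $J(n,2)$, and translate balanced subgraphs into oriented $(0,1,2)$-factors. The paper's proof is a single sentence that declares this an ``immediate consequence'' of these ingredients, whereas you have spelled out the correspondence (loops $\leftrightarrow$ isolated vertices, $2$-cycles $\leftrightarrow$ edges, longer directed cycles $\leftrightarrow$ oriented cycles) and the orientation subtlety explicitly; but the route is the same.
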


\begin{remark}\label{rem:dn}
We do not know of any formula or recurrence relation for the number $d_n$ of minimal idempotent generating sets of $\B_n\setminus\S_n$. However, the sequence $d_n$ grows rapidly, as we now explain.  By Proposition \ref{012factorsJn2}, $d_n$ is equal to the number of $(0,1,2)$-factors of the Johnson graph $J(n,2)$. 
Consider the graph $J(n+1,2)$. We decompose the vertex set of $J(n+1,2)$ into two subsets: 
\[
V_1=\bigset{\{i,n+1\}}{1\leq i\leq n} \qquad\text{and}\qquad V_2=\bigset{\{i,j\}}{\oijn}.
\]
Let $G_1$ and $G_2$ be the induced subgraphs of $\Lam_{n+1}$ with these vertex sets (respectively).  Any oriented $(0,1,2)$-factor of $G_1$ and any oriented $(0,1,2)$-factor of $G_2$ may be pieced together to give an oriented $(0,1,2)$-factor of $J(n+1,2)$.  But $G_1$ is the complete graph on vertex set $V_1$, and $G_2$ is in fact the Johnson graph $J(n,2)$.  Since there are $n!$ oriented $(0,1,2)$-factors of $G_1$ (corresponding to permutations of the vertex set), and $d_n$ oriented $(0,1,2)$-factors of $G_2$, it follows that $d_{n+1}\geq n!\times d_n$.
This then gives
\[
d_n \geq \prod_{i=1}^{n-1}i! = \prod_{i=1}^{n-1}i^{n-i}.
\]
\end{remark}

\begin{remark}
The first few values of $d_n$ were calculated by James Mitchell using the \textsf{Semigroups} package in \textsf{GAP}; see \cite{East2013_4,Mitchell2013}.  These values are given in Table~\ref{tab_cndnen}, along with the sequence $c_n=\prod_{i=1}^{n-1}i!$.  
The calculation of $d_3=6$ is trivial, the $(0,1,2)$-factors of $J(3,2)$ being in one-one correspondence with the permutations of a three element set.  To see that $d_4=265$, consider the graph $J(4,2)$, depicted as an octahedron in Figure~\ref{fig_Lambda_5}.  Labelling the vertices $a,b,c,A,B,C$ in such a way that vertex $x$ is opposite vertex $X$ for each $x\in\{a,b,c\}$, we see that the $(0,1,2)$-factors of $J(4,2)$ are in one-one correspondence with the permutations of $\{a,b,c,A,B,C\}$ such that no lower-case letter is mapped to its corresponding upper-case letter, and vice versa.  These permutations are obviously in one-one correspondence with the fixed point free permutations of the set $[6] = \{1, 2, 3, 4, 5, 6\}$, of which there are $265$.  In general, for $n\geq4$, $d_n$ is bounded above by the number $e_n$ of fixed point free permutations of a set of size $\binom n2$; this sequence is also given in Table~\ref{tab_cndnen}.  The sequences $c_n$, $d_n$ and $e_n$ are A000178, A244493 and (a subsequence of) A000166, respectively, on \cite{Sloan:OEIS}; the sequence $d_n$ was added to \cite{Sloan:OEIS} after this article was written.
\end{remark}

\begin{table}[ht]%
\begin{center}
\begin{tabular}{|c|cccccc|}
\hline
$n$ & $2$ & $3$ & $4$ & $5$ & $6$ & $7$ \\
\hline\hline
$c_n$  & $1$ &  $2$ &  $12$  & $288$  & $34560$  & $24883200$   \\
\hline
$d_n$  & $1$ &  $6$ &  $265$  & $126140$  & $855966411$  & ?   \\
\hline
$e_n$  & $0$ &  $2$ &  $265$  & $1334961$  & $481066515734$  & $895014631192902121$   \\
\hline\hline
$d_n/c_n$  & $1$ &  $3$ &  $\approx22\phantom{\approx}$  & $\approx438\phantom{\approx}$  & $\approx24768\phantom{\approx}$  & ?   \\
\hline
$e_n/d_n$  & $0$ &  $1/3$ & $1$ & $\approx11\phantom{\approx}$  & $\approx562\phantom{\approx}$  &  ?   \\
\hline
\end{tabular}
\end{center}
\caption{The sequences $c_n,d_n,e_n$.  For $n\geq2$, $d_n$ is equal to the number of minimal idempotent generating sets of $\BnSn$.}
\label{tab_cndnen}
\end{table}

\subsection*{The Brauer monoid and Pfaffian orientations}

We have established a correspondence between minimal idempotent generating sets of the singular part of the Brauer monoid, and certain factorizations of the Johnson graph $J(n,2)$. As we mentioned above, we do not know of a formula or recurrence relation that gives the number of such factorizations. The following result shows that one cannot find such a formula by trying to compute a Pfaffian orientation for the corresponding Graham--Houghton graph. 

A subgraph $H$ of a graph $G$ is called \emph{central} if $G \setminus H$ has a perfect matching (here,~$\setminus$ stands for deletion, where we remove the vertices from $H$ as well as any edges involving one or more vertices from $H$). An even circuit $C$ in a directed graph $D$ is called \emph{oddly oriented} if for either choice of direction of traversal around $C$, the number of edges of $C$ directed in the direction of traversal is odd. This is independent of the initial choice of direction of traversal, since $C$ is even. An orientation $D$ of the edges of a graph $G$ is \emph{Pfaffian} if every even central circuit of G is oddly oriented in $D$. We say that a graph $G$ is Pfaffian if it has a Pfaffian orientation. The significance of Pfaffian orientations comes from the fact that if a bipartite graph $G$ has one, then the number of perfect matchings of $G$ can be computed in polynomial time. More on Pfaffian orientations may be found in \cite{Little1975, Robertson1999, ThomasSurvey2006, Vazirani1989}. 

These ideas are relevant to us, since the number of distinct minimal idempotent generating sets is precisely the number of perfect matchings of the (bipartite) Graham--Houghton graph. So, if the Graham--Houghton graph in question did have a Pfaffian orientation this would mean that the number of distinct minimal idempotent generating sets could be computed in polynomial time. The following result shows that we cannot use the theory of Pfaffian orientations as an approach to computing the number of minimal generating sets for $\B_n \setminus \S_n$.

\begin{proposition}\label{prop_nonPfaf}
Let $\Delta_n$ be the unlabeled undirected Graham--Houghton graph of the $\gj$-class $\JnmBn$. If $n \geq 3$, then $\Delta_n$ does not admit a Pfaffian orientation.  
\end{proposition}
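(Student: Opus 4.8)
The plan is to identify $\Delta_n$ explicitly as a concrete bipartite graph, locate a copy of $K_{3,3}$ sitting inside it as a central subgraph, and then invoke Little's characterisation of Pfaffian bipartite graphs \cite{Little1975}.

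First I would pin down the structure of $\Delta_n$. By Theorem~\ref{thm_wilcox_new}, the $\gr$- and $\gl$-classes of $\JnmBn$ are indexed by their kernels and cokernels respectively, each being an equivalence on $[n]$ with a single non-trivial block of size $2$; thus both index sets $I$ and $\Lambda$ are naturally the collection of $2$-element subsets of $[n]$. By definition, $\Delta_n$ has an edge joining $S\in I$ to $T\in\Lambda$ exactly when the $\gh$-class $H_{S,T}$ contains an idempotent. Reading off Lemma~\ref{lem_idempotents_in_J_{n-2}_Bn}, the projection $\tau_{ij}$ has kernel and cokernel $\{i,j\}$, so it occupies $H_{\{i,j\},\{i,j\}}$; and since $\si_{ijk}=\tau_{ij}\tau_{jk}$ with $\tau_{ij}$, $\tau_{jk}$ and their product all lying in $\JnmBn$, stability gives $\si_{ijk}\in R_{\tau_{ij}}\cap L_{\tau_{jk}}$, so $\si_{ijk}$ occupies $H_{\{i,j\},\{j,k\}}$. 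As the $\tau$'s and $\si$'s exhaust the idempotents of $\JnmBn$, this shows that $S\in I$ and $T\in\Lambda$ are adjacent in $\Delta_n$ if and only if $S\cap T\neq\varnothing$.

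Next I would exhibit the central $K_{3,3}$. Take the three $2$-subsets $\{1,2\},\{1,3\},\{2,3\}$ of $\{1,2,3\}$, regarded both as vertices of $I$ and of $\Lambda$. Any two $2$-subsets of a $3$-set meet, so these six vertices span a complete bipartite subgraph $H\cong K_{3,3}$ of $\Delta_n$. To see that $H$ is central, note that removing $V(H)$ deletes precisely the three $2$-subsets of $\{1,2,3\}$ from each side, leaving on each side the $2$-subsets $S\not\subseteq\{1,2,3\}$; matching each surviving $S\in I$ to the identical $S\in\Lambda$ (an edge, since $S\cap S=S\neq\varnothing$) is a perfect matching of $\Delta_n\setminus H$. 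This argument is uniform in $n\geq3$: for $n=3$ nothing survives and indeed $\Delta_3=K_{3,3}$, while for $n\geq4$ the diagonal matching on the remaining ${n\choose2}-3$ pairs does the job. Finally, by Little's theorem \cite{Little1975}, a bipartite graph is Pfaffian if and only if it has no central subgraph that is an even subdivision of $K_{3,3}$; since $K_{3,3}$ is an even subdivision of itself, the central copy of $K_{3,3}$ just found certifies that $\Delta_n$ is not Pfaffian.

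The hard part is the first step: correctly translating the algebraic data of Lemma~\ref{lem_idempotents_in_J_{n-2}_Bn} into the clean combinatorial adjacency rule $S\cap T\neq\varnothing$, in particular verifying that $\si_{ijk}$ genuinely lies in the off-diagonal $\gh$-class $H_{\{i,j\},\{j,k\}}$ and that disjoint pairs carry no idempotent. Once $\Delta_n$ is described in these terms, locating the central $K_{3,3}$ and applying Little's theorem are both routine.
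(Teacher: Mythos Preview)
Your proof is correct and complete. You correctly identify $\Delta_n$ as the bipartite graph on two copies of $\binom{[n]}{2}$ with adjacency $S\sim T$ iff $S\cap T\neq\varnothing$, you locate a genuine central $K_{3,3}$ on the three $2$-subsets of $\{1,2,3\}$ (with the diagonal matching certifying centrality), and Little's characterisation then finishes the argument.

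The paper takes a different route. Rather than invoking the ``central $K_{3,3}$-subdivision'' form of Little's theorem, it uses an equivalent reformulation (also due to Little): a bipartite graph $G$ with perfect matching $M$ is Pfaffian if and only if the digraph $D(G,M)$, obtained by orienting all edges from one side to the other and then contracting $M$, is \emph{not even} (meaning some $\{0,1\}$-weighting of the arcs has no cycle of even total weight). Contracting the diagonal matching of $\Delta_n$ yields the symmetric digraph on the Johnson graph $J(n,2)$, and the paper shows this digraph is even by a short case analysis on the weights appearing in any embedded triangle. Your approach is more direct and avoids that case split entirely: once $\Delta_n$ is described combinatorially, the central $K_{3,3}$ is visible by inspection. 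The paper's approach, on the other hand, makes the connection to $J(n,2)$ and the projection graph $\Lambda_n$ explicit, which ties in with the surrounding discussion. A minor terminological caution: what you call an ``even subdivision'' (each edge subdivided an even number of times, hence replaced by an odd-length path) is sometimes called an ``odd subdivision'' in the literature, so it is worth stating precisely which convention you mean when you write this up.
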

\begin{proof}
When $n=3$, $\Delta_3$ is isomorphic to the complete bipartite graph $K_{3,3}$, which is known not to admit a Pfaffian orientation; see \cite[Theorem~4.1]{ThomasSurvey2006}. Now we shall apply results from \cite{ThomasSurvey2006} to show that $\Delta_n$ does not admit a Pfaffian orientation for any $n \geq 4$. 

A digraph $D$ is called \emph{even} if whenever the arcs of $D$ are assigned weights $0$ or $1$, $D$ contains a cycle of even total weight. Now let $G = A \sqcup B$ be a bipartite graph and $M$ a matching of $G$. Let $D = D(G,M)$ be the digraph obtained from $G$ by (i) orienting arcs from $A$ to $B$, and (ii) contracting every edge of $M$. Little \cite{Little1975} showed that a bipartite graph $G$ is Pfaffian if and only if $D(G,M)$ is \emph{not} even. 

Now, carrying out this process with the bipartite graph $\Delta_n$ together with the natural choice of matching given by the set of projections, we obtain a directed version of the Johnson graph $J(n,2)$; the graph obtained is essentially the graph $\Lambda_n=\Gamma(\BnSn)$, defined above, but with the loops at each vertex removed. 
We claim that the resulting digraph, which we will denote by $\Lambda_n'$, is even. Once established, from the results above, this will complete the proof that $\Delta_n$ is not Pfaffian. 
To see that $\Lambda_n'$ is even, consider an assignment of labels $\{0,1\}$ to the arcs of this digraph. Since $\Lambda_n'$ embeds triangles it follows that at least one of the four configurations displayed in Figure~\ref{Fig_4config} must arise. By inspection it is now clear that in each case $\Lambda_n'$ must contain a cycle of even total weight.  
\end{proof}

\begin{figure}
\begin{center}
\begin{tabular}{ccccccc}
\begin{tikzpicture}[scale=1]
\tikzstyle{vertex}=[circle,draw=black, fill=white, inner sep = 0.06cm]
\node[vertex] (A) at (0,0) {};
\node[vertex] (B) at (2,0) {};
\draw [->-=0.5] (A)        to [bend right=40] (B);
\draw [->-=0.5] (B)        to [bend right=40] (A);
\end{tikzpicture}
& &
\begin{tikzpicture}[scale=1]
\tikzstyle{vertex}=[circle,draw=black, fill=white, inner sep = 0.06cm]
\node[vertex] (A) at (0,0) {};
\node[vertex] (B) at (2,0) {};
\draw [->-=0.5, dashed] (A)        to [bend right=40] (B);
\draw [->-=0.5, dashed] (B)        to [bend right=40] (A);
\end{tikzpicture}
& &
\begin{tikzpicture}[scale=1]
\tikzstyle{vertex}=[circle,draw=black, fill=white, inner sep = 0.06cm]
\node[vertex] (A) at (0,0) {};
\node[vertex] (B) at (2,0) {};
\node[vertex] (C) at (1,1.732) {};
\draw [->-=0.5] (A)        to [bend right=20] (B);
\draw [->-=0.5] (B)        to [bend right=20] (C);
\draw [->-=0.5] (C)        to [bend right=20] (A);
\draw [->-=0.5, dashed] (B)        to [bend right=20] (A);
\draw [->-=0.5, dashed] (A)        to [bend right=20] (C);
\draw [->-=0.5, dashed] (C)        to [bend right=20] (B);
\end{tikzpicture}
& &
\begin{tikzpicture}[scale=1]
\tikzstyle{vertex}=[circle,draw=black, fill=white, inner sep = 0.06cm]
\node[vertex] (A) at (0,0) {};
\node[vertex] (B) at (2,0) {};
\node[vertex] (C) at (1,1.732) {};
\draw [->-=0.5, dashed] (A)        to [bend right=20] (B);
\draw [->-=0.5] (B)        to [bend right=20] (C);
\draw [->-=0.5] (C)        to [bend right=20] (A);
\draw [->-=0.5] (B)        to [bend right=20] (A);
\draw [->-=0.5, dashed] (A)        to [bend right=20] (C);
\draw [->-=0.5, dashed] (C)        to [bend right=20] (B);
\end{tikzpicture}
\end{tabular}
\end{center}
\caption{
Four configurations, one of which must arise in  $\Lambda_n'$ after assigning labels $\{0,1\}$ to the arcs of the digraph $\Lambda_n'$. Here dotted arcs are arcs labeled by $0$, and the solid represent those labeled by $1$.}
\label{Fig_4config}
\end{figure}
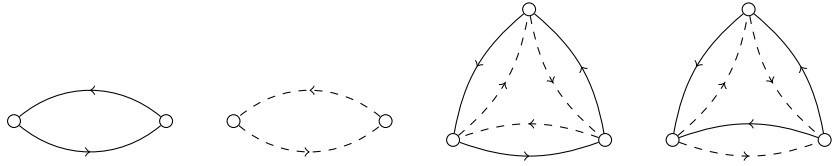

\subsection{Arbitrary idempotent generating sets for $\boldmath \BnSn$.}

For a subset $F$ of
\[
\{\al\in E(\B_n):\rank(\al)=n-2\} = \set{\tau_{ij}}{\oijn}\cup\set{\si_{ijk}}{\text{$i,j,k\in[n]$ distinct}},
\]
let $\Lam_n(F)$ be the two-coloured digraph obtained by colouring each edge of $\Lam_n$ blue, and then adding red edges corresponding to the idempotents from $F$:
\begin{itemize}
	\item $ij{\boldsymbol{\red\to}}ij$ if $\tau_{ij}\in F$,
	\item $ij{\boldsymbol{\red\to}}jk$ if $\si_{ijk}\in F$.
\end{itemize}
(As above, we will denote the vertices of $\Lam_n$ by $ij$ rather than $\tau_{ij}$.)

\begin{theorem}\label{arb_gen_set_Bn}
For $F\subseteq \{\al\in E(\B_n):\rank(\al)=n-2\}$, the following are equivalent\emph{:}
\begin{enumerate}
	\item[(i)] $\BnSn=\la F\ra$\emph{;}
	\item[(ii)] each vertex of $\Lam_n(F)$ is the base point of an RBR-alternating circuit. 
\end{enumerate}
\end{theorem}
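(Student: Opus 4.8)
The plan is to obtain this as a direct application of the general criterion Theorem~\ref{thm_generalRBR}, in exactly the way that Theorem~\ref{thm_PnMinusSnGeneral} was derived for the partition monoid. Two things need checking: that deciding whether $F$ generates $\BnSn$ reduces to deciding whether $F$ generates a single completely $0$-simple semigroup, and that the two-coloured graph $\Lam_n(F)$ is precisely the graph $\Gamma(S,F)$ to which Theorem~\ref{thm_generalRBR} refers.

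First I would set up the reduction to a principal factor. By Lemma~\ref{lem_dropdownBn} and Proposition~\ref{prop_chain} we have $\BnSn=\InmBn=\la\JnmBn\ra$, so $\JnmBn$ is the unique maximal $\gj$-class and it generates the whole semigroup. Since every element of $F$ lies in $\JnmBn$, any product of elements of $F$ either stays in $\JnmBn$ or drops to a strictly smaller $\gj$-class. Hence $\la F\ra=\BnSn$ if and only if every element of $\JnmBn$ is already a product of elements of $F$, and this is exactly the statement that $F$ generates the principal factor $S=\JnmBn^*$ (in which the products that drop in rank are identified with $0$). By Corollary~\ref{cor_FitzGerald}, $S$ is an idempotent generated completely $0$-simple semigroup, and since $\B_n$ is a regular $*$-semigroup whose $*$ preserves rank (and hence each $\gj$-class), $S$ inherits the regular $*$-structure. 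Thus $S$ is a finite idempotent generated completely $0$-simple regular $*$-semigroup, and Theorem~\ref{thm_generalRBR} applies to it.

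Next I would identify the graphs. By Definition~\ref{def_GammaS} the projection graph $\Gamma(S)=\Lam_n$ has vertex set $\set{\tau_{ij}}{\oijn}$, and by Lemma~\ref{lem_idempotents_in_J_{n-2}_Bn} its edges encode the nonzero products of projections: the loop $ij\to ij$ corresponds to the projection $\tau_{ij}$, and the edge $ij\to jk$ corresponds to $\si_{ijk}=\tau_{ij}\tau_{jk}$. These edges are in bijection with the idempotents of $\JnmBn$, so the red arcs added to build $\Gamma(S,F)$ are exactly $ij{\red\to}ij$ for $\tau_{ij}\in F$ and $ij{\red\to}jk$ for $\si_{ijk}\in F$; that is, $\Gamma(S,F)=\Lam_n(F)$. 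The equivalence (i)$\Leftrightarrow$(ii) is then immediate from Theorem~\ref{thm_generalRBR}, which says that $S=\la F\ra$ if and only if every vertex of $\Gamma(S,F)$ is the base point of an RBR-alternating circuit.

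I expect no genuinely hard step: essentially all of the content is carried by the previously established machinery (Theorem~\ref{thm_generalRBR} together with Corollary~\ref{cor_FitzGerald}). The only point requiring care is the reduction in the second paragraph, where one must use that $\JnmBn$ is the maximal $\gj$-class generating all of $\BnSn$, so that no generation ``from below'' can occur; once this is in place, the identification of $\Lam_n(F)$ with $\Gamma(S,F)$ is a routine translation through Lemma~\ref{lem_idempotents_in_J_{n-2}_Bn}.
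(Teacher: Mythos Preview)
Your proposal is correct and matches the paper's approach: the paper states Theorem~\ref{arb_gen_set_Bn} without proof, treating it as an immediate specialisation of Theorem~\ref{thm_generalRBR} just as Theorem~\ref{thm_PnMinusSnGeneral} was for $\P_n$. Your write-up simply makes explicit the (routine) reduction to the principal factor $\JnmBn^*$ via Lemma~\ref{lem_dropdownBn} and Proposition~\ref{prop_chain}, and the identification $\Gamma(\JnmBn^*,F)=\Lam_n(F)$ via Lemma~\ref{lem_idempotents_in_J_{n-2}_Bn}.
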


\newcommand{\LnF}{\Lam_n(F)}
\newcommand{\rto}{{\boldsymbol{\red\to}}}
\newcommand{\bto}{{\blue\to}}

The authors are not aware of any formula for the number of subsets $$F\subseteq \{\al\in E(\B_n):\rank(\al)=n-2\}$$ that generate $\BnSn$.

\begin{remark}\label{rem810}
Neither condition (i) nor condition (ii) of Remark~\ref{rem_generalRBR} gives a necessary and sufficient condition for $F$ to be a generating set in the case of $\BnSn$. 

Condition (i) of Remark~\ref{rem_generalRBR}, though necessary, is not sufficient. 
For example, consider the graph $\Lambda_4(F)$ where $\Lambda_4 = \Gamma(\B_4 \setminus \S_4)$ 
and
\[
F = \{  
\tau_{14}, 
\tau_{23}, 
\sigma_{214}, 
\sigma_{231}, 
\sigma_{234}, 
\sigma_{241}, 
\sigma_{314}, 
\sigma_{321}, 
\sigma_{324},
\sigma_{341}
\}. 
\]
This graph is illustrated in Figure~\ref{fig_Lambda_Fails} (with blue edges simplified as in Figure \ref{fig_Lambda_5}). In this example, each vertex of $\Lam_4(F)$ has at least one red edge coming in to it and at least one going out of it. However, the vertex $12$ is not the base point of any RBR-alternating circuit, and therefore $F$ is not a generating set for $\B_4 \setminus \S_4$. 

Condition (ii) of Remark~\ref{rem_generalRBR}, though sufficient, is not necessary.
For example, consider the graph $\Lam_3(F)$, where $F=\{\tau_{13},\tau_{23},\si_{213},\si_{321}\}$, which is illustrated in Figure~\ref{Fig_Lam3F}. It is easy to check that each vertex is the base point of an RBR-alternating circuit, so that $\B_3\setminus\S_3=\la F\ra$, even though vertex $12$ is not contained in a red circuit.  
\end{remark}

\begin{figure}
\begin{center}
\scalebox{0.8}
{
\begin{tikzpicture}[scale=7.3]
\tikzstyle{vertex}=[circle,draw=black, fill=white, inner sep = 0.07cm]
\node[vertex] (12) at (0	,1) {$12$};
\node[vertex] (13) at (.5	,.666) {$13$};
\node[vertex] (14) at (0.4	,.333) {$14$};
\node[vertex] (23) at (-0.4	,.666) {$23$};
\node[vertex] (24) at (-.5	,.333) {$24$};
\node[vertex] (34) at (0	,0) {$34$};
\draw [color=blue] (12)--(13);
\draw [color=blue] (12)--(23);
\draw [color=blue] (12)--(24);
\draw [color=blue] (12)--(14);
\draw [color=blue] (34)--(13);
\draw [color=blue] (34)--(14);
\draw [color=blue] (34)--(23);
\draw [color=blue] (34)--(24);
\draw [color=blue] (23)--(13);
\draw [color=blue] (23)--(24);
\draw [color=blue] (24)--(14);
\draw [color=blue] (14)--(13);
\draw [ultra thick, ->-=0.5, color=red] (23)        to [bend left=15] (12);
\draw [ultra thick, ->-=0.5, color=red] (23)        to [bend right=15] (24);
\draw [ultra thick, ->-=0.5, color=red] (23)        to [bend right=15] (13);
\draw [ultra thick, ->-=0.5, color=red] (23)        to [bend right=15] (34);
\draw [ultra thick, ->-=0.5, color=red] (24)        to [bend right=15] (14);
\draw [ultra thick, ->-=0.5, color=red] (34)        to [bend right=15] (14);
\draw [ultra thick, ->-=0.5, color=red] (12)        to [bend left=15] (14);
\draw [ultra thick, ->-=0.5, color=red] (23)        to [bend right=15] (24);
\draw [ultra thick, ->-=0.5, color=red] (13)        to [bend left=15] (14);
\draw [ultra thick, ->, color=red] (23) edge [out=90+22.5,in=90+90+22.5,loop] ();
\draw [ultra thick, ->, color=red] (14) edge [out=0+22.5,in=-90+22.5,loop] ();
\end{tikzpicture}
}
\end{center}
\vspace{-8mm}
\caption{
The graph $\Lambda_4(F)$, where 
$$
F = \{  
\tau_{14}, 
\tau_{23}, 
\sigma_{214}, 
\sigma_{231}, 
\sigma_{234}, 
\sigma_{241}, 
\sigma_{314}, 
\sigma_{321}, 
\sigma_{324},
\sigma_{341}
\}. 
$$
}
\label{fig_Lambda_Fails}
\end{figure}

\begin{figure}
\begin{center}
\begin{tikzpicture}[scale=1]
\tikzstyle{vertex}=[circle,draw=black, fill=white, inner sep = 0.06cm]
\node[vertex] (A) at (0,0) {$23$};
\node[vertex] (B) at (2,0) {$13$};
\node[vertex] (C) at (1,1.732) {$12$};
\draw [blue,->-=0.5] (A)        to [bend left=10] (B);
\draw [blue,->-=0.5] (B)        to [bend left=10] (C);
\draw [blue,->-=0.5] (C)        to [bend left=10] (A);
\draw [blue,->-=0.5] (B)        to [bend left=10] (A);
\draw [blue,->-=0.5] (A)        to [bend left=10] (C);
\draw [blue,->-=0.5] (C)        to [bend left=10] (B);
\draw [ultra thick, red,->-=0.5] (A)        to [bend left=30] (C);
\draw [ultra thick, red,->-=0.5] (C)        to [bend left=30] (B);
\draw [blue,->] (C) edge [out=130,in=50,loop] ();
\draw [blue,->] (B) edge [out=130+240,in=50+240,loop] ();
\draw [blue,->] (A) edge [out=130+120,in=50+120,loop] ();
\path (B) edge[ultra thick, red, out=140+240, in=40+240
                , looseness=0.8, loop
                , distance=2.5cm, ->]
            node[above=3pt] {} (B);
\path (A) edge[ultra thick, red, out=140+120, in=40+120
                , looseness=0.8, loop
                , distance=2.5cm, ->]
            node[above=3pt] {} (A);
\end{tikzpicture}
\end{center}
\vspace{-15mm}
\caption{The graph $\Lam_3(F)$, where $F=\{\tau_{13},\tau_{23},\si_{213},\si_{321}\}$.}
\label{Fig_Lam3F}
\end{figure}
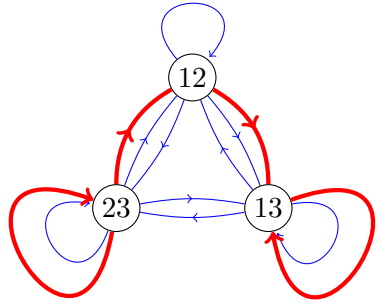

\section{The Jones monoid}\label{sec_Jones}

Recall that the Jones monoid $\J_n$ is the subsemigroup of $\B_n$ consisting of all partitions whose blocks have cardinality $2$ and may be drawn in a planar fashion.  By stretching the diagram of an element of $\J_n$ so that the vertices appear in a single straight line $1,2,\ldots,n,n',\ldots,2',1'$, it is straightforward to verify that the elements of $\J_n$ are in one-one correspondence with the proper bracketings with $n$ pairs of brackets; see Figure \ref{fig_Jn} for an example.  It follows that $|\J_n|=C_n$, where $C_n=\frac{1}{n+1}\binom {2n}n$ is the $n$th Catalan number.  The only planar permutation is the identity element of $\P_n$, denoted $1$, so the group of units of $\J_n$ is equal to~$\{1\}$.  But $\J_n\subseteq\B_n$ is again closed under the $*$ operation, so Green's relations are still described by Theorem \ref{thm_wilcox_new}.  Note, however, that the planarity condition implies that $\J_n$ is $\gh$-trivial.

\begin{figure}[ht]
\begin{center}
\begin{tikzpicture}[xscale=.7,yscale=0.7]
	\fill (0,0)circle(.1)
	      (1,0)circle(.1)
	      (2,0)circle(.1)
	      (3,0)circle(.1)
	      (4,0)circle(.1)
	      (5,0)circle(.1)
	      (6,0)circle(.1)
	      (0,2)circle(.1)
	      (1,2)circle(.1)
	      (2,2)circle(.1)
	      (3,2)circle(.1)
	      (4,2)circle(.1)
	      (5,2)circle(.1)
	      (6,2)circle(.1);
  \draw[white] (13,2)circle(.1);
	\catarc25{.6}
	\catarc34{.3}
	\arcdn12
	\arcdn56
	\cve00
	\cve13
	\cve64
  \draw(0,2)node[above]{{\tiny $1$}};
  \draw(1,2)node[above]{{\tiny $2$}};
  \draw(2,2)node[above]{{\tiny $3$}};
  \draw(3,2)node[above]{{\tiny $4$}};
  \draw(4,2)node[above]{{\tiny $5$}};
  \draw(5,2)node[above]{{\tiny $6$}};
  \draw(6,2)node[above]{{\tiny $7$}};
  \draw(0,0)node[below]{{\tiny $\phantom{'}1'$}};
  \draw(1,0)node[below]{{\tiny $\phantom{'}2'$}};
  \draw(2,0)node[below]{{\tiny $\phantom{'}3'$}};
  \draw(3,0)node[below]{{\tiny $\phantom{'}4'$}};
  \draw(4,0)node[below]{{\tiny $\phantom{'}5'$}};
  \draw(5,0)node[below]{{\tiny $\phantom{'}6'$}};
  \draw(6,0)node[below]{{\tiny $\phantom{'}7'$}};
  \draw(13,0)node[below]{{\white $)$}};
	\end{tikzpicture}
	\\ \ \\ 
\begin{tikzpicture}[xscale=.7,yscale=0.7]
	\fill (0,2)circle(.1)
	      (1,2)circle(.1)
	      (2,2)circle(.1)
	      (3,2)circle(.1)
	      (4,2)circle(.1)
	      (5,2)circle(.1)
	      (6,2)circle(.1)
	      (7,2)circle(.1)
	      (8,2)circle(.1)
	      (9,2)circle(.1)
	      (10,2)circle(.1)
	      (11,2)circle(.1)
	      (12,2)circle(.1)
	      (13,2)circle(.1);
	\catarc0{13}{1.2}
	\catarc1{10}{.9}
	\catarc25{.6}
	\catarc34{.3}
	\catarc69{.6}
	\catarc78{.3}
	\catarc{11}{12}{.3}
  \draw(0,2)node[above]{{\tiny $1$}};
  \draw(1,2)node[above]{{\tiny $2$}};
  \draw(2,2)node[above]{{\tiny $3$}};
  \draw(3,2)node[above]{{\tiny $4$}};
  \draw(4,2)node[above]{{\tiny $5$}};
  \draw(5,2)node[above]{{\tiny $6$}};
  \draw(6,2)node[above]{{\tiny $7$}};
  \draw(13,2)node[above]{{\tiny $\phantom{'}1'$}};
  \draw(12,2)node[above]{{\tiny $\phantom{'}2'$}};
  \draw(11,2)node[above]{{\tiny $\phantom{'}3'$}};
  \draw(10,2)node[above]{{\tiny $\phantom{'}4'$}};
  \draw(9,2)node[above]{{\tiny $\phantom{'}5'$}};
  \draw(8,2)node[above]{{\tiny $\phantom{'}6'$}};
  \draw(7,2)node[above]{{\tiny $\phantom{'}7'$}};
  \draw(0,0)node[below]{{ $($}};
  \draw(1,0)node[below]{{ $($}};
  \draw(2,0)node[below]{{ $($}};
  \draw(3,0)node[below]{{ $($}};
  \draw(4,0)node[below]{{ $)$}};
  \draw(5,0)node[below]{{ $)$}};
  \draw(6,0)node[below]{{ $($}};
  \draw(7,0)node[below]{{ $($}};
  \draw(8,0)node[below]{{ $)$}};
  \draw(9,0)node[below]{{ $)$}};
  \draw(10,0)node[below]{{ $)$}};
  \draw(11,0)node[below]{{ $($}};
  \draw(12,0)node[below]{{ $)$}};
  \draw(13,0)node[below]{{ $)$}};
	\end{tikzpicture}\end{center}
\caption{An element of the Jones monoid $\J_7$ (above) along with its corresponding Catalan bracketing diagram (below).}
\label{fig_Jn}
\end{figure}
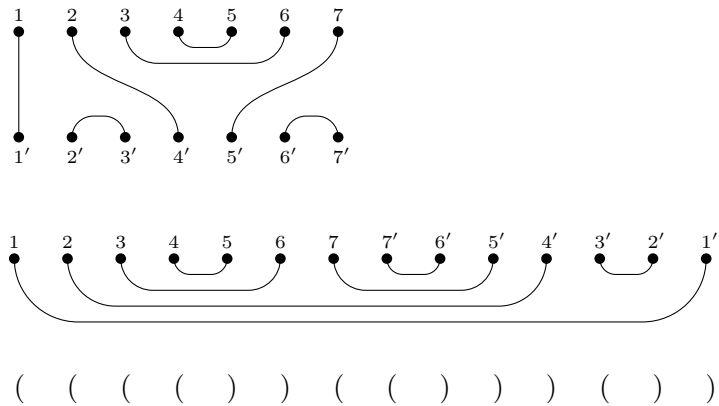

For $1\leq i\leq n-1$, write $\tau_i=\tau_{i,i+1}$.  (The partitions $\tau_{ij}$ were defined in the previous section.)  See Figure \ref{fig_tau_i} for an illustration.
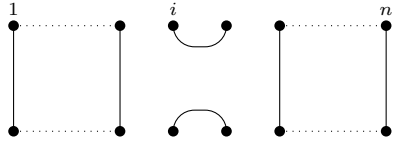
\begin{figure}[ht]
\begin{center}
\begin{tikzpicture}[xscale=.7,yscale=0.7]
\draw(3,2)arc(180:270:.4) (3.4,1.6)--(3.6,1.6) (3.6,1.6) arc(270:360:.4);
\draw(3,0)arc(180:90:.4) (3.4,.4)--(3.6,.4) (3.6,.4) arc(90:0:.4);
	\fill (0,0)circle(.1)
	      (2,0)circle(.1)
	      (3,0)circle(.1)
	      (4,0)circle(.1)
	      (5,0)circle(.1)
	      (7,0)circle(.1)
	      (0,2)circle(.1)
	      (2,2)circle(.1)
	      (3,2)circle(.1)
	      (4,2)circle(.1)
	      (5,2)circle(.1)
	      (7,2)circle(.1);
	\draw (0,2)--(0,0)
	      (2,2)--(2,0)
	      (5,2)--(5,0)
	      (7,2)--(7,0);
  \draw [dotted] (0,2)--(2,2)
                 (5,2)--(7,2)
                 (0,0)--(2,0)
                 (5,0)--(7,0);
  \draw(3,2)node[above]{{\tiny $i$}};
  \draw(0,2)node[above]{{\tiny $1$}};
  \draw(7,2)node[above]{{\tiny $n$}};
	\end{tikzpicture}
\end{center}
\caption{The projection $\tau_i\in\J_n$.}
\label{fig_tau_i}
\end{figure}

The following result is well known; see for example \cite{FitzGerald2006}, where presentations are also discussed. 

\begin{theorem}
The singular part $\J_n\setminus\{1\}$ of the Jones monoid $\J_n$ is idempotent generated.  The set $\set{\tau_i}{1\leq i\leq n-1}$ is a minimal idempotent generating set.
\end{theorem}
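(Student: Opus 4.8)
The plan is to run the same machine used for the partition and Brauer monoids, with the combinatorial input adapted to planar diagrams. Throughout, write $S=\J_n\setminus\{1\}$, and note that $\J_n$ is closed under $*$ (reflection) and hence is a finite regular $*$-semigroup whose Green's relations are given by Theorem~\ref{thm_wilcox_new}, that $\J_n$ is $\gh$-trivial, and that the $\gj$-classes $\set{\al\in\J_n}{\rank(\al)=r}$ (with $r\equiv n\pmod 2$) form a chain whose largest singular member is $J_{n-2}(\J_n)$. First I would pin down the $\tau_i$: each $\tau_i=\tau_{i,i+1}$ is a projection by Lemma~\ref{lem_projections_Pn}, of rank $n-2$, whose only nontrivial kernel class is $\{i,i+1\}$. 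Conversely, a rank-$(n-2)$ planar projection has a single nontrivial block $\{a,b\}$ on top (and its mirror on the bottom); planarity forbids a through-string from being enclosed by the arc $\{a,b\}$, forcing $b=a+1$. Hence $\set{\tau_i}{1\le i\le n-1}$ is exactly the set of projections of $J_{n-2}(\J_n)$, it has $n-1$ elements, and (having pairwise distinct kernels and cokernels) it meets each $\gr$-class and each $\gl$-class of $J_{n-2}(\J_n)$ exactly once; in particular $J_{n-2}(\J_n)$ has $n-1$ $\gr$-classes.

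The heart of the argument is to show $S=\lb \tau_1,\dots,\tau_{n-1}\rb$, and I would first reduce this to the single statement that $S$ is idempotent generated. I claim every projection of $S$ (of any singular rank) is a product of the $\tau_i$; this I would prove by induction on the number $k\ge 1$ of nontrivial blocks of the projection $p$. Planarity guarantees that an innermost top arc of $p$ is some $\{i,i+1\}$, and a direct diagram calculation gives $p=\tau_i q$, where $q$ is the planar projection obtained by deleting that arc (so $q$ has $k-1$ arcs and lies in a higher $\gj$-class); the base case $k=1$ is $p=\tau_i$. Since $E(S)=P(S)^2$ by Lemma~\ref{lem_projections}(ii), it follows that $\lb\tau_1,\dots,\tau_{n-1}\rb=\lb E(S)\rb$, so that $S=\lb\tau_i\rb$ is equivalent to $S$ being idempotent generated. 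The main obstacle is precisely this last fact. One clean route is a direct diagrammatic induction showing that an arbitrary (non-projection) planar diagram is a product of the ``chains'' $\tau_i\tau_{i+1}\cdots\tau_j$, whose effect I would first compute (such a chain has top arc $\{i,i+1\}$ and bottom arc $\{j,j+1\}$); the subtlety, and the reason the slick identity $\al=(\al\al^*)(\al^*\al)$ fails, is that a product of two projections can drop in rank, so one must track how the through-strings are rerouted as caps and cups are moved into standard position. Alternatively one may simply cite \cite{FitzGerald2006} for the idempotent generation of $S$.

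Finally I would read off minimality from the general theory. Since $\set{\tau_i}{1\le i\le n-1}\sub J_{n-2}(\J_n)$, we have $S=\lb\tau_i\rb\sub\lb J_{n-2}(\J_n)\rb\sub S$, so $J_{n-2}(\J_n)$ is the unique maximal $\gj$-class of $S$ with $\lb J_{n-2}(\J_n)\rb=S$, and by Corollary~\ref{cor_FitzGerald} its principal factor $J_{n-2}(\J_n)^*$ is an idempotent generated completely $0$-simple regular $*$-semigroup. Lemma~\ref{lem_maxJ} then gives $\rank(S)=\rank(J_{n-2}(\J_n)^*)$ and $\idrank(S)=\idrank(J_{n-2}(\J_n)^*)$, and Theorem~\ref{thm_czsproj} evaluates both as the number of $\gr$-classes of $J_{n-2}(\J_n)$, namely $n-1$. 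As $\set{\tau_i}{1\le i\le n-1}$ has exactly $n-1$ elements, it is a minimal idempotent generating set; moreover Theorem~\ref{thm_czsproj}(ii) shows that the minimal (idempotent) generating sets of $S$ are precisely the transversals of the $\gr$- and $\gl$-classes of $J_{n-2}(\J_n)$, and the projection set $\set{\tau_i}{1\le i\le n-1}$ is one such transversal.
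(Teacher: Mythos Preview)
Your proposal is correct. In fact you supply far more than the paper does: the paper does not prove this theorem at all but simply states it as well known, citing \cite{FitzGerald2006}. Your reduction---identifying $\{\tau_i\}$ as precisely the rank-$(n-2)$ projections, showing by induction on the number of arcs that every projection of $S$ lies in $\langle\tau_i\rangle$, and then invoking $E(S)=P(S)^2$ to reduce $S=\langle\tau_i\rangle$ to the idempotent-generation of $S$---is sound, and your eventual appeal to \cite{FitzGerald2006} for that last fact is exactly what the paper does. Your minimality argument via Lemma~\ref{lem_maxJ}, Corollary~\ref{cor_FitzGerald} and Theorem~\ref{thm_czsproj} is the intended use of the general machinery the paper develops; the paper applies the same framework implicitly (the $r=n-2$ case of Theorem~\ref{thm_main4} recovers the rank $n-1$).

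One small remark: in your inductive step you write $p=\tau_i q$ with $q$ the projection obtained by deleting the innermost arc $\{i,i+1\}$; when $k=1$ this $q$ is the identity $1\notin S$, but your base case $p=\tau_i$ handles this directly, so there is no gap.
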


\begin{remark}
Because the singular part of the Jones monoid $\J_n$ is simply $\J_n\setminus\{1\}$, we could state many of the results of this section in terms of \emph{monoid} generating sets of $\J_n$, rather than semigroup generating sets of $\J_n\setminus\{1\}$.  However, for reasons of consistency, we will not do this.
\end{remark}

For $0\leq r=n-2k\leq n$, let
\[
\JrJn=\set{\al\in\J_n}{\rank(\al)=r} \quad\text{and}\quad \IrJn=\set{\al\in\J_n}{\rank(\al)\leq r}.
\]
As before, these are precisely the $\gj$-classes and ideals of $\J_n$, and the $\gj$-classes form a chain; but this time all maximal subgroups are trivial.

\subsection{Rank and idempotent rank of ideals of $\J_n$}

In the proof of Lemma \ref{lem_dropdownBn}, it was shown that any projection $\al\in\JrBn$ with $r=n-2k$ and $k\geq2$ was the product of two partitions $\be,\ga\in\JrpBn$, and we wish to establish the corresponding result for~$\J_n$.  However, in the proof of Lemma \ref{lem_dropdownBn}, even if $\al$ is planar, the partitions $\be,\ga$ constructed in the proof need not be planar themselves.  For example, with $\al=\tau_{14}\tau_{23}$, the proof of Lemma~\ref{lem_dropdownBn} gives $\be=\tau_{14}$ and $\ga=\tau_{23}$, with $\ga$ being planar but not $\be$.  So we must work a bit harder to prove the next result.

\begin{lemma}\label{lem_dropdownJn}
If $0\leq r\leq n-4$, then $\JrJn \subseteq \lb \JrpJn \rb$. 
\end{lemma}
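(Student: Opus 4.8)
The plan is to follow the same route as Lemma~\ref{lem_dropdownBn}, but to repair the factorization so that it stays inside the \emph{planar} monoid $\J_n$. First, exactly as in the Brauer case, I would invoke Proposition~\ref{prop_chain}: since $\J_n$ is a finite idempotent generated regular $*$-semigroup whose $\gj$-classes form a chain, it suffices to show that every \emph{projection} $\al\in\JrJn$ lies in $\la\JrpJn\ra$. Writing $\al=\partn{A_i}{A_i}{C_j}{C_j}$, planarity forces the nontransversal blocks (the ``caps'' $C_j$ on top and their mirror ``cups'' on the bottom) to be pairwise non-crossing, and moreover no transversal block may pass underneath a cap; consequently the caps form a nesting forest and every innermost cap is an adjacent pair $\{i,i+1\}$. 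Since $r\le n-4$, there are $k\ge2$ caps, so such an innermost pair exists.

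The naive approach, copying Lemma~\ref{lem_dropdownBn}, would be to delete one cap of $\al$ (turning its two points into through-strings) to obtain a rank-$(r+2)$ factor and then rebuild it in the second factor. The obstruction---precisely the one flagged in the paragraph preceding the lemma, with $\al=\tau_{14}\tau_{23}$---is that opening a \emph{nested} cap leaves two through-strings sitting under an enclosing cap, so the resulting factor is no longer planar. The key point is therefore that an outer (nested) cap of $\al$ can never appear as a cap of either rank-$(r+2)$ factor; it must instead be \emph{re-created in the product}, by a path that threads through the middle row of the stacked diagram.

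Accordingly I would construct an explicit planar factorization $\al=\be\ga$ with $\be,\ga\in\JrpJn$ in which each of $\be,\ga$ has only $k-1$ caps (hence rank $r+2$) and, crucially, each factor's newly introduced connecting block is an \emph{adjacent} cap or cup, so that both factors are visibly planar. Concretely, retaining an innermost cap $\{i,i+1\}$ in $\be$ and routing the two legs of the cap to be reconstructed down to adjacent middle vertices, where $\ga$ carries a matching adjacent cap, reproduces the outer cap of $\al$ by the zig-zag $\;\text{top leg}\to\text{middle}\to\text{middle}\to\text{top leg}\;$ upon stacking; the mirror device on the bottom row rebuilds the corresponding cup. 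This is exactly the mechanism that realises $\tau_{14}\tau_{23}$ as a product of two rank-$2$ planar diagrams. I would then verify in each case that (i) $\be$ and $\ga$ are planar, using the standard non-crossing (balanced-parenthesis) criterion on the cyclic boundary of each diagram, and (ii) $\be\ga=\al$, by tracing connected components through the identified middle vertices according to the multiplication rule for $\P_n$. A short split is convenient: if $\al$ has two distinct \emph{outermost} caps one may simply open one outermost cap in each factor (both openings are planar, so the argument is then a direct planar refinement of Lemma~\ref{lem_dropdownBn}); the remaining, partly or totally nested configurations are handled by the middle-threading construction above.

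The main obstacle is entirely the planarity of the two factors. The combinatorics of which blocks merge on multiplication is the same bookkeeping as in the Brauer case, and it drops the rank from $r+2$ down to $r$ exactly as required; but guaranteeing that neither factor introduces a non-adjacent cap or cup---and hence a crossing---is what forces the middle-threading idea and the mild case analysis on the nesting structure of $\al$.
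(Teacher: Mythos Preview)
Your plan is correct and matches the paper's approach closely: reduce to projections via Proposition~\ref{prop_chain}, then give explicit planar factorizations $\al=\be\ga$ with $\be,\ga\in\JrpJn$, splitting into cases according to the nesting structure of the caps. Your case split is slightly different from the paper's---you branch on whether $\al$ has two distinct \emph{outermost} caps (in which case opening one in each factor is indeed planar, a mild generalization of the paper's Case~1, which assumes both outermost caps are adjacent pairs), versus a single outermost cap (your ``middle-threading'' case), whereas the paper instead asks whether there exist two outermost \emph{adjacent} caps, and otherwise works with a single outermost non-adjacent cap $\{i,j\}$ with $j\ge i+3$; but the resulting constructions are the same in spirit, and your Case~B coincides exactly with the paper's Case~2.
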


\begin{proof}
As usual, it suffices to show that any projection from $\JrJn$ belongs to $\la\JrpJn\ra$, so suppose $\al\in\JrJn$ is a projection.  We consider two cases.  For the duration of this proof, it will be convenient to introduce some terminology.  If $1\leq i<j\leq n$ and $1\leq r<s\leq n$, we will say that $\{i,j\}$ is \emph{surrounded by} $\{r,s\}$ if $r<i<j<s$.

\

\noindent \textbf{Case 1:} Suppose first that $\al$ has two blocks of the form $\{i,i+1\}$ and $\{j,j+1\}$ that are not surrounded by any other blocks of $\al$; see Figure \ref{fig_dropdownJn_1}.  In this case, $\al=\be\ga$ where $\be,\ga\in\JrpJn$ are also illustrated in Figure \ref{fig_dropdownJn_1}.  In the diagram, it is understood that the shaded parts of $\be,\ga$ are the same as the corresponding shaded parts of $\al$.  (Since $\al$ is an idempotent, these shaded parts are all idempotents in isomorphic copies of $\J_{i-1}$, $\J_{j-i-2}$ and $\J_{n-j-1}$.)

\

\noindent \textbf{Case 2:} If we are not in Case 1, then $\al$ must have at least one block of the form $\{i,j\}$ where $j\geq i+3$, and $\{i,j\}$ is not surrounded by any other block of $\al$; see Figure \ref{fig_dropdownJn_2}.  Due to planarity, this implies that vertices $i+1,i+2,\ldots,j-1$ are all involved in non-transversal blocks.  In this case, $\al=\be\ga$ where $\be,\ga\in\JrpJn$ are also illustrated in Figure \ref{fig_dropdownJn_2}.
\end{proof}

\begin{figure}[ht]
\begin{center}
\begin{tikzpicture}[xscale=.7,yscale=0.7]
\bluebox0032
\bluebox6092
\bluebox{12}0{15}2
\arcup45
\arcup{10}{11}
\arcdn45
\arcdn{10}{11}
	\fill (0,0)circle(.1)
	      (3,0)circle(.1)
	      (4,0)circle(.1)
	      (5,0)circle(.1)
	      (6,0)circle(.1)
	      (9,0)circle(.1)
	      (10,0)circle(.1)
	      (11,0)circle(.1)
	      (12,0)circle(.1)
	      (15,0)circle(.1)
	      (0,2)circle(.1)
	      (3,2)circle(.1)
	      (4,2)circle(.1)
	      (5,2)circle(.1)
	      (6,2)circle(.1)
	      (9,2)circle(.1)
	      (10,2)circle(.1)
	      (11,2)circle(.1)
	      (12,2)circle(.1)
	      (15,2)circle(.1);
  \draw [dotted] (0,2)--(3,2)
                 (0,0)--(3,0)
                 (6,2)--(9,2)
                 (6,0)--(9,0)
                 (12,0)--(15,0)
                 (12,2)--(15,2);
  \draw(0,2)node[above]{{\tiny $\phantom{j}1\phantom{j}$}};
  \draw(4,2)node[above]{{\tiny $\phantom{j}i\phantom{j}$}};
  \draw(10,2)node[above]{{\tiny $\phantom{j}j\phantom{j}$}};
  \draw(15,2)node[above]{{\tiny $\phantom{j}n\phantom{j}$}};
  \draw(-.2,1)node[left]{$\al=$};
	\end{tikzpicture}
\\ \ \\ \ \\
\begin{tikzpicture}[xscale=.7,yscale=0.7]
\bluebox0032
\bluebox6092
\bluebox{12}0{15}2
\cve44
\cve55
\arcup{10}{11}
\arcdn{10}{11}
	\fill (0,0)circle(.1)
	      (3,0)circle(.1)
	      (4,0)circle(.1)
	      (5,0)circle(.1)
	      (6,0)circle(.1)
	      (9,0)circle(.1)
	      (10,0)circle(.1)
	      (11,0)circle(.1)
	      (12,0)circle(.1)
	      (15,0)circle(.1)
	      (0,2)circle(.1)
	      (3,2)circle(.1)
	      (4,2)circle(.1)
	      (5,2)circle(.1)
	      (6,2)circle(.1)
	      (9,2)circle(.1)
	      (10,2)circle(.1)
	      (11,2)circle(.1)
	      (12,2)circle(.1)
	      (15,2)circle(.1);
  \draw [dotted] (0,2)--(3,2)
                 (0,0)--(3,0)
                 (6,2)--(9,2)
                 (6,0)--(9,0)
                 (12,0)--(15,0)
                 (12,2)--(15,2);
  \draw(0,2)node[above]{{\tiny $\phantom{j}1\phantom{j}$}};
  \draw(4,2)node[above]{{\tiny $\phantom{j}i\phantom{j}$}};
  \draw(10,2)node[above]{{\tiny $\phantom{j}j\phantom{j}$}};
  \draw(15,2)node[above]{{\tiny $\phantom{j}n\phantom{j}$}};
  \draw(-.2,1)node[left]{$\be=$};
\end{tikzpicture}
\\ \ \\ \ \\
\begin{tikzpicture}[xscale=.7,yscale=0.7]
\bluebox0032
\bluebox6092
\bluebox{12}0{15}2
\arcup45
\arcdn45
\cve{10}{10}
\cve{11}{11}
	\fill (0,0)circle(.1)
	      (3,0)circle(.1)
	      (4,0)circle(.1)
	      (5,0)circle(.1)
	      (6,0)circle(.1)
	      (9,0)circle(.1)
	      (10,0)circle(.1)
	      (11,0)circle(.1)
	      (12,0)circle(.1)
	      (15,0)circle(.1)
	      (0,2)circle(.1)
	      (3,2)circle(.1)
	      (4,2)circle(.1)
	      (5,2)circle(.1)
	      (6,2)circle(.1)
	      (9,2)circle(.1)
	      (10,2)circle(.1)
	      (11,2)circle(.1)
	      (12,2)circle(.1)
	      (15,2)circle(.1);
  \draw [dotted] (0,2)--(3,2)
                 (0,0)--(3,0)
                 (6,2)--(9,2)
                 (6,0)--(9,0)
                 (12,0)--(15,0)
                 (12,2)--(15,2);
  \draw(0,2)node[above]{{\tiny $\phantom{j}1\phantom{j}$}};
  \draw(4,2)node[above]{{\tiny $\phantom{j}i\phantom{j}$}};
  \draw(10,2)node[above]{{\tiny $\phantom{j}j\phantom{j}$}};
  \draw(15,2)node[above]{{\tiny $\phantom{j}n\phantom{j}$}};
  \draw(-.2,1)node[left]{$\ga=$};
	\end{tikzpicture}
\end{center}
\caption{The partitions $\al\in\JrJn$ and $\be,\ga\in\JrpJn$ from Case 1 of the proof of Lemma \ref{lem_dropdownJn}.}
\label{fig_dropdownJn_1}
\end{figure}

\begin{figure}[ht]
\begin{center}
\begin{tikzpicture}[xscale=.7,yscale=0.7]
\bluebox0032
\bluebox5{1.6}{11}2
\bluebox50{11}{.4}
\longarcup4{12}
\longarcdn4{12}
\bluebox{13}0{16}2
	\fill (0,0)circle(.1)
	      (3,0)circle(.1)
	      (4,0)circle(.1)
	      (5,0)circle(.1)
	      (11,0)circle(.1)
	      (12,0)circle(.1)
	      (13,0)circle(.1)
	      (16,0)circle(.1)
	      (0,2)circle(.1)
	      (3,2)circle(.1)
	      (4,2)circle(.1)
	      (5,2)circle(.1)
	      (11,2)circle(.1)
	      (12,2)circle(.1)
	      (13,2)circle(.1)
	      (16,2)circle(.1);
  \draw [dotted] (0,2)--(3,2)
                 (0,0)--(3,0)
                 (5,2)--(11,2)
                 (5,0)--(11,0)
                 (13,0)--(16,0)
                 (13,2)--(16,2);
  \draw(0,2)node[above]{{\tiny $\phantom{j}1\phantom{j}$}};
  \draw(4,2)node[above]{{\tiny $\phantom{j}i\phantom{j}$}};
  \draw(12,2)node[above]{{\tiny $\phantom{j}j\phantom{j}$}};
  \draw(16,2)node[above]{{\tiny $\phantom{j}n\phantom{j}$}};
  \draw(-.2,1)node[left]{$\al=$};
	\end{tikzpicture}
\\ \ \\ \ \\
\begin{tikzpicture}[xscale=.7,yscale=0.7]
\bluebox0032
\bluebox5{1.6}{11}2
\arcdn67
\arcdn9{10}
\arcdn{11}{12}
\cve44
\draw(12,2)arc(0:-90:3.5 and 1);
\draw(5,0)arc(180:90:3.5 and 1);
\bluebox{13}0{16}2
	\fill (0,0)circle(.1)
	      (3,0)circle(.1)
	      (4,0)circle(.1)
	      (5,0)circle(.1)
	      (6,0)circle(.1)
	      (7,0)circle(.1)
	      (9,0)circle(.1)
	      (10,0)circle(.1)
	      (11,0)circle(.1)
	      (12,0)circle(.1)
	      (13,0)circle(.1)
	      (16,0)circle(.1)
	      (0,2)circle(.1)
	      (3,2)circle(.1)
	      (4,2)circle(.1)
	      (5,2)circle(.1)
	      (11,2)circle(.1)
	      (12,2)circle(.1)
	      (13,2)circle(.1)
	      (16,2)circle(.1);
  \draw [dotted] (0,2)--(3,2)
                 (0,0)--(3,0)
                 (5,2)--(11,2)
                 (7,0)--(9,0)
                 (13,0)--(16,0)
                 (13,2)--(16,2);
  \draw(0,2)node[above]{{\tiny $\phantom{j}1\phantom{j}$}};
  \draw(4,2)node[above]{{\tiny $\phantom{j}i\phantom{j}$}};
  \draw(12,2)node[above]{{\tiny $\phantom{j}j\phantom{j}$}};
  \draw(16,2)node[above]{{\tiny $\phantom{j}n\phantom{j}$}};  \draw(-.2,1)node[left]{$\be=$};
\end{tikzpicture}
\\ \ \\ \ \\
\begin{tikzpicture}[xscale=.7,yscale=0.7]
\bluebox0032
\bluebox50{11}{.4}
\arcup45
\arcup67
\arcup9{10}
\cve{12}{12}
\bluebox{13}0{16}2
\draw(11,2)arc(0:-90:3.5 and 1);
\draw(4,0)arc(180:90:3.5 and 1);
	\fill (0,0)circle(.1)
	      (3,0)circle(.1)
	      (4,0)circle(.1)
	      (5,0)circle(.1)
	      (11,0)circle(.1)
	      (12,0)circle(.1)
	      (13,0)circle(.1)
	      (16,0)circle(.1)
	      (0,2)circle(.1)
	      (3,2)circle(.1)
	      (4,2)circle(.1)
	      (5,2)circle(.1)
	      (6,2)circle(.1)
	      (7,2)circle(.1)
	      (9,2)circle(.1)
	      (10,2)circle(.1)
	      (11,2)circle(.1)
	      (12,2)circle(.1)
	      (13,2)circle(.1)
	      (16,2)circle(.1);
  \draw [dotted] (0,2)--(3,2)
                 (0,0)--(3,0)
                 (7,2)--(9,2)
                 (5,0)--(11,0)
                 (13,0)--(16,0)
                 (13,2)--(16,2);
  \draw(0,2)node[above]{{\tiny $\phantom{j}1\phantom{j}$}};
  \draw(4,2)node[above]{{\tiny $\phantom{j}i\phantom{j}$}};
  \draw(12,2)node[above]{{\tiny $\phantom{j}j\phantom{j}$}};
  \draw(16,2)node[above]{{\tiny $\phantom{j}n\phantom{j}$}};  \draw(-.2,1)node[left]{$\ga=$};
	\end{tikzpicture}
\end{center}
\caption{The partitions $\al\in\JrJn$ and $\be,\ga\in\JrpJn$ from Case 2 of the proof of Lemma \ref{lem_dropdownJn}.}
\label{fig_dropdownJn_2}
\end{figure}

\begin{remark}
In Case 1 of the above proof, $\be$ and $\ga$ were both projections from $\JrpJn$.  In Case 2, they were not (and could not be; consider the above example of $\al=\tau_{14}\tau_{23}$).  However, they are both idempotents. Indeed,
consider $\beta$ for example.  Because $\alpha$ is a projection, the only part of $\beta$ we do not automatically know is idempotent is the portion contained between points $i+1, \ldots ,j$.  But this is essentially a (planar) rank $1$ Brauer diagram, and all of these are idempotents.  
Similarly we see that $\gamma$ is an idempotent. 
Since $\beta$ and $\gamma$ are both idempotents, it follows that any projection from $\JrJn$ can be expressed as the product of at most four projections from $\JrpJn$ if $0\leq r\leq n-4$.  In general, this bound (of four projections) is sharp, as the above example of $\al=\tau_{14}\tau_{23}\in\J_4$ demonstrates.
\end{remark}

\begin{theorem}
\label{thm_main4}
For $0\leq r=n-2k\leq n-2$, the ideal $\IrJn$ is idempotent generated, and
\[
\rank(\IrJn) = \idrank(\IrJn) = \frac{r+1}{n+1}\binom {n+1}k.
\] 
Moreover, a subset $A \subseteq \IrJn$ of this cardinality is a generating set for $\IrJn$ if and only if the following three conditions hold\emph{:}
\begin{enumerate}
\item $\rank(\alpha) = r$ for all $\alpha \in A$\emph{;}
\item for all $\alpha, \beta \in A$ with $\alpha \neq \beta$, $\ker(\al)\not=\ker(\be)$\emph{;}
\item for all $\alpha, \beta \in A$ with $\alpha \neq \beta$, $\coker(\al)\not=\coker(\be)$.
\end{enumerate}
\end{theorem}

\begin{proof}
Fix some $0\leq r=n-2k\leq n$, and denote by $\rho_{nr}$ the number of $\gr$-classes in $\JrJn$.  As usual, it suffices to show that $\rho_{nr}=\frac{r+1}{n+1}\binom {n+1}k$.
As noted above, an element $\al\in\J_n$ is completely determined by a string of length $2n$, consisting of a proper bracketing of $n$ pairs of brackets.  We denote by $L(\al)$ the substring consisting of the first $n$ symbols in this string.  If $\al\in\JrJn$, then this string will have~$k$ right brackets and $k+r$ left brackets, $r$ of which are unmatched.  These unmatched left brackets of $L(\al)$ correspond to the elements of $\dom(\al)$, while the pairs of matched brackets in $L(\al)$ correspond to the non-trivial $\ker(\al)$-classes.  Hence, by Theorem~\ref{thm_wilcox_new}, it follows that $\rho_{nr}$ is equal to the number of all such strings of $n$ brackets (with the specified number of unmatched left brackets), and it is well known that there are $\frac{r+1}{n+1}\binom {n+1}k$ such strings; see for example \cite[p.~303]{Brualdi2010}. 
\end{proof}

\begin{remark}\label{rem:|IrJn|}
As in Remarks \ref{rem:|IrPn|} and \ref{rem:|IrBn|},  but noting that $\gh$-classes of $\J_n$ are singletons, we obtain
\[
|\JrJn| = \rho_{nr}^2 \qquad\text{and}\qquad  |\IrJn| = \sum_{i=0}^r \rho_{ni}^2 \qquad\text{for any $0\leq r\leq n$},
\]
where the numbers $\rho_{nr}$ are as in (the proof of) Theorem \ref{thm_main4}, and we interpret $\rho_{ni}=0$ if $i\not\equiv n \pmod 2$.
\end{remark}

\begin{remark}\label{rem:irreducibles}
The sequence $\rho_{nr}$ is A053121 on \cite{Sloan:OEIS}, and appears in a variety of contexts, most notably as the Catalan triangle (concerning U/D walks in the first quadrant).  Of particular present interest is that these numbers give the dimensions of the irreducible representations of the \emph{Temperley--Lieb algebra} of degree $n$ in the semisimple case.
%
%
The dimensions of the irreducible representations of the \emph{Brauer} and \emph{partition algebras} may also be obtained from our results on (idempotent) ranks of ideals of the corresponding diagram monoids.  See Section~\ref{sec_irreps} below for full details of this. 
\end{remark}


\begin{table}[ht]%
\begin{tabular}{|c|ccccccccccc|}
\hline
$n$ $\setminus$ $r$	&0	&1	&2	&3	&4	&5	&6	&7 &8 &9 &10 \\
\hline
 0&           1&            &            &            &            &            &            &            &          &            &                        \\
 1&            &           1&            &            &            &            &            &            &          &            &                        \\
 2&           1&            &           1&            &            &            &            &            &          &            &                        \\
 3&            &           2&            &           1&            &            &            &            &          &            &                        \\
 4&           2&            &           3&            &           1&            &            &            &          &            &                        \\
 5&            &           5&            &           4&            &           1&            &            &          &            &                        \\
 6&           5&            &           9&            &           5&            &           1&            &          &            &                        \\
 7&            &          14&            &          14&            &           6&            &           1&          &            &                        \\
 8&          14&            &          28&            &          20&            &           7&            &         1&            &                        \\
 9&            &          42&            &          48&            &          27&            &           8&          &           1&                        \\
10&          42&            &          90&            &          75&            &          35&            &         9&            &           1            \\
\hline
\end{tabular}
\caption{Values of $\rho_{nr}$ from the proof of Theorem~\ref{thm_main4}.  For $0\leq r\leq n-2$, $\rho_{nr}=\rank(I_r(\J_n)) = \idrank(I_r(\J_n))$.}
\label{tab_rankIrJn}
\end{table}

\subsection{Minimal idempotent generating sets of $\J_n\setminus\{1\}$}

Again, an enumeration of the minimal idempotent generating sets of $\J_n\setminus\{1\}$ amounts to an enumeration of the balanced subgraphs of the projection graph $\Gamma(\JnSn)=\Gamma(\JnmJn^*)$, which we will denote by~$\Xi_n$.

For $1\leq i\leq n-2$, put
$
\lam_i=\si_{i,i+1,i+2} $ and $ \rho_i=\si_{i+2,i+1,i}.
$
(The idempotents $\si_{ijk}$ were defined in the previous section.)
See Figure \ref{fig_lam_i_rho_i} for an illustration.
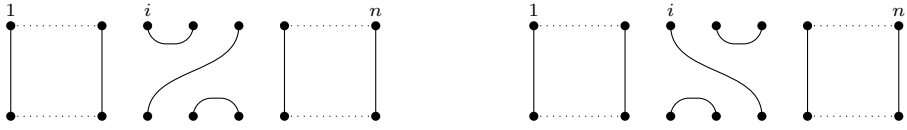
\begin{figure}[ht]
\begin{center}
\begin{tikzpicture}[xscale=.6,yscale=0.6]
	\fill (0,0)circle(.1)
	      (2,0)circle(.1)
	      (3,0)circle(.1)
	      (4,0)circle(.1)
	      (5,0)circle(.1)
	      (6,0)circle(.1)
	      (8,0)circle(.1)
	      (0,2)circle(.1)
	      (2,2)circle(.1)
	      (3,2)circle(.1)
	      (4,2)circle(.1)
	      (5,2)circle(.1)
	      (6,2)circle(.1)
	      (8,2)circle(.1);
	\arcup34
	\arcdn45
	\cve00
	\cve22
	\cve53
	\cve66
	\cve88
	\draw[dotted](0,2)--(2,2);
	\draw[dotted](0,0)--(2,0);
	\draw[dotted](6,2)--(8,2);
	\draw[dotted](6,0)--(8,0);
  \draw(0,2)node[above]{{\tiny $1$}};
  \draw(3,2)node[above]{{\tiny $i$}};
  \draw(8,2)node[above]{{\tiny $n$}};
	\end{tikzpicture}
	\qquad\qquad
\begin{tikzpicture}[xscale=.6,yscale=0.6]
	\fill (0,0)circle(.1)
	      (2,0)circle(.1)
	      (3,0)circle(.1)
	      (4,0)circle(.1)
	      (5,0)circle(.1)
	      (6,0)circle(.1)
	      (8,0)circle(.1)
	      (0,2)circle(.1)
	      (2,2)circle(.1)
	      (3,2)circle(.1)
	      (4,2)circle(.1)
	      (5,2)circle(.1)
	      (6,2)circle(.1)
	      (8,2)circle(.1);
	\arcup45
	\arcdn34
	\cve00
	\cve22
	\cve35
	\cve66
	\cve88
	\draw[dotted](0,2)--(2,2);
	\draw[dotted](0,0)--(2,0);
	\draw[dotted](6,2)--(8,2);
	\draw[dotted](6,0)--(8,0);
  \draw(0,2)node[above]{{\tiny $1$}};
  \draw(3,2)node[above]{{\tiny $i$}};
  \draw(8,2)node[above]{{\tiny $n$}};
	\end{tikzpicture}		\end{center}
\caption{The idempotents $\lam_i$ (left) and $\rho_i$ (right) from $\J_n$.}
\label{fig_lam_i_rho_i}
\end{figure}

Again, the next lemma is easily verified, and its proof is omitted.

\begin{lemma}\label{lem_idempotents_in_J_{n-2}_Jn}
The set of idempotents of $\JnmJn$ is
\[
\set{\tau_i}{1\leq i\leq n-1}\cup\set{\lam_i,\rho_i}{1\leq i\leq n-2}.
\]
The set of projections of $\JnmJn$ is
\[
\set{\tau_i}{1\leq i\leq n-1}.
\]
In the principal factor $\JnmJn^*$, the only nonzero products of pairs of projections are
\[
\tau_{i}^2=\tau_{i}, \ \ \tau_{i}\tau_{i+1}=\lam_i, \ \ \tau_{i+1}\tau_{i}=\rho_i.
\]
\end{lemma}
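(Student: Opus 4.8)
The plan is to exploit the regular $*$-semigroup structure of $\J_n$, using Lemma~\ref{lem_projections} exactly as in the proof of Lemma~\ref{lem_idempotents_in_J_{n-1}} (and its Brauer analogue Lemma~\ref{lem_idempotents_in_J_{n-2}_Bn}). I would treat the three assertions in the order in which they are most naturally obtained: first the projections, then the nonzero products of projections in the principal factor $\JnmJn^*$, and finally the idempotents.

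First I would determine the projections. By Lemma~\ref{lem_projections_Pn}, any projection of $\P_n$ has the form $\partn{A_i}{A_i}{C_j}{C_j}_{i\in I,\ j\in J}$; since $\J_n\subseteq\B_n$, every block has size $2$, so each $A_i$ is a singleton and each $C_j$ has size $2$. The rank condition $\rank(\alpha)=|I|=n-2$ together with $|I|+2|J|=n$ forces $|J|=1$, so such a projection has a single upper non-transversal block $\{a,b\}$ (with $a<b$), its mirror image $\{a',b'\}$, and vertical transversals $\{x,x'\}$ at all remaining points. The key step is the planarity constraint: the arc joining $a$ to $b$ separates each intermediate top vertex $x$ (with $a<x<b$) from the bottom row, yet $x$ must be joined to $x'$, which is impossible unless $b=a+1$. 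Hence the rank-$(n-2)$ projections are exactly $\tau_1,\dots,\tau_{n-1}$.

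Next I would compute the products $\tau_i\tau_j$ via the diagrammatic multiplication of Section~\ref{sect_Pn}. A direct trace of connected components shows that when $|i-j|\geq2$ the two non-transversal blocks $\{i,i+1\}$ and $\{j,j+1\}$ survive intact, so the product has rank $n-4<n-2$ and is therefore zero in $\JnmJn^*$; meanwhile $\tau_i^2=\tau_i$ is immediate, and the adjacent cases yield $\tau_i\tau_{i+1}=\lam_i$ and $\tau_{i+1}\tau_i=\rho_i$, matching the definitions $\lam_i=\si_{i,i+1,i+2}$ and $\rho_i=\si_{i+2,i+1,i}$. This gives the list of nonzero products. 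For the idempotents, by Lemma~\ref{lem_projections}(ii) any idempotent $e\in\JnmJn$ satisfies $e=(ee^*)(e^*e)$ with $ee^*,e^*e$ projections that are $\gr$- and $\gl$-related to $e$, hence of the same rank $n-2$; thus $ee^*=\tau_i$ and $e^*e=\tau_j$ for some $i,j$, and $e=\tau_i\tau_j$ is a nonzero product, so $e\in\{\tau_i,\lam_i,\rho_i\}$ by the previous step. Conversely, each $\tau_i$ is a projection and hence idempotent, while $\lam_i$ and $\rho_i$ are idempotents of the form $\si_{ijk}$, already noted to be idempotent in Section~\ref{sec_Brauer} (alternatively $\lam_i^2=\tau_i\tau_{i+1}\tau_i\tau_{i+1}=\tau_i\tau_{i+1}=\lam_i$ using the Temperley--Lieb relation $\tau_i\tau_{i+1}\tau_i=\tau_i$, and dually for $\rho_i$).

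The only genuinely content-bearing steps are the planarity restriction on projections --- which is precisely what distinguishes $\J_n$ from $\B_n$, where the non-adjacent pairs $\tau_{ij}$ are also admissible --- and the bookkeeping of connected components in the products $\tau_i\tau_j$; everything else is a routine application of the general regular $*$-semigroup machinery of Section~\ref{sect_regular-*}. An alternative route would be to intersect Lemma~\ref{lem_idempotents_in_J_{n-2}_Bn} with the planarity condition, but isolating the planar $\si_{ijk}$ directly is no simpler than the computation above, so I would prefer the self-contained argument.
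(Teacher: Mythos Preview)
Your proposal is correct and follows exactly the approach the paper signals: the paper omits the proof, stating only that it ``is verified in similar fashion to Lemma~\ref{lem_idempotents_in_J_{n-1}}'', whose proof determines the projections via Lemma~\ref{lem_projections_Pn}, then computes their pairwise products, and finally invokes Lemma~\ref{lem_projections} to conclude that every idempotent is such a product. Your treatment of the planarity constraint forcing adjacency of the non-transversal block is the only additional ingredient needed beyond the Brauer case, and you handle it correctly.
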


It follows that the graph $\Xi_n$ has vertex set $\set{\tau_{i}}{1\leq i\leq n-1}$ with edges $\tau_{i}\to\tau_{j}$ if and only if $|i-j|\leq1$.  The graph $\Xi_5$ is pictured in Figure~\ref{fig_Xi_5}, with vertices labeled $i$ instead of $\tau_i$.
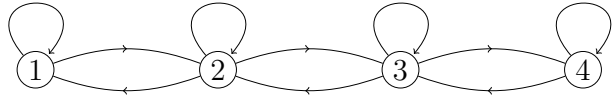
\begin{figure}[ht]
\begin{center}
\scalebox{0.8}
{
\begin{tikzpicture}[scale=3]
\tikzstyle{vertex}=[circle,draw=black, fill=white, inner sep = 0.07cm]
\node[vertex] (1) at (0,0) { \Large $1$ };
\node[vertex] (2) at (1,0) { \Large $2$ };
\node[vertex] (3) at (2,0) { \Large $3$ };
\node[vertex] (4) at (3,0) { \Large $4$ };
\draw[->-=0.5] (1) to [bend left=20] (2);
\draw[->-=0.5] (2) to [bend left=20] (3);
\draw[->-=0.5] (3) to [bend left=20] (4);
\draw[-<-=0.5] (1) to [bend right=20] (2);
\draw[-<-=0.5] (2) to [bend right=20] (3);
\draw[-<-=0.5] (3) to [bend right=20] (4);
\draw [->] (1) edge [out=130,in=50,loop] ();
\draw [->] (2) edge [out=130,in=50,loop] ();
\draw [->] (3) edge [out=130,in=50,loop] ();
\draw [->] (4) edge [out=130,in=50,loop] ();
\end{tikzpicture}
}
\end{center}
\caption{The graph $\Xi_5=\Gamma(\J_5\setminus\{1\})$.}
\label{fig_Xi_5}
\end{figure}

\begin{theorem}
The number of minimal idempotent generating sets for the singular part $\J_n\setminus\{1\}$ of the Jones monoid $\J_n$ is equal to $F_n$, the $n$th Fibonacci number, where $F_1=F_2=1$ and $F_n=F_{n-1}+F_{n-2}$ for $n\geq3$.
\end{theorem}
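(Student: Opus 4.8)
The plan is to invoke Theorem~\ref{thm_balanced}, which places the minimal idempotent generating sets of $\J_n\setminus\{1\}=\langle\JnmJn\rangle$ in bijection with the balanced subgraphs of the projection graph $\Xi_n=\Gamma(\J_n\setminus\{1\})=\Gamma(\JnmJn^*)$. Thus it suffices to count these balanced subgraphs. By Lemma~\ref{lem_idempotents_in_J_{n-2}_Jn}, $\Xi_n$ has vertex set $\set{\tau_i}{1\leq i\leq n-1}$, a loop at each vertex, and a pair of oppositely directed edges $\tau_i\rightleftarrows\tau_{i+1}$ for each $1\leq i\leq n-2$; its underlying undirected loop-free graph is the path $\tau_1-\tau_2-\cdots-\tau_{n-1}$ on $n-1$ vertices.

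First I would observe that, by Definition~\ref{def_balanced_subgraph} and the discussion following it, a balanced subgraph $H$ of $\Xi_n$ decomposes $\Xi_n$ into disjoint directed cycles. Since the underlying undirected graph of $\Xi_n$ is a path, hence a tree, it contains no undirected cycle of length $\geq3$; consequently every directed cycle of $H$ is either a loop $\tau_i\to\tau_i$ or a two-cycle $\tau_i\to\tau_{i+1}\to\tau_i$ traversing a single edge of the path in both directions. Therefore a balanced subgraph is determined by, and determines, a partition of $\{1,\ldots,n-1\}$ into singletons (each covered by a loop) and pairs of consecutive integers $\{i,i+1\}$ (each covered by a two-cycle). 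Equivalently, the balanced subgraphs of $\Xi_n$ are in one-one correspondence with the tilings of a $1\times(n-1)$ strip by $1\times1$ squares and $1\times2$ dominoes.

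Next I would enumerate these configurations by conditioning on what covers the final vertex $\tau_{n-1}$. Writing $t_m$ for the number of balanced subgraphs of $\Xi_{m+1}$ (equivalently, the number of tilings of a $1\times m$ strip), there are two cases: if $\tau_{n-1}$ carries a loop, then the restriction to $\{\tau_1,\ldots,\tau_{n-2}\}$ is an arbitrary balanced subgraph of $\Xi_{n-1}$, contributing $t_{n-2}$; if instead $\tau_{n-1}$ lies in a two-cycle, that two-cycle must be $\tau_{n-2}\to\tau_{n-1}\to\tau_{n-2}$, and the restriction to $\{\tau_1,\ldots,\tau_{n-3}\}$ is an arbitrary balanced subgraph of $\Xi_{n-2}$, contributing $t_{n-3}$. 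This gives the recurrence $t_{n-1}=t_{n-2}+t_{n-3}$, together with the base values $t_0=1$ (the empty strip) and $t_1=1$ (the single square). Since these base cases and this recurrence are exactly those defining $F_n$ under the convention $F_1=F_2=1$, a straightforward induction yields $t_{n-1}=F_n$, which is the desired count; the cases $n=1,2,3$ (giving $1,1,2$) can be verified directly as a sanity check.

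Because the structural reduction is so direct, I do not anticipate any serious obstacle. The only point demanding care is the claim that $\Xi_n$ supports no directed cycle other than loops and adjacent two-cycles, which rests squarely on the acyclicity of the path underlying $\Xi_n$. Once this observation is recorded, the remainder is a standard Fibonacci tiling argument.
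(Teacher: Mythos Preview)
Your proof is correct and follows essentially the same approach as the paper: both reduce via Theorem~\ref{thm_balanced} to counting balanced subgraphs of $\Xi_n$, observe that the only possible cycles are loops and adjacent two-cycles, and derive the Fibonacci recurrence by conditioning on what covers the last vertex $\tau_{n-1}$. Your tiling interpretation and the explicit justification that the path structure of $\Xi_n$ forbids longer cycles are nice additions, but the underlying argument is the same.
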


\begin{proof}
Let $x_n$ be the number of balanced subgraphs of $\Xi_n$.  It suffices to show that the numbers $x_n$ satisfy the Fibonacci recurrence.  It is clear that $x_1=x_2=1$.  (Note that $\J_1\setminus\{1\}=\varnothing$, so there is only one generating set, $\varnothing$.)  Next, suppose $n\geq3$.  A balanced subgraph of $\Xi_n$ must contain either the loop at $\tau_{n-1}$ and a balanced subgraph of $\Xi_{n-1}$, or else the edges $\tau_{n-2}\leftrightarrows\tau_{n-1}$ and a balanced subgraph of $\Xi_{n-2}$.  So $x_n=x_{n-1}+x_{n-2}$, as required.
\end{proof}

Although these numbers are well-known (see A000045 on \cite{Sloan:OEIS}), we include the first few values, for completeness, in Table~\ref{tab_Fn}.

\begin{table}[ht]%
\begin{center}
\begin{tabular}{|c|cccccccccc|}
\hline
$n$ & $1$ & $2$ & $3$ & $4$ & $5$ & $6$ & $7$ & $8$ & $9$ & $10$ \\
\hline
$F_n$  & $1$ &  $1$ &  $2$  & $3$  & $5$  & $8$  & $13$  & $21$  & $34$ & $55$ \\
\hline
\end{tabular}
\end{center}
\caption{The Fibonacci sequence $F_n$.  For $n\geq1$, $F_n$ is equal to the number of minimal idempotent generating sets of $\J_n\setminus\{1\}$.}
\label{tab_Fn}
\end{table}

\subsection{Arbitrary idempotent generating sets for $\JnSn$.}

For a subset $F$ of
\[
\{\al\in E(\J_n):\rank(\al)=n-2\} = \{\tau_1,\ldots,\tau_{n-1}\} \cup \{\lam_1,\ldots,\lam_{n-2}\} \cup \{\rho_1,\ldots,\rho_{n-2}\},
\]
let $\Xi_n(F)$ be the two-coloured digraph obtained by colouring each edge of $\Xi_n$ blue, and then adding red edges corresponding to the idempotents from $F$:
\begin{itemize}
	\item $i{\boldsymbol{\red\to}}i$ if $\tau_i\in F$,
	\item $i{\boldsymbol{\red\to}}i+1$ if $\lam_i\in F$,
	\item $i+1{\boldsymbol{\red\to}}i$ if $\rho_i\in F$.
\end{itemize}
(As above, we will denote the vertices of $\Xi_n$ by $1,\ldots,n-1$ rather than $\tau_1,\ldots,\tau_{n-1}$.)
  
\begin{theorem}\label{arb_gen_set_Jn}
For $F\subseteq \{\al\in E(\J_n):\rank(\al)=n-2\}$, the following are equivalent\emph{:}
\begin{enumerate}
	\item[(i)] $\JnSn=\la F\ra$\emph{;}
	\item[(ii)] each vertex of $\Xi_n(F)$ is the base point of an RBR-alternating circuit\emph{;}
	\item[(iii)] each vertex of $\Xi_n(F)$ is contained in a red circuit.
\end{enumerate}
The number $f_n$ of such subsets is given by the recurrence
\[
f_2=1, \quad f_3=7, \quad f_n=5f_{n-1}+6f_{n-2} \quad\text{for $n\geq4$.}
\]
\end{theorem}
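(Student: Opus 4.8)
The plan is to get the equivalence of (i) and (ii) for free from the general result Theorem~\ref{thm_generalRBR}, applied to the principal factor $S=\JnmJn^*$, whose associated two-coloured graph $\Gamma(S,F)$ is by construction exactly $\Xi_n(F)$. All the real content then lies in the purely graph-theoretic equivalence (ii)$\Leftrightarrow$(iii). I would first record the structural fact read off from Lemma~\ref{lem_idempotents_in_J_{n-2}_Jn}: the graph $\Xi_n$ is the path $1-2-\cdots-(n-1)$ together with a loop at every vertex, so its directed simple cycles, and hence the \emph{red} circuits of $\Xi_n(F)$, are precisely the red loops (a loop at $i$ when $\tau_i\in F$) and the red $2$-cycles between adjacent vertices (a $2$-cycle $i\leftrightarrows i+1$ when both $\lam_i,\rho_i\in F$). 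Thus (iii) asserts exactly that each vertex $i$ carries a red loop, or is an endpoint of a ``doubled bond'' $i\leftrightarrows i+1$ or $i-1\leftrightarrows i$.

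The implication (iii)$\Rightarrow$(ii) is immediate: since every vertex of $\Xi_n$ has a blue loop, any red circuit through $v$ expands to an RBR-alternating circuit based at $v$ by inserting a blue loop after each red edge, producing the alternating pattern red, blue, red, $\dots$, red.

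The heart of the proof is (ii)$\Rightarrow$(iii), and I would establish it by a \emph{first-crossing} argument. Fix $v$ and an RBR-alternating circuit $v=x_0,x_1,\dots,x_{2k-1}=v$ with edge $e_j=(x_{j-1},x_j)$ red for $j$ odd and blue for $j$ even, noting $|x_{j-1}-x_j|\le1$ throughout. If $x_1=v$ or $x_{2k-2}=v$ then $v$ has a red loop; if $x_1$ and $x_{2k-2}$ are the same neighbour of $v$ then $v$ lies in a red $2$-cycle. Up to the reflection $i\mapsto n-i$ of the path (which interchanges $\lam$ and $\rho$), the only remaining case is $x_1=v+1$ and $x_{2k-2}=v-1$, so $\lam_v,\lam_{v-1}\in F$. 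Here I let $p$ be the least index with $x_p<v$; since a step changes the vertex by at most one and every earlier vertex is $\ge v$, we must have $x_{p-1}=v$ and $x_p=v-1$. Inspecting the colour of the crossing edge $e_p$: if $e_p$ is red then $\rho_{v-1}\in F$, giving the red $2$-cycle $v\leftrightarrows v-1$; if $e_p$ is blue then $p$ is even, so $e_{p-1}$ is red and ends at $v$ with $x_{p-2}\in\{v,v+1\}$ (no earlier vertex being below $v$), yielding a red loop at $v$ when $x_{p-2}=v$, or the red $2$-cycle $v\leftrightarrows v+1$ via $\rho_v\in F$ when $x_{p-2}=v+1$. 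In every case $v$ lies in a red circuit. The main obstacle is precisely this step: one must use that $\Xi_n$ is a tree-plus-loops, so that passing from one side of $v$ to the other forces a visit to $v$, and then combine this with the forced colour alternation to manufacture a red loop or red $2$-cycle at $v$.

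For the enumeration I would count, using the equivalence just proved, the subsets $F$ for which every vertex of the path on $m:=n-1$ vertices is \emph{covered} (carries a red loop, or is an endpoint of a doubled bond). A configuration is a choice of loop-state $\tau_i\in\{0,1\}$ at each vertex and a state for each of the $m-1$ bonds, each bond having four states (empty, $\lam_i$ only, $\rho_i$ only, doubled). I would run a transfer recursion in $m$ with two counts: $C_m$, the number of configurations with all of $1,\dots,m$ covered, and $U_m$, the number with $1,\dots,m-1$ covered but vertex $m$ uncovered (forcing bond $m-1$ undoubled). Adjoining a new rightmost vertex and bond gives $C_{m+1}=5C_m+2U_m$ and $U_{m+1}=3C_m$: a doubled new bond (one of the four states) covers both its ends and permits either loop-state, while an undoubled new bond (three states) leaves the new vertex covered only if it carries a loop, and from a $U_m$-configuration only the doubled bond keeps vertex $m$ covered. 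Eliminating $U$ yields $C_{m+1}=5C_m+6C_{m-1}$, with base values $C_1=1$ and $C_2=5C_1+2U_1=7$ (where $U_1=1$). Since $f_n=C_{n-1}$, this is exactly $f_2=1$, $f_3=7$, and $f_n=5f_{n-1}+6f_{n-2}$ for $n\ge4$.
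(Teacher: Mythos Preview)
Your proof is correct. The equivalence of (i) and (ii) is obtained exactly as in the paper, via Theorem~\ref{thm_generalRBR} and Lemma~\ref{lem_dropdownJn}, and (iii)$\Rightarrow$(ii) is the same observation in both. The differences are in (ii)$\Rightarrow$(iii) and in the enumeration.

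For (ii)$\Rightarrow$(iii), the paper argues by contradiction: assuming the only red edges at an interior vertex $i$ are $i-1{\red\to}i{\red\to}i+1$ (the remaining case after ruling out loops and $2$-cycles, up to reflection), it shows by a short induction that every RBR-path starting at $i$ satisfies $j_{2r}\ge i$ and $j_{2r+1}\ge i+1$, so no such path can return to $i$ on a red step. Your first-crossing argument is instead constructive: you take a given RBR-circuit, locate the first step that drops below $v$, and read off from the colour of that step (together with the red edges $\lam_v,\lam_{v-1}$ already forced) either a red loop at $v$ or a red $2$-cycle at $v$. Both arguments exploit the same underlying fact---that $\Xi_n$ is a path with loops, so crossing $v$ forces a visit to $v$---but yours extracts the red circuit directly rather than deriving a contradiction. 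One small point: your reformulation of (iii) as ``red loop or endpoint of a doubled bond'' is correct even for non-simple red circuits, since on a path graph any closed walk uses the edge $v\to v{+}1$ exactly as often as $v{+}1\to v$ (and similarly on the other side), so any red closed walk through $v$ already yields a loop or $2$-cycle there.

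For the recurrence, the paper does a direct case split on the red edges at vertex $n-1$: three subcases according to whether the bond $(n-2,n-1)$ is doubled and whether the loop at $n-1$ is present, with a further subdivision of the doubled-bond cases according to whether the induced subgraph on $1,\dots,n-2$ already lies in $\mathcal X_{n-1}$, giving $3f_{n-1}+2(f_{n-1}+3f_{n-2})$. Your transfer-matrix approach with the auxiliary count $U_m$ (configurations with the rightmost vertex uncovered) is a cleaner and more systematic route to the same recurrence; eliminating $U_m=3C_{m-1}$ from $C_{m+1}=5C_m+2U_m$ gives $C_{m+1}=5C_m+6C_{m-1}$ immediately, and the base values match. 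The paper's argument has the advantage of being self-contained in one variable, while yours makes the ``boundary state'' explicit and would generalise more readily.
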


\begin{proof}
By Lemma~\ref{lem_dropdownJn}, $F$ generates $\JnSn$ if and only if it generates the principle factor $\JnmJn^*$.  So the equivalence of (i) and (ii) follows from Theorem~\ref{thm_generalRBR}.  Any red circuit beginning and ending at vertex $i$ clearly gives rise to an RBR-circuit based at $i$, since $\Xi_n(F)$ has a blue loop at each vertex, so (iii) implies (ii).

Now assume (ii), and suppose $1\leq i\leq n-1$.  We must show that $i$ is contained in a red circuit.  Consider first the case in which $i=1$.  If $1{\boldsymbol{\red\to}}1$ is an edge, then $1$ is contained in a red circuit.  If not, then, since there is an RBR-alternating circuit based at $1$, we see that $\Xi_n(F)$ contains the edges $1{\boldsymbol{\red\leftrightarrows}}2$, so $1$ is contained in a red circuit.  By symmetry, $n-1$ is contained in a red circuit.

Finally, suppose $1<i<n-1$.  If $\Xi_n(F)$ contains the loop $i{\boldsymbol{\red\to}}i$ or the edges $i{\boldsymbol{\red\leftrightarrows}}i+1$ or $i+1{\boldsymbol{\red\leftrightarrows}}i$, then $i$ is contained in a red circuit.  In order to obtain a contradiction, suppose this is not the case.  By symmetry, we may suppose that $\Xi_n(F)$ contains the edges $i-1{\boldsymbol{\red\to}}i{\boldsymbol{\red\to}}i+1$, but no other red edges at $i$.  Now consider an RBR-alternating path beginning at $i$:
\[
i=j_0 {\boldsymbol{\red\to}} j_1 {\blue\to} j_2 {\boldsymbol{\red\to}} \cdots {\blue\to} j_{2k} {\boldsymbol{\red\to}} j_{2k+1}.
\]
A simple induction shows that $j_{2r}\geq i$ and $j_{2r+1}\geq i+1$ for each $0\leq r\leq k$.  In particular, $j_{2k+1}\geq i+1$, so it follows that $\Xi_n(F)$ contains no RBR-alternating circuit based at vertex~$i$, a contradiction.  This completes the proof that (ii) implies (iii).

Now let $\X_n$ denote the set of all graphs $\Xi_n(F)$ satisfying property (iii).  We must show that the cardinalities $f_n=|\X_n|$ satisfy the given recurrence.  It is easy to check that $f_2=1$ and $f_3=7$, so suppose now that $n\geq4$.  Let $G\in\X_n$.  Since the vertex $n-1$ is contained in a red circuit, there are three cases we must consider:
\begin{itemize}
	\item[(1)] $G$ does not contain both of the edges $n-1{\boldsymbol{\red\leftrightarrows}}n-2$, or
	\item[(2)] $G$ contains both of the edges $n-2{\boldsymbol{\red\leftrightarrows}}n-1$ and the loop $n-1{\boldsymbol{\red\to}}n-1$, or
	\item[(3)] $G$ contains both of the edges $n-2{\boldsymbol{\red\leftrightarrows}}n-1$ but not the loop $n-1{\boldsymbol{\red\to}}n-1$.
\end{itemize}
If (1) holds, then $G$ contains the loop $n-1{\boldsymbol{\red\to}}n-1$, and the subgraph of $G$ induced by the vertices $1,\ldots,n-2$ belongs to $\X_{n-1}$.  Since there are three ways to choose at most one of the edges $n-1{\boldsymbol{\red\leftrightarrows}}n-2$, there are $3f_{n-1}$ graphs $G$ satisfying (1).

Now suppose (2) holds.  Let $H$ be the subgraph of $G$ induced by the vertices $1,\ldots,n-2$.  Note that $G$ is completely determined by $H$.  There are two subcases:
\begin{itemize}
	\item[(2.1)] $H$ belongs to $\X_{n-1}$, or
	\item[(2.2)] $H$ does not belong to $\X_{n-1}$.
\end{itemize}
There are obviously $f_{n-1}$ graphs $G$ satisfying (2.1).  Now suppose (2.2) holds.  Now, each vertex $1,\ldots,n-3$ belongs to a red circuit of $G$.  Since such a circuit does not need to include the edges $n-2{\boldsymbol{\red\leftrightarrows}}n-1$, we see that, in fact, each vertex $1,\ldots,n-3$ belongs to a red circuit of $H$.  Since we assumed that $H\not\in\X_{n-1}$, it follows that $H$ does not contain the loop $n-2{\boldsymbol{\red\to}}n-2$ and contains at most one of the edges $n-3{\boldsymbol{\red\leftrightarrows}}n-2$.  It also follows that the subgraph of $H$ induced by the vertices $1,\ldots,n-3$ belongs to $\X_{n-2}$.  So there are $3f_{n-2}$ graphs $G$ satisfying (2.2).  Thus, there are $f_{n-1}+3f_{n-2}$ graphs $G$ satisfying~(2).  The same argument shows that there are the same number of graphs $G$ satifying (3).  Putting this together,
$
f_n = 3f_{n-1} +2(f_{n-1}+3f_{n-2}) = 5f_{n-1} +6f_{n-2}.
$
This completes the proof.
\end{proof}

\begin{remark}
Solving the above recurrence gives 
\[
f_n = \frac2{63}\cdot6^n + \frac17\cdot(-1)^{n+1}.
\]
This is sequence A108983 on \cite{Sloan:OEIS}.  The first few values of the sequence $f_n$ are given in Table~\ref{tab_fn}.  
\end{remark}

\begin{table}[ht]%
\begin{center}
\begin{tabular}{|c|ccccccccc|}
\hline
$n$ & $2$ & $3$ & $4$ & $5$ & $6$ & $7$ & $8$ & $9$ & $10$ \\
\hline
$f_n$  & $1$ &  $7$  & $41$  & $247$  & $1481$  & $8887$  & $53321$  & $319927$ & $1919561$ \\
\hline
\end{tabular}
\end{center}
\caption{The sequence $f_n$, which gives the number of generating sets for $\JnSn$ consisting of idempotents belonging to $\JnmJn$.
}
\label{tab_fn}
\end{table}

\begin{remark}\label{rem911}
Theorem \ref{arb_gen_set_Jn} shows that condition (i) from Remark~\ref{rem_generalRBR} is necessary and sufficient in the case of $\JnSn$.  While condition (ii) from Remark~\ref{rem_generalRBR} is, as ever, necessary, it is not sufficient in general.  For example, consider the set of idempotents $F=\{\tau_1,\tau_3,\lam_1,\lam_2\}$ from~$\J_4$.  The digraph $\Xi_4(F)$ is displayed in Figure~\ref{Fig_Xi4F}.  
As in Example~\ref{Example6}, there is no RBR-alternating circuit at vertex $2$, so $\JnSn\not=\la F\ra$.  
\end{remark}

\begin{figure}
\begin{center}
\begin{tikzpicture}[scale=1]
\tikzstyle{vertex}=[circle,draw=black, fill=white, inner sep = 0.07cm]
\node[vertex] (1) at (0,0) {  $\Large{1}$ };
\node[vertex] (12) at (2,0) { $\Large{2}$};
\node[vertex] (2) at (4,0) {  $\Large{3}$ };
\draw [ultra thick, ->-=0.5, color=red] (1)        to [bend right=40] (12);
\draw [ultra thick, ->-=0.5, color=red] (12)        to [bend right=40] (2);
\draw [ultra thick, ->, color=red] (1) edge [out=130+180,in=50+180,loop] ();
\draw [ultra thick, ->, color=red] (2) edge [out=130+180,in=50+180,loop] ();
\draw [->, color=blue] (1) edge [out=130,in=50,loop] ();
\draw [->, color=blue] (2) edge [out=130,in=50,loop] ();
\draw [->, color=blue] (12) edge [out=130,in=50,loop] ();
\draw [->-=0.5, color=blue] (1)        to [bend right=15] (12);
\draw [->-=0.5, color=blue] (12)        to [bend right=15] (2);
\draw [->-=0.5, color=blue] (12)        to [bend right=15] (1);
\draw [->-=0.5, color=blue] (2)        to [bend right=15] (12);
\end{tikzpicture}
\end{center}
\caption{The digraph $\Xi_4(F)$, where $F=\{\tau_1,\tau_3,\lam_1,\lam_2\}$.}
\label{Fig_Xi4F}
\end{figure}
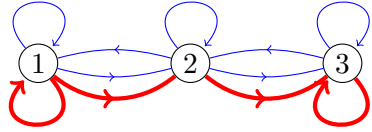

\subsection{The monoid of planar partitions}

Recall that the monoid of planar partitions is denoted $\PP_n$.  It is well known that $\PP_n$ is isomorphic to the Jones monoid $\J_{2n}$; see for example \cite{Halverson2005}.  This isomorphism is easiest to describe diagrammatically, and we do so in Figure \ref{fig_PPn_J2n}.  Because of this isomorphism, we will not state the results concerning the ideals 
\[
\IrPPn = \set{\al\in\PP_n}{\rank(\al)\leq r}
\]
and minimal idempotent generating sets, as these can be deduced easily from the results for $\J_n$ in the preceding sections.
\begin{figure}[ht]
\begin{center}
\begin{tikzpicture}[xscale=.8,yscale=0.8]
	\fill (0,0)circle(.1)
	      (1,0)circle(.1)
	      (2,0)circle(.1)
	      (3,0)circle(.1)
	      (4,0)circle(.1)
	      (5,0)circle(.1)
	      (6,0)circle(.1)
	      (7,0)circle(.1)
	      (0,2)circle(.1)
	      (1,2)circle(.1)
	      (2,2)circle(.1)
	      (3,2)circle(.1)
	      (4,2)circle(.1)
	      (5,2)circle(.1)
	      (6,2)circle(.1)
	      (7,2)circle(.1);
	\barcup01{.3}
	\barcup34{.3}
	\barcup25{.9}
	\barcup56{.3}
	\barcdn34{.3}
	\barcdn56{.3}
	\barcdn67{.3}
	\cve02
	\cve12
	\cve23
	\cve64
  \draw(0,2)node[above]{{\tiny $1$}};
  \draw(1,2)node[above]{{\tiny $2$}};
  \draw(2,2)node[above]{{\tiny $3$}};
  \draw(3,2)node[above]{{\tiny $4$}};
  \draw(4,2)node[above]{{\tiny $5$}};
  \draw(5,2)node[above]{{\tiny $6$}};
  \draw(6,2)node[above]{{\tiny $7$}};
  \draw(7,2)node[above]{{\tiny $8$}};
  \draw[color=lightgray,fill=lightgray] (0-.25,0)circle(.1);
  \draw[color=lightgray,fill=lightgray] (1-.25,0)circle(.1);
  \draw[color=lightgray,fill=lightgray] (2-.25,0)circle(.1);
  \draw[color=lightgray,fill=lightgray] (3-.25,0)circle(.1);
  \draw[color=lightgray,fill=lightgray] (4-.25,0)circle(.1);
  \draw[color=lightgray,fill=lightgray] (5-.25,0)circle(.1);
  \draw[color=lightgray,fill=lightgray] (6-.25,0)circle(.1);
  \draw[color=lightgray,fill=lightgray] (7-.25,0)circle(.1);
  \draw[color=lightgray,fill=lightgray] (0-.25,2)circle(.1);
  \draw[color=lightgray,fill=lightgray] (1-.25,2)circle(.1);
  \draw[color=lightgray,fill=lightgray] (2-.25,2)circle(.1);
  \draw[color=lightgray,fill=lightgray] (3-.25,2)circle(.1);
  \draw[color=lightgray,fill=lightgray] (4-.25,2)circle(.1);
  \draw[color=lightgray,fill=lightgray] (5-.25,2)circle(.1);
  \draw[color=lightgray,fill=lightgray] (6-.25,2)circle(.1);
  \draw[color=lightgray,fill=lightgray] (7-.25,2)circle(.1);
  \draw[color=lightgray,fill=lightgray] (0.25,0)circle(.1);
  \draw[color=lightgray,fill=lightgray] (1.25,0)circle(.1);
  \draw[color=lightgray,fill=lightgray] (2.25,0)circle(.1);
  \draw[color=lightgray,fill=lightgray] (3.25,0)circle(.1);
  \draw[color=lightgray,fill=lightgray] (4.25,0)circle(.1);
  \draw[color=lightgray,fill=lightgray] (5.25,0)circle(.1);
  \draw[color=lightgray,fill=lightgray] (6.25,0)circle(.1);
  \draw[color=lightgray,fill=lightgray] (7.25,0)circle(.1);
  \draw[color=lightgray,fill=lightgray] (0.25,2)circle(.1);
  \draw[color=lightgray,fill=lightgray] (1.25,2)circle(.1);
  \draw[color=lightgray,fill=lightgray] (2.25,2)circle(.1);
  \draw[color=lightgray,fill=lightgray] (3.25,2)circle(.1);
  \draw[color=lightgray,fill=lightgray] (4.25,2)circle(.1);
  \draw[color=lightgray,fill=lightgray] (5.25,2)circle(.1);
  \draw[color=lightgray,fill=lightgray] (6.25,2)circle(.1);
  \draw[color=lightgray,fill=lightgray] (7.25,2)circle(.1);
  \garcup{0.25}{0.75}{.15}
  \garcup{2.25}{4.75}{.7}
  \garcup{2.75}{4.25}{.5}
  \garcup{3.25}{3.75}{.15}
  \garcup{5.25}{5.75}{.15}
  \garcup{6.75}{7.25}{.2}
  \garcdn{-.25}{.25}{.2}
  \garcdn{1-.25}{1.25}{.2}
  \garcdn{3.25}{3.75}{.15}
  \garcdn{4.75}{7.25}{.5}
  \garcdn{5.25}{5.75}{.15}
  \garcdn{6.25}{6.75}{.15}
  \gcve{-.25}{1.75}
  \gcve{1.25}{2.25}
  \gcve{1.75}{2.75}
  \gcve{6.25}{4.25}
	\end{tikzpicture}
\end{center}
\caption{A planar partition from $\PP_8$ (black) and its image (gray) under the isomorphism $\PP_8\to\J_{16}$.}
\label{fig_PPn_J2n}
\end{figure}
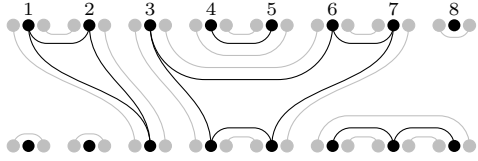

\section{Ranks of ideals and dimensions of cell modules \\ and irreducible representations}
\label{sec_irreps} 

As mentioned in the introduction, further motivation for the rank and idempotent rank formulae obtained above comes from the connection between these numbers and the dimensions of the irreducible representations of the corresponding algebras (realised as twisted semigroup algebras): namely, the partition, Brauer and Temperley--Lieb algebras. 
Specifically, the rank and idempotent rank formulae we obtained above can be used to recover formulae for dimensions of cell modules which, in the semisimple case, correspond to dimensions of irreducible representations. This fact was brought to the attention of the authors by Arun Ram (at the Workshop on Diagram Algebras, Stuttgart, 2014) who pointed out that the set of ranks (and idempotent ranks) of the two-sided ideals of the Jones monoid give precisely the dimensions of the irreducible representations of the Temperley--Lieb algebras. 
Analogous statements hold for the partition and Brauer algebras, and the purpose of this section is to explain this connection. 
We note that the formulae for the dimensions of the cell modules that we derive here are not new (we provide the relevant references below).
Our aim here is to give a new way of deriving these formulae, and at the same time explain the relationship between the problem of computing such numbers and that of computing ranks (and idempotent ranks) of ideals of diagram monoids. 

In each case the main idea is the same: the ranks of ideals of the semigroup are given by numbers of $\mathscr{R}$-classes in $\mathscr{J}$-classes of the semigroup. By \cite{Wilcox2007}, it follows that these numbers in turn arise in the construction of the cell modules  of the corresponding twisted semigroup algebras when they are realised as cellular algebras. In the case that the algebras are semisimple, the cell modules give a complete set of irreducible representations of the algebra. The questions of determining the dimensions of these representations can then easily be reduced to the question of determining the ranks of the ideals of the semigroups, which are given by the formulae in 
Theorems~\ref{thm_main1}, \ref{thm_main3} and \ref{thm_main4} above (depending on the semigroup--algebra pair under consideration). 

We shall now explain this in more detail for the partition monoid and partition algebra. The arguments for the other pairs (the Brauer semigroup and Brauer algebra, and
the Jones monoid and Temperley--Lieb algebra) are analogous, and for these we shall just state the corresponding results
and give relevant references. 

\subsection*{Dimensions of irreducible representations of partition algebras}

In this subsection we follow \cite[Section~7]{Wilcox2007} to define the partition algebra and explain its cellular structure. Let $R$ be a commutative ring with identity, let $\delta \in R$ be fixed, and let $\psi$ be the mapping
\begin{align*}
& \psi: \P_n \times \P_n \rightarrow R, \quad (\alpha,\beta) \mapsto \delta^{m(\alpha,\beta)},
\end{align*}
called a \emph{twisting} from $\P_n$ to $R$, where $m(\alpha,\beta)$ denotes the number of connected components removed from the middle row when constructing the composition $\alpha \beta$ in $\P_n$. The resulting twisted semigroup algebra $R^{\psi}[\P_n]$ is called the \emph{partition algebra}; see \cite{Martin1991, Martin1994, Jones1994_2}.  The algebra $R^{\psi}[\P_n]$ has $\P_n$ as its basis, and its multiplication $\star$ is defined on basis elements (and then extended linearly) by $\al\star\be=\psi(\al,\be)(\al\be)$.

Recall from Section~\ref{sect_Pn} above that there is a natural anti-involution $*$ on $\P_n$ that reflects graphs representing elements in the horizontal axis. By linearity, this extends to an $R$-linear anti-involution (also denoted $*$) on the partition algebra $R^{\psi}[\P_n]$.  Also recall from Section~\ref{sect_Pn} that $\rank(\alpha)$ is the number of transversal blocks of $\alpha\in\P_n$, 
that the $\gj$-classes of $\P_n$ are the sets 
\[
\JrPn= \set{\al\in\P_n}{\rank(\al)=r}
\]
where $0\leq r\leq n$, that they form a chain:
\[
J_0(\P_n)< J_1(\P_n)<\cdots< J_{n-1}(\P_n)< \JnPn,
\]
and that the ideals of $\P_n$ are precisely the sets
\[
\IrPn=J_0(\P_n)\cup J_1(\P_n)\cup\cdots\cup J_r(\P_n)=\set{\al\in\P_n}{\rank(\al)\leq r}.
\]
For each $\gj$-class $J=\JrPn$, we may choose and fix a maximal subgroup $G_{J}$ of $J$ that is fixed setwise by $*$ (such a maximal subgroup contains a projection) and in this way the $*$ operation restricted to $G_{J} \cong \S_r$ corresponds to inversion in $\S_r$. Since the group algebra $R[\S_r]$ is cellular with the anti-involution induced by inversion (see for example \cite{Mathas1999}), it follows that $R[G_{J}]$ is cellular with respect to $*$, and thus (by\cite[Corollary~7]{Wilcox2007}) the partition algebra $R^{\psi}[\P_n]$ is cellular with anti-involution $*$. 
Viewing the partition algebra $R^{\psi}[\P_n]$ in this way, as a cellular algebra, gives information about its representation theory, by appealing to the general theory of cellular algebras in the following way. 


%
%

Let $A$ be a cellular algebra with cell datum $(\Lambda, M, \mathcal{C}, *)$ (see \cite{GrahamLehrer1996}). Here $\Lambda$ is a partially ordered set, the algebra $A$ has an $R$-basis
\[
\mathcal{C} = \{
C_{\mfs, \mft}^{\lambda} :
\lambda \in \Lambda, \; \mfs, \mft \in M(\lambda)
\},
\]
and an anti-involution $*: A \rightarrow A$ is given by $(C_{\mfs, \mft}^{\lambda})^* = C_{\mft, \mfs}^{\lambda}$.
Moreover, for 
all $\lambda \in \Lambda$, $\mfs, \mft \in M(\lambda)$ and $a \in A$
\[
a C_{\mfs, \mft}^{\lambda} \equiv
\sum_{\mfs' \in M(\lambda)} r_a(\mfs', \mfs)
C_{\mfs', \mft}^{\lambda} \mod{A ( < \lambda)}
\]
where each $r_a(\mfs', \mfs) \in R$ is independent of $\mft$, and where $A ( < \lambda)$ is the $R$-submodule of~$A$ generated by $\{ C_{\mfu, \mfv}^{\mu} :  \mu < \lambda, \ \mfu, \mfv \in M(\mu) \}$. It follows that for all $\lambda \in \Lambda$ and all $ \mfs_1, \mfs_2, \mft_1, \mft_2 \in M(\lambda)$ we have
\[
C_{\mfs_1, \mft_1}^{\lambda}C_{\mfs_2, \mft_2}^{\lambda} \equiv
\phi(\mft_1, \mfs_2) C_{\mfs_1, \mft_2}^{\lambda} 
\mod{A ( < \lambda)}
\]
for some $\phi(\mft_1, \mfs_2) \in R$ that depends only on $\mft_1$ and  $\mfs_2$.  

For each $\lambda \in \Lambda$, let $W(\lambda)$ denote the left $A$-module with $R$-basis $\{ C_\mfs : \mfs \in M(\lambda) \}$ and $A$-action given by
\[
aC_\mfs = \sum_{\mfs' \in M(\lambda)} r_a(\mfs', \mfs)C_{\mfs'}
\]
for each $a \in A$. 
The $W(\lambda)$ are called \emph{cell modules}. 
For $\lambda \in \Lambda$ define the bilinear form
\[
\phi_\lambda: W(\lambda) \times W(\lambda) \rightarrow R, \quad
\phi_\lambda(C_\mfs, C_\mft) = \phi(\mfs,\mft), 
\]
for $\mfs, \mft \in M(\lambda)$. The radical of $\lambda \in \Lambda$ is then the $A$-submodule
\[
\mathrm{rad}(\lambda) = \{
x \in W(\lambda) : 
\phi_\lambda(x,y) = 0 \ \forall y \in W(\lambda)\}
\] 
of $W(\lambda)$. 
%
%
Let us now quote two important results from \cite{GrahamLehrer1996}.

\begin{theorem}[Graham and Lehrer {\cite[Theorem~3.4(i)]{GrahamLehrer1996}}]
Let $R$ be a field and let $(\Lambda, M, \mathcal{C}, *)$ be a cell datum for the $R$-algebra $A$.
For $\lambda \in \Lambda$ let $L_\lambda = W(\lambda) / \mathrm{rad}(\lambda)$. Then
\[
\{ L_\lambda : \lambda \in \Lambda, \phi_\lambda \neq 0 \}
\] 
is a complete set of (representatives of equivalence classes of) absolutely irreducible $A$-modules. 
\end{theorem}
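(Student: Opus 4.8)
The plan is to derive the entire statement from the two congruences already recorded in the excerpt---the straightening rule $a C_{\mfs,\mft}^{\lambda} \equiv \sum_{\mfs'} r_a(\mfs',\mfs) C_{\mfs',\mft}^{\lambda} \pmod{A(<\lambda)}$ and the product rule $C_{\mfs_1,\mft_1}^{\lambda} C_{\mfs_2,\mft_2}^{\lambda} \equiv \phi(\mft_1,\mfs_2) C_{\mfs_1,\mft_2}^{\lambda} \pmod{A(<\lambda)}$---together with the compatibility $(C_{\mfs,\mft}^{\lambda})^{*} = C_{\mft,\mfs}^{\lambda}$. First I would establish that $\phi_\lambda$ is symmetric and $*$-invariant, meaning $\phi_\lambda(ax,y) = \phi_\lambda(x,a^{*}y)$ for all $a \in A$ and $x,y \in W(\lambda)$; both facts follow by applying $*$ to the product rule and comparing coefficients. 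Invariance gives at once that $\mathrm{rad}(\lambda)$ is an $A$-submodule: if $\phi_\lambda(x,-)=0$ then $\phi_\lambda(ax,-) = \phi_\lambda(x,a^{*}-) = 0$. Hence $L_\lambda = W(\lambda)/\mathrm{rad}(\lambda)$ is a genuine $A$-module.

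The crucial computation is that reading the product rule off on the cell module yields $C_{\mfs,\mft}^{\lambda}\cdot C_{\mfu} = \phi(\mft,\mfu)\,C_{\mfs}$ in $W(\lambda)$; that is, each $C_{\mfs,\mft}^{\lambda}$ acts as the rank-one operator $v \mapsto \phi_\lambda(v,C_{\mft})\,C_{\mfs}$ (using symmetry of $\phi_\lambda$). Now suppose $\phi_\lambda \neq 0$. If $N \subseteq W(\lambda)$ is a submodule with $N \not\subseteq \mathrm{rad}(\lambda)$, pick $x \in N$ and $\mft$ with $\phi_\lambda(x,C_{\mft}) \neq 0$; then $C_{\mfs,\mft}^{\lambda} x$ is a nonzero multiple of $C_{\mfs}$ for every $\mfs$, so $N = W(\lambda)$. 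Thus $\mathrm{rad}(\lambda)$ is the unique maximal submodule and $L_\lambda$ is simple, indeed the unique simple top of $W(\lambda)$. For absolute irreducibility I would show $\mathrm{End}_A(L_\lambda) = R$: any $A$-endomorphism $\theta$ commutes with all the induced rank-one operators $\bar v \mapsto \phi_\lambda(\bar v,C_{\mft})\,\overline{C_{\mfs}}$ on $L_\lambda$, and commuting with such a map forces $\theta(\overline{C_{\mfs}})$ to be a fixed scalar multiple of $\overline{C_{\mfs}}$ with the scalar independent of $\mfs$, whence $\theta \in R\cdot\mathrm{id}$; since $R$ is a field, $\mathrm{End}_A(L_\lambda) = R$ is exactly the statement of absolute irreducibility.

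It then remains to prove that the $L_\lambda$ with $\phi_\lambda \neq 0$ are pairwise non-isomorphic and exhaust the simple $A$-modules. For distinctness I would invoke the triangularity of the decomposition numbers with respect to the partial order on $\Lambda$: the cell-chain structure forces every composition factor $L_\mu$ of $W(\lambda)$ to satisfy $\mu \geq \lambda$, with $L_\lambda$ occurring exactly once, so the multiplicities $[W(\lambda):L_\mu]$ form a unitriangular matrix and distinct labels give distinct simples. For completeness I would use that the regular module ${}_{A}A$ carries a filtration whose successive quotients are direct sums of copies of the cell modules $W(\lambda)$---the section $A(\leq\lambda)/A(<\lambda)$ being isomorphic to $W(\lambda)^{\oplus |M(\lambda)|}$, read off column-by-column from $\mathcal{C}$. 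Consequently every simple $A$-module, being a composition factor of ${}_AA$, is a composition factor of some $W(\lambda)$ and hence isomorphic to one of the $L_\mu$ with $\phi_\mu \neq 0$.

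The routine parts are the symmetry and invariance of $\phi_\lambda$ and the rank-one action, which are direct coefficient comparisons in the two congruences. The genuine obstacles are the two global statements of the previous paragraph: controlling the composition factors of $W(\lambda)$ through the partial order---to obtain both the unitriangular decomposition matrix and the fact that \emph{all} composition factors of the cell modules already lie in the proposed list---is where the poset structure of the cell datum must be used essentially, and it is the step most likely to require the full strength of the cellular axioms rather than a single $\mathscr{J}$-class computation.
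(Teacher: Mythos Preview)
The paper does not prove this theorem at all: it is explicitly introduced as a quotation (``Let us now quote two important results from \cite{GrahamLehrer1996}'') and is stated without any argument, serving only as background for the application to partition, Brauer and Temperley--Lieb algebras in Section~\ref{sec_irreps}. There is therefore no paper proof to compare your proposal against.

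That said, your sketch is essentially the standard proof given in Graham and Lehrer's original paper (and reproduced, for instance, in Mathas's book on Iwahori--Hecke algebras). The rank-one action $C_{\mfs,\mft}^{\lambda}\cdot C_{\mfu} = \phi(\mft,\mfu)\,C_{\mfs}$, the consequence that $\mathrm{rad}(\lambda)$ is the unique maximal submodule when $\phi_\lambda\neq 0$, the Schur-lemma computation $\mathrm{End}_A(L_\lambda)=R$, the unitriangularity of decomposition numbers via the order on $\Lambda$, and the cell filtration of the regular module are exactly the ingredients Graham and Lehrer assemble. Your identification of the ``genuine obstacles'' is also accurate: the global control of composition factors through the poset is where the cellular axioms are used in full, and this is precisely what Graham and Lehrer's Lemma~3.2 and Proposition~3.6 establish before deducing Theorem~3.4.
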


\begin{theorem}[Graham and Lehrer {\cite[Theorem~3.8]{GrahamLehrer1996}}]
Let $A$ be an $R$-algebra ($R$ a field) with cell datum $(\Lambda, M, \mathcal{C}, *)$. 
Then the following are equivalent.
\begin{enumerate}
\item[(i)] The algebra $A$ is semisimple. 
\item[(ii)] The nonzero cell representations $W(\lambda)$ are irreducible and pairwise inequivalent. 
\item[(iii)] The form $\phi_{\lambda}$ is nondegenerate (i.e. $\mathrm{rad}(\lambda)=0$) for each $\lambda \in \Lambda$. 
\end{enumerate}
\end{theorem}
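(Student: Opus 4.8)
The plan is to prove all three equivalences by playing two expressions for $\dim_R A$ against each other: the one coming from the cellular basis $\mathcal{C}$, and the one coming from the Wedderburn decomposition of $A$ modulo its radical. These two agree exactly when $A$ is semisimple, and the slack between them is governed by the degeneracy of the forms $\phi_\lambda$, which is what makes (i), (ii), (iii) coincide.

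First I would record the cellular dimension formula $\dim_R A = \sum_{\lambda \in \Lambda} |M(\lambda)|^2 = \sum_{\lambda \in \Lambda}(\dim_R W(\lambda))^2$, which is immediate since $\mathcal{C} = \{C_{\mfs,\mft}^\lambda\}$ is an $R$-basis of $A$ and $\{C_\mfs : \mfs \in M(\lambda)\}$ is a basis of $W(\lambda)$. Next, since $R$ is a field and the modules $L_\lambda$ are \emph{absolutely} irreducible, the Wedderburn--Artin theorem applied to $A/\mathrm{rad}(A)$ gives $\dim_R(A/\mathrm{rad}(A)) = \sum_S (\dim_R S)^2$, the sum being over a complete irredundant set of irreducible $A$-modules, with $\dim_R A \geq \sum_S (\dim_R S)^2$ and equality precisely when $\mathrm{rad}(A)=0$, that is, when $A$ is semisimple. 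By the preceding theorem of Graham and Lehrer, that complete set is exactly $\{L_\lambda : \phi_\lambda \neq 0\}$, and these are pairwise non-isomorphic, so $\sum_S (\dim_R S)^2 = \sum_{\lambda : \phi_\lambda \neq 0}(\dim_R L_\lambda)^2$, where $\dim_R L_\lambda = \dim_R W(\lambda) - \dim_R \mathrm{rad}(\lambda)$.

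The heart of the argument, and the step I expect to demand the most care, is (i) $\Leftrightarrow$ (iii). Subtracting the two quantities above and regrouping, I would write
\[
\dim_R A - \sum_S(\dim_R S)^2 = \sum_{\lambda:\phi_\lambda = 0}(\dim_R W(\lambda))^2 + \sum_{\lambda:\phi_\lambda \neq 0}\big[(\dim_R W(\lambda))^2 - (\dim_R L_\lambda)^2\big],
\]
a sum of non-negative terms. Hence $A$ is semisimple (equality) if and only if every term vanishes, which happens exactly when $W(\lambda)=0$ whenever $\phi_\lambda = 0$ and $\mathrm{rad}(\lambda)=0$ whenever $\phi_\lambda \neq 0$. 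Together these amount to $\mathrm{rad}(\lambda)=0$ for every $\lambda$, i.e. nondegeneracy of $\phi_\lambda$ on each nonzero $W(\lambda)$, which is precisely condition (iii).

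Finally I would close the loop through (ii). For (iii) $\Rightarrow$ (ii): nondegeneracy forces $\mathrm{rad}(\lambda)=0$, so each nonzero $W(\lambda)=L_\lambda$ is irreducible (note $\mathrm{rad}(\lambda)=0 \neq W(\lambda)$ forces $\phi_\lambda \neq 0$), and these are pairwise inequivalent by the preceding theorem. For (ii) $\Rightarrow$ (iii): if some nonzero cell module $W(\lambda)$ were irreducible with $\phi_\lambda = 0$, then $\mathrm{rad}(\lambda)=W(\lambda)$ gives $L_\lambda = 0$, yet irreducibility forces $W(\lambda)\cong L_\mu = W(\mu)$ for some $\mu$ with $\phi_\mu \neq 0$ and $\mu \neq \lambda$, contradicting the pairwise inequivalence of the nonzero cell modules; thus $\phi_\lambda \neq 0$ and, $W(\lambda)$ being irreducible, $\mathrm{rad}(\lambda)=0$ for every $\lambda$. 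The only genuinely delicate points throughout are the bookkeeping of the two degenerate possibilities---$W(\lambda)=0$ versus $\phi_\lambda = 0$ with $W(\lambda)\neq 0$---and the observation that the Wedderburn count $\dim_R(A/\mathrm{rad}(A)) = \sum_S(\dim_R S)^2$ is valid only over a splitting field, which is guaranteed here by the absolute irreducibility of the $L_\lambda$ supplied by the preceding theorem.
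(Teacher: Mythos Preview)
The paper does not give its own proof of this statement: it is quoted verbatim as a result of Graham and Lehrer \cite[Theorem~3.8]{GrahamLehrer1996}, alongside the preceding \cite[Theorem~3.4(i)]{GrahamLehrer1996}, purely as background for the application in Section~\ref{sec_irreps}. There is therefore nothing in the paper to compare your proof against.

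That said, your argument is essentially correct and is in fact close in spirit to Graham and Lehrer's original proof, which also proceeds via the dimension comparison
\[
\dim_R A \;=\; \sum_{\lambda\in\Lambda}(\dim_R W(\lambda))^2 \;\geq\; \sum_{\lambda:\phi_\lambda\neq0}(\dim_R L_\lambda)^2 \;=\; \dim_R(A/\mathrm{rad}(A)),
\]
with equality if and only if $A$ is semisimple. One small point of presentation: in your implication (ii)~$\Rightarrow$~(iii), when you write ``$W(\lambda)\cong L_\mu = W(\mu)$'', the identification $L_\mu = W(\mu)$ is not automatic and deserves a sentence of justification. It holds because, under hypothesis (ii), the nonzero cell module $W(\mu)$ is irreducible, so its submodule $\mathrm{rad}(\mu)$ is either $0$ or all of $W(\mu)$; since $\phi_\mu\neq 0$ rules out the latter, $\mathrm{rad}(\mu)=0$ and $L_\mu=W(\mu)$. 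With that made explicit, the chain (ii)~$\Rightarrow$~(iii)~$\Rightarrow$~(i)~$\Rightarrow$~(iii)~$\Rightarrow$~(ii) is complete.
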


It follows from these results that the irreducible left $A$-modules are paramaterised by by the set
\[
\Lambda_0 = \{ \lambda \in \Lambda : \phi_\lambda \neq 0 \},
\]
and the dimensions of the irreducible left $A$-modules are given by 
\[
\mathrm{dim}_R(L_\lambda) = |M(\lambda)| - \mathrm{dim}_R(\mathrm{rad}(\lambda)). 
\]
In particular, in the case of semisimple cellular algebras, these dimensions are given simply by $|M(\lambda)|$ with $\lambda \in \Lambda$. 

Returning our attention to the partition algebra $R^{\psi}[\P_n]$, applying \cite[Theorem~5 and Corollary~7]{Wilcox2007}, Wilcox shows that the partition algebra $R^{\psi}[\P_n]$ is cellular with cell datum 
$(\Lambda, M, C, *)$ where  
\[
\Lambda = \{ (J, \lambda) : J \in \mathbb{J} \ \mbox{and} \ \lambda \in \Lambda_J \},
\]
with a partial order defined on it (which we shall not need here), 
the symbol $\mathbb{J}$ denotes the set of $\gj$-classes of the semigroup $\P_n$, and $\Lambda_J$ comes from the cell datum $(\Lambda_J, M_J, \mathcal C_J, *)$ for the (non-twisted) group algebra $R[G_J]$ where $G_J$ is the a maximal subgroup of $D$ fixed set-wise by $*$ defined above. 
Moreover, for $(J,\lambda) \in \Lambda$ we have
\[
M(J,\lambda) = \mathbb{L}_J \times M_J(\lambda),
\]
where $\mathbb{L}_J$ is the set of $\gl$-classes of the $\gj$-class $J$, and so in particular 
\begin{equation}\label{eq_connection}
|M(J,\lambda)| = |\mathbb{L}_J| \cdot |M_J(\lambda)|.
\end{equation}

Finally, let us now consider the case that $R$ is the field $\mathbb{C}$ of complex numbers and consider the complex partition algebra $\mathbb{C}^{\psi}[\P_n]$. 
For fixed $n$, semisimplicity of the algebra $\mathbb{C}^{\psi}[\P_n]$ depends on $\psi$. This is explored in more detail in  \cite{Halverson2005}. Specifically it is  observed that, with $\delta$ in the definition of $\psi$, for all but a finite number of $\delta \in \mathbb{C}$, the algebra  $\mathbb{C}^{\psi}[\P_n]$ is semisimple. In the cases that $\mathbb{C}^{\psi}[\P_n]$ is semisimple, a method for computing dimensions of irreducible $\mathbb{C}^{\psi}[\P_n]$-modules in terms of counting paths in a certain graph $\hat{A}$, whose vertices are labelled by integer partitions,  is given in \cite[Theorem~2.24]{Halverson2005}. Here our aim is to state a result analogous to \cite[Theorem~2.24(b)]{Halverson2005} but where we express the dimensions of the irreducible  $\mathbb{C}^{\psi}[\P_n]$-modules in terms of the rank formula we obtained in Theorem~\ref{thm_main1}. 

In order to do this we need to recall some basic notions about integer partitions and the representation theory of the symmetric group. Any integer partition $\lambda$ can be identified with a sequence $\lambda = (\lambda_1 \geq \lambda_2 \geq \cdots \geq \lam_k)$. Integer partitions are represented using Young diagrams; for example, the diagram corresponding to $\lambda = (543311)$ is 
\[
\etab(5,4,3,3,1,1).
\]
The \emph{hook length} of the box $b$ of $\lambda$ is 
\[
h(b) = (\lambda_i -j) + (\lambda_j'-i) + 1 \
\mbox{if $b$ is in position $(i,j)$ of $\lambda$.}
\] 
Here, $\lam'=(\lam_1'\geq\lam_2'\geq\cdots\geq\lam_{\lam_1}')$ denotes the partition obtained by reflecting (the Young diagram corresponding to) $\lam$ in the leading diagonal.
In other words, $h(b)$ is the number of boxes to the right of $b$ plus the number below $b$, plus one (to include the box $b$ itself in the count).  Write $\lambda \vdash r$ and $|\lam|=r$ if $\lambda$ is a partition with $r$ boxes (so $\lam_1+\lam_2+\cdots+\lam_k=r$). 

Let $0\leq r\leq n-1$ and consider the $\gj$-class  $J = J_r(\P_n)$ of the partition monoid $\P_n$. The maximal subgroups of this $\gj$-class are isomorphic to the symmetric group $\S_r$. As noted above, the algebra $\mathbb{C}[\S_r]$ is known to be cellular with cell datum $(\Lambda_J, M_J, \mathcal C_J, *)$ where $\Lambda_J$ is the set $\hat{S}_r = \{ \lam:\lambda \vdash r\}$ of all partitions with $r$ boxes, carrying a natural partial order  (which will not be needed here). For a partition $\lambda$, let $\mathrm{Std}(\lambda)$ denote the set of standard $\lambda$-tableaux (that is, all ways of filling the boxes of $\lambda$ with the symbols $1$ up to $r$ so that both rows and columns are strictly increasing). Then for $\lambda \in \Lambda_J$, we have $M_J(\lambda) = \mathrm{Std}(\lambda)$. 

So the irreducible $\mathbb{C}[\S_r]$-modules, denoted $S_r^{\lambda}$, are indexed by the elements of $\Lambda_J = \hat{S}_r = \{ \lam:\lambda \vdash r\}$. For a given $\lambda \in \Lambda_J$ we have $M_J(\lambda) = \mathrm{Std}(\lambda)$ and the dimension of the corresponding irreducible module $S_r^{\lambda}$ is given by $\dim(S_r^\lam)=|M_J(\lambda)| = |\mathrm{Std}(\lambda)|$. This number is well known to be given by 
\[
|M_J(\lambda)| = |\mathrm{Std}(\lambda)| = \frac{r!}{\prod_{b \in \lambda}h(b)},
\]
where $h(b)$ denotes the hook-length of the box $b$ of $\lambda$, as defined above. In the last expression, we write $b\in\lam$ to indicate that $b$ is a box of (the Young diagram representing)~$\lam$. Now combining these observations, Equation~\ref{eq_connection} and Theorem~\ref{thm_main1} we obtain the following. 


%

%
%
%
%
%
%

%

\begin{proposition}\label{prop_partition_alg}
If $\mathbb{C}^{\psi}[\P_n]$ is semisimple, then the irreducible $\mathbb{C}^{\psi}[\P_n]$-modules $A_n^\mu$ are indexed by elements of the set 
\[
\hat{A}_n = 
\{
\text{\emph{partitions}} \ \mu :  0\leq|\mu|\leq n
\}.
\]
Moreover, for $|\mu|<n$, we have  
\begin{align*}
\mathrm{dim}(A_n^{\mu}) 
& =
\mathrm{rank}({I}_{|\mu|}(\mathcal{P}_n)) \cdot 
\left(
\frac{|\mu|!}{\prod_{b \in \mu}h(b)} 
\right) =
\left(
\sum_{j=|\mu|}^n S(n,j){j\choose |\mu|}
\right) 
\cdot 
\frac{|\mu|!}{\prod_{b \in \mu}h(b)}.
\end{align*}
%
\end{proposition}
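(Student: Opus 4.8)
The plan is to assemble the proposition from three ingredients already in hand: the cellular structure of $\mathbb{C}^\psi[\P_n]$ recorded above following Wilcox \cite{Wilcox2007}, the two quoted theorems of Graham and Lehrer \cite{GrahamLehrer1996}, and the rank formula of Theorem~\ref{thm_main1}.

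First I would pin down the indexing set. Under the semisimplicity hypothesis, the Graham--Lehrer semisimplicity criterion forces $\phi_\lambda\neq0$ for every $\lambda\in\Lambda$ (otherwise $\mathrm{rad}(\lambda)=W(\lambda)\neq0$ and $\phi_\lambda$ would be degenerate), so the parameter set $\Lambda_0=\{\lambda\in\Lambda:\phi_\lambda\neq0\}$ of irreducibles is the whole of $\Lambda=\{(J,\lambda):J\in\mathbb{J},\ \lambda\in\Lambda_J\}$. Writing $J=\JrPn$ and recalling $\Lambda_J=\hat S_r=\{\lambda:\lambda\vdash r\}$, a pair $(J,\lambda)\in\Lambda$ is the same datum as a single partition $\mu:=\lambda$ of $r=|\mu|$, since $J$ is determined by $r$ and $0\leq r\leq n$. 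This gives the bijection $\Lambda\leftrightarrow\hat A_n$, so the irreducibles $A_n^\mu$ are indexed by $\hat A_n$.

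Next I would compute the dimension. In the semisimple case every nonzero cell module $W(\lambda)$ is irreducible with $\mathrm{rad}(\lambda)=0$, so $A_n^\mu=W(J,\lambda)$ and $\dim_{\mathbb{C}}(A_n^\mu)=|M(J,\lambda)|$. Equation~\eqref{eq_connection} factorises this as $|M(J,\lambda)|=|\mathbb{L}_J|\cdot|M_J(\lambda)|$, and the second factor is the number of standard $\mu$-tableaux, which the hook length formula evaluates as $|M_J(\lambda)|=|\mathrm{Std}(\mu)|=|\mu|!/\prod_{b\in\mu}h(b)$.

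The crux is identifying the first factor $|\mathbb{L}_J|$, the number of $\gl$-classes of $J=J_{|\mu|}(\P_n)$, with $\rank(I_{|\mu|}(\P_n))$. Here I would invoke the regular $*$-structure of $\P_n$: since regular $*$-semigroups have square $\gd$-classes, the number of $\gl$-classes of $J$ equals the number of its $\gr$-classes, which is precisely the quantity $\rho_{n,|\mu|}$ computed in the proof of Theorem~\ref{thm_main1}. That theorem (via Proposition~\ref{prop_chain}) gives $\rho_{n,|\mu|}=\rank(I_{|\mu|}(\P_n))=\sum_{j=|\mu|}^n S(n,j)\binom{j}{|\mu|}$, valid exactly in the stated range $|\mu|<n$; for $|\mu|=n$ the $\gj$-class is the group of units $\S_n$, the ideal $I_n(\P_n)=\P_n$ is improper, and $|\mathbb{L}_J|=1$, which is why that case is excluded. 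Substituting the two factors back into $|M(J,\lambda)|$ yields the displayed formula. The only genuine obstacle is this last identification, and it rests entirely on the square-$\gd$-class property of the regular $*$-semigroup $\P_n$ together with the fact that Theorem~\ref{thm_main1} is proved by counting $\gr$-classes; the rest is bookkeeping.
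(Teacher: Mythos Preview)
Your proof is correct and follows essentially the same route as the paper: combine the Graham--Lehrer theorems with the Wilcox cellular datum, use Equation~\eqref{eq_connection} to factor $|M(J,\lambda)|=|\mathbb{L}_J|\cdot|M_J(\lambda)|$, evaluate the second factor via the hook length formula, and identify the first with $\rank(I_{|\mu|}(\P_n))$ via Theorem~\ref{thm_main1}. If anything, you are a bit more explicit than the paper about why $\Lambda_0=\Lambda$ under semisimplicity and about invoking the square-$\gd$-class property to equate $|\mathbb{L}_J|$ with the number of $\gr$-classes.
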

%
%
%
%
%
We note that in the cases $|\mu|=n$, the right hand side of the formula in the above proposition still holds, but the expression with the term $\mathrm{rank}({I}_{|\mu|}(\mathcal{P}_n))$ is no longer correct (since the partition monoid $\P_n=\InPn$ itself does not have rank $1$). 
It is interesting to compare this statement with \cite[Theorem~2.24(b)]{Halverson2005}, where the dimensions are given by  
counting paths in a certain graph $\hat{A}$, whose vertices are labelled by partitions (so called, Bratteli diagrams).

\subsection*{Dimensions of irreducible representations of Brauer algebras}

Recall from Section~\ref{sec_Brauer} above that the Brauer monoid $\B_n$ is the subsemigroup of $\P_n$ consisting of all partitions whose blocks have cardinality $2$,  that for $r=n-2k$ with $k\geq0$ and $0\leq r\leq n$, we write
\[
\JrBn=\set{\al\in\B_n}{\rank(\al)=r}=\JrPn\cap\B_n,
\]
that these are precisely the $\gj$-classes of $\B_n$, and that they form a chain:
\[
J_m(\B_n)<J_{m+2}(\B_n)<\cdots<\JnmBn<\JnBn,
\]
where $m$ denotes $0$ if $n$ is even, or $1$ otherwise.  
As in the partition monoid, the maximal subgroups of the $\gj$-class $J_r(\B_n)$ are isomorphic to the symmetric group $\S_r$. 
The ideals of $\B_n$ are precisely the sets
\[
\IrBn = J_m(\B_n)\cup J_{m+2}(\B_n)\cup\cdots\cup \JrBn = \set{\al\in\B_n}{\rank(\al)\leq r}.
\]
The twisted semigroup algebra $R^{\psi}[\B_n]$ is called the Brauer algebra. (For simplicity, we write $\psi$ for the restriction of the twisting $\psi$ to $\B_n$.) This algebra has been studied extensively in the literature; see for example 
\cite{
Brauer1937,
Hanlon94,
Wenzl88
}.
Semisimplicity of Brauer algebras is considered in 
\cite{
Wenzl88,
Rui05,Rui06
}.
Cellularity of the Brauer algebra $R^{\psi}[\B_n]$ may be proved, as for the partition algebra, by appealing to cellularity of symmetric group algebras; see \cite[Section~8]{Wilcox2007}. Following the same argument  used for the partition algebra above, and applying Theorem~\ref{thm_main3},  we obtain the following.  

\begin{proposition}\label{prop_dim_B}
If $\mathbb{C}^{\psi}[\B_n]$ is semisimple, then the irreducible $\mathbb{C}^{\psi}[\B_n]$-modules, $B_n^\mu$ are indexed by elements of the set 
\[
\hat{B}_n = 
\{
\text{\emph{partitions}} \ \mu : 0 \leq |\mu|=r=n-2k \leq n
\}.
\]
Moreover, for $0 \leq |\mu|=r=n-2k \leq n-2$ we have 
\begin{align*}
\mathrm{dim}(B_n^{\mu}) 
& = 
\mathrm{rank}({I}_{|\mu|}(\mathcal{P}_n)) \cdot
\left(
\frac{|\mu|!}{\prod_{b \in \mu}h(b)} 
\right) 
 = 
\frac{n!}{2^k k! r!}
\left(
\frac{|\mu|!}{\prod_{b \in \mu}h(b)} 
\right) =
\frac{n!}{
2^k k!\prod_{b \in \mu}h(b)
}.
\end{align*}
\end{proposition}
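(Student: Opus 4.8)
The plan is to transcribe the partition-algebra argument given above almost verbatim, substituting $\B_n$ for $\P_n$ throughout and replacing the appeal to Theorem~\ref{thm_main1} with an appeal to Theorem~\ref{thm_main3}. First I would invoke Wilcox's theorem \cite{Wilcox2007}, together with the cellularity of $\mathbb{C}[\S_r]$ (valid because, as recorded above, the maximal subgroups of $J_r(\B_n)$ are isomorphic to $\S_r$), to conclude that $\mathbb{C}^{\psi}[\B_n]$ is cellular with a cell datum $(\Lambda,M,\mathcal C,*)$ whose index set is $\Lambda=\{(J,\lambda):J\in\mathbb{J},\ \lambda\in\Lambda_J\}$, where $\mathbb{J}$ is the set of $\gj$-classes of $\B_n$ and $\Lambda_J=\hat S_r=\{\lambda:\lambda\vdash r\}$ when $J=J_r(\B_n)$. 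Since by Theorem~\ref{thm_wilcox_new} the $\gj$-classes of $\B_n$ are exactly the sets $J_r(\B_n)$ with $r=n-2k$ ranging over $0\le r\le n$, this immediately produces the claimed parametrizing set $\hat B_n$.

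Next, assuming semisimplicity, I would apply the two results of Graham and Lehrer quoted above to deduce that the nonzero cell modules are precisely the irreducibles and that $\dim(B_n^{\mu})=|M(J,\mu)|$ for $J=J_{|\mu|}(\B_n)$. Exactly as in the partition case, $M(J,\mu)=\mathbb{L}_J\times M_J(\mu)$, so $|M(J,\mu)|=|\mathbb{L}_J|\cdot|M_J(\mu)|$. The second factor is handled identically to Proposition~\ref{prop_partition_alg}: $M_J(\mu)=\mathrm{Std}(\mu)$, and the hook-length formula gives $|M_J(\mu)|=|\mathrm{Std}(\mu)|=\dfrac{r!}{\prod_{b\in\mu}h(b)}$, where $r=|\mu|$.

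The only step needing a separate observation is the evaluation of $|\mathbb{L}_J|$, the number of $\gl$-classes of $J=J_r(\B_n)$. Here I would note that since $\B_n$ is a regular $*$-semigroup its $\gd$-classes are square, so this number coincides with the number $\rho_{nr}$ of $\gr$-classes; equivalently, by Theorem~\ref{thm_wilcox_new} the $\gl$-classes are indexed by cokernels and the $\gr$-classes by kernels, of which there are equally many under the bijection $\coker(\alpha)=\ker(\alpha^{*})$. By Theorem~\ref{thm_main3}, for $0\le r\le n-2$ this common value is $\rank(I_r(\B_n))=\dfrac{n!}{2^kk!\,r!}$. Substituting the two factors and cancelling $r!$ then yields
\[
\dim(B_n^{\mu})=\frac{n!}{2^kk!\,r!}\cdot\frac{r!}{\prod_{b\in\mu}h(b)}=\frac{n!}{2^kk!\prod_{b\in\mu}h(b)},
\]
as required.

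I expect no genuine obstacle here: the argument is a direct adaptation of the partition-algebra proof, and the single item not already spelled out there is the identification $|\mathbb{L}_J|=\rank(I_r(\B_n))$, which is immediate from the $*$-structure (cf.\ Theorem~\ref{thm_czsproj}) and Theorem~\ref{thm_main3}. The one point deserving care is the range restriction $0\le|\mu|=r=n-2k\le n-2$: the top $\gj$-class $J_n(\B_n)=\S_n$ must be excluded, since $I_n(\B_n)=\B_n$ is not a proper ideal and the rank formula of Theorem~\ref{thm_main3} is not available for it, even though the final closed form $\frac{n!}{2^kk!\prod_{b\in\mu}h(b)}$ continues to hold in that case (reducing to the ordinary hook-length dimension $n!/\prod_{b\in\mu}h(b)$ of the $\S_n$-irreducible).
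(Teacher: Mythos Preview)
Your proposal is correct and follows precisely the approach the paper indicates: the paper's own ``proof'' is the single sentence ``Following the same argument used for the partition algebra above, and applying Theorem~\ref{thm_main3}, we obtain the following,'' and you have simply unpacked that sentence. Note also that you have silently (and rightly) corrected what appears to be a typo in the displayed statement, where $\mathrm{rank}(I_{|\mu|}(\mathcal{P}_n))$ should read $\mathrm{rank}(I_{|\mu|}(\mathcal{B}_n))$, as the value $\frac{n!}{2^k k!\,r!}$ is the one supplied by Theorem~\ref{thm_main3}.
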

We have not been able to find this formula as stated in the above proposition anywhere in the literature. 
However, a similar (and equivalent) formula may be found in~
\mbox{\cite[Equation 3.4]{Pan95}}.
Also, in a similar way as for the partition algebra above, there is an alternative approach to computing these dimensions using Bratteli diagrams; see 
\cite[Theorem 3.2]{Wenzl88}.

\subsection*{Dimensions of irreducible representations of Temperly--Lieb algebras}

Recall from Section~\ref{sec_Jones} that the Jones monoid $\J_n$ is the subsemigroup of $\B_n$ consisting of all partitions whose blocks have cardinality $2$ and may be drawn in a planar fashion, that for $0\leq r=n-2k\leq n$,
\[
\JrJn=\set{\al\in\J_n}{\rank(\al)=r} \quad\text{and}\quad \IrJn=\set{\al\in\J_n}{\rank(\al)\leq r}
\]
are precisely the $\gj$-classes and ideals of $\J_n$, and that the $\gj$-classes form a chain. In contrast to the partition and Brauer monoids, the maximal subgroups of the Jones monoid are all trivial. 

The twisted semigroup algebra $R^{\psi}[\J_n]$ is called the Temperley--Lieb algebra; see \cite{Temperley71, Goodman1993, Jones1994_2}. In \cite[Section 5]{Westbury95} a simple criterion for semisimplicity of Temperley--Lieb algebras is given. 
Cellularity of the Temperly--Lieb algebras may be proved as for the partition and Brauer algebras above, but this time appealing to the fact that the maximal subgroups are trivial, and thus the corresponding group algebras are trivially cellular; see \cite[Section~8]{Wilcox2007}. 

Following the same approach as used for the partition and Brauer algebras above, and applying Theorem~\ref{thm_main4}, we obtain the following. 

\begin{proposition}\label{prop_dim_TL}
If $\mathbb{C}^{\psi}[\J_n]$ is semisimple, then the irreducible $\mathbb{C}^{\psi}[\J_n]$-modules, $J_n^r$ are indexed by elements of the set 
\[
\hat{J}_n = \{ r \in \mathbb{Z}_{\geq 0} :  0 \leq r = n-2k \leq n \}.
\]
Moreover, for $0 \leq r=n-2k \leq n-2$, we have 
\begin{align*}
\mathrm{dim}(J_n^r) 
& = 
\rank(\IrJn) = \frac{r+1}{n+1}{n+1 \choose k}.
\end{align*}
\end{proposition}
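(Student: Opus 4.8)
The plan is to run the argument given above for the partition and Brauer algebras verbatim, exploiting the one structural simplification special to the Jones monoid: since $\J_n$ is $\gh$-trivial, all of its maximal subgroups are trivial. By Wilcox's construction (\cite[Section~8]{Wilcox2007}, as recalled in the text), $\mathbb{C}^\psi[\J_n]$ is cellular with cell datum indexed by $\Lambda=\{(J,\lambda):J\in\mathbb{J},\ \lambda\in\Lambda_J\}$, where $\mathbb{J}$ is the set of $\gj$-classes of $\J_n$ and $\Lambda_J$ is the cell datum of the group algebra $\mathbb{C}[G_J]$ of a maximal subgroup $G_J$ fixed setwise by $*$. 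First I would observe that each such $G_J$ is trivial, so $\mathbb{C}[G_J]\cong\mathbb{C}$ has a single one-dimensional cell module; thus $\Lambda_J$ is a singleton and $|M_J(\lambda)|=1$ for its unique element $\lambda$. Consequently the cell modules are indexed simply by the $\gj$-classes $\JrJn$ with $0\le r=n-2k\le n$, which is exactly the set $\hat J_n$.

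Next I would compute the dimension of each cell module. By Equation~\eqref{eq_connection} we have $|M(J,\lambda)|=|\mathbb{L}_J|\cdot|M_J(\lambda)|=|\mathbb{L}_J|$, the number of $\gl$-classes contained in $J=\JrJn$. Since $\J_n$ is a regular $*$-semigroup it has square $\gd$-classes, so this equals the number of $\gr$-classes of $\JrJn$, namely $\rho_{nr}$ in the notation of the proof of Theorem~\ref{thm_main4}. That theorem gives $\rho_{nr}=\rank(\IrJn)=\frac{r+1}{n+1}\binom{n+1}{k}$ for $0\le r=n-2k\le n-2$, whence $|M(\JrJn,\lambda)|=\rank(\IrJn)$ throughout this range. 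In the semisimple case, Graham and Lehrer \cite[Theorem~3.8]{GrahamLehrer1996} shows that the nonzero cell modules are precisely the irreducible modules, so $J_n^r=W(\JrJn,\lambda)$ and $\dim(J_n^r)=|M(\JrJn,\lambda)|=\rank(\IrJn)=\frac{r+1}{n+1}\binom{n+1}{k}$, as claimed.

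Because essentially every ingredient is supplied by the earlier partition- and Brauer-algebra arguments, I do not expect a substantive obstacle. The only points needing genuine care are the honest verification that triviality of the maximal subgroups forces $|M_J(\lambda)|=1$ — so that the standard-tableaux factor $|\mathrm{Std}(\lambda)|$ appearing for $\P_n$ and $\B_n$ collapses to $1$ — and the boundary behaviour at $r=n$. As in the partition-algebra case, the closed formula $\frac{r+1}{n+1}\binom{n+1}{k}$ still evaluates correctly (to $1$) at $r=n$, $k=0$, but its interpretation as $\rank(I_n(\J_n))$ breaks down there since $I_n(\J_n)=\J_n$ is the whole monoid; this is precisely why the identification with $\rank(\IrJn)$ is asserted only for $r\le n-2$.
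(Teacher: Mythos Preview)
Your proposal is correct and follows essentially the same approach as the paper, which simply says ``Following the same approach as used for the partition and Brauer algebras above, and applying Theorem~\ref{thm_main4}, we obtain the following'' without spelling out further details. You have in fact supplied more of the argument than the paper does, including the collapse of the standard-tableaux factor via triviality of the maximal subgroups and the caveat about the boundary case $r=n$, both of which are exactly in the spirit of the paper's treatment of the partition-algebra case.
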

This  agrees with known results from the literature; see the recent survey article \cite{Ridout14} for details.

\subsection*{Acknowledgements}

We thank the referees for their careful reading of the article, for several helpful suggestions that improved readability, and for drawing our attention to a number of interesting references.

\bibliographystyle{abbrv} 
\bibliography{MasterBibliography} 
\end{document}